\newtheorem{them}{Theorem}
\newtheorem{pro}[them]{Proposition}
\newtheorem{lem}[them]{Lemma}
\theoremstyle{definition}
\newtheorem{defi}[them]{Definition}
\newtheorem{remq}[them]{Remark}
\newtheorem{ex}[them]{Example}
\newtheorem{nott}[them]{Notation}
\numberwithin{them}{section}
\newcommand{\dcv}[1]{\xrightarrow[#1]{}}
\newcommand{\At}{\textrm{Atom}}
\newcommand{\dd}{\ \mathrm{d}}
\newcommand{\ens}[2]{\left\{ #1 ~;~#2 \right\} }
\newcommand{\g}{\gamma}
\renewcommand{\d}{\delta}
\newcommand{\s}{\sigma}
\newcommand{\ddd}{\mathrm{d}}
\newcommand{\F}{\mathrm{F}}
\newcommand{\D}{\mathrm{D}}
\renewcommand{\H}{\mathrm{H}}
\newcommand{\G}{\mathrm{G}}
\newcommand{\Cd}{\mathrm{C}}
\newcommand{\Q}{\mathbb{Q}}
\newcommand{\R}{\mathbb{R}}
\newcommand{\N}{\mathbb{N}}
\newcommand{\Z}{\mathbb{Z}}
\newcommand{\II}{\mathcal{I}}
\newcommand{\FF}{\mathcal{F}}
\newcommand{\CC}{\mathcal{C}}
\newcommand{\BB}{\mathcal{B}}
\newcommand{\KK}{\mathcal{K}}
\newcommand{\DD}{\mathcal{D}}
\newcommand{\TT}{\mathcal{T}}
\newcommand{\PP}{\mathcal{P}}
\newcommand{\LL}{\mathcal{L}}
\newcommand{\OO}{\mathcal{O}}
\newcommand{\MM}{\mathcal{M}}
\newcommand{\CCC}{\mathfrak{C}}
\newcommand{\OOO}{\mathfrak{O}}
\newcommand{\PPP}{\mathfrak{P}}
\newcommand{\RRR}{\mathfrak{R}}
\newcommand{\ee}{\mathbb{\varepsilon}}
\newcommand{\1}{\mathds{1}}
\newcommand{\ie}{\textnormal{i.e.}}
\newcommand{\id}{\textnormal{id}}
\newcommand{\Marg}{\textnormal{Marg}}
\newcommand{\Mart}{\textnormal{Mart}}
\newcommand{\argmin}{\textnormal{argmin}}
\newcommand{\Ent}{\textnormal{Ent}}
\newcounter{numeroexo}
\definecolor{darkgreen}{RGB}{0,100,0} % Vert foncé type "forest green"
\newcommand{\ti}[1]{\tilde{#1}}
\newcommand{\st}{\textnormal{st}}
\newcommand{\n}[2]{\left\|#1\right\|_{#2}}
\newcommand{\spt}{\textnormal{spt}}
\newcommand{\mres}{\mathbin{\vrule height 1.6ex depth 0pt width
		0.13ex\vrule height 0.13ex depth 0pt width 1.3ex}}
\newcommand{\res}{\hspace{0.072cm}\mres}
\DeclareMathSymbol{\Eta}{\mathalpha}{letters}{"48}
\newcommand{\accol}[1]{\left\{\begin{array}{l}#1\end{array}\right .}
\newcommand{\ent}[2]{\llbracket #1,#2\rrbracket}
\newtheorem{introthm}{Theorem} % Pas de lien avec section
\newtheorem*{introthm*}{Theorem}
\title{Entropic selection for optimal transport on the line with distance cost}
\author{
	Armand Ley}
\begin{document}
	\date{}
	\maketitle

	{
		\makeatletter
		\renewcommand{\@makefntext}[1]{\noindent #1}
		\makeatother
		
		\renewcommand{\thefootnote}{}
		\footnotetext{
			\textbf{2020 Mathematics Subject Classification.} 	
			49Q22 49J45 60E15.\\
			\textbf{Key words and phrases.}
			optimal transport, entropic regularization, decomposition theorem, stochastic orders.
		}
		\renewcommand{\thefootnote}{\arabic{footnote}}
	}
	
	\pagestyle{plain}
	\textbf{Abstract:} {\small
		 We study the small-regularisation limit of the entropic optimal transport problem on the line with distance cost. While convergence of entropic minimizers is well understood in the discrete setting and in the case where the cost is continuous and there is a unique optimal transport plan, the question of existence and characterization outside these settings remains largely open. We propose a natural candidate for the limiting object and establish its convergence under mutual singularity of the marginals. For arbitrary marginals, we moreover prove that every limit point of entropic minimizers obeys a structural condition known as weak multiplicativity. The construction of our candidate relies on a decomposition theorem for optimal transport plan that we believe is of independent interest. This article complements the previous work of Di Marino and Louet \cite{di_marino_entropic_2018}.}

	\section{Introduction}
	
	\subsection{On the Monge-Kantorovitch problem and its entropic regularization}
	In recent decades, optimal transport theory has attracted considerable attention, due to its many connections to probability, analysis, geometry and other areas of mathematics (see, e.g., \cite{villani_optimal_2009,villani_topics_2003,santambrogio_optimal_2015,ambrosio_gradient_2008,rachev_mass_1998}). In this paper, we investigate one of the most elementary optimal transport problems, namely the $L^1$ Monge-Kantorovich problem on the real line. Given two probability measures $\mu$ and $\nu$ on $\R$, this problem consists of minimizing the global transport cost function
	\begin{equation*}
		J: \pi \in \Marg(\mu,\nu) \mapsto \int_{\R^2} |y-x| \dd \pi (x,y),
	\end{equation*}
	over the set $\Marg(\mu,\nu)$ of measures that satisfy the marginal constraints $\pi(A \times \R) = \mu(A)$ and $\pi(\R \times B) = \nu(B)$ for every pair $(A,B)$ of Borel subsets of $\R$. If $|y-x|$ is replaced by $|y-x|^p$ for some $p>1$ in the definition of the cost function $J$, then the set of minimizers of $J$, known as optimal transport plans, reduces to a singleton which consists of the well-known (co)monotone transport. This transport plan, also known as quantile transport plan, is defined as $(G_\mu,G_\nu)_\# \left(\LL^1_{\res ]0,1[} \right)$, where $\LL^1_{\res ]0,1[}$ stands for the restriction of the Lebesgue measure to $]0,1[$, while $G_\mu$ and $G_\nu$ denote the quantile functions of $\mu$ and $\nu$, respectively. For $p=1$ uniqueness of an optimal transport plan does not hold, and describing the set $\OO(\mu,\nu)$ of optimal transport plans becomes a delicate problem which will address later through a decomposition result. The lack of uniqueness of optimal transport plans for the distance cost raises the broader question of selection under perturbation. Entropic regularisation, the most prominent perturbation of the Monge–Kantorovich problem, provides a natural framework in which to address this issue. Given a regularization parameter $\ee > 0$, the entropic optimal transport problem consists in minimizing the regularized cost function 
	$$J_\ee : \pi \in \Marg(\mu,\nu) \mapsto \int_{\R^2} |y-x| \dd \pi(x,y) + \ee \Ent(\pi|\mu \otimes \nu),$$
	where the relative entropy $\Ent(\cdot|\mu \otimes \nu)$ is defined as follows:
	\begin{equation*}
		\Ent(\pi|\mu \otimes \nu) =
		\begin{cases}
			\int_{\R^2} \log\left(\frac{\ddd \pi}{\ddd\mu \otimes \nu}\right) \frac{\ddd \pi}{\ddd\mu \otimes \nu} \dd\mu \otimes \nu &\text{if } \pi \ll \mu \otimes \nu \\+\infty &\text{otherwise }
		\end{cases}.
	\end{equation*}
	When $\ee = 0$, we recover the usual optimal transport problem. Heuristically, since $\Ent(\cdot|\mu \otimes \nu)$ can be interpreted as a measure of divergence from $\mu \otimes \nu$, the entropic term in $J_\ee$ encourages the minimizer $\pi_\ee := \argmin J_\ee$ to balance two competing effects: minimizing the transport cost \emph{and} being as close as possible to the product measure. This regularization, which applies to more general cost functions, was popularized by Cuturi \cite{cuturi_sinkhorn_2013} and has since become a highly active area of research. Its main appeal is likely due to the fact that it transforms the optimal transport problem ---which solutions are hard to compute in high dimensions--- into a computationally tractable problem. Indeed, adding this regularization term allows the use of robust and efficient algorithms, such as the celebrated Sinkhorn algorithm \cite{sinkhorn_relationship_1964}. We refer to \cite{peyre_computational_2019} and the numerous references therein for a comprehensive overview of the computational aspects related to optimal transport and to \cite{nutz_introduction_nodate} for a general introduction to entropic optimal transport theory. 
	
	Since entropic optimal transport is viewed as an approximation of classical optimal transport, understanding its behaviour as the regularization parameter $\ee$ tends to $0^+$ is of critical importance. Beyond its computational applications, several aspects of this convergence have been studied, including the convergence of minimizers, convergence of solutions to the associated dual problem, stability with respect to the data (marginals and cost function), and convergence rate estimates. In this paper, we are interested in the question of the convergence of the minimizers of $J_\ee$. We refer to the works \cite{carlier_convergence_2017,leonard_schrodinger_2012} in the context of the squared Euclidean distance, where $\Gamma$-convergence methods are used to prove the convergence of the entropic minimizers to the unique optimal transport plan. This result has been extended in \cite{bernton_entropic_2022}, where the authors show that for continuous cost functions, every cluster point of entropic minimizers is an optimal transport plan: in particular, if there exists a unique optimal transport plan, we obtain the convergence toward this transport plan. In the case of measures with finite support, it is known \cite[Proposition 4.1]{peyre_computational_2019} that entropic minimizers converge to $\argmin_{\OO(\mu,\nu)} \Ent(\cdot|\mu \otimes \nu).$ Outside these cases, proving the convergence and finding a characterization of the potential limit remains an open question and an active area of research. This selection problem can be reformulated as follows: is an optimal transport plan selected by entropic regularization, and if so, how can it be characterized? The most elementary setting in which this selection problem arises is the distance cost on the real line: $\OO(\mu,\nu)$ is generally not a singleton, and optimal transport plans are not even absolutely continuous with respect to the product measure.\footnote{E.g., if $\mu = \d_0 + \d_1 + \LL^1_{\res ]2,3[}$ and $\nu = \d_1 + \d_2 + \LL^1_{\res ]2,3[}$.} To state our result concerning the entropic selection problem in this setting, we first need to provide a more tractable description of the set $\OO(\mu,\nu)$.
	\subsection{A decomposition result for optimal transport plans}
	By a {decomposition result} for $\OO(\mu,\nu)$, we mean a representation of any optimal transport plan as a sum of simpler components that can be analysed independently. More specifically, we construct a set $\DD = \ens{(\mu_i,\nu_i)}{i \in \II}$ of pairs of measures such that, for all $i \in \II$, the set $\OO(\mu_i,\nu_i)$ admits a tractable description and, for every  $\pi \in \OO(\mu,\nu)$, there exists a unique family $(\pi_i)_{i \in \II} \in \prod_{i \in \II} \OO(\mu_i,\nu_i)$ such that $\pi = \sum_{i \in \II} \pi_i$. This motivates the following notation.
	\begin{nott}[Minkowski sum and direct sum of set of measures]\label{nott:somme_directe}
		Given a countable family $(E_i)_{i \in \II}$ of subsets of $\MM_+(\R^2)$, we denote by $\sum_{i \in \II} E_i$ the image of the map $\phi : (\pi_i)_{i \in \II} \in \prod_{i \in \II} E_i \mapsto \sum_{i \in \II} \pi_i \in \MM_+(\R^2).$ If $\phi$ is injective, we write $\bigoplus_{i \in \II} E_i$ instead of $\sum_{i \in \II} E_i$.
	\end{nott}
	With this notation, our decomposition theorem becomes $\OO(\mu,\nu) = \bigoplus_{i \in \II}\OO(\mu_i,\nu_i).$	Strictly speaking, the decomposition is formulated in the more general setting of cyclical monotone transport plans (see Theorem \ref{pro:decomposition}). In the case of measures without atoms and with compact support, it coincides with a previous result by Di Marino and Louet \cite[Proposition $3.1$]{di_marino_entropic_2018}. To motivate our decomposition theorem and clarify the exposition of the decomposition procedure, we briefly present a standard decomposition result for the set of transport plans satisfying the martingale constraint.

	\paragraph{Decomposition theorem for the set of martingale transport plans}	A classical theorem (see Strassen \cite{strassen_existence_1965}) states that the set $\Mart(\mu,\nu)$ of martingale transport plans, \ie \ the transport plans $\pi$ that admit a disintegration of the form $\pi(\ddd x,\ddd y) = \mu(\ddd x)k_x(\ddd y)$ with $\int_\R y k_x (\ddd y) = x$, is non-empty if and only if the measures $\mu$ and $\nu$ are in the convex order, meaning that the potential functions $u_\mu : t \mapsto \int_{\R} |x-t| \dd\mu(x)$ and $u_\nu : t \mapsto \int_{\R} |x-t| \dd\nu(x)$ satisfy $u_\mu \leq u_\nu$.\footnote{The measure $\mu$ is smaller than $\nu$ in the convex order if $\int f \dd \mu \leq \int f \dd \nu$ for all convex function $f$. This characterization trough the ordering of potential functions holds only for measures on $\R$. In the following, we refer to a statement as a \emph{Strassen-type} result if it characterizes a stochastic order via the existence of a transport plan satisfying structural constraints.} Assuming $u_\mu \leq u_\nu$, let $(I_k)_{k \in \KK}$ be the family of connected components of  $\{u_\mu< u_\nu\} := \ens{x \in \R}{u_\mu(x) < u_\nu(x)}$, and define the components of $\mu$ by setting  $\mu^= = \mu_{\res \{u_\mu = u_\nu\}}$ and $\mu_k = \mu_{\res I_k}$ for each $k \in \KK$. Beiglböck and Juillet \cite[Theorem A.4]{beiglbock_problem_2016} proved that there exists a unique family   $((\nu_k)_{k \in \KK},\nu^=)$ such that $\nu = \sum_{k \in \KK} \nu_k + \nu^=$, $u_{\mu^=} \leq u_{\nu^=}$ and $u_{\mu_k} \leq u_{\nu_k}$ for each $k \in \KK$. Moreover, the inequality $u_{\mu_k} < u_{\nu_k}$ is satisfied in the interior of the support of $\mu$ and $\nu$, and they established that $\Mart(\mu,\nu)$ admits the following decomposition:
	\begin{equation*}
		\Mart(\mu,\nu) = \left(\bigoplus_{k \in \KK} \Mart(\mu_k,\nu_k) \right) \oplus \Mart(\mu^=,\nu^=).
	\end{equation*}
	The proof of this result mainly relies on the fact that the set
	$$\ens{x \in \R}{\forall \pi \in \Mart(\mu,\nu), \pi(\left(]-\infty,x[ \times [x,+ \infty[\right) \uplus \left( ]x,+\infty[ \times ]-\infty,x] \right))=0}$$
	coincides precisely with the set where the potential functions are equal. In higher dimensions, although the situation is notably more intricate, similar results have recently been established \cite{ghoussoub_structure_2019,march_irreducible_2019,obloj_structure_2017}.

	\paragraph{A decomposition result for cyclically monotone transport plan} Our decomposition method for $\OO(\mu,\nu)$ closely mirrors that of $\Mart(\mu,\nu)$: the optimality constraint replaces the martingale constraint, and the cumulative distribution functions take the role of the potential functions. In this introduction, we present the decomposition result of $\OO(\mu,\nu)$ for atomless measures: in this case, the analogy with the martingale decomposition becomes particularly clear, and our decomposition theorem can be presented more intuitively. In this context, the cumulative distribution functions $F_\mu^+$ and $F_\nu^+$ of $\mu$ and $\nu$ are continuous, so that the sets $\{F_\mu^+ > F_\nu^+\}$ and $\{F_\mu^+ < F_\nu^+\}$ are open, and their connected components, denoted by $(I_k^+)_{k \in \KK^+}$ and $(I_k^-)_{k \in \KK^-}$, form countable families of open intervals. The components of $\mu$ and $\nu$ are then defined by restricting them to these connected components. More precisely, we set
	\begin{equation}\label{def:comp_marg_intro}
		\begin{cases}
			\mu^= = \mu_{\res \{F_\mu^+ = F_\nu^+\}}\\
			\nu^= = \nu_{\res \{F_\mu^+ = F_\nu^+\}}
		\end{cases},
		\begin{cases}
			\mu_k^+ = \mu_{\res I_k^+}\\
			\nu_k^+ = \nu_{\res I_k^+}
		\end{cases}
		\text{and }
		\begin{cases}
			\mu_k^- = \mu_{\res I_k^-}\\
			\nu_k^- = \nu_{\res I_k^-}
		\end{cases}.
	\end{equation}
	We write $\mu \leq_\F \nu$ when the inequality $F_\mu^+ \geq F_\nu^+$ holds, with strict inequality $F_\mu^+ > F_\nu^+$ on the interior of the union of the supports of $\mu$ and $\nu$.\footnote{For  measures that may have atoms, the definition $\leq_\F$ requires both right continuous \emph{and} left-continuous distribution functions, and strict inequality may also be required at extremal points of the support: we refer the reader to Definition \ref{def:Kellerer_order_large}.} We can now state our decomposition result in the case of atomless measures.
	\begin{introthm}\label{them:deco_intro}
		Let $\mu$ and $\nu$ be two atomless measures on $\R$ such that $\min J < +\infty$, and define the marginal components  $((\mu_k^+)_{k \in \KK^+},(\mu_k^-)_{k \in \KK^-},$$\mu^=)$ and $((\nu_k^+)_{k \in \KK^+},(\nu_k^-)_{k \in \KK^-},\nu^=)$  as in Equation  \eqref{def:comp_marg_intro}.
		\begin{enumerate}
			\item Then $\mu^= = \nu^=$, and for all $k \in \KK^+$ (resp. $k \in \KK^-$), we have $\mu_k^+ \leq_{\F} \nu_k^+$ (resp. $\nu_k^- \leq_{\F} \mu_k^-$).
			\item The set $\OO(\mu,\nu)$ admits the following decomposition: 
			\begin{equation*}
				\OO(\mu,\nu) = \left(\bigoplus_{k \in \KK^+} \OO(\mu_k^+,\nu_k^+) \right) \oplus \left(\bigoplus_{k \in \KK^-} \OO(\mu_k^-,\nu_k^-)\right) \oplus \OO(\mu^=,\nu^=).
			\end{equation*}
		\end{enumerate}
	\end{introthm}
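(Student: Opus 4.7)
My plan rests on the classical Fubini representation
\[
\int_{\R^2}|y-x|\dd\pi(x,y) = \int_\R\bigl(H_\pi^+(t)+H_\pi^-(t)\bigr)\dd t,
\]
where $H_\pi^+(t):=\pi(]-\infty,t]\times]t,+\infty[)$ and $H_\pi^-(t):=\pi(]t,+\infty[\times]-\infty,t])$. The pointwise identity $H_\pi^+(t)-H_\pi^-(t)=F_\mu^+(t)-F_\nu^+(t)$ gives $H_\pi^++H_\pi^-\geq|F_\mu^+-F_\nu^+|$, so $\pi\in\OO(\mu,\nu)$ precisely when $\min(H_\pi^+(t),H_\pi^-(t))=0$ for a.e.\ $t$. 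Atomlessness of $\mu,\nu$ makes $H_\pi^\pm$ continuous in $t$, so this no-crossing condition propagates to every $t$: no mass is transported from the right to the left of any $t$ with $F_\mu^+(t)>F_\nu^+(t)$, from the left to the right of any $t$ with $F_\mu^+(t)<F_\nu^+(t)$, and across any $t\in A:=\{F_\mu^+=F_\nu^+\}$ at all.

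\textbf{Part 1.} Continuity of $F_\mu^+,F_\nu^+$ makes $A$ closed, with complement partitioned into the countable families $(I_k^+)_{k\in\KK^+}$ and $(I_k^-)_{k\in\KK^-}$ of open intervals whose endpoints lie in $A$. Evaluating $F_\mu^+=F_\nu^+$ at these endpoints gives $\mu(I_k^\pm)=\nu(I_k^\pm)$. The distribution function of $\mu_{\res A}$ is then obtained, for any $t\in\R$, by subtracting from $F_\mu^+(t)$ the $\mu$-mass of those components of $A^c$ lying below $t$ (the contribution of the atomless boundary $\partial A$ vanishes); applying the same decomposition to $\nu$ and using the equalities above yields $\mu^==\nu^=$. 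The inequality $\mu_k^+\leq_\F\nu_k^+$ follows by direct computation: for $t\in I_k^+$ the endpoint contributions cancel and $F_{\mu_k^+}^+(t)-F_{\nu_k^+}^+(t)=F_\mu^+(t)-F_\nu^+(t)>0$, with the strict inequality extending across the interior of the support.

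\textbf{Part 2.} For $\pi\in\OO(\mu,\nu)$, the no-crossing principle applied at every $t\in A$ gives $H_\pi^\pm(t)=0$, so no $\pi$-mass travels across any point of $A$. Since any two distinct components of $A^c$, and any component of $A^c$ together with $A$ itself, are separated by a point of $A$, this forces $\pi$ to vanish on every off-diagonal block. Setting $\pi_k^\pm:=\pi_{\res(I_k^\pm\times I_k^\pm)}$ and $\pi^=:=\pi_{\res(A\times A)}$, atomlessness renders boundary contributions null so the marginals come out as $\mu_k^\pm,\nu_k^\pm,\mu^=,\nu^=$ respectively, yielding $\pi=\sum_{k\in\KK^+}\pi_k^++\sum_{k\in\KK^-}\pi_k^-+\pi^=$. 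Each piece is optimal on its own marginals, for otherwise replacing a suboptimal $\pi_k^\pm$ by a better coupling on the same block would strictly decrease $J(\pi)$, contradicting $\pi\in\OO(\mu,\nu)$. Conversely, additivity of the cost across disjoint blocks shows that summing any admissible choice of optimal pieces produces a plan of cost $\min J(\mu,\nu)$; uniqueness of the decomposition is immediate from the pairwise disjointness of the blocks, giving the direct-sum structure.

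\textbf{Main obstacle.} The principal technical subtlety is upgrading the no-crossing condition from \emph{a.e.} to \emph{pointwise} on $A$, and in particular at its boundary points, so that the block restrictions of $\pi$ genuinely realise the prescribed marginals. The atomless hypothesis is what makes this tractable: it renders $H_\pi^\pm$ continuous, makes $\partial A$ negligible for both marginals, and prevents $\pi$ from concentrating on the critical vertical and horizontal lines. These simplifications are precisely what is lost in the general case of Theorem~\ref{pro:decomposition}, which must be formulated through left- and right-continuous distribution functions and argued via cyclic monotonicity rather than through direct optimality.
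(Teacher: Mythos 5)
Your proposal is correct, and it follows a genuinely different route from the paper. The paper derives the barrier-point property (no mass crosses a point $t$ with $F_\mu^+(t)=F_\nu^+(t)$, and mass only moves forward across the positive components) from \emph{cyclical monotonicity}: Proposition~\ref{pro:barrier_points} is proved by exhibiting a swapping cycle of length two, and Proposition~\ref{pro:ordre_stochastique} combined with Remark~\ref{remq:cycl_res} then pins down the directionality on each component. You instead go through the classical $L^1$ Fubini identity $J(\pi)=\int_\R\bigl(H_\pi^+(t)+H_\pi^-(t)\bigr)\dd t$ together with $H_\pi^+-H_\pi^-=F_\mu^+-F_\nu^+$, so that optimality is equivalent to $\min(H_\pi^+,H_\pi^-)=0$ for a.e.\ $t$, and then exploit the continuity of $H_\pi^\pm$ (a consequence of atomlessness, since $|H_\pi^\pm(s)-H_\pi^\pm(t)|\le \mu(]t\wedge s,t\vee s])+\nu(]t\wedge s,t\vee s])$) to upgrade the a.e.\ condition to a pointwise one. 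This is an elegant and more elementary argument that sidesteps both cyclical monotonicity and the explicit Kantorovich potential used by Di Marino--Louet, but the elementary route is essentially tied to the atomless case: as you correctly point out in your ``Main obstacle'' paragraph, the continuity of $H_\pi^\pm$ is lost once atoms are allowed, while the paper's cyclical-monotonicity argument yields the barrier property pointwise from the start and therefore extends to the general decomposition of Theorem~\ref{pro:decomposition} (and to $\CCC(\mu,\nu)$ without finiteness of $W_1$).

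Two small remarks on your write-up. First, you invoke $W_1(\mu,\nu)=\int_\R|F_\mu^+-F_\nu^+|$ as known, which is fine, but it is worth remembering that the paper does not rely on this formula. Second, your verification that $\mu_k^+\leq_\F\nu_k^+$ is somewhat compressed: beyond $F_{\mu_k^+}^+-F_{\nu_k^+}^+=F_\mu^+-F_\nu^+>0$ on $]a_k^+,b_k^+[$, one needs to identify $s_{\mu_k^+}=a_k^+$ and $S_{\nu_k^+}=b_k^+$ (which follows from the strict inequality on the whole interval) so that the sets $T_\pm$ in Definition~\ref{def:Kellerer_order_large} coincide with $]a_k^+,b_k^+[$; this is automatic in the atomless case but deserves a sentence. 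These are gaps of exposition, not of substance.
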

	As for the decomposition of $\Mart(\mu,\nu)$, the proof of this result relies on the characterization of a specific set, whose elements will be called barrier points. This set, denoted by $\BB(\mu,\nu)$ is defined by $$\BB(\mu,\nu) =\ens{x \in \R}{\forall \pi \in \OO(\mu,\nu), \pi(]-\infty,x[ \times ]x,+\infty[ \cup ]x,+\infty[ \times ]-\infty,x)=0},$$ and coincides with the set of points where $F_\mu^+$ and $F_\nu^+$ are equal.\footnote{This is not true anymore if $\mu$ and $\nu$ may have atoms: we refer the reader to Equation \eqref{eq:E_==_B_modifie} for the general characterization of $\BB(\mu,\nu)$ in terms of cumulative distribution functions.} In case $\mu$ and $\nu$ also have compact support, this result coincides with that of Di Marino and Louet: we mention that their proof relies on the construction of an explicit solution to the dual problem and on the use of complementary slackness to constrain mass displacement for elements of $\OO(\mu,\nu)$.\footnote{Di Marino and Louet proved a slightly different version of this result (see Proposition \ref{pro:structure_result_DL} for the exact statement).} {In contrast, our approach is based on cyclical monotonicity, a geometric property of the support of optimal transport plans (see Definition \ref{defi:cycl_mon}), which we use to establish our earlier characterization of barrier points.} In Section \ref{sec:decomposition}, after proving Proposition \ref{them:deco_intro}, we extend this decomposition result to the case where the measures may have atoms, which, as we will see in detail, requires a more refined analysis. Without delving into technicalities at this stage, we note that the general case requires working with both right-continuous \emph{and} left-continuous cumulative distribution functions (see Definition \ref{defi:compo_real_line} and the preceding example), that components may not be mutually singular (see Definition \ref{def:component_marginals} and Point \ref{Point:not_singular} of Remark \ref{remq:on_marginals}), and that a refinement of the notion of barrier points is required to control mass repartition at the boundaries of the components (see Proposition \ref{pro:caracterization_barrier}). In Theorem \ref{pro:decomposition}, after adapting the definition of marginal components, we extend Theorem \ref{them:deco_intro} for any pair of measures $(\mu,\nu)$, including measures that may have atoms. This result, stated in the broader context of cyclically monotone transport plans, coincides with Theorem \ref{them:deco_intro} when $\mu$ and $\nu$ are atomless measures such that $\min J < + \infty$.	

	We now highlight a interesting consequence of our approach. For $L^p$ transport with $p>1$, it is well known that a transport plan is optimal if and only if there exists a set $\Gamma$ such that, for every $(x,y), (x',y') \in \Gamma$, $|y-x|^p+ |y'-x'|^p \leq |y-x'|^p+ |y'-x|^p$.\footnote{This condition corresponds to cyclical monotonicity (see Definition \ref{defi:cycl_mon}) for cycles of length two, and is equivalent to $(y'-x')(y-x) \geq 0$. This is the key argument to prove that $\OO(\mu,\nu)$ reduces to the monotone transport plan.} We establish that this characterization of optimality also holds for $p=1$, and provide an interpretation in terms of ``crossings" (see Proposition \ref{pro:swapping_lemma}).
	
	\subsection{Back to the selection problem}
	\paragraph{Convergence of minimizers for the $L^1$ entropic regularization on the real line}
	 Our first contribution to the study of the selection problem for the distance cost on the line is to apply our decomposition result for $\OO(\mu,\nu)$ and a Strassen-type theorem due to Kellerer \cite{kellerer_order_1986} to  build a natural candidate $\KK(\mu,\nu)$ to be the limit of the entropic minimizers $(\pi_\ee)_{\ee>0}$. This optimal transport plan $\KK(\mu,\nu)$ is characterized by the fact that it is as close from the product measure as possible in the following sense: each positive (respectively negative) component of $\KK(\mu,\nu)$ is strongly multiplicative, meaning that it coincides with the restriction of a product measure on $\F := \ens{(x,y) \in \R^2}{x \leq y}$ (respectively on $\ti{\F} := \ens{(x,y) \in \R^2}{x \geq y}$). We conjecture that $(\pi_\ee)_{\ee>0}$ converges to $\KK(\mu,\nu)$ when $\ee \to 0^+$. In \cite[Theorem $4.1$]{di_marino_entropic_2018}, the authors proved that $(\pi_\ee)_{\ee >0}$ converges to an optimal transport whose components are strongly multiplicative, under the assumption that $\mu$ and $\nu$ are compactly supported, have finite entropy with respect to the Lebesgue measure on $\R$, and there exists $\pi \in \OO(\mu-\mu^=,\nu-\nu^=)$ with finite entropy with respect to $\mu \otimes \nu$ (see Sub-subsection \ref{subsubsection:Dl_results}).\footnote{Along with some additional technical assumption, see Theorem \ref{them:conv_Di_Marino_Louet} for the precise statement.} Our main convergence result is stated as follows.
	\begin{introthm}\label{them:convergence_semi_discret_intro}
		Consider a pair $(\mu,\nu)$ of probability measures satisfying one of the two following hypothesis:
		\begin{enumerate}
			\item $\mu$ and $\nu$ both have finite support;
			\item For all $k \in \KK^+$ (respectively $k \in \KK^-$), $(\mu_k^+,\nu_k^+)$ (resp. $(\mu_k^-,\nu_k^-)$) forms a pair of mutually singular measures.
		\end{enumerate}
		Then $\pi_\ee \dcv{\ee \to 0^+} \KK(\mu,\nu).$ 
	\end{introthm}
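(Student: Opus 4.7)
The strategy is to show that every weak-$\ast$ cluster point $\pi^*$ of $(\pi_\ee)_{\ee>0}$ coincides with $\KK(\mu,\nu)$; since $(\pi_\ee)_{\ee>0}$ is tight (fixed marginals), this yields the announced convergence. Any such cluster point belongs to $\OO(\mu,\nu)$ by the classical $\Gamma$-convergence argument of Bernton et al., which applies since $(x,y)\mapsto|y-x|$ is continuous. Applying Theorem \ref{them:deco_intro} (or its atomic extension) to $\pi^*$ yields a unique decomposition
$$\pi^* = \sum_{k\in\KK^+}\pi_k^+ + \sum_{k\in\KK^-}\pi_k^- + \pi^=,$$
with $\pi_k^\pm\in\OO(\mu_k^\pm,\nu_k^\pm)$ and $\pi^=\in\OO(\mu^=,\nu^=)$. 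The diagonal piece $\pi^=$ is automatically the identity coupling of $\mu^==\nu^=$, and by the defining property of $\KK(\mu,\nu)$ it then suffices to prove that each $\pi_k^\pm$ is strongly multiplicative.

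A key structural ingredient is the additivity of the relative entropy along the decomposition. Since the marginal components live on pairwise disjoint intervals $I_i$ and any plan produced by the decomposition puts mass only on the diagonal blocks $I_i\times I_i$, a direct computation gives
$$\Ent(\pi^*\,|\,\mu\otimes\nu) \;=\; \sum_{k\in\KK^+}\Ent(\pi_k^+\,|\,\mu_k^+\otimes\nu_k^+) + \sum_{k\in\KK^-}\Ent(\pi_k^-\,|\,\mu_k^-\otimes\nu_k^-) + \Ent(\pi^=\,|\,\mu^=\otimes\nu^=).$$
Hence minimization (or approximate minimization, via the entropic regularization) of $\Ent(\cdot\,|\,\mu\otimes\nu)$ on $\OO(\mu,\nu)$ decouples into independent problems on each $\OO(\mu_k^\pm,\nu_k^\pm)$, which in turn reduces the analysis of cluster points to each individual component.

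In the finite-support case, I would invoke the classical result \cite[Proposition 4.1]{peyre_computational_2019}: the minimizers $\pi_\ee$ converge as $\ee\to 0^+$ to the unique entropy minimizer $\pi^\dagger$ of $\OO(\mu,\nu)$. Combining this with the decoupling above, the problem reduces to identifying the entropy minimizer on each $\OO(\mu_k^+,\nu_k^+)$. Since every element of $\OO(\mu_k^+,\nu_k^+)$ is concentrated on $\F\cap (I_k^+)^2$ (as $F_\mu^+>F_\nu^+$ there forces rightward transport), this is a standard Schrödinger-type problem whose unique solution has density of product form $e^{f(x)+g(y)}\1_\F(x,y)$ against $\mu_k^+\otimes\nu_k^+$. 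Such a density exhibits $\pi_k^+$ as the restriction to $\F$ of the product measure $(e^{f}\mu_k^+)\otimes(e^{g}\nu_k^+)$, i.e., it is strongly multiplicative. By the uniqueness of $\KK(\mu,\nu)$, this forces $\pi^\dagger=\KK(\mu,\nu)$.

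In the mutually singular case, finite-dimensional entropy arguments are no longer available. Instead, I would combine the weak multiplicativity of cluster points of $(\pi_\ee)$—which the paper establishes for arbitrary marginals—with the disjointness of the supports of $\mu_k^+$ and $\nu_k^+$ to promote weak into strong multiplicativity on each component. Concretely, weak multiplicativity already provides a product-type factorization of the density of $\pi_k^+$ with respect to $\mu_k^+\otimes\nu_k^+$ along the support of $\pi_k^+$, and the mutual singularity $\mu_k^+\perp\nu_k^+$ should rigidify the two factors so that the factorization extends to all of $\F\cap(I_k^+)^2$. This rigidification step, in which mutual singularity is used to eliminate the ambiguity in the factorization, is the main obstacle of the proof; once it is carried out, the limit components coincide with those of $\KK(\mu,\nu)$, and summing over $k$ together with the automatic identification of $\pi^=$ completes the argument.
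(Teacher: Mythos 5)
Your plan for the finite-support case is essentially the paper's: cluster points lie in $\CCC(\mu,\nu)=\OO(\mu,\nu)$, the relative entropy decomposes additively along the components, and on each $\Marg_\F(\mu_k^+,\nu_k^+)$ the entropy minimizer is the unique strongly multiplicative plan, hence $\KK_\F(\mu_k^+,\nu_k^+)$. (The paper proves the last step by a direct inequality, Proposition~\ref{lem:minimum_kell_ord}, rather than by invoking Schrödinger/Sinkhorn structure, but the conclusion is the same, and your additivity formula matches Proposition~\ref{pro:decomposition_entropie} up to a $\pi$-independent additive constant.)

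For the mutual-singularity case there is a genuine gap, and it sits exactly where you flag it: the ``rigidification'' from weak to strong multiplicativity is neither stated nor proved, and the density framing you propose does not survive scrutiny — a cluster point $\pi_k^+$ need not be absolutely continuous with respect to $\mu_k^+\otimes\nu_k^+$, so there is no density to factor. More fundamentally, weak $\F$-multiplicativity alone is \emph{not} a rigid property: for example $\d_{(0,0)}+\d_{(1,1)}$ is weakly multiplicative but not strongly multiplicative, so even on a single component weak multiplicativity does not single out the Kellerer plan, and mutual singularity of $\mu_k^+$ and $\nu_k^+$ does not by itself ``eliminate ambiguity in a factorization.'' The paper's mechanism is different and crucially passes from the closed half-plane $\F$ (order $\leq_\F$) to the open half-plane $\G$ (order $\leq_\G$): mutual singularity is used twice, once (via Proposition~\ref{pro:equi_reduction}) to show $\pi_k^+(\D)=0$, i.e.\ $\pi_k^+\in\Marg_\G(\mu_k^+,\nu_k^+)$, and once (via ``no common atoms'' and Proposition~\ref{pro:pas_atomes_communs}) to upgrade $\mu_k^+\leq_\F\nu_k^+$ to $\mu_k^+\leq_\G\nu_k^+$ and to show $\KK_\F=\KK_\G$. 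Only in the $\G$-setting does Kellerer's rigidity theorem (Lemma~\ref{lem:WGM_implique_SGM}) assert that the unique weakly multiplicative element of $\Marg_\G(\g_1,\g_2)$ is $\KK_\G(\g_1,\g_2)$; there is no analogous uniqueness for $\F$. Without this large-to-strict passage your argument cannot close, so the second half of your proof remains a plan rather than a proof.
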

	This covers the case where one measure is atomic and the other has no atoms. In the general case the conjecture is still open, but we prove the following result.
	
	\begin{introthm}\label{them:valeur_adherence_solutions_penalisees_generales_SWFM_intro}
		Let $(\mu,\nu)$ be a pair of probability measures. If $\pi^*$ is a cluster point of $(\pi_\ee)_{\ee>0}$, then every restriction of $\pi^*$ to a product set contained in $\ens{(x,y) \in \R^2}{x \leq y}$ or $\ens{(x,y) \in \R^2}{x \geq y}$ coincides with a product measure.
	\end{introthm}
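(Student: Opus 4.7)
The plan is to exploit the Schrödinger factorization satisfied by every entropic minimizer. By classical duality for the entropic optimal transport problem (see, e.g., \cite{nutz_introduction_nodate}), there exist measurable potentials $\phi_\ee, \psi_\ee : \R \to [-\infty, +\infty)$ such that
\begin{equation*}
\frac{\ddd \pi_\ee}{\ddd(\mu \otimes \nu)}(x,y) \;=\; \exp\!\left(\frac{\phi_\ee(x) + \psi_\ee(y) - |y - x|}{\ee}\right).
\end{equation*}
On $\F = \{x \le y\}$ the cost is separated ($|y-x| = y - x$), so the density factorizes as $f_\ee(x)\,g_\ee(y)$ with $f_\ee(x) = e^{(\phi_\ee(x)+x)/\ee}$ and $g_\ee(y) = e^{(\psi_\ee(y)-y)/\ee}$. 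As a direct consequence, for any product set $R = A \times B \subset \F$ and any Borel subsets $A_1, A_2 \subset A$ and $B_1, B_2 \subset B$, the pre-limit multiplicative identity
\begin{equation*}
\pi_\ee(A_1 \times B_1) \, \pi_\ee(A_2 \times B_2) \;=\; \pi_\ee(A_1 \times B_2) \, \pi_\ee(A_2 \times B_1) \qquad (\star)
\end{equation*}
holds, since each $\pi_\ee(A_i \times B_j)$ equals $\bigl(\int_{A_i} f_\ee \, \ddd\mu\bigr)\bigl(\int_{B_j} g_\ee \, \ddd\nu\bigr)$. The symmetric statement holds on $\ti\F$ by exchanging the roles of $x$ and $y$.

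Let $\pi^*$ be a cluster point of $(\pi_\ee)_{\ee>0}$, realized as the weak limit of some sequence $(\pi_{\ee_n})_n$ with $\ee_n \to 0^+$, and fix $R = A \times B \subset \F$. The next step is to transfer $(\star)$ to $\pi^*$ for a rich family of sets. Let $N \subset \R$ be the at most countable set collecting all atoms of the two marginals of $\pi^*$. For every pair of open intervals $I, J$ with endpoints outside $N$, the boundary $\partial(I \times J)$ is contained in four horizontal or vertical lines of zero $\pi^*$-mass, so $I \times J$ is a $\pi^*$-continuity set; the Portmanteau theorem yields $\pi_{\ee_n}(I \times J) \to \pi^*(I \times J)$. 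Applying $(\star)$ with $A_i = I_i$, $B_j = J_j$ whenever $I_i \times J_j \subset R$ and passing to the limit gives
\begin{equation*}
\pi^*(I_1 \times J_1) \, \pi^*(I_2 \times J_2) \;=\; \pi^*(I_1 \times J_2) \, \pi^*(I_2 \times J_1).
\end{equation*}

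To conclude, I would argue that this identity forces $\pi^* \res R$ to be a product measure. Denote by $\mu^*$ and $\nu^*$ the two marginals of $\pi^* \res R$. By inner regularity of the Radon measure $\pi^*$ on $\R^2$, one may choose increasing sequences $(I^n)_n, (J^n)_n$ of continuity intervals with $\pi^*\bigl(R \setminus (I^n \times J^n)\bigr) \to 0$. Setting $I_2 = I^n$, $J_2 = J^n$ in the limiting identity and letting $n \to \infty$ yields
\begin{equation*}
\pi^*(R) \, \pi^*(I_1 \times J_1) \;=\; \mu^*(I_1) \, \nu^*(J_1)
\end{equation*}
for all continuity intervals $I_1, J_1$ with $I_1 \times J_1 \subset R$. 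A standard monotone class argument (the continuity intervals form a $\pi$-system generating the Borel $\sigma$-algebra on each factor) extends this equality to all Borel rectangles $U \times V \subset R$, which proves $\pi^* \res R = \pi^*(R)^{-1} \mu^* \otimes \nu^*$ (the case $\pi^*(R) = 0$ being trivial). The argument on $\ti\F$ is symmetric. The main obstacle I foresee is precisely this concluding approximation step when $A$ or $B$ has intricate Borel structure, where one must carefully choose continuity intervals exhausting the projections of $R$; this is ultimately a routine consequence of inner regularity, but it is the part of the argument least transparent from the algebraic identity $(\star)$.
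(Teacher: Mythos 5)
Your approach is the same as the paper's in spirit: use the Schr\"odinger factorization to show that each $\pi_\ee$ is a product measure on any rectangle contained in $\F$ (resp.~$\ti\F$), transfer the multiplicativity identity $(\star)$ to the limit, and rebuild the product structure. The paper organizes this as a characterization of weak multiplicativity (Proposition~\ref{pro:caracterisation_SWFM}, citing Kellerer) followed by a stability lemma (Proposition~\ref{cor:stabilite_SWFM}) via a Portmanteau-type result for transport plans with fixed marginals (Lemma~\ref{lem:weak_con_marg}, from Boubel--Juillet) that gives convergence on \emph{all} intervals and dispenses with the continuity-point bookkeeping you perform.

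There is, however, a genuine gap in your concluding step. You pass to the limit only for continuity rectangles $I_1\times J_1\subset R$ and then try to exhaust $R = A\times B$ by interval rectangles $I^n\times J^n$. For arbitrary Borel $A,B$ this breaks down at both ends: first, $A$ may contain no non-degenerate interval at all (e.g.~a Cantor set or a countable set of atoms), so the family of interval rectangles inside $R$ can be empty and carries no information; second, any increasing interval sequences $(I^n),(J^n)$ converge to intervals $I^\infty,J^\infty$ and one gets $\pi^*(I^n\times J^n)\to\pi^*(I^\infty\times J^\infty)$, which is in general strictly larger than $\pi^*(R)$, while $\pi^*(I_1\times J^n)\to\pi^*(I_1\times J^\infty)\neq\mu^*(I_1)$ because mass may sit on $(I_1\setminus A)\times J^\infty$ or $(A\cap I_1)\times(J^\infty\setminus B)$. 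Inner regularity does not repair this: the objects you need to approximate by intervals are not $A$ and $B$. The correct (and easy) fix is a reduction you never make explicit: since $A\times B\subset\F$ forces $\sup A\le\inf B$, pick $t$ in between and note $A\times B\subset\,]-\infty,t]\times[t,+\infty[$. Your limiting argument proves cleanly that $\pi^*\res(]-\infty,t]\times[t,+\infty[)$ is a product measure (here the exhausting sequences of continuity intervals really do converge to the full half-lines), and then $\pi^*\res(A\times B)$ is the restriction of that product measure to a rectangle, hence trivially again a product measure. This reduction is precisely what Proposition~\ref{pro:caracterisation_SWFM} encapsulates. With that paragraph inserted, your argument is complete and is a valid, slightly more hands-on variant of the paper's.
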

	
	This property, known as weak multiplicativity was introduced by Kellerer \cite{kellerer_order_1986}, and its ``strict version" has proven useful in studying the Markov property of real-valued point processes \cite{kellerer_markov_1987}.
	
	\paragraph{Decomposition and regularized problem in higher dimension}
	
	In the Monge–Kantorovich problem on $\R^n$ with cost given by the Euclidean norm, a classical decomposition strategy due to Sudakov \cite{sudakov_geometric_1979} has proven instrumental to establish the existence of an optimal transport map. The core idea of this decomposition is to disintegrate the measures $\mu$ and $\nu$ into families $(\mu_\s)_\s$ and $(\nu_\s)_\s$ of measures concentrated on one-dimensional ``transport rays". This allows one to consider, on each transport ray $\s$, a solution $\pi_\s$ to the one-dimensional transport from $\mu_\s$ to $\nu_\s$. The delicate step consists in gluing the $\pi_\s$ together into an optimal transport plan from $\mu$ to $\nu$. Assuming $\mu \ll \LL^n$, if each $\pi_\s$ is chosen as the monotone transport between $\mu_\s$ and $\nu_\s$, this gluing procedure is well-defined and yields an optimal transport \emph{map}. For further details on this decomposition, we refer the reader to the lecture notes by Ambrosio \cite{ambrosio_lecture_2003} as well as to the textbooks \cite[Sections 3.1.3 and 3.1.4 ]{santambrogio_optimal_2015} and \cite[Chapter 18]{maggi_optimal_2023} (which contains an extensive bibliographical note on the historical development of this decomposition). 
	The idea of decomposing $\mu$ and $\nu$ along maximal transport rays has proven itself essential in addressing the selection problem arising in the study of  $L^1(\R^d)$ entropic optimal transport for the Euclidean distance cost $c(x,y) = \n{y-x}{}$ with $d \geq 2$. According to a recent preprint of Aryan and Ghosal \cite{aryan_entropic_2025}, if $X$ and $Y$ are compact subsets of $\R^d$ such that $d(X,Y) = \inf \left( \ens{\n{y-x}{}}{x \in X, y \in Y} \right) >0$, and  $\mu$ and $\nu$ are probability measures on $X$ and $Y$ with smooth densities with respect to the Lebesgue measure, then the family $(\pi_\ee)_{\ee>0}$ converges to an optimal transport plan $\pi^{\text{opt}}$. Moreover, $\R^d \times \R^d$ admits a decomposition into maximal transport rays $\s$, with respect to which the optimal coupling $\pi^{\text{opt}}$ admits the following variational characterization. For all $\s$, there exists $c_\sigma \in \R$ such that the components \( \pi^{\text{opt}}_{\sigma} \) of \( \pi^{\text{opt}} \) along $\sigma$ minimizes 
	\begin{equation}\label{eq:EOT_sup}
		\inf_{\pi_\s \in \Marg(\mu_\s,\nu_\s)} \int_{\R \times \R} \left(c_\sigma |y-x| - \frac{d-1}{2} \log(2\pi |y-x|)\right)\ddd \pi_\s(x,y) + \Ent(\pi_\s|\mu_\sigma \otimes \nu_\sigma).
	\end{equation}
	
	Observe that, contrary to what one might expect by looking at our one-dimensional result, \( \pi^{\text{opt}}_{\sigma} \) does \emph{not} minimize \( \Ent( \cdot | \mu_\sigma \otimes \nu_\sigma) \) over \( \OO(\mu_\sigma, \nu_\sigma) \). This surprising fact --- that \( \pi^{\text{opt}}_{\sigma} \) actually solves~\eqref{eq:EOT_sup} rather than minimizing \( \Ent(\cdot | \mu_\sigma \otimes \nu_\sigma) \) --- was first observed by Marcel Nutz and Chenyang Zhong, prior to~\cite{aryan_entropic_2025} (see the video~\cite{nutz2023entropic}). A paper is in preparation.

	\subsection{Organization of the paper}
	
	In Section \ref{sec:decomposition}, we focus on the decomposition theorem for optimal transport plans. We begin by presenting the result of Di Marino and Louet (Proposition~\ref{pro:structure_result_DL}) in the case of atomless marginals. This serves both to build intuition for the reader and to introduce the tools that will be used throughout the paper. After recalling the notions of cyclically monotone transport plans (Definition~\ref{defi:cycl_mon}) and barrier points (Definition~\ref{defi:barrier_points}), we provide a concise alternative proof of their result. The argument relies on the fact that the points where $F_\mu^+ = F_\nu^+$ are barrier points (Proposition~\ref{pro:barier_point_continuous_inclusion}), together with a characterization of optimal transport plans for stochastically ordered marginals (Proposition~\ref{pro:ordre_stochastique}). Building on these ingredients, we obtain our decomposition theorem for atomless marginals (Proposition~\ref{them:deco_cas_sans_atomes}). We then extend this result to arbitrary marginals, including those with atoms. The first step is a technical lemma showing that the relative position of the cumulative distribution functions constrains the regions where optimal transport plans may concentrate (Proposition~\ref{pro:barrier_points}). We next introduce the components of the real line (Definition~\ref{defi:compo_real_line}), of the marginals (Definition~\ref{def:component_marginals}), and of an optimal transport plan (Definition~\ref{defi:composantes}). We subsequently prove that the components of an optimal transport plan have the appropriate marginals (Proposition~\ref{pro:transport_fitting_marg}). After introducing the reinforced stochastic order (Definition~\ref{def:Kellerer_order_large}), we show that any two distinct marginal components are ordered w.r.t. this order (Proposition~\ref{pro:prop_deco}). These results culminate in our decomposition theorem for $\OO(\mu,\nu)$ (Theorem~\ref{pro:decomposition}). Finally, we discuss several applications of the decomposition. After recalling the Strassen-type theorem for $\leq_\F$, we define candidate $\KK(\mu,\nu)$ (Definition~\ref{defi:gen_kell_transport_plan}). We then provide a characterization of barrier points in terms of cumulative distribution functions (Proposition~\ref{pro:caracterization_barrier}), and conclude by proving that a transport plan is optimal if and only if it is supported on a family of transport paths that do not ``freely'' cross (Proposition~\ref{pro:swapping_lemma}).

	In Section~\ref{sec:convergence_entro}, we study the behaviour of $(\pi_\ee)_{\ee>0}$ as $\ee \to 0^+$. We first use an adaptation of a result of Di Marino and Louet (Lemma~\ref{lem:minimum_kell_ord}), together with the fact that the relative entropy of an optimal transport plan equals the sum of the entropies of its components (Proposition~\ref{pro:equi_reduction}), to establish that $\KK(\mu,\nu)$ minimizes the relative entropy among all optimal transport plans (Theorem~\ref{them:kellerer_minimizes}). This immediately implies that, whenever there exists an optimal transport plan with finite entropy, the family $(\pi_\ee)_{\ee>0}$ converges to $\KK(\mu,\nu).$ We then introduce the notion of (large) weak multiplicativity (Definition~\ref{defi:weak_multiplicativity}) and prove that this property is stable under weak convergence of transport plans (Proposition~\ref{cor:stabilite_SWFM}). As a consequence, every cluster point of $(\pi_\ee)_{\ee>0}$ is weakly multiplicative (Theorem~\ref{them:valeur_adherence_solutions_penalisees_generales_SWFM}). Finally, after introducing strict versions of the reinforced stochastic order, of weak multiplicativity, and of strong multiplicativity, we prove that if the non-equal pairs of components of $(\mu,\nu)$ are mutually singular, then $\pi_\ee$ converges to $\KK(\mu,\nu)$ as $\ee \to 0^+$. This result in particular covers the case of semi-discrete pairs of measures.

	\subsection{General notation}\label{subsection:not}

	\begin{enumerate}
		\item For all $(a,b) \in \Z^2$, we define $\ent{a}{b} = [a,b] \cap \Z$.
		\item Any Polish space \(E\) is equipped with its Borel \(\sigma\)-algebra \(\BB(E)\). We denote by $\PP(E)$, $\MM_+(E)$, and $\MM_+^\s(E)$ the set of probability measures, finite positive measures, and $\sigma$-finite positive measures on $(E,\BB(E))$, respectively. For all $\g \in \MM_+^\s(E)$ and $A \in \BB(E)$, we define the restriction of $\g$ to $A$ as $\g_{\res A} : B \in \BB(E) \mapsto \g(A \cap B)$. We will say that $\g$ is concentrated on $A \in \BB(E)$ if $\g(A^c) = 0$. We will often need to consider the support of the measure, denoted as $\spt(\g)$ of a measures $\g$. Recall that $\spt(\g)$ is a closed set, $\g$ is concentrated on $\spt(\g)$, and every neighbourhood of an element belonging to the support has positive measure. The pushforward notation will often be needed: if $E'$ is another Polish space, $f : E \to E'$ is measurable and $\g \in \MM_+^\s(E)$, we denote by $f_{\#} \g$ the pushforward measure of $\g$ by $f$, defined by $f_{\#} \g : B \in \BB(E') \mapsto \g(f^{-1}(B))$. 
		\item\label{point:atoms_not} In the case where $E=\R$, we introduce specific notation. We begin with extremal point of the support: define  $s_\g = \inf(\spt(\g))$ and $S_\g = \sup(\spt(\g)) \in [-\infty,+\infty]$ (the letter ``s'' stands for support). Next, let $F_\g^+ : t \in \R \mapsto \g(]-\infty,t]) \in \R^+ $ and $F_\g^- : t \in \R \mapsto \g(]-\infty,t[) \in \R^+$ denote the cumulative distribution functions of $\g$, respectively. Recall that $F_\g^-$ is left-continuous, $F_\g^+$ is right-continuous and $F_\g^- \leq F_\g^+$. Finally, we denote by $\At(\g) = \ens{x \in \R}{\g(\{x\}) > 0}$ the set of atoms of $\g$: we shall say that $\g$ is atomless when $\At(\g) = \emptyset.$
		\item The following notation concern specific spaces of measures. We define $\MM^2_+$ as $\MM^2_+ = \{(\mu,\nu) \in \MM_+(\R)^2~;~\mu(\R) = \nu(\R) > 0\}$: observe that $\MM_+^2$ is a proper subset of $\MM_+(\R)^2$. We also define $\MM_1(\R) = \ens{\mu \in \MM_+(\R)}{\int_{\R} |x| \dd\mu(x)<+\infty}$. Given two measures $\mu, \nu \in \MM_+(\R)$, we define the set of transport plans from $\mu$ to $\nu$ by  $\Marg(\mu, \nu) = \ens{\pi \in \MM_+(\R^2)}{{p_1}_{\#} \pi = \mu \text{ and } {p_2}_{\#} \pi = \nu}$, where $p_1 :(x,y) \in \R^2 \mapsto x \in \R$, $p_2 : (x,y) \in \R^2 \mapsto y\in \R$ stand for the projections from $\R^2$ to $\R$. Recall that the cost function for transport plans $J : \Marg(\mu,\nu) \to \R_+$ associated to the distance is defined by $J(\pi) = \int_{\R^2} |x-y| \dd \pi(x,y)$:  its minimum, also  known as the $1$-Wasserstein distance between $\mu$ and $\nu$, is denoted by $W_1(\mu,\nu) = \min(J)$. The set $\OO(\mu,\nu)$ of optimal transport plans from $\mu$ to $\nu$ is then defined as the set of minimizers of $J$, that is, $\OO(\mu,\nu) = \ens{\pi \in \Marg(\mu,\nu)}{J(\pi) = W_1(\mu,\nu)}$. Note that $(\mu,\nu) \in \MM_1(\R) \times \MM_1(\R)$ implies that $W_1(\mu,\nu) < + \infty$ and $W_1(\mu,\nu) < + \infty$ implies $(\mu,\nu) \in \MM_+^2$. We denote by $\CCC(\mu,\nu)$ the set of cyclically monotone transport plan for $\mu$ to $\nu$ (see Definition \ref{defi:cycl_mon}).
		\item The following notation for half-planes, adopted from Kellerer, will be useful: we define $\F = \ens{(x,y) \in \R^2}{x \leq y}$, $\ti{\F} = \ens{(x,y) \in \R^2}{y \leq x}$, $\G = \{(x,y) \in \R^2~;~x<y\}$,  $\ti{\G} = \{(x,y) \in \R^2~;~x>y\}$ and $\D = \ens{(x,x)}{x \in \R}$. Given $\H \in \BB(\R^2)$, we define  $\Marg_\H(\mu,\nu) = \{\pi \in \Marg(\mu,\nu)~;~\pi(\H^c) = 0\}.$
	\end{enumerate}

	\section{The decomposition result.}\label{sec:decomposition}
	
	\subsection{The structure result of Di Marino--Louet}\label{subsection:continuous_decomposition}
	
	Before presenting our decomposition, we examine the result by Simone Di Marino and Jean Louet regarding the structure of optimal transport plans, when $(\mu,\nu) \in \PP(\R)^2$ is a pair of atomless marginals \cite[Proposition $3.1$]{di_marino_entropic_2018}. Although their result is not originally presented as a decomposition of the space $\OO(\mu,\nu)$ in direct sums of subspaces, we shall see that the proof of this result leads to our decomposition result (for atomless measures). When $\mu$ and $\nu$ are atomless, the functions $F_\mu^+$ and $F_\nu^+$ are continuous; therefore, the sets $\{F_\mu^+ > F_\nu^+\} := \ens{x \in \R}{F_\mu^+(x) > F_\nu^+(x)}$ and $\{F_\mu^+ < F_\nu^+\} := \ens{x \in \R}{F_\mu^+(x) < F_\nu^+(x)}$ are open and consists of a countable family of open, connected components. Let $(]a_k^+,b_k^+[)_{k \in \KK^+}$ and $(]a_k^-,b_k^-[)_{k \in \KK^-}$ denote the connected components of $\{F_\mu^+ > F_\nu^+\}$ and $\{F_\mu^+ < F_\nu^+\}$, respectively. Using our notation, the result by Di Marino and Louet can be stated as follows.
	\begin{pro}[Structure result by Di Marino--Louet]\label{pro:structure_result_DL}
		Assume that $\mu$ and $\nu$ in $\PP(\R)$ are atomless and have compact support. For all $\pi \in \OO(\mu,\nu)$, there exists a set $S_\pi \in \BB(\R^2)$ such that $\pi(S_\pi) = 1$ and:
		\begin{enumerate}
			\item\label{Point:structure_fixe} $[S_\pi \cap (\{F_\mu^+ = F_\nu^+\} \times \R)] \subset \D$;
			\item\label{Point:structure_avant} For all $k \in \KK^+$, $[S_\pi \cap ( ]a_k^+,b_k^+[ \times \R)] \subset \F \cap (\R \times ]a_k^+,b_k^+[)$;
			\item\label{Point:structure_arriere} For all $k \in \KK^-$, $[S_\pi \cap (]a_k^-,b_k^-[ \times \R)] \subset \ti{\F} \cap (\R \times ]a_k^-,b_k^-[)$.
		\end{enumerate}
	\end{pro}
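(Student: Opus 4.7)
The proof rests on Proposition~\ref{pro:barier_point_continuous_inclusion} --- every point of $E := \{F_\mu^+ = F_\nu^+\}$ is a barrier point --- and on Proposition~\ref{pro:ordre_stochastique} --- optimal plans between stochastically ordered marginals concentrate on $\F$ (or $\ti{\F}$). Throughout, I would fix $\pi \in \OO(\mu,\nu)$ and abbreviate $I_k^\pm := ]a_k^\pm, b_k^\pm[$. Since $\mu,\nu$ are atomless, $F_\mu^+$ and $F_\nu^+$ are continuous, so $E$ is closed and the endpoints $a_k^\pm, b_k^\pm$ all lie in $E$; the compact support hypothesis makes them finite.

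The first step is a confinement argument. The barrier property at $a_k^\pm$ and $b_k^\pm$, combined with $\nu(\{a_k^\pm\}) = \nu(\{b_k^\pm\}) = 0$ (and symmetrically for $\mu$), yields
\[
	\pi\bigl( I_k^\pm \times (I_k^\pm)^c \bigr) = 0 = \pi\bigl( (I_k^\pm)^c \times I_k^\pm \bigr).
\]
Consequently $\pi$ decomposes as $\pi_{\res E \times E} + \sum_{k \in \KK^+} \pi_{\res I_k^+ \times I_k^+} + \sum_{k \in \KK^-} \pi_{\res I_k^- \times I_k^-}$, each sub-block having marginals $\mu_k^\pm := \mu_{\res I_k^\pm}$ and $\nu_k^\pm := \nu_{\res I_k^\pm}$.

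The second and most delicate step is the identity $\mu_{\res E} = \nu_{\res E}$. To compute $F_{\mu_{\res E}}^+(x) = F_\mu^+(x) - \mu(E^c \cap ]-\infty,x])$, I would split $E^c \cap ]-\infty,x]$ into components lying entirely to the left of $x$ and the at most one component straddling $x$. A left-of-$x$ component $I_k^\pm$ contributes $F_\mu^+(b_k^\pm) - F_\mu^+(a_k^\pm) = F_\nu^+(b_k^\pm) - F_\nu^+(a_k^\pm)$ (the same for $\mu$ and $\nu$, since $a_k^\pm, b_k^\pm \in E$), while the straddling component contributes $F_\mu^+(x) - F_\mu^+(a_k^\pm)$ to $\mu$ and $F_\nu^+(x) - F_\nu^+(a_k^\pm)$ to $\nu$, their difference being exactly $F_\mu^+(x) - F_\nu^+(x)$. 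Summing, $F_{\mu_{\res E}}^+ - F_{\nu_{\res E}}^+ \equiv 0$, so $\mu_{\res E} = \nu_{\res E} =: \rho$ and $\pi_{\res E \times E} \in \Marg(\rho, \rho)$.

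To conclude, I would pick $\pi_k^{\pm,*} \in \OO(\mu_k^\pm, \nu_k^\pm)$ and compare $\pi$ against the competitor $(\id, \id)_\# \rho + \sum_{k \in \KK^+} \pi_k^{+,*} + \sum_{k \in \KK^-} \pi_k^{-,*} \in \Marg(\mu,\nu)$, which gives $W_1(\mu,\nu) \leq \sum_{k \in \KK^+} W_1(\mu_k^+, \nu_k^+) + \sum_{k \in \KK^-} W_1(\mu_k^-, \nu_k^-)$; the decomposition of $\pi$ together with $J(\pi_{\res E \times E}) \geq 0$ gives the reverse inequality, forcing equality throughout. In particular $J(\pi_{\res E \times E}) = 0$ pins down $\pi_{\res E \times E}$ as the diagonal coupling, yielding Point~\ref{Point:structure_fixe}, and each $\pi_{\res I_k^\pm \times I_k^\pm}$ is optimal for $(\mu_k^\pm, \nu_k^\pm)$. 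On $I_k^+$ the strict inequality $F_\mu^+ > F_\nu^+$ amounts to $\mu_k^+ \leq_{\st} \nu_k^+$, so Proposition~\ref{pro:ordre_stochastique} concentrates $\pi_{\res I_k^+ \times I_k^+}$ on $\F$, giving Point~\ref{Point:structure_avant}; the symmetric statement on $I_k^-$ yields Point~\ref{Point:structure_arriere}. I expect the identity $\mu_{\res E} = \nu_{\res E}$ to be the main obstacle: because $E$ may have empty interior, one cannot argue by ``differentiating'' $F_\mu^+ = F_\nu^+$ on it, and the bookkeeping must instead be carried out component by component through the equality of $F_\mu^+$ and $F_\nu^+$ at the endpoints.
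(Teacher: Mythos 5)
Your proof is correct, and the high-level architecture matches the paper's: barrier points along $\{F_\mu^+ = F_\nu^+\}$ confine the plan, then the stochastic-order characterization (Proposition~\ref{pro:ordre_stochastique}) pins down the half-plane on each component. But you take a genuinely different route on two sub-steps, which is worth comparing. To show each block $\pi_{\res I_k^\pm \times I_k^\pm}$ is optimal for its own marginals, the paper invokes the sub-measure optimality principle (Remark~\ref{remq:cycl_res}) directly, whereas you build a competitor plan and run a chain of Wasserstein inequalities until equality is forced; yours is more self-contained at the cost of first establishing $\mu_{\res E} = \nu_{\res E}$. More interestingly, for Point~\ref{Point:structure_fixe} the paper argues topologically on the support: if $(x,y)\in S_\pi$ with $x\in E\setminus B$ and $x<y$, then $]x,y[$ cannot lie entirely in $\{F_\mu^+<F_\nu^+\}$ or $\{F_\mu^+>F_\nu^+\}$ (else $x$ would be a boundary point), so by continuity $]x,y[$ meets $E$, contradicting the barrier property. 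You instead first prove $\mu_{\res E} = \nu_{\res E}$ by the component-by-component CDF bookkeeping, then read off diagonal concentration from $J(\pi_{\res E\times E}) = 0$. The logical dependency is thereby reversed: in the paper, $\mu^= = \nu^=$ appears as a corollary of the structure result (proved at the start of Theorem~\ref{them:deco_cas_sans_atomes}), while you establish it first and feed it into the proof. Your instinct that the mass-matching identity is the delicate step is sound --- the paper's topological argument is precisely what lets it avoid confronting that identity at this stage.
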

	
	Interpreting a transport plan $\pi \in \Marg(\mu,\nu)$ as a mass displacement from the distribution $\mu$ to the distribution $\nu$, a point $(x,y)$ represents a transport path and $\pi(x,y)$ the amount of mass displaced from $x$ to $y$. Observe that $\F$ represents forward transports, $\ti{\F}$ represents backward transports, and $\D$ corresponds to fixed mass. From this perspective, Point \ref{Point:structure_fixe} of Proposition \ref{pro:structure_result_DL} states that the mass on the set $\{F_\mu^+ = F_\nu^+\}$ remains fixed under any optimal transport for the distance cost. Point \ref{Point:structure_avant} (resp. Point \ref{Point:structure_arriere}) of Proposition \ref{pro:structure_result_DL} states that for an optimal displacement, the mass of $\mu(dx)$ on a connected component  of $\{F_\mu^+ > F_\nu^+ \}$ (resp. $\{F_\mu^+ < F_\nu^+ \}$) remains within the same component and moves forward (resp. backward). Interpreting a transport as a measure on the plan $\R^2$, Proposition \ref{pro:structure_result_DL} can be viewed as an assertion about the concentration region of elements of $\OO(\mu,\nu).$ Specifically, every element of $\OO(\mu,\nu)$ is concentrated on the following disjoint union of triangles and diagonal parts: $$\biguplus_{k \in \KK^+} (\F \cap ]a_k^+,b_k^+[^2) \biguplus \biguplus_{k \in \KK^-} (\ti{\F} \cap ]a_k^-,b_k^-[^2) \biguplus \left(\D \cap \{F_\mu^+ = F_\nu^+\}^2 \right).$$ 
	\begin{ex}\label{ex:unif_DL}
		Define $\mu = \1_{]0,1[} \cdot \LL^1$ and $\nu = \left(2 \1_{]1/8,1/4[} + 2 \1_{]3/8,1/2[} +  \1_{]1/2,3/4[} + 2 \1_{]3/4,7/8[}\right) \cdot \LL^1.$ We have $\{F_\mu^+ > F_\nu^+\} = ]0,1/4[ \cup ]1/4,1/2[$, $\{F_\mu^+ < F_\nu^+\} = ]3/4,1[$ and $\{F_\mu^+ = F_\nu^+\} = [1/2, 3/4].$ According to Proposition \ref{pro:structure_result_DL}, for every optimal displacement from $\mu$ to $\nu$ the mass moves as illustrated in Figure \ref{fig:arrow_cdf}  and is concentrated in the orange region shown in Figure \ref{fig:area_of_con}. 
	\end{ex}
		
	\begin{figure}[h!]
		\begin{center}
			\def\svgwidth{10cm}
			%% Creator: Inkscape 1.2.1 (9c6d41e410, 2022-07-14), www.inkscape.org
%% PDF/EPS/PS + LaTeX output extension by Johan Engelen, 2010
%% Accompanies image file '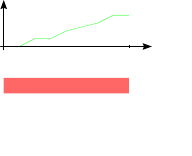' (pdf, eps, ps)
%%
%% To include the image in your LaTeX document, write
%%   \input{<filename>.pdf_tex}
%%  instead of
%%   \includegraphics{<filename>.pdf}
%% To scale the image, write
%%   \def\svgwidth{<desired width>}
%%   \input{<filename>.pdf_tex}
%%  instead of
%%   \includegraphics[width=<desired width>]{<filename>.pdf}
%%
%% Images with a different path to the parent latex file can
%% be accessed with the `import' package (which may need to be
%% installed) using
%%   \usepackage{import}
%% in the preamble, and then including the image with
%%   \import{<path to file>}{<filename>.pdf_tex}
%% Alternatively, one can specify
%%   \graphicspath{{<path to file>/}}
%% 
%% For more information, please see info/svg-inkscape on CTAN:
%%   http://tug.ctan.org/tex-archive/info/svg-inkscape
%%
\begingroup%
  \makeatletter%
  \providecommand\color[2][]{%
    \errmessage{(Inkscape) Color is used for the text in Inkscape, but the package 'color.sty' is not loaded}%
    \renewcommand\color[2][]{}%
  }%
  \providecommand\transparent[1]{%
    \errmessage{(Inkscape) Transparency is used (non-zero) for the text in Inkscape, but the package 'transparent.sty' is not loaded}%
    \renewcommand\transparent[1]{}%
  }%
  \providecommand\rotatebox[2]{#2}%
  \newcommand*\fsize{\dimexpr\f@size pt\relax}%
  \newcommand*\lineheight[1]{\fontsize{\fsize}{#1\fsize}\selectfont}%
  \ifx\svgwidth\undefined%
    \setlength{\unitlength}{92.079755bp}%
    \ifx\svgscale\undefined%
      \relax%
    \else%
      \setlength{\unitlength}{\unitlength * \real{\svgscale}}%
    \fi%
  \else%
    \setlength{\unitlength}{\svgwidth}%
  \fi%
  \global\let\svgwidth\undefined%
  \global\let\svgscale\undefined%
  \makeatother%
  \begin{picture}(1,0.81257465)%
    \lineheight{1}%
    \setlength\tabcolsep{0pt}%
    \put(0,0){\includegraphics[width=\unitlength,page=1]{flee.pdf}}%
    \put(0.66210747,0.52326568){\color[rgb]{0,0,0}\makebox(0,0)[lt]{\lineheight{1.25}\smash{\begin{tabular}[t]{l}$1$\end{tabular}}}}%
    \put(0,0){\includegraphics[width=\unitlength,page=2]{flee.pdf}}%
    \put(0.83387993,0.30951448){\color[rgb]{0,0,0}\makebox(0,0)[lt]{\lineheight{1.25}\smash{\begin{tabular}[t]{l}$\mu$\end{tabular}}}}%
    \put(0.8306627,0.15000263){\color[rgb]{0,0,0}\makebox(0,0)[lt]{\lineheight{1.25}\smash{\begin{tabular}[t]{l}$\nu$\\\end{tabular}}}}%
  \end{picture}%
\endgroup%

			\caption{Representation of $F_\mu^+$ (in red) and $ F_\nu^+$ (in green) and mass displacement of elements of $\OO(\mu,\nu)$  }
			\label{fig:arrow_cdf}
		\end{center}
	\end{figure}
	
	\begin{figure}[h]
		\begin{center}
			\def\svgwidth{10cm}
			%% Creator: Inkscape 1.2.1 (9c6d41e410, 2022-07-14), www.inkscape.org
%% PDF/EPS/PS + LaTeX output extension by Johan Engelen, 2010
%% Accompanies image file '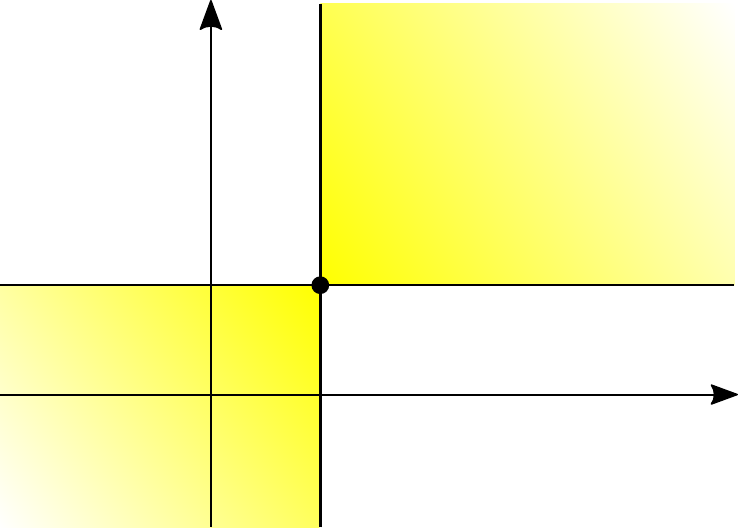' (pdf, eps, ps)
%%
%% To include the image in your LaTeX document, write
%%   \input{<filename>.pdf_tex}
%%  instead of
%%   \includegraphics{<filename>.pdf}
%% To scale the image, write
%%   \def\svgwidth{<desired width>}
%%   \input{<filename>.pdf_tex}
%%  instead of
%%   \includegraphics[width=<desired width>]{<filename>.pdf}
%%
%% Images with a different path to the parent latex file can
%% be accessed with the `import' package (which may need to be
%% installed) using
%%   \usepackage{import}
%% in the preamble, and then including the image with
%%   \import{<path to file>}{<filename>.pdf_tex}
%% Alternatively, one can specify
%%   \graphicspath{{<path to file>/}}
%% 
%% For more information, please see info/svg-inkscape on CTAN:
%%   http://tug.ctan.org/tex-archive/info/svg-inkscape
%%
\begingroup%
  \makeatletter%
  \providecommand\color[2][]{%
    \errmessage{(Inkscape) Color is used for the text in Inkscape, but the package 'color.sty' is not loaded}%
    \renewcommand\color[2][]{}%
  }%
  \providecommand\transparent[1]{%
    \errmessage{(Inkscape) Transparency is used (non-zero) for the text in Inkscape, but the package 'transparent.sty' is not loaded}%
    \renewcommand\transparent[1]{}%
  }%
  \providecommand\rotatebox[2]{#2}%
  \newcommand*\fsize{\dimexpr\f@size pt\relax}%
  \newcommand*\lineheight[1]{\fontsize{\fsize}{#1\fsize}\selectfont}%
  \ifx\svgwidth\undefined%
    \setlength{\unitlength}{354.22359045bp}%
    \ifx\svgscale\undefined%
      \relax%
    \else%
      \setlength{\unitlength}{\unitlength * \real{\svgscale}}%
    \fi%
  \else%
    \setlength{\unitlength}{\svgwidth}%
  \fi%
  \global\let\svgwidth\undefined%
  \global\let\svgscale\undefined%
  \makeatother%
  \begin{picture}(1,0.71466609)%
    \lineheight{1}%
    \setlength\tabcolsep{0pt}%
    \put(0,0){\includegraphics[width=\unitlength,page=1]{barriere_plan.pdf}}%
    \put(0.45549604,0.28799542){\color[rgb]{0,0,0}\makebox(0,0)[lt]{\lineheight{1.25}\smash{\begin{tabular}[t]{l}\tiny{$(x,x)$}\end{tabular}}}}%
  \end{picture}%
\endgroup%

			\caption{Concentration region of optimal transport plans when $x \in \R$ is a barrier point.}
			\label{fig:area_of_con_bar}
		\end{center}
	\end{figure}

	We propose a brief alternative proof of Proposition \ref{pro:structure_result_DL}, which will provide the opportunity to introduce notions used throughout the article. Our proof relies on cyclical monotonicity, and does not use the construction and manipulation of an explicit Kantorovich potential that appears in the proof of Proposition $3.1$ in \cite{di_marino_entropic_2018}. Cyclical monotonicity is a standard tool in optimal transport theory: for further details, we refer to the monograph \cite[Chapter 5]{villani_optimal_2009}.
	
	\begin{defi}\label{defi:cycl_mon}
		A set $\Gamma \subset \R^2$ is said to be cyclically monotone if, for every $n \in \N^*$ and $((x_i,y_i))_{i \in \ent{1}{n}} \in \Gamma^n$, $\sum_{i =1}^n |y_i - x_i| \leq \sum_{i=1}^n | y_{i+1}-x_i|$ (with the convention $y_{n+1} = y_1$). A measure $\pi \in \MM_+(\R^2)$ is said to be cyclically monotone if it is concentrated on a  cyclically monotone set.
	\end{defi}
	
	\begin{nott}
		For any pair $(\g_1,\g_2) \in \MM_+(\R^2)$, define $\CCC(\g_1,\g_2)$ as the set of cyclically monotone measures in $\Marg(\g_1,\g_2).$
	\end{nott}
	
	\begin{remq}
		For any pair $(\g_1,\g_2) \in \MM_+^2$, $\CCC(\g_1,\g_2)$ is non-empty. Indeed, denoting by $G_{\g_1}$ and $G_{\g_2}$ the quantile functions of $\g_1$ and $\g_2$, respectively, the reader may verify that the monotone transport plan $(G_{\g_1},G_{\g_2})_\# \left( \1_{[0,1]} \cdot \LL^1 \right)$ belongs to $\CCC(\g_1,\g_2).$
	\end{remq}
	
	The following result is classical; see \cite[Theorem $5.10$]{villani_optimal_2009} for a proof with more general cost functions.
	
	\begin{them}\label{them:cyclicality_optimality}
		Assume that $(\g_1,\g_2) \in \MM_+(\R)^2$ and $W_1(\g_1,\g_2) < + \infty$. Then $\OO(\g_1,\g_2) = \CCC(\g_1,\g_2).$
	\end{them}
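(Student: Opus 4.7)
The plan is to prove both inclusions $\OO(\g_1,\g_2) \subseteq \CCC(\g_1,\g_2)$ and $\CCC(\g_1,\g_2) \subseteq \OO(\g_1,\g_2)$. The first inclusion follows from a finite cyclic mass-swap and a contradiction argument; the second relies on Rockafellar's construction of a Kantorovich potential together with weak duality.

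For the forward direction, assume by contradiction that $\pi \in \OO(\g_1,\g_2)$ fails to be cyclically monotone. Then there exist $n \geq 1$ and points $(x_i,y_i)_{i \in \ent{1}{n}} \in \spt(\pi)^n$ with $\sum_{i=1}^n |y_{i+1} - x_i| < \sum_{i=1}^n |y_i - x_i|$ (indices taken cyclically, $y_{n+1} := y_1$). Continuity of the cost provides disjoint open product neighbourhoods $U_i \times V_i$ of $(x_i,y_i)$, each of positive $\pi$-mass, on which the strict inequality persists with a uniform gap $\delta > 0$. Writing $\beta_i := \pi_{\res U_i \times V_i} / \pi(U_i \times V_i)$, one considers the perturbation
$$\widetilde{\pi} \;:=\; \pi \;-\; \eta \sum_{i=1}^n \beta_i \;+\; \eta \sum_{i=1}^n p_1(\beta_i) \otimes p_2(\beta_{i+1}),$$
with $\eta > 0$ small enough that $\widetilde{\pi}$ remains a non-negative measure. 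A direct check shows that $\widetilde{\pi} \in \Marg(\g_1,\g_2)$ --- the removed and re-added $x$-marginals coincide and the $y$-marginals balance by cyclic shift --- while $J(\widetilde{\pi}) \leq J(\pi) - \eta \delta < J(\pi)$, contradicting the optimality of $\pi$.

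For the backward direction, fix $\pi \in \CCC(\g_1,\g_2)$ concentrated on a cyclically monotone Borel set $\Gamma$. Applying Rockafellar's construction to $\Gamma$ for the cost $c(x,y) = |y-x|$ produces a $1$-Lipschitz potential $\phi : \R \to \R$ whose $c$-transform $\phi^c(y) := \inf_{x \in \R}(|y-x| - \phi(x))$ satisfies $\phi(x) + \phi^c(y) \leq |y - x|$ everywhere, with equality on $\Gamma$. Weak duality then yields, for any competitor $\pi' \in \Marg(\g_1,\g_2)$,
$$J(\pi) = \int_{\R^2} (\phi + \phi^c)\, \dd\pi = \int_{\R} \phi\, \dd\g_1 + \int_{\R} \phi^c\, \dd\g_2 \leq \int_{\R^2} (\phi + \phi^c)\, \dd\pi' \leq J(\pi'),$$
so $\pi$ is optimal.

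The main technical obstacle is the integrability of $\phi$ and $\phi^c$ against their respective marginals, which is not automatic from $W_1(\g_1,\g_2) < +\infty$ alone. This is handled by the standard workaround (cf.~\cite[Chapter 5]{villani_optimal_2009}): only the sum $\int(\phi + \phi^c)\, \dd(\pi - \pi')$ needs to be controlled, which is arranged by subtracting $\phi(x_0)$, truncating if necessary, and using that the monotone rearrangement always lies in $\CCC(\g_1,\g_2)$ with finite cost under the hypothesis $W_1(\g_1,\g_2) < +\infty$, keeping the duality chain finite throughout.
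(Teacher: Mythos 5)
The paper does not actually prove this statement; it records it as classical and cites Villani, Theorem~5.10 (stated there for general continuous costs). Your proposal fills in the standard textbook argument, which is essentially the one in that reference: optimality $\Rightarrow$ cyclical monotonicity by a localized mass-swap along a bad cycle, and cyclical monotonicity $\Rightarrow$ optimality via a Rockafellar-type potential and weak duality. Both halves are sound in outline. Two remarks on the details.

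In the forward direction, your passage from ``$\pi$ not cyclically monotone'' to ``there exist $(x_i,y_i)_{i}\in\spt(\pi)^n$ violating the cyclic inequality'' uses (tacitly) that, for a continuous cost, the closure of a cyclically monotone set is cyclically monotone, so that the paper's Definition~\ref{defi:cycl_mon} (concentration on \emph{some} cyclically monotone set) is equivalent to cyclical monotonicity of $\spt(\pi)$; this is worth making explicit. Also, the $U_i\times V_i$ cannot always be taken pairwise disjoint, since a bad cycle may repeat a pair $(x_i,y_i)$; one then reuses the same neighbourhood for repeated indices and shrinks $\eta$ accordingly, which leaves both the marginal cancellation and the cost estimate intact.

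In the backward direction you correctly identify the real difficulty: the hypothesis $W_1(\g_1,\g_2)<+\infty$ is strictly weaker than $(\g_1,\g_2)\in\MM_1(\R)^2$, so the $1$-Lipschitz potential $\phi$ (normalized by $\phi(x_0)=0$, with $\phi^c=-\phi$ for the distance cost) need not be in $L^1(\g_1)$ or $L^1(\g_2)$, and the middle equality in your display is not justified as written. Your sketch of the fix is the right idea but under-specified; the clean route is the following. Set $\phi_N=\max(-N,\min(N,\phi))$, which is again $1$-Lipschitz and bounded. On $\{\phi(x)\ge\phi(y)\}$, which contains the cyclically monotone set carrying $\pi$ (there $\phi(x)-\phi(y)=|y-x|\ge 0$), one has $\phi_N(x)-\phi_N(y)=\int_y^x \1_{|t|\le N}\dd t$, which is nonnegative, nondecreasing in $N$, and converges to $\phi(x)-\phi(y)$. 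By monotone convergence $\int(\phi_N(x)-\phi_N(y))\dd\pi\uparrow J(\pi)$. Since $\phi_N$ is bounded, $\int(\phi_N(x)-\phi_N(y))\dd\pi=\int\phi_N\dd\g_1-\int\phi_N\dd\g_2=\int(\phi_N(x)-\phi_N(y))\dd\pi'$ for every $\pi'\in\Marg(\g_1,\g_2)$. Now take any competitor $\pi'$ with $J(\pi')<+\infty$ (at least one exists, e.g.\ the monotone rearrangement, which realizes $W_1(\g_1,\g_2)<+\infty$): dominated convergence, using $|\phi_N(x)-\phi_N(y)|\le|y-x|\in L^1(\pi')$, gives $\int(\phi_N(x)-\phi_N(y))\dd\pi'\to\int(\phi(x)-\phi(y))\dd\pi'\le J(\pi')$. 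Combining, $J(\pi)\le J(\pi')$ for every $\pi'$ of finite cost, and hence for all $\pi'$; this simultaneously shows $J(\pi)<+\infty$ (which was not a priori known) and that $\pi$ is optimal. With this substitution in place of the naive splitting of $\int(\phi+\phi^c)\dd\pi$, your argument is complete.
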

	
	The following remark states that a sub-measure of a cyclically monotone transport plan remains cyclically monotone for its own marginals. We next derive a corresponding result for optimal transport plans. For any measurable space $(E,\TT)$ and any pair $(\g_1,\g_2) \in \MM_+(E)^2$, we write $\g_1 \leq \g_2$ if, for all $A \in \TT$, $\g_1(A) \leq \g_2(A)$. 
	
	\begin{remq}[Cyclicity and optimality of sub-measures]\label{remq:cycl_res}
		\begin{enumerate}
			\item Consider $(\g_1,\g_2) \in \MM_+(\R)^2$, $\pi \in \CCC(\g_1,\g_2)$ and $\pi^* \in \MM(\R^2).$ If $\pi^* \leq \pi$, then $\pi^* \in \CCC({p_1}_\# \pi^*, {p_2}_\# \pi^*).$ Indeed, if $\Gamma$ is a cyclically monotone set on which $\pi$ is concentrated, then $\pi^*$ is also concentrated on $\Gamma.$
			\item Consider $(\g_1,\g_2) \in \MM_+(\R)^2$ such that $W_1(\g_1,\g_2) < + \infty$, $\pi \in \OO(\g_1,\g_2)$, and $\pi^* \in \MM_+(\R^2)$. If $\pi^* \leq \pi,$ then $\pi^* \in \OO({p_1}_\#\pi^*,{p_2}_\# \pi^*).$ Indeed, since 
			\begin{equation*}
				W_1\left({p_1}_\# \pi^*, {p_2}_\#\pi^*\right) \leq \int_{\R^2} |y-x|\dd \pi^* (x,y) \leq \int_{\R^2} |y-x|\dd \pi (x,y) = W_1(\g_1,\g_2)  < + \infty,
			\end{equation*}
			by Theorem \ref{them:cyclicality_optimality}, $\CCC(\g_1,\g_2) = \OO(\g_1,\g_2)$ and $\CCC({p_1}_\#\pi^*,{p_2}_\# \pi^*) = \OO({p_1}_\#\pi^*,{p_2}_\# \pi^*) $. Thus, $\pi^* \in \OO({p_1}_\#\pi^*,{p_2}_\# \pi^*)$ directly follows from the previous point.
		\end{enumerate}
	\end{remq}

	We now introduce the key concept of barrier point, which plays a central role in proving Proposition \ref{pro:structure_result_DL} and is used throughout the remainder of the article.
	
	\begin{defi}[Barrier points]\label{defi:barrier_points}
		For all $x \in \R$, define $\Cd(x) = (]-\infty,x[ \times ]x,+\infty[) \cup (]x,+\infty[ \times ]-\infty,x[).$
		We say that $x \in \R$ is a barrier point for $(\mu,\nu) \in \MM_+^2$ if, for all $\pi \in \CCC(\mu,\nu),$ we have $\pi(\Cd(x)) = 0.$ We denote by $\BB(\mu,\nu)$ the set of barrier points for $(\mu,\nu)$.
	\end{defi}

	Intuitively, a barrier point of $(\mu,\nu)$ is a point through which no optimal transport from $\mu$ to $\nu$ can move mass. Visually, $x \in \BB(\mu,\nu)$ if every $\pi \in \CCC(\mu,\nu)$ is concentrated in the yellow region shown in Figure \ref{fig:area_of_con_bar}. The next result provides a sufficient condition for a point to be a barrier point. Its proof relies on cyclical monotonicity and is postponed to the next subsection, following Proposition \ref{pro:barrier_points}, which presents a more general statement.

	\begin{figure}[h]
		\begin{center}
			\def\svgwidth{10cm}
			%% Creator: Inkscape 1.2.1 (9c6d41e410, 2022-07-14), www.inkscape.org
%% PDF/EPS/PS + LaTeX output extension by Johan Engelen, 2010
%% Accompanies image file '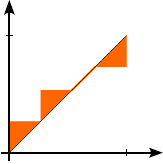' (pdf, eps, ps)
%%
%% To include the image in your LaTeX document, write
%%   \input{<filename>.pdf_tex}
%%  instead of
%%   \includegraphics{<filename>.pdf}
%% To scale the image, write
%%   \def\svgwidth{<desired width>}
%%   \input{<filename>.pdf_tex}
%%  instead of
%%   \includegraphics[width=<desired width>]{<filename>.pdf}
%%
%% Images with a different path to the parent latex file can
%% be accessed with the `import' package (which may need to be
%% installed) using
%%   \usepackage{import}
%% in the preamble, and then including the image with
%%   \import{<path to file>}{<filename>.pdf_tex}
%% Alternatively, one can specify
%%   \graphicspath{{<path to file>/}}
%% 
%% For more information, please see info/svg-inkscape on CTAN:
%%   http://tug.ctan.org/tex-archive/info/svg-inkscape
%%
\begingroup%
  \makeatletter%
  \providecommand\color[2][]{%
    \errmessage{(Inkscape) Color is used for the text in Inkscape, but the package 'color.sty' is not loaded}%
    \renewcommand\color[2][]{}%
  }%
  \providecommand\transparent[1]{%
    \errmessage{(Inkscape) Transparency is used (non-zero) for the text in Inkscape, but the package 'transparent.sty' is not loaded}%
    \renewcommand\transparent[1]{}%
  }%
  \providecommand\rotatebox[2]{#2}%
  \newcommand*\fsize{\dimexpr\f@size pt\relax}%
  \newcommand*\lineheight[1]{\fontsize{\fsize}{#1\fsize}\selectfont}%
  \ifx\svgwidth\undefined%
    \setlength{\unitlength}{78.10747531bp}%
    \ifx\svgscale\undefined%
      \relax%
    \else%
      \setlength{\unitlength}{\unitlength * \real{\svgscale}}%
    \fi%
  \else%
    \setlength{\unitlength}{\svgwidth}%
  \fi%
  \global\let\svgwidth\undefined%
  \global\let\svgscale\undefined%
  \makeatother%
  \begin{picture}(1,1.00570372)%
    \lineheight{1}%
    \setlength\tabcolsep{0pt}%
    \put(0,0){\includegraphics[width=\unitlength,page=1]{aire_conc.pdf}}%
    \put(0.77189454,0.00550037){\color[rgb]{0,0,0}\makebox(0,0)[lt]{\lineheight{1.25}\smash{\begin{tabular}[t]{l}$1$\end{tabular}}}}%
    \put(-0.00587971,0.77367287){\color[rgb]{0,0,0}\makebox(0,0)[lt]{\lineheight{1.25}\smash{\begin{tabular}[t]{l}$1$\end{tabular}}}}%
  \end{picture}%
\endgroup%

			\caption{Illustration of the concentration area of optimal transport plans.}
			\label{fig:area_of_con}
		\end{center}
	\end{figure}

	\begin{pro}\label{pro:barier_point_continuous_inclusion}
		Let $\mu,\nu \in \MM_+(\R)$ satisfy $W_1(\mu,\nu) < + \infty.$ The inclusion $\{F_\mu^+ = F_\nu^+\} \subset \BB(\mu,\nu)$ is satisfied.
	\end{pro}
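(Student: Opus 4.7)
The plan is to argue by contradiction using the $n=2$ case of cyclical monotonicity. Fix $x \in \R$ with $F_\mu^+(x) = F_\nu^+(x)$, take any $\pi \in \CCC(\mu,\nu)$ concentrated on a cyclically monotone set $\Gamma \in \BB(\R^2)$, and suppose toward a contradiction that $\pi(\Cd(x)) > 0$.

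\textbf{Step 1 (extracting points from the marginal identity).} I would first use the marginal constraints on $\pi$: decomposing $\R$ as $]-\infty,x] \sqcup ]x,+\infty[$ on each factor, the equations $\pi(]-\infty,x]\times\R) = F_\mu^+(x)$ and $\pi(\R \times ]-\infty,x]) = F_\nu^+(x)$ together with $F_\mu^+(x) = F_\nu^+(x)$ collapse to
\begin{equation*}
\pi\bigl(]-\infty,x]\times ]x,+\infty[\bigr) = \pi\bigl(]x,+\infty[\times ]-\infty,x]\bigr).
\end{equation*}
Since $\pi(\Cd(x)) > 0$, by symmetry we may assume $\pi(]-\infty,x[\times ]x,+\infty[) > 0$; the identity above then forces $\pi(]x,+\infty[\times ]-\infty,x]) > 0$ as well. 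Because $\pi(\Gamma^c) = 0$, each of these sets meets $\Gamma$ on a set of positive $\pi$-measure, so I can select $(x_1,y_1) \in \Gamma$ with $x_1 < x < y_1$ and $(x_2,y_2) \in \Gamma$ with $x < x_2$ and $y_2 \leq x$.

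\textbf{Step 2 (the 2-cycle inequality).} I would then plug $(x_1,y_1),(x_2,y_2)$ into Definition~\ref{defi:cycl_mon} with $n=2$, which reads $|y_1-x_1| + |y_2-x_2| \leq |y_2-x_1| + |y_1-x_2|$. On the left-hand side, the choice $x_1 < x < y_1$ and $y_2 \leq x < x_2$ gives $|y_1-x_1| = y_1-x_1$ and $|y_2-x_2| = x_2-y_2$. On the right-hand side, the signs of $y_1-x_2$ and $y_2-x_1$ are a priori unknown, so I would branch into the four subcases $y_1 \gtrless x_2$ and $y_2 \gtrless x_1$. A direct algebraic check in each subcase shows that
\begin{equation*}
\bigl[|y_1-x_1|+|y_2-x_2|\bigr] - \bigl[|y_2-x_1|+|y_1-x_2|\bigr] = 2\,\min\bigl(x_2-x_1,\, x_2-y_2,\, y_1-x_1,\, y_1-y_2\bigr) > 0
\end{equation*}
up to a choice of summand matching the subcase, contradicting the 2-cycle inequality.

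\textbf{Step 3 (boundary case and main obstacle).} The subtle point I expect to be the main obstacle is the possibility that $y_2 = x$, which arises because the marginal identity in Step~1 involves the closed interval $]-\infty,x]$ whereas $\Cd(x)$ is defined with strict inequalities (atoms of $\nu$ at $x$ can carry the extra mass). In this boundary case $|y_2-x_2| = x_2 - x$ and $|y_2-x_1| = x - x_1$, and the same branching $y_1 \gtrless x_2$ produces differences equal to $2(x_2-x) > 0$ or $2(y_1-x) > 0$, again violating the 2-cycle inequality. Aggregating the subcases yields the required contradiction and proves $x \in \BB(\mu,\nu)$. The symmetric assumption $\pi(]x,+\infty[\times ]-\infty,x[) > 0$ is handled by swapping the roles of the two selected points, so no additional work is needed.
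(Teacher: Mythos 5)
Your proof is correct and takes essentially the same route as the paper's. The paper establishes Proposition~\ref{pro:barier_point_continuous_inclusion} by first proving the more refined Proposition~\ref{pro:barrier_points} (which treats the four one-sided comparisons $F_\mu^\pm \gtrless F_\nu^\pm$ one quadrant at a time, as that finer resolution is needed elsewhere for the general decomposition with atoms) and then intersecting two of its cases; your argument fuses the two relevant quadrants into a single step by invoking the full equality $F_\mu^+(x)=F_\nu^+(x)$ from the start, which is perfectly adequate for this statement. The marginal identity $\pi(]-\infty,x]\times ]x,+\infty[) = \pi(]x,+\infty[\times ]-\infty,x])$, the selection of two points from the cyclically monotone carrier $\Gamma$, the $n=2$ cyclical-monotonicity violation, and the care about the closed boundary $y_2 \leq x$ versus the open strip in $\Cd(x)$ all match the paper's reasoning. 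The only stylistic difference is in verifying the 2-cycle inequality: the paper avoids your four-way branching by setting $a=\min(x_2,y_1)$, $b=\max(x_2,y_1)$, $c=\min(x_1,y_2)$, $d=\max(x_1,y_2)$, observing $a\leq b\leq x< c\leq d$, and noting $|y_2-x_1|+|y_1-x_2|=(d-c)+(b-a)<(d+c)-(a+b)=|y_2-x_2|+|y_1-x_1|$ in one line; your explicit $2\min(\cdot)>0$ identity is correct but more laborious, and your Step~3 is in fact already subsumed by Step~2 since the minimum remains strictly positive when $y_2=x$.
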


	This proposition is an important step toward the proof of Proposition \ref{pro:structure_result_DL}. To see this, observe that in Example \ref{ex:unif_DL}, $1/4$ and $1/2$ lie in $\{F_\mu^+ = F_\nu^+\}$, and thus both belong to $\BB(\mu,\nu)$. Hence, the mass cannot exit the connected component $]1/4,1/2[$, which is precisely the content of Point \ref{Point:structure_avant} of Proposition \ref{pro:structure_result_DL}. To prove the ``moving forward'' part of Point \ref{Point:structure_avant}, we use the fact that, for measures in the stochastic order, a transport is optimal for the distance cost if and only if all the mass moves forward (see Proposition \ref{pro:ordre_stochastique}).

	\begin{defi}[Stochastic Order]
		We say that $\g_1 \in \MM_+(\R)$ is smaller than $\g_2 \in \MM_+(\R)$ in the stochastic order if, for all  bounded increasing function $f:\R \to \R$, $\int_{\R} f \dd\g_1 \leq \int_{\R} f \dd\g_2$. In this case, we write $\g_1 \leq_{\st} \g_2$.
	\end{defi}

	It is well known that $\g_1 \leq_{\st} \g_2$ if and only if both $\g_1(\R) = \g_2(\R)$ and $F_{\g_1}^+ \geq F_{\g_2}^+.$ We now recall the Strassen-type theorem for the order $\leq_{\st}$, along with the characterization of the set of transport plans for measures in the stochastic order. We refer the reader to \cite[Proposition 4.1]{kellerer_order_1986} for a proof of the first point. The second point will be proved in Subsection \ref{subsection:general_dec}, immediately following Proposition \ref{pro:barrier_points}.
	
	\begin{pro}\label{pro:ordre_stochastique}
		Consider $(\g_1,\g_2) \in \MM_+(\R)^2$.
		\begin{enumerate}
			\item \textbf{Strassen-type theorem for $\leq_{\st}$:}\label{point:Strasse_sto}  $\g_1 \leq_{\st} \g_2 \iff \ \Marg_{\F}(\g_1,\g_2) \neq \emptyset$. 
			\item \textbf{Characterization of optimality:}\label{point:caracterisation_sto} If $\g_1 \leq_{\st} \g_2$, then we have $\CCC(\g_1,\g_2) = \Marg_{\F}(\g_1,\g_2)$. When $W_1(\g_1,\g_2) < + \infty$, we obtain $\OO(\g_1,\g_2) = \Marg_{\F}(\g_1,\g_2)$.
		\end{enumerate}
	\end{pro}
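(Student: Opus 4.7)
The first point is the classical Strassen theorem for the stochastic order recalled in the statement and can simply be cited from Kellerer; the content to establish is the identity $\CCC(\g_1,\g_2) = \Marg_{\F}(\g_1,\g_2)$, with the optimal-transport version then being an immediate consequence of Theorem \ref{them:cyclicality_optimality} once $W_1(\g_1,\g_2) < +\infty$. I would therefore concentrate on proving the two inclusions defining this set-theoretic equality by a direct cyclical-monotonicity argument that does not require finiteness of the cost.

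For the easy inclusion $\Marg_{\F}(\g_1,\g_2) \subset \CCC(\g_1,\g_2)$, I would show that the set $\F$ is itself cyclically monotone: given $((x_i,y_i))_{i \in \ent{1}{n}} \in \F^n$, one has $|y_i - x_i| = y_i - x_i$, so $\sum_{i=1}^n |y_i - x_i| = \sum_{i=1}^n (y_{i+1} - x_i)$, which is bounded above by $\sum_{i=1}^n |y_{i+1} - x_i|$ by the triangle inequality. Hence any $\pi \in \Marg_{\F}(\g_1,\g_2)$ is concentrated on a cyclically monotone set and belongs to $\CCC(\g_1,\g_2)$.

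The hard part is the reverse inclusion $\CCC(\g_1,\g_2) \subset \Marg_{\F}(\g_1,\g_2)$, which I expect to be the main obstacle since we cannot compare costs directly when $W_1 = +\infty$. Arguing by contradiction, I would assume there exists $\pi \in \CCC(\g_1,\g_2)$ with $\pi(\ti{\G}) > 0$, fix a cyclically monotone support $\Gamma$ with $\pi(\Gamma) = \pi(\R^2)$, and pick $(x_1,y_1) \in \Gamma \cap \ti{\G}$, so $y_1 < x_1$. For any $z \in\ ]y_1,x_1[$, combining the marginal identities $\pi(]-\infty,z] \times \R) = F_{\g_1}^+(z)$ and $\pi(\R \times\ ]-\infty,z]) = F_{\g_2}^+(z)$ with the hypothesis $F_{\g_1}^+ \geq F_{\g_2}^+$ gives
\begin{equation*}
\pi(]-\infty,z] \times\ ]z,+\infty[) - \pi(]z,+\infty[ \times\ ]-\infty,z]) = F_{\g_1}^+(z) - F_{\g_2}^+(z) \geq 0.
\end{equation*}
Since $(x_1,y_1)$ witnesses $\pi(]z,+\infty[ \times\ ]-\infty,z]) > 0$, this forces $\pi(]-\infty,z] \times\ ]z,+\infty[) > 0$ as well, so I can pick $(x_2,y_2) \in \Gamma$ with $x_2 \leq z < y_2$. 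Applying cyclical monotonicity to the pair $\{(x_1,y_1),(x_2,y_2)\}$ yields $|y_1 - x_1| + |y_2 - x_2| \leq |y_2 - x_1| + |y_1 - x_2|$; a short case analysis on the relative order of $x_1,y_1,x_2,y_2$ (all of which lie in explicit regions determined by $z$) shows that the left-hand side strictly exceeds the right-hand side, producing the desired contradiction.

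Once this equality $\CCC(\g_1,\g_2) = \Marg_{\F}(\g_1,\g_2)$ is established, the optimality statement is immediate: under $W_1(\g_1,\g_2) < +\infty$, Theorem \ref{them:cyclicality_optimality} gives $\CCC(\g_1,\g_2) = \OO(\g_1,\g_2)$, so $\OO(\g_1,\g_2) = \Marg_{\F}(\g_1,\g_2)$ follows at once.
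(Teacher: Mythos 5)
Your strategy coincides with the paper's: the paper isolates the key computation as Proposition~\ref{pro:barrier_points} (Point~1) and then cites it when proving this proposition, while you inline that same computation; the easy inclusion (showing $\F$ is cyclically monotone) is identical in both. So this is fundamentally the same proof, just reorganized.

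However, the reverse inclusion $\CCC(\g_1,\g_2) \subset \Marg_\F(\g_1,\g_2)$ as you have written it has a quantifier-order gap. You first pick a point $(x_1,y_1) \in \Gamma \cap \ti{\G}$, then an arbitrary threshold $z \in ]y_1,x_1[$, and then claim that $(x_1,y_1)$ ``witnesses'' $\pi(]z,+\infty[ \times ]-\infty,z]) > 0$. A single element of $\Gamma$ lying in a Borel set does not make that set $\pi$-positive: $\Gamma$ is only a carrier of $\pi$, not the topological support, and even replacing $\Gamma$ by $\spt(\pi)$ would only give positivity for \emph{open} neighbourhoods of $(x_1,y_1)$, which $]z,+\infty[ \times ]-\infty,z]$ is not a priori. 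The choices must go the other way: since $\F^c = \bigcup_{r \in \Q} ]r,+\infty[ \times ]-\infty,r]$, the assumption $\pi(\F^c) > 0$ produces a (rational) $z$ with $\pi(]z,+\infty[ \times ]-\infty,z]) > 0$; your marginal identity then forces $\pi(]-\infty,z] \times ]z,+\infty[) > 0$ as well, and only now can you extract $(x_1,y_1)$ and $(x_2,y_2)$ from $\Gamma$ intersected with each of these two sets, each of which carries positive $\pi$-mass and hence meets $\Gamma$. After this reordering your ``short case analysis'' reduces exactly to the $a,b,c,d$ bookkeeping in the paper's proof of Proposition~\ref{pro:barrier_points}, and the argument closes. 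As written, though, the step from one sample point of $\Gamma$ to positive measure is a genuine gap.
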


	We prove now the structure result of Di Marino--Louet. 
	
	\begin{proof}[\textbf{Proof of Proposition \ref{pro:structure_result_DL}}]
		Let $\Cd(x)$ be defined as in Definition \ref{defi:barrier_points}, and define $B = \left(\bigcup_{k \in \KK^+}\{a_k^+,b_k^+\}\right) \cup \left(\bigcup_{k \in \KK^-}\{a_k^-, b_k^-\} \right)$, $S_\pi = \spt(\pi) \cap (B^c \times B^c)$. Since $\mu$, $\nu$ are atomless and $B$ is countable, $\pi(S_\pi) = 1.$ To prove Point \ref{Point:structure_avant}, fix $k \in \KK_+$. Since $a_k^+$ and $b_k^+$ belong to $\{F_\mu^+ = F_\nu^+\}$, by Proposition \ref{pro:barier_point_continuous_inclusion}, it follows that they also belong to $\BB(\mu,\nu)$. Thus $\pi \left(\Cd({a_k^+}) \cup \Cd(b_k^+)\right) =0$, which implies $\spt(\pi) \subset [\Cd({a_k^+}) \cup \Cd(b_k^+)]^c = ]-\infty,a_k^+]^2 \cup [a_k^+,b_k^+]^2 \cup [b_k^+,+\infty[^2,$ and therefore:
		\begin{equation}\label{eq:inclusion_F}
			\begin{cases*}
				S_\pi \cap (]a_k^+,b_k^+[\times \R) \subset ]a_k^+,b_k^+[^2 \subset \R \times ]a_k^+,b_k^+[\\
				S_\pi \cap (\R \times ]a_k^+,b_k^+[) \subset ]a_k^+,b_k^+[^2 \subset  ]a_k^+,b_k^+[ \times \R
			\end{cases*}.
		\end{equation}
		To complete the proof of Point \ref{Point:structure_avant}, it remains to show that $S_\pi \cap (]a_k^+,b_k^+[\times \R)$ is contained in $\F$. Let us define  $\pi_k^+  = \pi_{\res ]a_k^+,b_k^+[^2}$, $\mu_k^+ = \mu_{\res ]a_k^+,b_k^+[}$ and $\nu_k^+ = \nu_{\res ]a_k^+,b_k^+[}.$ As $\pi$ is concentrated on $\spt(\pi)$, the first inclusion of the first line and the second inclusion of the second line of Equation \eqref{eq:inclusion_F} imply that $\pi_k^+ = \pi_{\res  ]a_k^+,b_k^+[\times \R}$. Similarly, the second inclusion of the first line and the first inclusion of the second line of Equation \ref{eq:inclusion_F} yield $\pi_k^+ = \pi_{\res \R \times  ]a_k^+,b_k^+[}.$ Thus, $\pi_k^+ \in \Marg(\mu_k^+,\nu_k^+)$. By Remark \ref{remq:cycl_res}, it follows that $\pi_k^+ \in \OO(\mu_k^+,\nu^+_k)$. Since  $\mu_k^+(\R) = F_{\mu_k^+}(b_k^+) - F_{\mu_k^+}(a_k^+) = F_{\nu_k^+}(b_k^+) - F_{\nu_k^+}(a_k^+) = \nu_k^+(\R)$ and $F_{\mu_k^+}- F_{\nu^+_k} = \1_{]a_k^+,b_k^+[}(F_\mu^+ - F_\nu^+) \geq 0$, we conclude that $\mu_k^+ \leq_\st \nu_k^+$. By Proposition \ref{pro:ordre_stochastique}, we obtain $\pi_k^+(\F^c) = 0$, \ie, $\pi(\F^c \cap (]a_k^+,b_k^+[ \times \R))= 0.$ Thus, $\spt(\pi) \subset \F \cup (]a_k^+,b_k^+[ \times \R)^c$, which implies $S_\pi \cap (]a_k^+,b_k^+[ \times \R) \subset \F$, thereby completing the proof of Point \ref{Point:structure_avant} in Proposition \ref{pro:structure_result_DL}. Since the proof of Point \ref{Point:structure_arriere} is similar, it remains to prove Point \ref{Point:structure_fixe}. Consider $(x,y) \in [S_\pi \cap (\{F_\mu^+ = F_\nu^+\} \times \R)]$ and suppose  $x<y.$ As $x \notin B$, the situations $]x,y[ \subset \{F_\mu^+< F_\nu^+\}$ and $]x,y[ \subset \{F_\mu^+> F_\nu^+\}$ are excluded. Since $F_\mu^+$ and $F_\nu^+$ are continuous, there exists $z \in ]x,y[$ such that $F_\mu^+(z) = F_\nu^+(z).$ Proposition \ref{pro:barier_point_continuous_inclusion} implies $\pi(\Cd(z)) = 0$, whereas $(x,y) \in \Cd(z)$. This contradicts $(x,y) \in \spt(\pi)$, therefore $x \geq y$. Similarly $x \leq y$, which proves $x=y$ and completes the proof of Point \ref{Point:structure_fixe}.
	\end{proof}
	
	\begin{remq}
		Our alternative proof of the statement of Di Marino and Louet requires only $W_1(\mu,\nu) < +\infty$, rather than compact support of the measures. In Theorem \ref{them:deco_cas_sans_atomes}, we complete this result by a decomposition result. We will generalize Theorem \ref{them:deco_cas_sans_atomes} for marginals that may have atoms in Theorem \ref{pro:decomposition} (and $\CCC(\mu,\nu)$ instead of $\OO(\mu,\nu)$). 
	\end{remq}
	
	We fix the notation used in the previous proof concerning the components of the marginals.
	
	\begin{nott}[Components of the marginals]\label{nott:marg_atomless}  For all $k \in \KK^+$ (resp. $k \in \KK^-$), define $\mu_k^+ = \mu_{\res ]a_k^+,b_k^+[}$, $\nu_k^+ = \nu_{\res ]a_k^+,b_k^+[}$ (resp.  $\mu_k^- = \mu_{\res ]a_k^-,b_k^-[}$, $\nu_k^- = \nu_{\res ]a_k^-,b_k^-[}$). Then, define $\mu^= = \mu_{\res \{ F_\mu^+ = F_\nu^+\}}$ and $\nu^= = \nu_{\res \{F_\mu^+ = F_\nu^+\}}$.
	\end{nott}

	\begin{them}\label{them:deco_cas_sans_atomes}
		Consider a pair $(\mu,\nu) \in \PP(\R)^2$ of atomless measures with compact support. Using the notation introduced in Notation \ref{nott:marg_atomless}, we define $\PPP =  \prod_{k \in \KK^+} \OO({\mu}_k^+,{\nu}_k^+) \times \prod_{k \in \KK^-} \OO({\mu}_k^-, {\nu}_k^-) \times \OO({\mu}^=, {\nu}^=).$
		\begin{enumerate}
			\item For all $k \in \KK^+$ (resp. $\KK^-$), we have $\mu_k^+ \leq_{\st} \nu_k^+$ (resp. $\nu_k^- \leq_{\st} \mu_k^-$), and $\mu^= = \nu^=$.
			\item The map $\varphi :  \PPP \to \OO(\mu,\nu)$ defined by 
			$$\varphi((\pi_k^+)_{k \in \KK^+}, (\pi_k^+)_{k \in \KK^+},\pi^=) = \sum_{k \in \KK^+ } \pi_k^+ + \sum_{k \in \KK^- } \pi_k^- + \pi^=$$ is a bijection. Furthermore, its inverse $\varphi^{-1} : \OO(\mu ,\nu) \to \PPP $ is given by 
			\begin{equation}\label{eq:formul_inverse_con}
				\varphi^{-1}(\pi) = \left( \left(\pi_{\res ]a_k^+,b_k^+[^2} \right)_{k \in \KK^+},  \left(\pi_{\res ]a_k^-,b_k^-[^2} \right)_{k \in \KK^-},\pi_{\res \{F_\mu^+ = F_\nu^+\}^2} \right).
			\end{equation}
		\end{enumerate}
	\end{them}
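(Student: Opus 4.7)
The plan is to let Proposition~\ref{pro:structure_result_DL} carry all the structural load: every $\pi \in \OO(\mu,\nu)$ is concentrated on the disjoint union
\[
\left(\biguplus_{k \in \KK^+} ]a_k^+,b_k^+[^2\right) \uplus \left(\biguplus_{k \in \KK^-} ]a_k^-,b_k^-[^2\right) \uplus \bigl(\D \cap \{F_\mu^+=F_\nu^+\}^2\bigr),
\]
so the restrictions to these pieces supply a natural candidate for $\varphi^{-1}$. I will verify that these restrictions land in the announced spaces $\OO(\mu_k^\pm,\nu_k^\pm)$ and $\OO(\mu^=,\nu^=)$, and conversely that pasting an arbitrary family from $\PPP$ yields an optimal transport plan between $\mu$ and $\nu$.

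For Point~1, the inequalities $\mu_k^+ \leq_\st \nu_k^+$ and $\nu_k^- \leq_\st \mu_k^-$ are direct computations on cumulative distribution functions: continuity of $F_\mu^+,F_\nu^+$ forces $\{a_k^+,b_k^+\} \subset \{F_\mu^+=F_\nu^+\}$, which combined with $\mu,\nu$ being atomless yields $\mu_k^+(\R)=\nu_k^+(\R)$ and $F_{\mu_k^+}^+ \geq F_{\nu_k^+}^+$ on $]a_k^+,b_k^+[$. The equality $\mu^==\nu^=$ is more subtle; I fix any $\pi \in \OO(\mu,\nu)$ (non-empty since marginals are compactly supported) and invoke Point~\ref{Point:structure_fixe} of Proposition~\ref{pro:structure_result_DL} together with its symmetric version obtained by interchanging $x$ and $y$ in its proof (which goes through verbatim since $S_\pi$ is symmetric with respect to this swap). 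These two statements imply that both $\pi_{\res \{F_\mu^+=F_\nu^+\} \times \R}$ and $\pi_{\res \R \times \{F_\mu^+=F_\nu^+\}}$ are concentrated on $\D \cap \{F_\mu^+=F_\nu^+\}^2$, hence coincide with $\pi^=:=\pi_{\res \{F_\mu^+=F_\nu^+\}^2}$. Being concentrated on the diagonal, the two marginals of this common measure agree; but they are exactly $\mu^=$ and $\nu^=$.

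For Point~2, I build $\varphi^{-1}$ via formula~\eqref{eq:formul_inverse_con}. Given $\pi \in \OO(\mu,\nu)$, the structural decomposition above gives $\pi = \sum_{k} \pi_k^+ + \sum_{k} \pi_k^- + \pi^=$, each summand a restriction of $\pi$ to one piece of the partition. Each $\pi_k^+$ has marginals $\mu_k^+,\nu_k^+$ (established inside the proof of Proposition~\ref{pro:structure_result_DL}) and satisfies $\pi_k^+ \leq \pi$, so Remark~\ref{remq:cycl_res} yields $\pi_k^+ \in \OO(\mu_k^+,\nu_k^+)$; the same applies to $\pi_k^-$ and to $\pi^=$. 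Injectivity of $\varphi$ is automatic because the sets $]a_k^+,b_k^+[^2$, $]a_k^-,b_k^-[^2$ and $\{F_\mu^+=F_\nu^+\}^2$ are pairwise disjoint, so each summand can be recovered by restriction, giving precisely the formula \eqref{eq:formul_inverse_con}.

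The main obstacle will be well-definedness of $\varphi$, i.e.~showing that $\pi := \sum \pi_k^+ + \sum \pi_k^- + \pi^=$ belongs to $\OO(\mu,\nu)$ for every family in $\PPP$. The marginal constraint is immediate from the fact that the component marginals sum back to $\mu$ and $\nu$. Optimality reduces to the cost identity
\[
W_1(\mu,\nu) = \sum_{k \in \KK^+} W_1(\mu_k^+,\nu_k^+) + \sum_{k \in \KK^-} W_1(\mu_k^-,\nu_k^-),
\]
using $W_1(\mu^=,\nu^=)=0$. The inequality $\leq$ follows by pasting any choice of optimal sub-plans together with the identity transport on $\mu^=$, which produces an admissible (hence cost-dominating) plan. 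The inequality $\geq$ uses the surjective half of Point~2 just established: decomposing a fixed $\pi^* \in \OO(\mu,\nu)$ and integrating $|y-x|$ against each piece gives $W_1(\mu,\nu)=\int |y-x|\dd\pi^* \geq \sum W_1(\mu_k^\pm,\nu_k^\pm)$, closing the argument.
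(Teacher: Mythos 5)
Your proof is correct and follows essentially the same route as the paper's: derive the decomposition and the memberships $\pi_k^\pm \in \OO(\mu_k^\pm,\nu_k^\pm)$, $\pi^= \in \OO(\mu^=,\nu^=)$ from Proposition~\ref{pro:structure_result_DL} together with Remark~\ref{remq:cycl_res}, obtain injectivity from disjointness of the supporting squares, and conclude well-definedness of $\varphi$ from the cost identity $W_1(\mu,\nu)=\sum_k W_1(\mu_k^+,\nu_k^+)+\sum_k W_1(\mu_k^-,\nu_k^-)$ (the paper reads this identity off directly from the decomposition of one optimal plan, while you derive the two inequalities separately, but these are the same computation). One small phrasing inaccuracy: $S_\pi=\spt(\pi)\cap(B^c\times B^c)$ is not itself invariant under $(x,y)\mapsto(y,x)$, since $\spt(\pi)$ need not be; what makes the symmetric version of Point~\ref{Point:structure_fixe} go through is the inclusion $S_\pi\subset B^c\times B^c$, so that both coordinates of any $(x,y)\in S_\pi$ avoid $B$, which is the property the original argument actually uses.
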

	
	\begin{proof}
		\begin{enumerate}
			\item From the proof of Proposition \ref{pro:structure_result_DL}, for all $k \in \KK^+$, we have $\mu_k^+ \leq_{\st} \nu_k^+$ and similarly, for all $k \in \KK^-$, $\nu_k^- \leq_{\st} \mu_k^-.$ By Point \ref{Point:structure_fixe} of Proposition \ref{pro:structure_result_DL}, $\pi_{\res \{F_\mu^+ = F_\nu^+\} \times \R}$ is concentrated on $\D$ and its first marginal is $\mu^=$. Therefore, $\pi_{\res \{F_\mu^+ = F_\nu^+\} \times \R} =(\id,\id)_\# \mu^= = \pi_{ \res \{  F_\mu^+ = F_\nu^+\}^2}$. Similarly $\pi_{\res \R \times \{F_\mu^+ = F_\nu^+\}}=(\id,\id)_\# \nu^= = \pi_{ \res \{  F_\mu^+ = F_\nu^+\}^2}$. Hence $\mu^= = \nu^=$.
			\item To prove that $\varphi$ is surjective, fix $\pi \in \OO(\mu,\nu).$ We established in the proof of Proposition \ref{pro:structure_result_DL} that, for all $k \in \KK^+$, $\pi_k^+$, defined by $\pi_k^+ = \pi_{\res ]a_k^+,b_k^+[^2}$ belongs to $\OO(\mu_k^+,\nu_k^+)$. Similarly, for all $k \in \KK^-$, $\pi_k^-$, defined by $\pi_k^- = \pi_{\res ]a_k^-,b_k^-[^2}$ belongs to $\OO(\mu_k^-,\nu_k^-)$. By the first point, $\pi^= := (\id,\id)_\# \mu^=$ belongs to $\OO(\mu^=,\nu^=).$ Hence $\FF$, defined by $\FF =  ((\pi_k^+)_{k \in \KK^+}, (\pi_k^-)_{k \in \KK^-}, \pi^=)$ belongs to $\PPP.$ Moreover, the measure $\ti{\pi}$, defined as $\ti{\pi} =\sum_{k \in \KK^+} \pi_k^+ + \sum_{k \in \KK^-} \pi_k^- + \pi^=$ has first marginal $\sum_{k \in \KK^+} \mu_k^+ + \sum_{k \in \KK^-}\mu_k^- + \mu^= = \mu$ and corresponds to the restriction of $\pi$ to $\biguplus_{k \in \KK^+} (]a_k^+,b_k^+[^2) \biguplus \biguplus_{k \in \KK^-}(]a_k^-,b_k^-[^2) \biguplus (\{F_\mu^+ = F_\nu^+\}^2 )$. Thus, $\pi(\R^2) = \mu(\R) = \ti{\pi}(\R)$ and $\ti{\pi} \leq \pi$, which implies $\ti{\pi} = \pi.$ Finally, we have shown that $\FF \in \PPP$ and $\varphi(\FF) = \pi$. Therefore, $\varphi$ is surjective. Note that, as $\pi \in \OO(\mu,\nu)$ and $\FF \in \PPP$, integrating $(x,y) \mapsto |y-x|$ along the equality $\varphi(\FF) = \pi$ yields $W_1(\mu,\nu) = \sum_{k \in \KK^+} W_1(\mu_k^+,\nu_k^+) + \sum_{k \in \KK^-}W_1(\mu_k^-,\nu_k^-).$ This ensures that $\varphi$ takes values in $\OO(\mu,\nu)$. Indeed, for every $\FF = ((\pi_k^+)_{k \in \KK^+},(\pi_k^-)_{k \in \KK^-},\pi^=)$, we have $\varphi(\FF) \in \Marg(\mu,\nu)$ and
			\begin{align*}
				J(\varphi(\FF)) &= \sum_{k \in \KK^+} \int_{\R^2} |y-x| \dd \pi_k^+(x,y) + \sum_{k \in \KK^-} \int_{\R^2} |y-x| \dd \pi_k^-(x,y) + \int_{\R^2} |y-x| \dd \pi^=(x,y)\\
				&= \sum_{k \in \KK^+} W_1(\mu_k^+,\nu_k^+) + \sum_{k \in \KK^-} W_1(\mu_k^-,\nu_k^-) = W_1(\mu,\nu),
			\end{align*}
			which implies $\varphi(\FF) \in \OO(\mu,\nu).$ The injectivity of $\varphi$ and the validity of Equation \eqref{eq:formul_inverse_con} follow directly from the fact that  $\{]a_k^+,b_k^+[^2\}_{k \in \KK^+} \cup \{]a_k^-,b_k^-[^2\}_{k \in \KK^-} \cup \{\{F_\mu^+ = F_\nu^+\}^2\}$ forms a collection of disjoint sets.\qedhere
		\end{enumerate}
	\end{proof}

	\subsection{Generalized decomposition result}\label{subsection:general_dec}
	
	In this subsection, we aim to extend Theorem \ref{them:deco_cas_sans_atomes} to any pair of measures, including those with atoms. More precisely, for any pair $(\mu,\nu) \in \MM^2_+$, we will prove a decomposition for $\CCC(\mu,\nu)$. Note that we neither require the measures to be atomless nor that \( W_1(\mu, \nu) < +\infty \), and that we decompose \( \CCC(\mu, \nu) \) instead of \( \OO(\mu, \nu) \). By Theorem \ref{them:cyclicality_optimality}, this result also covers the decomposition of $\OO(\mu,\nu)$ when $W_1(\mu,\nu) < +\infty.$ Note that $\mu(\R) = \nu(\R)$ is the minimal assumption possible: otherwise $\Marg(\mu,\nu)$ is empty and there is nothing to investigate: this is implicitly assumed from now on, as we consider pair of marginals in $\MM_+^2$ (see Notation \ref{subsection:not}, Point 4). The following result extends Proposition
	\ref{pro:barier_point_continuous_inclusion} and serves as a fundamental step toward our decomposition.
	
	\begin{pro}\label{pro:barrier_points}
		Consider $(\mu,\nu) \in \MM^2_+$, $x \in \R$ and $ \pi \in \CCC(\mu,\nu).$
		\begin{enumerate}
			\item\label{point:bar_geq_pos} If $F_\mu^+(x) \geq F_\nu^+(x)$, then $\pi(]x,+\infty[\times ]-\infty,x]) = 0.$
			\item\label{point:bar_geq_neg} If $F_\mu^-(x) \geq F_\nu^-(x)$, then $\pi([x,+\infty[\times ]-\infty,x[) = 0.$
			\item\label{point:bar_leq_pos} If $F_\mu^+(x) \leq F_\nu^+(x)$, then $\pi(]-\infty,x]\times ]x,+\infty[) = 0.$
			\item\label{point:bar_leq_neg} If $F_\mu^-(x) \leq F_\nu^-(x)$, then $\pi(]-\infty,x[\times[x,+\infty[) = 0.$	
		\end{enumerate}
	\end{pro}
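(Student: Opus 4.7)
The plan is to prove all four statements by the same contradiction scheme: combine the hypothesis on the cumulative distribution functions with a trivial mass-balance identity in order to force two particular rectangles to have positive $\pi$-measure, then use cyclical monotonicity of $\pi$ to derive a contradiction via a one-line triangle inequality. I will describe point \ref{point:bar_geq_pos} in detail; the remaining three points are entirely analogous after adjusting the half-lines and swapping the roles of $\mu$ and $\nu$.

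For point \ref{point:bar_geq_pos}, applying the marginal constraints to the set $A = ]-\infty,x]$ gives the mass-balance identity
\[
F_\mu^+(x) - F_\nu^+(x) = \pi(]-\infty,x]\times]x,+\infty[) - \pi(]x,+\infty[\times]-\infty,x]),
\]
which is non-negative by assumption. Suppose for contradiction that $\pi(]x,+\infty[\times]-\infty,x]) > 0$; then the identity forces $\pi(]-\infty,x]\times]x,+\infty[) > 0$ as well. Let $\Gamma$ be a cyclically monotone set on which $\pi$ is concentrated. Since both rectangles have positive $\pi$-measure, both must intersect $\Gamma$: pick $(x_1,y_1) \in \Gamma \cap (]x,+\infty[\times]-\infty,x])$ and $(x_2,y_2) \in \Gamma \cap (]-\infty,x]\times]x,+\infty[)$, so that $\min(x_1,y_2) > x \geq \max(y_1,x_2)$. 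Applying the triangle inequality through the intermediate point $x$ to the pairs $(x_1,y_2)$ and $(x_2,y_1)$ yields
\[
|x_1-y_2| + |x_2-y_1| \leq (x_1+y_2-2x) + (2x-x_2-y_1) = (x_1-y_1) + (y_2-x_2) = |x_1-y_1| + |x_2-y_2|,
\]
with \emph{strict} inequality because $\min(x_1,y_2) > x$ makes $|x_1-y_2| < (x_1-x) + (y_2-x)$ strict. This contradicts the cyclical monotonicity of $\{(x_1,y_1),(x_2,y_2)\} \subset \Gamma$.

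The other three points follow the same pattern with small changes in which half-line is open and which is closed. For point \ref{point:bar_geq_neg}, apply the mass balance to $A = ]-\infty,x[$ instead of $]-\infty,x]$; the strict inequality in the triangle estimate now comes from $\max(y_1,x_2) < x$ (instead of $\min(x_1,y_2) > x$), which ensures $|x_2-y_1| < (x-x_2) + (x-y_1)$ is strict. Points \ref{point:bar_leq_pos} and \ref{point:bar_leq_neg} are obtained from points \ref{point:bar_geq_pos} and \ref{point:bar_geq_neg} respectively by exchanging $\mu$ and $\nu$ (equivalently, reflecting $\R^2$ across the diagonal), which simultaneously swaps the two rectangles in the mass-balance identity and reverses the inequality between the distribution functions.

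The main obstacle, if there is one, is purely the bookkeeping of open versus closed half-lines across the four cases: each of the four statements uses a distinct combination, and one must verify that in each case the strict inequality in the cyclical monotonicity violation comes from the correct side (the one where the relevant extremum is a strict inequality with respect to $x$). The geometric core — a single triangle inequality through the point $x$ — is identical in all four cases.
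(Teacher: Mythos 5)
Your proof is correct and follows essentially the same strategy as the paper: the same mass-balance identity to show both rectangles carry positive $\pi$-mass, and the same two-point cyclical monotonicity contradiction (the paper organizes the arithmetic via auxiliary variables $a,b,c,d$ whereas you phrase it as a strict triangle inequality through $x$, but these are the same computation). Your explicit reduction of Points 3 and 4 to Points 1 and 2 via reflection is a valid and slightly cleaner way of making precise what the paper leaves as ``the arguments for the three other points are analogue.''
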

	
	\begin{proof}
		Assume that $x$ satisfies $F_\mu^+(x) \geq F_\nu^+(x)$. To reach a contradiction, suppose that $\pi(]x,+\infty[\times ]-\infty,x]) > 0$. Observe that $\pi(]-\infty,x] \times ]-\infty,x]) + \pi(]-\infty,x] \times ]x,+ \infty[) = F_\mu^+(x) \geq F_\nu^+(x) = \pi(]-\infty,x] \times ]-\infty,x]) + \pi(]x,+\infty[\times ]-\infty,x]) $, which implies $\pi(]-\infty,x] \times ]x,+\infty[) \geq \pi(]x,+\infty [ \times]-\infty,x]) >0.$ Let  $\Gamma \in \BB(\R^2)$ be a cyclically monotone set on which $\pi$ is concentrated. Then, $\pi(\Gamma \cap (]x,+\infty[ \times]-\infty,x])) = \pi(]x,+\infty[ \times]-\infty,x]) >0$ and $\pi(\Gamma \cap (]-\infty,x] \times ]x,+\infty[) ) = \pi(]-\infty,x] \times ]x,+\infty[) >0$. Hence, there exists $(x_1,y_1) \in (]x,+\infty[ \times ]-\infty,x]) \cap \Gamma$ and $(x_2,y_2) \in (]-\infty,x] \times ]x, + \infty[)\cap \Gamma.$ Define $a = \min(x_2,y_1), b = \max(x_2,y_1), c = \min(x_1,y_2)$ and $d = \max(x_1,y_2).$ We have the inequalities $x_2 \leq x < y_2$, $y_1 \leq x <x_1$, and $a \leq b \leq x < c \leq d$. Thus, $ |y_2 - x_1| + |y_1 - x_2| = (d-c) + (b-a) < (d-c) + 2(c-b) + (b-a) = (d+c)-(a+b) = (y_2 + x_1) - (y_1+x_2) = (y_2-x_2) + (x_1-y_1) = |y_2 - x_2| + |y_1- x_1|,$ which contradicts the cyclical monotonicity of $\Gamma$. Therefore, $\pi(]x,+\infty[\times ]-\infty,x]) = 0$. The reader may verify that the arguments for the three other points are analogue.
	\end{proof}
	
	\begin{remq}\label{remq:just_two}
		Observe that the proof of Proposition \ref{pro:barrier_points} only relies on the cyclical monotonicity for two pairs of points. Indeed, we used only that the set $\Gamma$ on which $\pi$ is concentrated satisfies: $$\forall (x_1,y_1),(x_2,y_2) \in \Gamma, |y_1-x_1| + |y_2-x_2| \leq |y_1-x_2| + |y_2-x_1|.$$
	\end{remq}
	
	By Points \ref{point:bar_geq_neg} and \ref{point:bar_leq_pos}, we deduce $\{F_\mu^+ = F_\nu^+\} = \{F_\mu^+ \geq F_\nu^+\} \cap \{F_\mu^+ \leq F_\nu^+\} \subset \BB(\mu,\nu)$, thus establishing Proposition \ref{pro:barier_point_continuous_inclusion}. For a more general result, see Proposition \ref{pro:caracterization_barrier} and Remark \ref{remq:computation_E^=}, where the following equality is shown:
	\begin{equation}\label{eq:E_==_B_modifie}
		\BB(\mu,\nu) = \{F_\mu^+ = F_\nu^+\} \cup  \{F_\mu^- = F_\nu^-\} \cup \{F_\nu^- < F_\mu^- \leq F_\mu^+ < F_\nu^+\} \cup  \{F_\mu^- < F_\nu^- \leq F_\nu^+ < F_\mu^+\}.
	\end{equation}
	
	We now apply Proposition \ref{pro:barrier_points} to establish Point \ref{point:caracterisation_sto} of Proposition \ref{pro:ordre_stochastique}.
	
	\begin{proof}[\textbf{Proof of Point \ref{point:caracterisation_sto} of Proposition \ref{pro:ordre_stochastique}}]
		Assume $\pi \in \Marg_{\F}(\mu,\nu)$. For all $n \geq 1$ and $((x_i,y_i))_{i \in \ent{1}{n}} \in \F^n$, we have $\sum_{i=1}^n |y_i -x_i| = \sum_{i=1}^n y_i - x_i = \sum_{i=1}^n y_{i+1} - x_i \leq \sum_{i=1}^n |y_{i+1} - x_i|$. Hence $\F$ is cyclically monotone. Since $\pi(\F^c) = 0$, $\pi \in \CCC(\mu,\nu)$. Assume now $\pi \in \CCC(\mu,\nu)$. Since $F_\mu^+ \geq F_\nu^+$, by Point \ref{point:bar_geq_pos} of Proposition \ref{pro:barrier_points}, for all $x \in \R$, $\pi(]x,+\infty[ \times ]-\infty,x]) = 0.$ Since $\F^c = \cup_{r \in \Q} ]r,+\infty[ \times ]-\infty,r]$, we obtain $\pi(\F^c) = 0.$ The last line follows directly from Theorem \ref{them:cyclicality_optimality}.
	\end{proof}
	We have shown that $\F$ is a cyclically monotone set, and by a similar argument, $\ti{\F}$ is also cyclically monotone.	
	\begin{remq}
		When $(\mu,\nu) \in \MM_1(\R)^2$, Point \ref{point:caracterisation_sto} of Proposition \ref{pro:ordre_stochastique} seems to be well known. In this case $\CCC(\mu,\nu) = \OO(\mu,\nu)$ and the proof of $\Marg_\F(\mu,\nu) = \OO(\mu,\nu)$ relies on the equality of the Jensen inequality. For all $\pi \in \Marg(\mu,\nu)$, $\int |y-x| \dd \pi \geq \left| \int y -x \dd \pi(x,y) \right| = \int y \dd\nu(y) - \int x \dd\mu(x)$, and the equality holds if and only if $\pi \in \Marg_\F(\mu,\nu)$.
	\end{remq}
	As in the decomposition of Di Marino and Louet, we begin by partitioning the real line according the relative position of the cumulative distribution functions. Next, we define a set $\{(\mu_k^+,\nu_k^+)\}_{k \in \KK^+} \cup \{(\mu_k^-,\nu_k^-)\}_{k \in \KK^-} \cup \{(\mu^=,\nu^=)\}$ of measure components of $(\mu,\nu)$, and finally, we prove that $$\CCC(\mu,\nu) = \left(\bigoplus_{k \in \KK^+} \CCC(\mu_k^+,\nu_k^+) \right) \oplus \left(\bigoplus_{k \in \KK^-} \CCC(\mu_k^-,\nu_k^-) \right) \oplus \CCC(\mu^=,\nu^=).$$

		Naturally, when dealing with measures that have atoms, the approach of Di Marino and Louet must be adapted. First, for the decomposition of the real line, observe that $\{F_\mu^+ > F_\nu^+\}$ need not be open in general, making it difficult to consider the connected components. More importantly, if we consider the connected components of $\{F_\mu^+> F_\nu^+\}$ and define the components of the marginal by restriction of $\mu$ and $\nu$ to the connected components, our decomposition would not be sufficiently fine. For instance, consider $\mu = \1_{]0,1[} \cdot \LL^1 + 2 \d_1$ and $\nu = 2 \1_{]1/2,2[}\cdot \LL^1.$ Since $\{F_\mu^+ > F_\nu^+\} = ]0,2[$, the set of component would be $\{(\mu,\nu)\}$, which is associated to the equality $\CCC(\mu,\nu) = \CCC(\mu,\nu).$ However, noting that $1$ belongs to $\{F_\mu^- = F_\nu^-\} \subset \BB(\mu,\nu)$, the reader may verify that $\CCC(\mu,\nu)$ can be decomposed as
	$$\CCC(\mu,\nu) = \CCC(\mu_1, \nu_1) \oplus \CCC(\mu_2,\nu_2),$$
	where $\mu_1= \1_{]0,1[} \cdot \LL^1 $, $ \nu_1 = 2 \1_{[1/2,1]} \cdot \LL^1$, $\mu_2 = 2 \d_1$ and $\nu_2 = 2 \1_{]1,2[} \cdot \LL^1$. This shows that the decomposition of the real line obtained by taking the connected components of $\{F_\mu^+> F_\nu^+\}$ is not fine enough. The following definition addresses this issue by considering the signs of both $F_\mu^+-F_\nu^+$ \emph{and} $F_\mu^- - F_\nu^-$, rather than solely the sign of $F_\mu^+-F_\nu^+.$
	\begin{defi}[Components of the real line]\label{defi:compo_real_line}
		Consider $(\mu,\nu) \in \MM^2_+.$
		\begin{enumerate}
			\item Define $E^+ = \{F_\mu^+ > F_\nu^+\} \cap \{F_\mu^- > F_\nu^-\}$, $E^- = \{F_\mu^+ < F_\nu^+\} \cap \{F_\mu^- < F_\nu^-\}$  and $E^= = \R \setminus (E^+ \cap E^-)$. Since $F_\mu^+-F_\nu^+$ and $F_\mu^- - F_\nu^-$ are left-continuous and right-continuous, respectively, the reader may verify that $E^-$ and $E^+$ are open. Note that $E^=$ is closed. Since $E^+$ and $E^-$ are disjoint, $(E^+,E^-,E^=)$ forms a disjoint covering of $\R.$
			\item Since $E^+$ and $E^-$ are open sets, each admits a countable family of open connected component. Let $(]a_k^+,b_k^+[)_{k \in \KK^+}$ and $(]a_k^-,b_k^-[)_{k \in \KK^-}$ denote the family of connected component of $E^+$ and $E^-$, respectively.
			\item Let $B_l^+$  denote the set $\ens{a_k^+}{k \in \KK^+}$ of the left boundary points of connected components of $E^+$. Similarly, define $B_r^+ = \ens{b_k^+}{k \in \KK^+}$, $B_l^- = \ens{a_k^-}{k \in \KK^-}$ and $B_r^- = \ens{b_k^-}{k \in \KK^-}$. Then, we denote by $B_l = B_l^+ \cup B_l^-$,  $B_r = B_r^+ \cup B_r^-$ and $B = B_l \cup B_r$ the sets of left boundaries, right boundaries and overall boundaries of the connected components of $E^+$ and $E^-$, respectively. Since these connected components form a countable family, $B$ is countable. Moreover, note that $B \subset E^=$.
		\end{enumerate}
	\end{defi}
	\begin{ex}\label{ex:guiding_example}
		The following example (see Figure \ref{fig:point_splitting}) which will be referenced multiple times. Define 
		\begin{equation}
			\begin{cases}
				\mu = \1_{[0,1]} \cdot \LL^1 + \d_1 + \1_{[1,2]} \cdot \LL^1 + \d_2 +  \1_{[2,3]} \cdot \LL^1 + \d_3 +  \1_{[3,4]} \cdot \LL^1 + \d_4 +  \1_{[4,5]} \cdot \LL^1 + \d_5 +  \1_{[5,6]} \cdot \LL^1 \\
				\nu =  \1_{[\frac{1}{2},1]} \cdot \LL^1  + \d_1 + \1_{[1,2]} \cdot \LL^1 + 2\d_2 +  \frac{1}{2} \1_{[2,3]} \cdot \LL^1 + \frac{3}{2}\d_3 + \frac{1}{2} \1_{[3,4]} \cdot \LL^1 + \d_4 +  \1_{[4,5]} \cdot \LL^1 + \d_5 +  \1_{[5,6]} \cdot \LL^1
			\end{cases}.
		\end{equation}
		Then, we obtain $E^+ = ]0,2[$, $E^- = ]2,3[ \cup ]3,4[$, and $E^= = ]-\infty,0] \cup \{2,3\} \cup [4,+\infty[.$ Hence, the boundary sets are given by $B_l^+  = \{0\}, B_l^- = \{2,3\}, B_r^+ = \{2\}, B_r^- =\{3,4\} , B_l = \{0,2,3\}, B_r = \{2,4\}$, and $B = \{0,2,3,4\}.$
	\end{ex}
	\begin{remq}\label{remq:deco_real_line}
		\begin{enumerate}
			\item Observe that the dependence of $E^+$, $E^-$, $E^=$, $a_k^+$, $b_k^+$, $B_l^+$, $B_l^-$, $B_r^+$, $B_r^-$, $B_l$, and $B_r$ on $(\mu,\nu)$ has been omitted here for clarity. We also write $\BB$ instead of $\BB(\mu,\nu)$ (see Definition \ref{defi:barrier_points}). In cases of ambiguity, we make the dependence explicit --- for example by writing $E^+(\mu,\nu)$ instead of $E^+$.
			\item\label{point:sym_deco} Our decomposition is symmetric: reversing the transport from $\nu$ to $\mu$ conserves the components, up to a sign change. More precisely, $E^{\pm}(\mu,\nu) = E^\mp(\nu,\mu)$, $E^=(\mu,\nu) = E^=(\nu,\mu)$, $B_l^\pm(\mu,\nu) = B_l^{\mp}(\nu,\mu)$, $B_r^\pm(\mu,\nu) = B_r^{\mp}(\nu,\mu)$, $B_l(\mu,\nu) = B_l(\nu,\mu)$, $B_r(\mu,\nu) = B_r(\nu,\mu)$ and $B(\mu,\nu) = B(\nu,\mu)$.
			\item Note that $(B_l^+,B_l^-,B_l^c)$ and $(B_r^+,B_r^-,B_r^c)$ are two disjoint coverings of $\R$. Note that their nine pairwise intersection -- namely $B_l^+ \cap B_r^+, B_l^+ \cap B_r^-, B_l^- \cap B_r^+, B_l^- \cap B_r^- ,B_l^+ \setminus B_r, B_l^- \setminus B_r, B_r^+ \setminus B_l, B_r^- \setminus B_l$ and $B_l^c \cap B_r^c$ -- can be non-empty in general. For example, in Example \ref{ex:guiding_example}, we have $0 \in B_l^+ \cap B_r^c$, $ 2 \in B_l^- \cap B_r^+$, $3 \in B_r^- \cap B_l^-$, and $4 \in B_r^- \cap B_l^c.$ 
			\item Note that $B$ represents the set of boundaries points of connected components of $E^+ \cup E^-$, but does not coincide with the topological boundary of that set. For instance, consider $\mu = \sum_{n \geq 0} \frac{1}{2^{n+1}} \d_{1- \frac{1}{2^n}}$ and $\nu = \1_{[0,1]}\cdot \LL^1$, where the set $E^+(\mu,\nu)$ is given by $E^+(\mu,\nu) = \cup_{n \geq 0} \left]1-\frac{1}{2^n},1- \frac{1}{2^{n+1}}\right[$. Therefore, $1$ lies in the topological boundary of $E^+$, but is not an element of $B.$
			\item The relative position of $F_\mu^- $ and $F_\nu^+$ is not prescribed in $E^+(\mu,\nu)$. For instance, in Example \ref{ex:guiding_example}, $1/2 \in E^+(\mu,\nu) \cap \{F_\mu^- > F_\nu^+\}$ whereas $ 1 \in E^+(\mu,\nu) \cap \{ F_\mu^- < F_\nu^+ \}.$
		\end{enumerate}
	\end{remq}
	\begin{lem}\label{remq:preliminary_inclusion}
		Consider $(\mu,\nu) \in \MM^2_+.$ Then, $E^=(\mu,\nu) \subset \BB(\mu,\nu)$.  
	\end{lem}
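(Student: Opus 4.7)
The plan is to reduce the inclusion to a direct application of Proposition \ref{pro:barrier_points}. Since $E^+$ and $E^-$ are disjoint, a point $x$ lies in $E^=$ precisely when it fails to lie in each of $E^+$ and $E^-$. Unpacking these two non-memberships yields two disjunctions: from $x \notin E^+$ I get $F_\mu^+(x) \leq F_\nu^+(x)$ or $F_\mu^-(x) \leq F_\nu^-(x)$, and from $x \notin E^-$ I get $F_\mu^+(x) \geq F_\nu^+(x)$ or $F_\mu^-(x) \geq F_\nu^-(x)$. These two disjunctions will independently control each of the two corner pieces of $\Cd(x)$.

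Fix $\pi \in \CCC(\mu,\nu)$ and write $\Cd(x) = (]-\infty,x[\times]x,+\infty[) \cup (]x,+\infty[\times]-\infty,x[)$. To annihilate the first corner $]-\infty,x[\times]x,+\infty[$, I would use the first disjunction: if $F_\mu^+(x) \leq F_\nu^+(x)$, Point \ref{point:bar_leq_pos} of Proposition \ref{pro:barrier_points} gives $\pi(]-\infty,x] \times ]x,+\infty[) = 0$; if instead $F_\mu^-(x) \leq F_\nu^-(x)$, Point \ref{point:bar_leq_neg} gives $\pi(]-\infty,x[ \times [x,+\infty[) = 0$. In either alternative, a set strictly containing $]-\infty,x[\times]x,+\infty[$ is $\pi$-null, so the first corner has measure zero. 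Symmetrically, the second disjunction together with Points \ref{point:bar_geq_pos} and \ref{point:bar_geq_neg} kills $]x,+\infty[\times]-\infty,x[$. Summing the two contributions gives $\pi(\Cd(x)) = 0$ for every $\pi \in \CCC(\mu,\nu)$, which is exactly the definition of $x \in \BB(\mu,\nu)$.

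The argument is essentially bookkeeping on Proposition \ref{pro:barrier_points}, so I do not anticipate any real obstacle. The only small care needed is matching the strict/non-strict endpoints in the corner sets with the mixed open/half-open rectangles appearing in the four points of Proposition \ref{pro:barrier_points}; but in every case the set provided by the proposition strictly contains the open corner we actually wish to discard, so the matching is automatic.
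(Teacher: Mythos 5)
Your proof is correct and follows the same approach as the paper's: rewrite $E^= = (E^+)^c \cap (E^-)^c$ as an intersection of two unions of sublevel/superlevel sets of the distribution functions, then use the four points of Proposition~\ref{pro:barrier_points} to kill the two corners of $\Cd(x)$ case by case. The paper states the case analysis more tersely, but the argument is the same; your bookkeeping on the open/half-open endpoints matching the rectangles in Proposition~\ref{pro:barrier_points} is also correct.
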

	
	\begin{proof}
		We have $E^= =(E^- \cup E^+)^c = \left(E^-\right)^c \cap \left(E^+\right)^c = [\{ F_\mu^- \geq F_\nu^-\} \cup \{ F_\mu^+ \geq F_\nu^+\}] \cap [\{ F_\mu^- \leq F_\nu^-\} \cup \{ F_\mu^+ \leq F_\nu^+\}]$. By applying Proposition \ref{pro:barrier_points}, it follows that $E^= \subset \BB.$ 
	\end{proof}
	
	The inclusion $\BB(\mu,\nu) \subset E^=$ is also satisfied as we shall prove in Proposition \ref{pro:caracterization_barrier}. We now want to define the components $((\mu_k^+,\nu_k^+)_{k \in \KK^+}, (\mu_k^-,\nu_k^-)_{k \in \KK^-},(\mu^=,\nu^=))$ of $\mu$ and $\nu$ for the general case. Observe that the components of $\mu$ and $\nu$ cannot be defined solely by restricting $\mu$ and $\nu$ to the connected components of $E^+$, $E^-$ and $E^=.$ For instance, consider $\mu = \d_{-1} + 6\d_0 + \d_1$ and $\nu = \d_{-3}+2\d_{-2} + 3\d_0 + \d_{2} + \d_3$. We have $E^- = ]-3,0[$ and $E^+ = ]0,3[$.  The reader may verify that  the mass at point $0$ should split as follows: $(F_\nu^-(1)-F_\mu^-(1)) = 2$ units of mass should go from $0$ to to $[2,0[$, $(F_\mu^+(1)-F_\nu^+(1)) = 1$ unit go from $0$ to $]0,3]$ and $3$ units of $0$ does not move. The following definition of the marginal components accounts for the possibility that mass at boundary points may be shared among a fixed part and mass moving to the left or right; conversely boundary points can receive mass from the left and right. The amount of mass allocated to each part is expressed using differences of the cumulative distribution functions, consistently with our illustrative example. In the following definition and throughout the paper, when referring to intervals $[a,b],]a,b]$ or $[a,b[$, where $a$ and $b$ that may be infinite, we use the following convention:  $[-\infty,x[ = ]-\infty,x[$, $]x,+\infty] = ]x,+\infty[$  for all $x \in \R$, and $[-\infty,+\infty] = \R$. For every $\g \in \MM_+(\R)$, we also set $F_{\g}^\pm(-\infty) =0$ and $F_{\g}^\pm(+\infty) =1.$
	
	\begin{defi}[Components of the marginals]\label{def:component_marginals}
		Consider $(\mu,\nu) \in \MM^2_+.$
		\begin{enumerate}
			\item For all $k \in \KK^+$, define ${\mu}_k^+ = (F_\mu^+(a_k^+) - F_\nu^+(a_k^+)) \d_{a_k^+} + \mu_{\res ]a_k^+,b_k^+[}$ and ${\nu}_k^+ = \nu_{\res{]a_k^+,b_k^+[}} + (F_\mu^-(b_k^+)-F_\nu^-(b_k^+))\d_{b_k^+}$. Then, set ${\mu}^+ = \sum_{k \in \KK^+} {\mu}_k^+$ and ${\nu}^+ = \sum_{k \in \KK^+} {\nu}_k^+$.
			\item For all $k \in \KK^-$, define ${\mu}_k^- =  \mu_{\res ]a_k^-,b_k^-[} + \left(F_\nu^-(b_k^-)-F_\mu^-(b_k^-)\right) \d_{b_k^-}$ and ${\nu}_k^- =  (F_\nu^+(a_k^-)-F_\mu^+(a_k^-))\d_{a_k^-} + \nu_{\res{]a_k^-,b_k^-[}} $. Then, set ${\mu}^- = \sum_{k \in \KK^-} {\mu}_k^-$ and ${\nu}^- = \sum_{k \in \KK^-} {\nu}_k^-$.
			\item Define  ${\mu}_1^= = \mu_{\res B^c \cap E^=}$, ${\nu}_1^= = \nu_{\res B^c \cap E^=}$, ${\mu}_2^= = \sum_{x \in B} (\mu(x)-{\mu}^+(x)-{\mu}^-(x))\d_x$, ${\nu}_2^= = \sum_{x \in B} (\nu(x)-{\nu}^+(x)-{\nu}^-(x))\d_x$, ${\mu}^= = {\mu}_1^= + {\mu}_2^=$ and ${\nu}^= = {\nu}_1^= + {\nu}_2^=.$
			\item Define $\DD_\KK = \{(\mu_k^+,\nu_k^+)\}_{k \in \KK^+} \cup \{(\mu_k^-,\nu_k^-)\}_{k \in \KK^-} \cup \{(\mu^=,\mu^=)\}$. 
		\end{enumerate}
	\end{defi}
	
	\begin{ex}\label{ex:guiding_example_mesure}
		In Example \ref{ex:guiding_example}, we have ${\mu}_1^+ = \1_{[0,1]} \cdot \LL^1 + \d_1 + \1_{[1,2]}\cdot \LL^1$, ${\nu}_1^+ = \1_{[1/2,1]} \cdot \LL^1 + \d_1 + \1_{[1,2]}\cdot \LL^1 +\frac{1}{2} \d_2$, ${\mu}_1^- = \1_{[2,3]}\cdot \LL^1$, ${\nu}_1^- = \frac{1}{2} \d_2 + \frac{1}{2} \1_{[2,3]}\cdot \LL^1,$  ${\mu}_2^- = \1_{[3,4]}\cdot \LL^1$, ${\nu}_2^- =\frac{1}{2} \d_3 + \frac{1}{2} \1_{[3,4]},$ ${\mu}_1^= = \1_{[4,5]}\cdot \LL^1 + \d_5 + \1_{[5,6]}\cdot \LL^1={\nu}_1^=$, ${\mu}_2^= = \d_2 + \d_3 ={\nu}_2^=$, and $\mu^= = \d_2 + \d_3 + \d_4 + \1_{[4,5]}\cdot \LL^1 + \d_5 + \1_{[5,6]}\cdot \LL^1 = {\nu}^=$. Figure \ref{fig:point_splitting} illustrates how the mass of $\nu$ at point $2$ is allocated between $\nu_1^+$, $\nu^=$ and $\nu_1^-$.
	\end{ex}
	
	\begin{figure}[h]
		\begin{center}
			\def\svgwidth{10cm}
			%% Creator: Inkscape 1.2.1 (9c6d41e410, 2022-07-14), www.inkscape.org
%% PDF/EPS/PS + LaTeX output extension by Johan Engelen, 2010
%% Accompanies image file '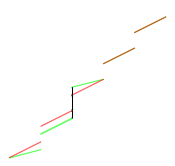' (pdf, eps, ps)
%%
%% To include the image in your LaTeX document, write
%%   \input{<filename>.pdf_tex}
%%  instead of
%%   \includegraphics{<filename>.pdf}
%% To scale the image, write
%%   \def\svgwidth{<desired width>}
%%   \input{<filename>.pdf_tex}
%%  instead of
%%   \includegraphics[width=<desired width>]{<filename>.pdf}
%%
%% Images with a different path to the parent latex file can
%% be accessed with the `import' package (which may need to be
%% installed) using
%%   \usepackage{import}
%% in the preamble, and then including the image with
%%   \import{<path to file>}{<filename>.pdf_tex}
%% Alternatively, one can specify
%%   \graphicspath{{<path to file>/}}
%% 
%% For more information, please see info/svg-inkscape on CTAN:
%%   http://tug.ctan.org/tex-archive/info/svg-inkscape
%%
\begingroup%
  \makeatletter%
  \providecommand\color[2][]{%
    \errmessage{(Inkscape) Color is used for the text in Inkscape, but the package 'color.sty' is not loaded}%
    \renewcommand\color[2][]{}%
  }%
  \providecommand\transparent[1]{%
    \errmessage{(Inkscape) Transparency is used (non-zero) for the text in Inkscape, but the package 'transparent.sty' is not loaded}%
    \renewcommand\transparent[1]{}%
  }%
  \providecommand\rotatebox[2]{#2}%
  \newcommand*\fsize{\dimexpr\f@size pt\relax}%
  \newcommand*\lineheight[1]{\fontsize{\fsize}{#1\fsize}\selectfont}%
  \ifx\svgwidth\undefined%
    \setlength{\unitlength}{83.52787181bp}%
    \ifx\svgscale\undefined%
      \relax%
    \else%
      \setlength{\unitlength}{\unitlength * \real{\svgscale}}%
    \fi%
  \else%
    \setlength{\unitlength}{\svgwidth}%
  \fi%
  \global\let\svgwidth\undefined%
  \global\let\svgscale\undefined%
  \makeatother%
  \begin{picture}(1,0.95253241)%
    \lineheight{1}%
    \setlength\tabcolsep{0pt}%
    \put(0,0){\includegraphics[width=\unitlength,page=1]{split.pdf}}%
    \put(0.42346181,0.29002334){\color[rgb]{0,0,0}\makebox(0,0)[lt]{\lineheight{1.25}\smash{\begin{tabular}[t]{l}\tiny{$\nu_1^+(2)$}\end{tabular}}}}%
    \put(0.23490194,0.35287628){\color[rgb]{0,0,0}\makebox(0,0)[lt]{\lineheight{1.25}\smash{\begin{tabular}[t]{l}\tiny{$\nu^=(2)=\mu^=(2)$}\end{tabular}}}}%
    \put(0.34265051,0.41573){\color[rgb]{0,0,0}\makebox(0,0)[lt]{\lineheight{1.25}\smash{\begin{tabular}[t]{l}\tiny {$\nu_1^-(2)$}\end{tabular}}}}%
    \put(0,0){\includegraphics[width=\unitlength,page=2]{split.pdf}}%
  \end{picture}%
\endgroup%

			\caption{Splitting of point 2 in Example \ref{ex:guiding_example}}
			\label{fig:point_splitting}
		\end{center}
	\end{figure}

	\begin{remq}\label{remq:on_marginals}
		\begin{enumerate}
			\item\label{point:marg_sum_compo} As expected, the measures $\mu$ and $\nu$ coincide with sum of their respective components. Indeed, for all $x \in B$,  ${\mu}^-(x) + {\mu}^+(x) + {\mu}_1^=(x) + {\mu}_2^=(x) = {\mu}^+(x) + {\mu}^-(x) + 0 + \big(\mu(x)- {\mu}^-(x) - {\mu}^+(x)\big) = \mu(x)$, which implies $\left({\mu}^- + {\mu}^+ + {\mu}_1^= + {\mu}_2^=\right)_{\res B} = \mu_{\res B}$. Moreover,
			\begin{align*}
				\mu_{\res B^c} &= \mu_{\res B^c \cap E^+} + \mu_{\res B^c \cap E^-} + \mu_{\res B^c \cap E^=} \\ &= \mu_{\res E^+} + \mu_{\res E^-} + \mu_{\res B^c \cap E^=} \\ &=  {\mu}^+_{\res B^c} + {\mu}^-_{\res B^c} + {\mu}_1^= + 0 = \left({\mu}^- + {\mu}^+ + {\mu}_1^= + {\mu}_2^=\right)_{\res B^c}.
			\end{align*}
			Therefore $\mu = {\mu}^+ + {\mu}^- + {\mu}_1^= + {\mu}_2^= = \sum_{k \in \KK^+} {\mu}_k^+ + \sum_{k \in \KK^-} {\mu}_k^- + \mu^=$. Similarly, $\nu = \sum_{k \in \KK^+} {\nu}_k^+ + \sum_{k \in \KK^-} {\nu}_k^- +{\nu}^=$.
			\item\label{point:sym_marg} The symmetry between transport from $\mu$ to $\nu$ and transport from $\nu$ to $\mu$ still holds. According to Point \ref{point:sym_deco} of Remark \ref{remq:deco_real_line}, $\left(]a_k^\mp,b_k^\mp[\right)_{k \in \KK^\mp}$  is the family of connected components of $E^\pm(\nu,\mu)$, where $\pm$ stands for a generic symbol of $+$ and $-$ and $\mp$ for the other one. It follows directly that, for each $k \in \KK^\mp$, the pair of marginal components associated to the component $\left]a_k^\mp,b_k^\mp\right[$ is $\left({\nu}_k^\mp,{\mu}_k^\mp\right).$  
			\item\label{Point:not_singular} Observe that $\{{\mu}_k^+\}_{k \in \KK^+} \cup \{\nu_k^-\}_{k \in \KK^-}$ forms a set of singular measures. However, ${\mu}_k^+$ and ${\mu}_j^-$ can both assign mass to $a_k^+$. This happens precisely when $b_j^- = a_k^+$, $F_\mu^+(a_k^+) > F_\nu^+(a_k^+)$ and $F_\nu^-(b_j^-) > F_\mu^-(b_j^-).$ Likewise, $({\mu}_k^+,{\nu}_j^+)$ and $({\mu}_k^+, {\mu}_2^=)$ may not be pairs of singular measures. 
			\item The fixed part ${\mu}^=$ of $\mu$ splits into two singular measures ${\mu}_1^=$ and ${\mu}_2^=$, that will each requiring a different mathematical treatment. The first part, ${\mu}_1^=$, represents the mass of $\mu$ that lies outside the closure of any positive of negative component of the decomposition: We will later show that this mass is fixed by every optimal transport plan. The second part, ${\mu}_2^=$, is atomic and corresponds to the part of the mass of $\mu$ located at boundary points that is not allocated to any positive or negative component of our decomposition. We explicitly compute ${\mu}_2^=(x)$ in the next lemma and later prove that ${\mu}_2^=$ measures the amount of mass at boundary points that is fixed by every optimal transport. In the atomless case, ${\mu}_2^= = 0$ and $E^=(\mu,\nu) = \{F_\mu^+ = F_\nu^+\}$. Thus ${\mu}^= = {\mu}_1^= = \mu_{ \res \{F_\mu^+ = F_\nu^+\}} = \mu_{\res E^=}$, which corresponds to the fixed part in Subsection \ref{subsection:continuous_decomposition}. In the general case, the equality ${\mu}^= = \mu_{\res E^=}$ does not hold.
		\end{enumerate}
	\end{remq}
	
	\begin{lem}\label{lem:eq_fixe_deux}
		\begin{enumerate}
			\item\label{point:partition_B}  The family $$\left(B_l^+ \cap B_r^+, B_l^+ \cap B_r^-,B_l^- \cap B_r^+, B_l^- \cap B_r^-,  B_l^+ \setminus B_r, B_l^- \setminus B_r, B_r^+ \setminus B_l, B_r^- \setminus B_l\right)$$ is a family of disjoint set  whose union is $B$.
			\item The following inclusions are satisfied:
			\begin{equation}\label{eq:inclusion_border}
				\accol{ B_l^+ \subset \{F_\mu^+ \geq F_\nu^+\} \\ B_r^+ \subset \{F_\mu^- \geq F_\nu^-\} \\ B_l^- \subset \{F_\mu^+ \leq F_\nu^+\} \\ B_r^- \subset \{F_\mu^- \leq F_\nu^-\}  \\ B_r \setminus B_l \subset \{F_\mu^+ = F_\nu^+\} \\ B_l \setminus B_r \subset \{F_\mu^- = F_\nu^-\}  }.
			\end{equation}
			\item For all $x \in B$, 
			\begin{equation}\label{eq:comp_mu+_mu-}
				\accol{{\mu}^+(x) = \1_{B_l^+}(x)(F_\mu^+(x) - F_\nu^+(x)) \\ {\nu}^+(x) = \1_{B_r^+}(x)(F_\mu^-(x) - F_\nu^-(x)) \\ {\mu}^-(x) = \1_{B_r^-}(x) (F_\nu^-(x) - F_\mu^-(x)) \\  {\nu}^-(x) = 	\1_{B_l^-}(x)(F_\nu^+(x) - F_\mu^+(x))}.
			\end{equation}
			\item We have ${\mu}_2^= = {\nu}_2^= = \sum_{x \in B} \left[ \min\left(F_\mu^+(x), F_\nu^+(x)\right) - \max\left(F_\mu^-(x),F_\nu^-(x)\right) \right]\d_x.$
		\end{enumerate}
	\end{lem}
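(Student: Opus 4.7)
The plan is to prove the four points in sequence, relying throughout on the disjointness of the open sets $E^+$ and $E^-$. For Point \ref{point:partition_B}, the crucial observation is that $B_l^+ \cap B_l^- = \emptyset$ (and symmetrically $B_r^+ \cap B_r^- = \emptyset$): any $a$ in both $B_l^+$ and $B_l^-$ would produce $]a, a+\epsilon[ \subset E^+ \cap E^-$ for some $\epsilon > 0$, a contradiction. The pairwise disjointness of the eight listed subsets and the fact that their union equals $(B_l \cap B_r) \cup (B_l \setminus B_r) \cup (B_r \setminus B_l) = B_l \cup B_r = B$ then follow from elementary set manipulations.

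For Point 2, the first four inclusions are immediate consequences of the one-sided continuity of the cumulative distribution functions together with the strict inequalities defining $E^\pm$. For instance, $a \in B_l^+$ gives $F_\mu^+ > F_\nu^+$ on $]a, a+\epsilon[$, and right-continuity of both $F_\mu^+$ and $F_\nu^+$ yields $F_\mu^+(a) \geq F_\nu^+(a)$. The last two inclusions are more delicate; I treat $B_r \setminus B_l \subset \{F_\mu^+ = F_\nu^+\}$ in detail, the other being symmetric. Suppose $b \in B_r \setminus B_l$ satisfies $F_\mu^+(b) > F_\nu^+(b)$. Right-continuity gives $F_\mu^+(x) > F_\nu^+(x)$ on some interval $]b, b+\delta[$. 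To handle the minus versions, I rely on the elementary identity $F_\gamma^-(b^+) = F_\gamma^+(b)$, valid for any finite $\gamma \in \MM_+(\R)$: indeed $F_\gamma^-(x) = F_\gamma^+(b) + \gamma(]b, x[)$ for $x > b$, and $\gamma(]b, x[) \to 0$ as $x \to b^+$ by continuity of $\gamma$ from above. Applied to $\mu$ and $\nu$, this yields $F_\mu^-(x) \to F_\mu^+(b) > F_\nu^+(b) = F_\nu^-(b^+)$ as $x \to b^+$, so $F_\mu^- > F_\nu^-$ holds on some interval $]b, b+\delta'[$. Consequently $]b, b+\min(\delta, \delta')[ \subset E^+$, and this interval must lie inside some connected component $]a', b'[$ of $E^+$. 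Since $b \in B \subset E^=$ is not in $E^+$, we are forced to have $a' = b$, so $b \in B_l^+ \subset B_l$, a contradiction. The case $F_\mu^+(b) < F_\nu^+(b)$ leads symmetrically to $b \in B_l^-$, establishing $F_\mu^+(b) = F_\nu^+(b)$.

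For Point 3, I would unfold Definition \ref{def:component_marginals}: writing ${\mu}^+(\{x\}) = \sum_{k \in \KK^+} {\mu}_k^+(\{x\})$, the inclusion $B \subset E^= \subset (E^+)^c$ ensures $x \notin ]a_k^+, b_k^+[$ for every $k$, so the restriction terms vanish and only the Dirac mass at $a_k^+$ contributes; this yields the announced formula for ${\mu}^+$, the remaining three being obtained analogously. Point 4 follows from a case analysis across the eight cells of the partition from Point \ref{point:partition_B}: in each cell, the inclusions of Point 2 pin down the signs of $F_\mu^\pm(x) - F_\nu^\pm(x)$, and a direct computation using $\mu(\{x\}) = F_\mu^+(x) - F_\mu^-(x)$ together with the formulas of Point 3 reduces $\mu(\{x\}) - {\mu}^+(\{x\}) - {\mu}^-(\{x\})$ to $\min(F_\mu^+(x), F_\nu^+(x)) - \max(F_\mu^-(x), F_\nu^-(x))$. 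Since this expression is symmetric in $\mu$ and $\nu$, the same computation for $\nu$ produces the same value for $\nu(\{x\}) - {\nu}^+(\{x\}) - {\nu}^-(\{x\})$, which yields ${\mu}_2^= = {\nu}_2^=$. I expect the main obstacle to lie in Point 2, specifically in using $F_\gamma^-(b^+) = F_\gamma^+(b)$ to transport a right-sided strict inequality of $F^+$ into one of $F^-$, the step that forces $b \in B_l$ and produces the desired equality.
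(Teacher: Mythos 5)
Your proof is correct and follows essentially the same approach as the paper's. The small variations — arguing by contradiction in the fifth and sixth inclusions of Point 2 (deriving $]b,b+\eta[\subset E^+$ and hence $b\in B_l^+$ from $F_\mu^+(b)>F_\nu^+(b)$, via $\lim_{x\to b^+}F_\gamma^-(x)=F_\gamma^+(b)$) where the paper instead selects a sequence $x_n\downarrow x$ in $(E^+)^c$ and passes to the limit, and appealing to symmetry in Point 4 where the paper tabulates all eight cells of the partition explicitly — rest on the same underlying facts and do not constitute a different route.
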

	
	\begin{proof}
		\begin{enumerate}
			\item Since $(B_l^+,B_l^-,B_l^c)$ and $(B_r^+,B_r^-,B_r^c)$ are two families of disjoint sets covering $\R$, the family $\FF = (B_l^+ \cap B_r^+,B_l^+ \cap B_r^-, B_l^- \cap B_r^+, B_l^- \cap B_r^-, B_l^+ \setminus B_r, B_l^- \setminus B_r, B_r^+ \setminus B_l, B_r^- \setminus B_l, B_l^c \cap B_r^c)$ is a family of disjoint sets that cover $\R.$ Since the last element of $\FF$ is $B_l^c \cap B_r^c = (B_l \cup B_r)^c = B^c$ and all the other terms are subsets of $B$,
			$$B = B \cap \R = B \cap \biguplus_{C \in \FF} C = \biguplus_{C \in \FF \setminus \{B^c\}} B \cap C = \biguplus_{C \in \FF \setminus \{B^c\}} C ,$$
			which proves Point \ref{point:partition_B}. 
			\item For all $k \in \KK^+$, $]a_k^+,b_k^+[ \subset E^+ \subset \{F_\mu^+ \geq F_{\nu}^+\}$. Since $F_\mu^+$ and $F_\nu^+$ are right-continuous, we obtain $F_\mu^+(a_k^+) = \lim_{x \to a_k^+ ; x > a_k^+} F_\mu^+(x) \geq \lim_{x \to a_k^+ ; x > a_k^+} F_\nu^+(x) = F_\nu^+(a_k^+),$ establishing the first inclusion. The second, third and fourth inclusion are proved similarly. We now turn our attention to the fifth inclusion. Consider $x \in B_r \setminus B_l$. Since $x \in B_r \subset E^= \subset (E^+)^c$, if there exists $\ee> 0$ such that $]x,x+\ee[ \subset E^+$, we would have $x \in B_l^+ \subset B_l$. As $x \notin B_l$, for all $\ee > 0$, we get $]x,x+\ee[ \cap \left(E^+\right)^c \neq \emptyset.$ Thus, there exists a sequence $(x_n)_{n \geq 1} \in \prod_{n \geq 1} ]x,x+1/n]$ such that, for all $n \geq 1$, $F_\mu^-(x_n) \leq F_\nu^-(x_n)$ or $F_\mu^+(x_n) \leq F_\nu^+(x_n)$. Letting $n$ go to $+\infty$, we obtain $F_\mu^+(x) \leq F_\nu^+(x).$ Hence $B_r \setminus B_l \subset \{F_\mu^+ \leq F_\nu^+\}$ and the reader may verify that the proof of $B_r \setminus B_l \subset \{F_\mu^+ \geq F_\nu^+\}$ is similar, thus establishing the fifth inclusion. The proof of the sixth inclusion is identical.
			\item For all $x \in B$, ${\mu}^+(x) = \sum_{k \in \KK^+} {\mu}_k^+(x) = \sum_{k \in \KK^+} \1_{a_k^+ = x} (F_\mu^+(x) -F_\nu^+(x)) = \1_{B_l^+}(x)(F_\mu^+(x) -F_\nu^+(x))$. The proofs of the three other inequalities are similar.
			\item For all $x \in B$, the values of ${\mu}^+(x)$ and ${\mu}^-(x)$ can be determined  using the partition described in Point \ref{point:partition_B} and the expressions given in Equation \eqref{eq:comp_mu+_mu-}. These computations are summarized in the two first columns of Table \ref{tab:mu_sub}.  The computation of $\mu_2^=(x) = \mu(x) - {\mu}^+(x) - {\mu}^-(x)$ is then obtained by subtracting the two first columns to $\mu(x) = F_\mu^+(x)- F_\mu^-(x)$. The corresponding result is presented in the third column of Table \ref{tab:mu_sub}. Based on the partition introduced in Point  \eqref{point:partition_B}, and the expressions in Equation \eqref{eq:inclusion_border}, the values of $\min(F_\mu^+(x), F_\nu^+(x))$ and $\max(F_\mu^-(x),F_\nu^-(x))$ are computed and shown in Table \ref{tab_max_moins_min}. By subtracting the first column to the second, we recover the third column of Table \ref{tab:mu_sub}, which proves that $\mu_2^=(x) = \mu(x)-{\mu}^+(x)-{\mu}^-(x) = \min(F_\mu^+(x), F_\nu^+(x)) - \max(F_\mu^-(x),F_\nu^-(x))$.   The proof of $\nu_2^=(x) = {\nu}(x)-{\nu}^+(x)-{\nu}^-(x) = \min(F_\mu^+(x), F_\nu^+(x)) - \max(F_\mu^-(x),F_\nu^-(x))$ is similar.\qedhere
		\end{enumerate}
	\end{proof}

	\begin{table}[h!]
		\begin{center}
			\begin{tabular}{|c||c|c|c|}
				\hline $x \in \cdot$ & ${\mu}^-(x) $ & ${\mu}^+(x) $ & $\mu(x) - {\mu}^-(x) - {\mu}^+(x)$  \\
				\hline $B_l^+ \cap B_r^+$ & $0$ & $F_\mu^+(x) - F_\nu^+(x)$ &  $F_\nu^+(x) - F_\mu^-(x)$  \\
				\hline $ B_l^+ \cap B_r^-$ & $F_\nu^-(x) - F_\mu^-(x)$ & $F_\mu^+(x) - F_\nu^+(x)$ & $\nu(x)$  \\
				\hline $B_l^- \cap B_r^+$ & $0$ & $0$ & $\mu(x)$\\
				\hline $B_l^- \cap B_r^- $ & $F_\nu^-(x) - F_\mu^-(x)$ & $0$ & $F_\mu^+(x) - F_\nu^-(x)$\\
				\hline $B_l^+ \setminus B_r$ & $0$ & $F_\mu^+(x) - F_\nu^+(x)$  & $F_\nu^+(x) - F_\mu^-(x)$ \\
				\hline $B_l^- \setminus B_r$ & $0$ & $0$ & $\mu(x)$ \\
				\hline $B_r^+ \setminus B_l$ & $0$ & $0$ & $\mu(x)$  \\
				\hline $B_r^- \setminus B_l$ & $F_\nu^-(x) - F_\mu^-(x)$ & $0$ & $F_\mu^+(x) - F_\nu^-(x)$ \\
				\hline
			\end{tabular}
			\caption{Computations of $\mu(x) - {\mu}^+(x) - {\mu}^-(x)$}
			\label{tab:mu_sub}
		\end{center}
	\end{table}

	\begin{table}[h!]
		\begin{center}
			\begin{tabular}{|c||c|c|}
				\hline $x \in \cdot$ & $\min(F_\mu^+(x),F_\nu^+(x))$ & $\max(F_\mu^-(x),F_\nu^-(x))$ \\
				\hline $B_l^+ \cap B_r^+$ & $F_\nu^+(x)$ & $F_\mu^-(x)$ \\
				\hline $B_l^+ \cap B_r^-$ & $F_\nu^+(x)$ & $F_\nu^-(x)$ \\
				\hline $B_l^- \cap B_r^+$ & $F_\mu^+(x)$ & $F_\mu^-(x)$ \\
				\hline $B_l^- \cap B_r^-$ & $F_\mu^+(x)$ & $F_\nu^-(x)$ \\
				\hline $B_l^+ \setminus B_r$ & $F_\nu^+(x)$ & $F_\mu^-(x)$ \\
				\hline $B_l^- \setminus B_r$ & $F_\mu^+(x)$ & $F_\mu^-(x)$ \\
				\hline $B_r^+ \setminus B_l$ & $F_\mu^+(x)$ & $F_\mu^-(x)$ \\
				\hline $B_r^- \setminus B_l$ & $F_\mu^+(x)$ & $F_\nu^-(x)$ \\
				\hline
			\end{tabular}
			\caption{Computations of $\min(F_\mu^+(x),F_\nu^+(x))-\max(F_\mu^-(x),F_\nu^-(x)) $}
			\label{tab_max_moins_min}
		\end{center}
	\end{table}

	\begin{defi}[Components of elements of $\CCC(\mu,\nu)$]\label{defi:composantes}
		Consider $\pi \in \CCC(\mu,\nu).$
		\begin{enumerate}
			\item For all $k \in \KK^+$, define $A_k^+ = [a_k^+,b_k^+[ \times ]a_k^+,b_k^+]$ and $\pi_k^+ = \pi_{\res A_k^+}.$
			\item For all $k \in \KK^-$, define $A_k^- = ]a_k^-,b_k^-] \times [a_k^-,b_k^-[$ and $\pi_k^- = \pi_{\res A_k^-}.$
			\item Define $\pi_1^= = \pi_{\res (B^c \cap E^=) \times \R}$, $\pi_2^= = \pi_{\res \D \cap (B \times \R)}$, $A^= = [(B^c \cap E^=) \times \R] \biguplus [\D \cap (B \times \R)]$   and $\pi^= = \pi_{\res A^=} = \pi_1^= + \pi_2^=.$
		\end{enumerate}
	\end{defi}
	
	\begin{remq}
		\begin{enumerate}
			\item Observe that the sets $\{A_k^+\}_{k \in \KK^+} \cup \{A_k^-\}_{k \in \KK^-} \cup \{A^=\}$ are pairwise disjoint, which implies that the corresponding components $\{\pi_k^+\}_{k \in \KK^+}\cup \{\pi_k^-\}_{k \in \KK^-} \cup \{\pi^=\}$ forms a family of mutually singular measures.
			\item Observe that the inclusion or exclusion of boundary points when restricting $\pi$ is crucial for ensuring $\pi_k^+ \in \Marg({\mu}_k^+,{\nu}_k^+)$. Since ${\mu}_k^+$ is concentrated on $[a_k^+,b_k^+[$ and ${\nu}_k^+$ on $]a_k^+,b_k^+]$, we must restrict $\pi$ to $[a_k^+,b_k^+[ \times ]a_k^+,b_k^+]$. For instance, if $\mu = 2\d_1 + \1_{[1,2]} \cdot \LL^1 + \d_2$ and $\nu = \d_1 + \1_{[1,2]} \cdot \LL^1 + 2\d_2$, $\mu$ admits two components $\mu_1^+ = \d_1 + \1_{[1,2]} \cdot \LL^1$, $\mu^= = \d_1 + \d_2$, while $\nu$ admits two of them $\nu_1^+ = \1_{[1,2]} \cdot \LL^1 + \d_2$ and $\nu^= = \d_1 + \d_2$. One may verify that, among the 16 possible combinations of boundary inclusion, only the chosen convention yields a restriction of optimal transport plans that lies in $\Marg({\mu}_k^+,{\nu}_k^+).$
			\item Symmetry is also preserved at the level of cyclically monotone transport plans. Namely, as $i:(x,y) \mapsto (y,x)$ satisfies $i \circ i = \id$, the map $i_\# : \pi \in \PP(\R^2) \mapsto i_\# \pi\in  \PP(\R^2)$ is a bijection from $\CCC(\mu,\nu)$ to $\CCC(\nu,\mu).$ Moreover, there is a correspondence between the components of a plan $\pi \in \CCC(\mu,\nu)$ and the components of $i_\# \pi \in \CCC(\nu,\mu).$ More precisely, for all $k \in \KK^+$,  it follows from Point \ref{point:sym_deco} of Remark \ref{remq:deco_real_line}, that $]a_k^+,b_k^+[$ is a component for $E^+(\mu,\nu)$ and a component for $E^-(\nu,\mu)$. Let  $\varphi_k : \pi \in \CCC(\mu,\nu) \mapsto \pi_{\res [a_k^+,b_k^+[ \times ]a_k^+,b_k^+]}$
			and $\phi_k : \pi \in \CCC(\nu,\mu) \mapsto \pi_{\res ]a_k^+,b_k^+] \times [a_k^+,b_k^+[}$ denote the functions	mapping every element of \( \CCC(\mu,\nu) \) and \( \CCC(\nu,\mu) \), respectively, to its component associated with \( ]a_k^+,b_k^+[ \). Then, it is straightforward that $\phi_k = i_\#\circ  \varphi_k \circ i_\#$. The same results holds for $k \in \KK^-$. For each ``$=$'' type components of the decomposition of $(\mu,\nu)$ and $(\nu,\mu)$, one can go from one to another by applying $i_\#$. This is a consequence of the equalities $E^=(\mu,\nu) = E^=(\nu,\mu)$ and $B(\mu,\nu) = B(\nu,\mu)$ established in Point \ref{point:sym_deco} of Remark \ref{remq:deco_real_line}.
		\end{enumerate}
	\end{remq}
	
	The following lemma quantifies how mass splits at boundary points, specifying the amount of mass that moves strictly to the left or strictly to the right. To quantify the amount of mass moving to the right, we use the disjoint covering $B = B_l^- \uplus B_l^+ \uplus (B_r \setminus B_l)$; for mass moving on the left we base our computations on the disjoint covering $B = B_r^- \uplus B_r^+ \uplus (B_l \setminus B_r).$
	
	\begin{lem}\label{lem:mass_transfert}
		Consider $(\mu,\nu) \in \MM^2_+$, $x \in B$ and $\pi \in \CCC(\mu,\nu).$
		\begin{enumerate}
			\item\label{point:controle_bord_gauche_neg} If $x \in B_l^- \cup (B_r \setminus B_l)  $, then $\pi(\{x\} \times ]x,+\infty[) = 0.$
			\item\label{point:controle_bord_gauche_pos} If  $x \in B_l^+$, $\pi(\{x\} \times ]x,+\infty[) = F_\mu^+(x) - F_\nu^+(x).$
			\item\label{point:controle_bord_droit_pos} If $x \in B_r^+ \cup (B_l \setminus B_r)$, $\pi(\{x\}\times ]-\infty,x[) = 0.$
			\item\label{point:controle_bord_droit_neg} If $x \in B_r^-$, $\pi(\{x\}\times ]-\infty,x[) = F_\nu^{-}(x) - F_\mu^-(x).$
		\end{enumerate}
	\end{lem}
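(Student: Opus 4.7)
My plan is to deduce each of the four assertions as a direct consequence of Proposition \ref{pro:barrier_points} combined with the inclusions collected in Equation \eqref{eq:inclusion_border} and the marginal constraints on $\pi$. The computational backbone will be the two elementary identities
\begin{equation*}
F_\mu^+(x) = \pi(]-\infty,x] \times ]-\infty,x]) + \pi(]-\infty,x] \times ]x,+\infty[),
\end{equation*}
\begin{equation*}
F_\nu^+(x) = \pi(]-\infty,x] \times ]-\infty,x]) + \pi(]x,+\infty[ \times ]-\infty,x]),
\end{equation*}
together with their $F^-$-counterparts obtained by replacing $]-\infty,x]$ with $]-\infty,x[$ and $]x,+\infty[$ with $[x,+\infty[$.

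For Point \ref{point:controle_bord_gauche_neg}, if $x \in B_l^-$ then Equation \eqref{eq:inclusion_border} gives $F_\mu^+(x) \leq F_\nu^+(x)$, so Point \ref{point:bar_leq_pos} of Proposition \ref{pro:barrier_points} yields $\pi(]-\infty,x] \times ]x,+\infty[) = 0$, which certainly implies $\pi(\{x\} \times ]x,+\infty[) = 0$. The case $x \in B_r \setminus B_l$ proceeds identically since the fifth inclusion of \eqref{eq:inclusion_border} forces $F_\mu^+(x) = F_\nu^+(x)$. Point \ref{point:controle_bord_droit_pos} is obtained by the symmetric argument, replacing $F^+$ by $F^-$ and swapping left and right.

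For Point \ref{point:controle_bord_gauche_pos}, I start from $x \in B_l^+$, which gives $F_\mu^+(x) \geq F_\nu^+(x)$ via Equation \eqref{eq:inclusion_border}, and apply Point \ref{point:bar_geq_pos} of Proposition \ref{pro:barrier_points} to get $\pi(]x,+\infty[ \times ]-\infty,x]) = 0$. Subtracting the two identities above then yields
\begin{equation*}
\pi(]-\infty,x] \times ]x,+\infty[) = F_\mu^+(x) - F_\nu^+(x).
\end{equation*}
The remaining step is to show $\pi(]-\infty,x[ \times ]x,+\infty[) = 0$, which is the only place where a small case split is needed. If $F_\mu^+(x) = F_\nu^+(x)$ the conclusion is immediate from the displayed equation. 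Otherwise $F_\mu^+(x) > F_\nu^+(x)$, and since $x \in B \subset E^=$ is excluded from $E^+$, this forces $F_\mu^-(x) \leq F_\nu^-(x)$, so Point \ref{point:bar_leq_neg} of Proposition \ref{pro:barrier_points} gives $\pi(]-\infty,x[ \times [x,+\infty[) = 0$ and in particular $\pi(]-\infty,x[ \times ]x,+\infty[) = 0$. Point \ref{point:controle_bord_droit_neg} follows by the symmetric argument, using Point \ref{point:bar_leq_neg} of Proposition \ref{pro:barrier_points} and the corresponding $F^-$-identities. I do not anticipate any substantive obstacle here; the only delicate bookkeeping concerns the strict versus non-strict boundaries in the rectangles, which dictates whether one uses $F^+$ or $F^-$ at each step.
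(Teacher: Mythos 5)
Your proof is correct and follows essentially the same route as the paper: both rest on Proposition \ref{pro:barrier_points}, the inclusions in Equation \eqref{eq:inclusion_border}, and the elementary cdf-identities coming from the marginal constraints. The only cosmetic difference is in Point \ref{point:controle_bord_gauche_pos}, where the paper obtains $\pi(]-\infty,x[\times]x,+\infty[)=0$ directly from $B_l^+\subset E^=\subset\BB(\mu,\nu)$ (Lemma \ref{remq:preliminary_inclusion}), while you rederive the same fact by a case split on whether $F_\mu^+(x)>F_\nu^+(x)$ and, in that case, reading off $F_\mu^-(x)\leq F_\nu^-(x)$ from $x\notin E^+$ --- this is in effect an inlined version of the argument behind Lemma \ref{remq:preliminary_inclusion}.
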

	
	\begin{proof}
		The proofs of Points \ref{point:controle_bord_droit_pos} and \ref{point:controle_bord_droit_neg}  are analogous to those of Points \ref{point:controle_bord_gauche_neg} and \ref{point:controle_bord_gauche_pos}, and are therefore omitted.
		\begin{enumerate}
			\item Equation \eqref{eq:inclusion_border}, implies that $B_l^- \cup \left(B_r \setminus B_l \right) \subset \{F_\mu^+ \leq F_\nu^+\}$. By Point \ref{point:bar_leq_pos} of Proposition  \ref{pro:barrier_points}, it follows that $\pi(]-\infty,x]\times ]x,+\infty[) = 0$. In particular, $\pi(\{x\}\times ]x,+\infty[) = 0$.
			\item

			Since $B_l^+ \subset E^=$, Lemma \ref{remq:preliminary_inclusion} implies $B_l^+ \subset \BB$, so that $\pi(]-\infty,x[\times]x,+\infty[) = 0.$ Hence,
			
			\begin{align*}
				F_\mu^+(x) &= \pi(]-\infty,x] \times \R)\\  &= \pi(]-\infty,x]^2) + \pi(]-\infty,x] \times ]x,+\infty[)\\ &= \pi(]-\infty,x]^2) + \pi(]-\infty,x[ \times ]x+ \infty[) + \pi(\{x\} \times ]x,+\infty[) \\
				&= \pi(]-\infty,x]^2) + \pi(\{x\} \times ]x,+\infty[).
			\end{align*}
			
			Moreover, from Equation \eqref{eq:inclusion_border}, we have $B_l^+ \subset \{F_\mu^+ \geq F_\nu^+\}$. By  Point \ref{point:bar_geq_pos} of Proposition \ref{pro:barrier_points}, it follows that  $\pi(]x,+\infty[\times]-\infty,x]) = 0$. Hence,
			\begin{align*}
				F_\nu^+(x) &= \pi(\R \times ]-\infty,x]) \\
				&= \pi(]-\infty,x]^2) + \pi(]x,+\infty[ \times ]-\infty,x]) \\
				&= \pi(]-\infty,x]^2).
			\end{align*}
			Therefore, $F_\mu^+(x) - F_\nu^+(x) = \pi(\{x\} \times ]x,+\infty[)$.\qedhere
		\end{enumerate}
	\end{proof}

	We now establish that the marginals of the components of a cyclically monotone transport plan (see Definition \ref{defi:composantes}) are the components of the marginals (see Definition \ref{def:component_marginals}). We then prove that a cyclically monotone transport plan equals the sum of its components.
	
	\begin{pro}\label{pro:transport_fitting_marg}
		Consider $(\mu,\nu) \in \MM^2_+$ and $\pi \in \CCC(\mu,\nu)$. 
		\begin{enumerate}
			\item \begin{enumerate}
				\item\label{point:transport_fitting_marg_plus} For all $k \in \KK^+$, $\pi_k^+ \in \Marg({\mu}_k^+, {\nu}_k^+).$
				\item\label{point:transport_fitting_marg_minus} For all $k \in \KK^-$, $\pi_k^- \in \Marg({\mu}_k^-, {\nu}_k^-).$
				\item\label{point:transport_fitting_marg_eq_first} Equalities $\pi_1^= = (\id,\id)_\# {\mu}_1^= = (\id,\id)_\# {\nu}_1^=$ are satisfied.
				\item\label{point:transport_fitting_marg_eq_second} Equalities $\pi_2^= = (\id,\id)_\#  {\mu}_2^= = (\id,\id)_\#  {\nu}_2^= $ are satisfied.
			\end{enumerate}
			\item\label{point:surj_int} Equality $\pi = \sum_{k \in \KK^+} \pi_k^+ + \sum_{k \in \KK^-} \pi_k^- + \pi^=$ is satisfied.
		\end{enumerate}
	\end{pro}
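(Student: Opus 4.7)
The plan is to combine two tools established earlier: the barrier inclusion $E^= \subset \BB(\mu,\nu)$ of Lemma~\ref{remq:preliminary_inclusion}, which confines the $\pi$-mass inside the correct component, and the exact mass-transfer identities at boundary points given by Lemma~\ref{lem:mass_transfert}, which recover the precise atomic contributions at endpoints. I will first prove the marginal-matching statements of Point~1 and then deduce Point~2 from a disintegration argument, since the latter amounts to showing that $\pi$ is concentrated on the union of the supports of the components.

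For Point~1(a), fix $k \in \KK^+$. Both $a_k^+$ and $b_k^+$ lie in $E^= \subset \BB(\mu,\nu)$, so Proposition~\ref{pro:barrier_points} combined with the inclusions of~\eqref{eq:inclusion_border} confines the mass emerging from $]a_k^+,b_k^+[$ strictly inside $]a_k^+,b_k^+]$ on the right and forbids it from reaching $]-\infty,a_k^+]$. This already shows that ${p_1}_\# \pi_k^+$ restricted to $]a_k^+,b_k^+[$ equals $\mu_{\res ]a_k^+,b_k^+[}$. The atomic contribution at $a_k^+ \in B_l^+$ is then obtained from Lemma~\ref{lem:mass_transfert}: $\pi(\{a_k^+\} \times ]a_k^+,+\infty[) = F_\mu^+(a_k^+) - F_\nu^+(a_k^+)$, and the barrier at $b_k^+$ forces this mass to sit inside $\{a_k^+\} \times ]a_k^+,b_k^+]$. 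Adding the two contributions recovers $\mu_k^+$. The symmetric computation, applied to the swapped plan $i_\# \pi \in \CCC(\nu,\mu)$ at the right endpoint $b_k^+$ (which lies in $B_r^+(\mu,\nu) = B_r^-(\nu,\mu)$ by Remark~\ref{remq:deco_real_line}), yields ${p_2}_\# \pi_k^+ = \nu_k^+$. Point~1(b) is identical after reflection.

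Point~1(c) is the delicate step. The first marginal of $\pi_1^=$ equals $\mu_1^=$ by construction, but we must upgrade this to concentration on the diagonal. I argue by contradiction: if $(x,y) \in \spt(\pi_1^=)$ with $x < y$, then since $x \in B^c \cap E^=$ and $E^\pm$ are open, $x$ admits an open neighborhood contained in $E^=$. A short topological argument then forces $]x,y[ \cap E^= \neq \emptyset$: otherwise $]x,y[$ would lie inside a connected component $]a_j^\pm, b_j^\pm[$, and inspecting the position of $x$ would give either $x = a_j^\pm \in B$ or $x \in E^\pm$, both excluded. Picking $z \in ]x,y[ \cap E^= \subset \BB(\mu,\nu)$, the open set $]-\infty,z[ \times ]z,+\infty[$ has $\pi$-measure zero but contains an open neighborhood of $(x,y)$, contradicting $(x,y) \in \spt(\pi_1^=)$. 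Hence $\pi_1^=$ is concentrated on $\D$ and its two marginals coincide, proving the identity. For Point~1(d), $\pi_2^=$ is supported on $\D$ by construction, and a direct computation using Lemma~\ref{lem:mass_transfert} together with the formulas from Lemma~\ref{lem:eq_fixe_deux} gives $\pi(\{x\}\times\{x\}) = \mu(\{x\}) - \mu^+(\{x\}) - \mu^-(\{x\}) = \mu_2^=(\{x\})$ at every $x \in B$.

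Finally, Point~2 reduces to checking that $\pi$ is concentrated on the pairwise disjoint union $\bigcup_{k \in \KK^+} A_k^+ \cup \bigcup_{k \in \KK^-} A_k^- \cup A^=$. A case analysis on $x$ in the first marginal mirrors the previous arguments: for $x$ in the interior of some component of $E^\pm$, the two-endpoint barriers localize the conditional mass in the matching $A_k^\pm$; for $x \in B^c \cap E^=$, the argument of Point~1(c) pins the mass on the diagonal; for $x \in B$, Lemma~\ref{lem:mass_transfert} quantifies the strictly lateral masses and the barriers at the opposite endpoints of the adjacent components send them to the correct $A_{k'}^\pm$, while the residual atomic part at $(x,x)$ is collected by $A^=$. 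I expect the main obstacle to be the bookkeeping at points of $B$ that simultaneously belong to several of the sets $B_l^\pm, B_r^\pm$ (for instance $B_l^- \cap B_r^+$): here the precise case tables of Lemma~\ref{lem:eq_fixe_deux}, together with the inclusions \eqref{eq:inclusion_border}, will be indispensable to verify that no mass is double-counted or missed.
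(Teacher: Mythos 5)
Your treatment of Point~1 follows the paper's approach: barrier points via Lemma~\ref{remq:preliminary_inclusion} together with Proposition~\ref{pro:barrier_points} to confine mass to the open interval, Lemma~\ref{lem:mass_transfert} for the atomic contribution at the left endpoint, then symmetry (or a mirrored argument) for the second marginal; and for Points~1(c)--(d), the diagonal-concentration argument and the case table of Lemma~\ref{lem:eq_fixe_deux}, exactly as in the paper. Two remarks on this part. First, the assertion in Point~1(c) that ``since $x \in B^c \cap E^=$ and $E^\pm$ are open, $x$ admits an open neighborhood contained in $E^=$'' is false in general: the paper's own Remark~\ref{remq:deco_real_line}, Point~4, gives the example $\mu = \sum_{n\geq 0} 2^{-n-1}\delta_{1-2^{-n}}$, $\nu = \mathds{1}_{[0,1]}\cdot\LL^1$, where $1 \in B^c\cap E^=$ yet every neighborhood of $1$ meets $E^+$. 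Fortunately you never use this claim -- the connectedness argument that follows is self-contained and correct, so this is a harmless slip, but it should be removed. Second, ``its two marginals coincide, proving the identity'' elides the equality $\mu_1^= = \nu_1^=$: the diagonal concentration of $\pi_1^= = \pi_{\res (B^c\cap E^=)\times\R}$ tells you that its second marginal equals $\mu_1^=$, but to identify this with $\nu_1^=$ one must also show $\pi_{\res \R\times(B^c\cap E^=)} = (\id,\id)_\#\nu_1^=$ by the symmetric argument and then observe that $\pi_{\res(B^c\cap E^=)\times\R} = \pi_{\res(B^c\cap E^=)^2} = \pi_{\res\R\times(B^c\cap E^=)}$.

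For Point~2, your reduction to concentration of $\pi$ on the disjoint union $\bigcup_k A_k^+ \cup \bigcup_k A_k^- \cup A^=$ is correct, but the case analysis you propose is not a proof; you explicitly flag the bookkeeping at multiply-classified boundary points as an unresolved obstacle, and it genuinely is nontrivial (especially at points of $B_l^-\cap B_r^+$, say). The paper circumvents all of this by a mass-counting argument that you should adopt: set $\tilde\pi = \sum_k\pi_k^+ + \sum_k\pi_k^- + \pi^=$. Since the sets $A_k^+$, $A_k^-$, $A^=$ are pairwise disjoint, $\tilde\pi = \pi_{\res(\cup_k A_k^+)\cup(\cup_k A_k^-)\cup A^=} \leq \pi$; and by Part~1 each summand has the prescribed marginals, so $\tilde\pi(\R^2) = \sum_k\mu_k^+(\R) + \sum_k\mu_k^-(\R) + \mu^=(\R) = \mu(\R) = \pi(\R^2)$ (using Remark~\ref{remq:on_marginals}, Point~\ref{point:marg_sum_compo}). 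Two positive measures with $\tilde\pi\leq\pi$ and $\tilde\pi(\R^2)=\pi(\R^2)$ must be equal, which gives Point~2 with no further bookkeeping.
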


	\begin{proof}
		\begin{enumerate}
			\item
			\begin{enumerate}
				 \item For all $t \in \R$, 
				 \begin{equation*}
				 	{\mu}_k^+(]-\infty,t])=
				 	\begin{cases}
				 		0 &\text{if } t< a_k^+ \\
				 		F_\mu^+(t)  - F_\nu^+(a_k^+) &\text{if } t \in [a_k^+,b_k^+[\\
				 		F_\mu^-(b_k^+) - F_\nu^+(a_k^+) &\text{if } t \geq b_k^+
				 	\end{cases}
				 \end{equation*}
				 and
				 \begin{equation*}
				 	\pi_k^+(]-\infty,t] \times \R) =
				 	\begin{cases}
				 		0 &\text{if } t < a_k^+\\
				 		\pi([a_k^+,t]\times ]a_k^+,b_k^+]) &\text{if } t \in [a_k^+,b_k^+[ \\
				 		\pi([a_k^+,b_k^+[ \times ]a_k^+,b_k^+]) &\text{if } t \geq b_k^+
				 	\end{cases}.
				 \end{equation*}
				 Assume for a while that the second line of both systems coincide. Then, 
				 \begin{equation*}
				 	F_\mu^-(b_k^+) - F_\nu^+(a_k^+) = \lim_{t \to b_k^+, t < b_k^+} F_\mu^+(t) - F_\nu^+(a_k^+) =\lim_{t \to b_k^+, t < b_k^+} \pi([a_k^+,t]\times ]a_k^+,b_k^+]) =\pi([a_k^+,b_k^+[\times ]a_k^+,b_k^+])
				 \end{equation*}
			 	and both system coincide. Thus, to prove that ${p_1}_\# \pi_k^+ = \mu_k^+$, we are left to show
				 that, for all $t \in [a_k^+,b_k^+[$, $\pi([a_k^+,t]\times ]a_k^+,b_k^+]) = F_\mu^+(t) - F_\nu^+(a_k^+)$ holds. Now, for all $t \in [a_k^+, b_k^+[$, 
				 \begin{align*}
				 	\pi([a_k^+,t] \times ]a_k^+,b_k^+]) &= \pi([a_k^+,t] \times ]a_k^+,+\infty[) \\
				 	&= \mu([a_k^+,t]) - \pi([a_k^+,t] \times ]-\infty,a_k^+]) \\
				 	&= F_\mu^+(t) - F_\mu^-(a_k^+) - \pi(\{a_k^+\} \times ]-\infty,a_k^+]) \\
				 	&= F_\mu^+(t) - F_\mu^-(a_k^+) - (\mu(a_k^+)- \pi(\{a_k^+\}\times ]a_k^+,+\infty[)) \\
				 	&= F_\mu^+(t) - F_\nu^+(a_k^+),
				 \end{align*}
				 where the first equality comes from $b_k^+ \in E^= \subset \BB$, the third equality comes from  Point \ref{point:bar_geq_pos} of Proposition \ref{pro:barrier_points} (with $x = a_k^+$) and Equation \eqref{eq:inclusion_border}, while the last equality comes from Point \ref{point:controle_bord_gauche_pos} of Lemma \ref{lem:mass_transfert}. Therefore, ${p_1}_\# \pi_k^+ = {\mu}_k^+$, and a similar argument shows that ${p_2}_\# \pi_k^+ = {\nu}_k^+$.
				 \item Same proof as for the previous point.
				 \item We first show the inclusion $\spt(\pi) \cap (E^=  \times \R) \cap (B^c \times \R) \subset \D.$ Assume for contradiction that there exists $(x,y) \in  \spt(\pi) \cap (E^=  \times \R) \cap (B^c \times \R)$ such that $x \neq y$. Since $x \in B^c$, $]\min(x,y),\max(x,y)[ \subset E^-$ or $]\min(x,y),\max(x,y)[ \subset E^+$ is prohibited, which implies there exists $z \in ]\min(x,y),\max(x,y)[ \cap E^=$. Thus, as $x<y$ implies $(x,y) \in ]-\infty,z[\times]z,+\infty[ \subset \Cd(z)$ and $y<x$ implies $(x,y) \in ]z,+\infty[\times ]-\infty,z[ \subset \Cd(z)$, $\Cd(z)$ is an open set containing $(x,y)$. Since $z \in E^= \subset B$ by \ref{remq:preliminary_inclusion} , $\pi(\Cd(z))=0$. This contradicts the fact that $(x,y) \in \spt(\pi)$, thereby proving the inclusion. By construction, the first marginal of $\pi_1^= = \pi_{\res [(B^c \cap E^=) \times \R} $ is ${\mu}_1^= = \mu_{\res [(B^c \cap E^=) \times \R}$ and $0 \leq \pi_1^=(\D^c) = \pi([(B^c \cap E^=) \times \R] \cap \D^c) \leq \pi(\D \cap \D^c) = 0,$ which shows the first identity. Observe that $\pi_1^= = \pi_{(E^=\cap B^c) \times (E^=\cap B^c)}$.  The proof of $\pi_{\res \R \times (E^= \cap B^c)} = (\id,\id)_\# \nu_1^=$ is similar. Therefore, $(\id,\id)_\# \mu_1^= =\pi_{\res (E^=\cap B^c) \times \R} = \pi_{\res (E^= \cap B^c)^2} = \pi_{\res \R \times (E^=\cap B^c)} =(\id,\id)_\# \nu_1^=$.
				 \item By definition, ${\pi}_2^= = \sum_{x \in B} \pi(x,x) \d_{(x,x)},$ so we have to show that, for all $x \in B$, $\pi(x,x) = {\mu}_2^=(x) = \mu(x) - {\mu}^-(x) - {\mu}^+(x)$. We compute $\pi(x,x)$ using the partition of Point \ref{point:partition_B} of Lemma \ref{lem:eq_fixe_deux}, together with Lemma \ref{lem:mass_transfert}. More precisely, the two first point of Lemma \ref{lem:mass_transfert} give the value of first column of Table \ref{tab_pi_x_x}, while the two other points give the value of the second column of Table \ref{tab_pi_x_x}. The final column follows from the identity $\pi(x,x) = \mu(x) - \pi(\{x\} \times ]x,+\infty[) - \pi(\{x\} \times ]-\infty,x[)$. Since the last columns of Table \ref{tab:mu_sub} and Table \ref{tab_pi_x_x} are the same, this finishes the proof.
			\end{enumerate}
			\item Define $\ti{\pi} = \sum_{k \in \KK^+} {\pi}_k^+ + \sum_{k \in \KK^-} {\pi}_k^- + {\pi}^=$. Since  $\ens{A_k^+}{k \in \KK^+} \cup \ens{A_k^-}{k \in \KK^-} \cup \{ A^=\}$ is a class of disjoint sets, we have $\ti{\pi} = \pi_{\res \left(\cup_{k \in \KK^+} A_k^+\right) \cup \left(\cup_{k \in \KK^-} A_k^-\right) \cup A^=} \leq \pi$. Furthermore, $\ti{\pi}(\R^2) =\sum_{k \in \KK^+} {\mu}_k^+(\R) + \sum_{k \in \KK^-} {\mu}_k^-(\R) + {\mu}^=(\R) = \mu(\R) = \pi(\R^2).$ Thus, $\pi = \ti{\pi}$.\qedhere
		\end{enumerate}
	\end{proof}

	\begin{table}[h!]
		\centering
		\begin{tabular}{|c||c|c|c|}
			\hline  $x \in \cdot$ & $\pi(\{x\} \times ]x,+\infty[)$ & $\pi(\{x\} \times ]-\infty,x[)$ & $\pi(\{x\} \times \{x\})$ \\
			\hline $B_l^+ \cap B_r^+$ & $F_\mu^+(x) - F_\nu^+(x)$ & $0$ & $F_\nu^+(x) - F_\mu^-(x)$ \\
			\hline $ B_l^+ \cap B_r^-$ & $F_\mu^+(x) - F_\nu^+(x)$ & $F_\nu^-(x) - F_\mu^-(x)$ &  $\nu(x) $\\
			\hline $B_l^- \cap B_r^+$ & $0$ & $0$ & $\mu(x)$ \\
			\hline $B_l^- \cap B_r^- $ & $0$ & $F_\nu^-(x) - F_\mu^-(x)$ & $F_\mu^+(x) - F_\nu^-(x)$  \\
			\hline $B_l^+ \setminus B_r$ & $F_\mu^+(x) - F_\nu^+(x)$ & $0$ & $F_\nu^+(x) - F_\mu^-(x)$ \\
			\hline $B_l^- \setminus B_r$ & $0$ & $0$ & $\mu(x)$\\
			\hline $B_r^+ \setminus B_l$ & $0$ & $0$ & $\mu(x)$ \\
			\hline $B_r^- \setminus B_l$ & $0$ & $F_\nu^-(x) - F_\mu^-(x)$ & $ F_\mu^+(x) - F_\nu^-(x)$ \\
			\hline
		\end{tabular}
		\caption{Values of $\pi(x,x)$}
		\label{tab_pi_x_x}
	\end{table}
	
	\begin{defi}\label{defi:order}
		We denote by $\leq_{\R^2}$ the usual partial order on $\R^2$: for all $(x_1,y_1), (x_2,y_2) \in \R^2$, $(x_1,y_1) \leq_{\R^2} (x_2,y_2)$ if $x_1 \leq x_2$ and $y_1 \leq y_2.$ This order naturally extends to subsets of $\R^2$ as follows: for all subsets $A,B$ of $\R^2$, we write $A \leq_{\R^2} B$ if for all $(a,b) \in A \times B$, $a \leq_{\R^2} b.$  Let $\OOO\RRR$ denote the set formed by the classes $\CC$ of subsets of $\R^2$ such that, $$\forall A,B \in \CC: A \neq B \implies A \leq_{\R^2} B \text{ or } B \leq_{\R^2} A.$$
	\end{defi}
	The reader may think of $\OOO\RRR$ as the set of totally ordered classes of subset of $\R^2$. However, we do not require that classes $\CC \in \OOO\RRR$ satisfy $A \leq_{\R^2} A$ for every $A \in \CC.$ 
	\begin{ex}
		The class $\CC = \ens{[n,n+1]^2}{n \in \N}$ is an element of $\OOO\RRR$.
	\end{ex}
	\begin{lem}\label{lem:cycl_union}
		\begin{enumerate}
			\item Let $A$ and $B$ denote two cyclically monotone subsets of $\R^2$. If $A \leq_{\R^2} B$, then $A \cup B$ is a cyclically monotone set.
			\item If $\Gamma$ is a cyclically monotone set, then $\D \cup \Gamma$ is also a cyclically monotone set
		\end{enumerate}	
	\end{lem}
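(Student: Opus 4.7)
The plan is to treat the two parts separately, both reducing the cyclical monotonicity of the enlarged set to that of the original set(s) through a cost-reducing reorganization of the ``cyclic shift'' appearing in Definition \ref{defi:cycl_mon}.

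For part (1), I would first recall the standard equivalence: a set $\Gamma \subset \R^2$ is cyclically monotone if and only if, for every $n \geq 1$, every tuple $((x_i,y_i))_{i=1}^n \in \Gamma^n$, and every permutation $\sigma$ of $\ent{1}{n}$, one has $\sum_i |y_{\sigma(i)}-x_i| \geq \sum_i |y_i - x_i|$. This is obtained by decomposing $\sigma$ into its disjoint cycles and applying the defining inequality on each cycle, then summing. Then, given a tuple in $A \cup B$ with the cyclic-shift permutation $\sigma(i) = i+1 \bmod n$, I would partition the indices into $I_A$ and $I_B$ according to which of $A, B$ the corresponding point belongs to, and perform a finite sequence of uncrossing swaps: whenever some $i \in I_A$ satisfies $\sigma(i) \in I_B$ and some $j \in I_B$ satisfies $\sigma(j) \in I_A$, replace $\sigma$ by the permutation $\sigma'$ that sends $i \mapsto \sigma(j)$ and $j \mapsto \sigma(i)$ and agrees with $\sigma$ elsewhere. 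Each such swap strictly decreases the integer $\#\{k \in I_A : \sigma(k) \in I_B\}$, so after finitely many steps we reach a permutation $\tau$ with $\tau(I_A) = I_A$ and $\tau(I_B) = I_B$. Applying the permutation form of cyclical monotonicity to $A$ for the subtuple $((x_i,y_i))_{i \in I_A}$ with $\tau|_{I_A}$, and similarly to $B$, and summing, yields the desired inequality.

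The main obstacle is verifying that each swap cannot increase the total cost. Setting $a = x_i$, $c = x_j$, $d = y_{\sigma(j)}$, $b = y_{\sigma(i)}$, the hypothesis $A \leq_{\R^2} B$ forces $a \leq c$ and $d \leq b$, and the cost change equals $|d-a| + |b-c| - |b-a| - |d-c|$. One must therefore establish the four-point inequality $|b-a| + |d-c| \geq |b-c| + |d-a|$ whenever $a \leq c$ and $d \leq b$. This is a short case analysis according to the relative positions of $a, b, c, d$ on the line; alternatively, both sides involve pairs summing to the same value $(b+d)-(a+c)$, and the pair $\{b-a, d-c\}$ majorizes $\{b-c, d-a\}$, so the inequality is a direct consequence of the convexity of $|\cdot|$.

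For part (2), the proof is more direct and relies on the triangle inequality. Given an $n$-tuple $((x_i,y_i))_{i=1}^n \in (\D \cup \Gamma)^n$, let $i_1 < \cdots < i_m$ index the $\Gamma$-points (cyclically). Since diagonal points contribute $0$ to $\sum_i |y_i - x_i|$, the latter sum reduces to $\sum_{j=1}^m |y_{i_j} - x_{i_j}|$. On the other hand, for each pair of consecutive $\Gamma$-indices $i_j$ and $i_{j+1}$, the terms $|y_{i+1} - x_i|$ for $i \in \ent{i_j}{i_{j+1}-1}$ form the successive legs of a piecewise path from $x_{i_j}$ to $y_{i_{j+1}}$ passing through the diagonal points, so the triangle inequality gives $\sum_{i=i_j}^{i_{j+1}-1} |y_{i+1}-x_i| \geq |y_{i_{j+1}}-x_{i_j}|$. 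Summing over $j$ and applying cyclical monotonicity of $\Gamma$ to the subtuple $((x_{i_j},y_{i_j}))_{j=1}^m$ finishes the argument.
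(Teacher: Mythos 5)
Both parts of your argument are correct, but they relate to the paper's proof differently.

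For part~(2) you follow essentially the same path as the paper: the paper telescopes $\sum_{j=i}^{\sigma(i)-1}(y_{j+1}-x_j) = y_{\sigma(i)}-x_i$ using $y_j=x_j$ on the diagonal points and then applies the triangle inequality; you phrase the same computation geometrically as the length of a piecewise path through the intermediate diagonal points. Same idea, same estimates.

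For part~(1) you take a genuinely different route. The paper's proof works by globally sorting the $n$-tuple: since $A \leq_{\R^2} B$, the sorted sequences of the $x_i$'s and $y_i$'s split cleanly into an initial block coming from $I_A$ and a terminal block from $I_B$; the paper then applies cyclical monotonicity of $A$ and of $B$ to compare the original cost with the block-sorted cost, and finishes by invoking the optimality of the monotone (sorted) coupling against the cyclic-shift coupling. Your proof instead upgrades the cyclic-shift definition to the permutation form of cyclical monotonicity (via cycle decomposition, a standard equivalence), and then performs a finite sequence of uncrossing swaps to replace an arbitrary permutation $\sigma$ by one $\tau$ preserving the partition $I_A/I_B$; the potential you give does strictly decrease and, when it reaches zero, injectivity of $\sigma$ forces $\sigma(I_A)=I_A$. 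The only genuine new ingredient you need is the four-point exchange inequality $|b-a|+|d-c|\geq |d-a|+|b-c|$ under $a\leq c$, $d\leq b$, and your majorization argument (both pairs share the sum $b+d-a-c$, and $b-a$, $d-c$ are the extreme values, so convexity of $|\cdot|$ applies) is correct. In short, the paper buys brevity by citing the known optimality of the monotone coupling on the line, whereas your approach is more self-contained at the cost of the termination argument and the explicit exchange inequality; both are valid.
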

	
	\begin{proof}
		\begin{enumerate}
			\item\label{point:cup_cm_ordered} Consider $n \geq 1$ and $((x_k,y_k))_{k \in \ent{1}{n}} \in \left(A \cup B \right)^n.$ Define $I_A = \ens{i \in \ent{1}{n}}{(x_i,y_i) \in A}$ and $I_B = \ens{i \in \ent{1}{n}}{(x_i,y_i) \in B \setminus A}$. Observe that $I_A\uplus I_B = \ent{1}{n}$ and define $k = \#I_A$. Let \( (x_{(1)}, \dots, x_{(k)}) \), \( (x_{(k+1)}, \dots, x_{(n)}) \), \( (y_{(1)}, \dots, y_{(k)}) \), and \( (y_{(k+1)}, \dots, y_{(n)}) \) denote the families \( (x_i)_{i \in I_A} \), \( (x_i)_{i \in I_B} \), \( (y_i)_{i \in I_A} \), and \( (y_i)_{i \in I_B} \), respectively, sorted in non-decreasing order.
			Since $A \leq_{\R^2} B$, there exists $(x,y) \in \R^2$ such that $A \subset ]-\infty,x] \times ]-\infty,y]$ and $B \subset [x,+\infty[ \times [y,+\infty[.$ Thus $x_{(1)} \leq \dots \leq x_{(k)} \leq x \leq x_{(k+1)} \leq \dots x_{(n)}$ and $y_{(1)} \leq \dots \leq y_{(k)} \leq y \leq y_{(k+1)} \leq \dots y_{(n)}$. Hence, \( (x_{(1)}, \dots, x_{(n)}) \) and \( (y_{(1)}, \dots, y_{(n)}) \) correspond to the sequences \( (x_1, \dots, x_n) \) and \( (y_1, \dots, y_n) \), respectively, sorted in non-decreasing order. Since the monotone transport plan $\sum_{i=1}^n \d_{(x_{(i)},y_{(i)})}$ from $\sum_{i=1}^n \d_{x_i}$ to $\sum_{i=1}^n \d_{y_i}$ is optimal, we have $\sum_{i=1}^n |y_{(i)}-x_{(i)}| \leq \sum_{i=1}^n |y_{i+1}-x_i|.$ Moreover, as $((x_i,y_i))_{i \in I_A} \in A^k$, $((x_i,y_i))_{i \in I_B} \in B^{n-k}$ and $(A,B)$ is a pair of cyclically monotone sets, we deduce that: $$\sum_{i=1}^n |y_i-x_i| = \sum_{i \in I_A} |y_i-x_i| + \sum_{i \in I_B} |y_i-x_i|\leq \sum_{i=1}^k |y_{(i)}-x_{(i)}| + \sum_{i=k+1}^n |y_{(i)}-x_{(i)}|.$$ Thus, $$\sum_{i=1}^n |y_i-x_i| \leq \sum_{i=1}^n |y_{(i)}-x_{(i)}| \leq  \sum_{i=1}^n |y_{i+1}-x_i|.$$ Therefore, $A \cup B$ is cyclically monotone.
			\item\label{point:cycl_D_cup_g} Consider $((x_i,y_i))_{i \in \ent{1}{n}} \in (\D \cup \Gamma)^n$ and define $I = \ens{i \in \ent{1}{n}}{(x_i,y_i) \in \Gamma}$. For all $i \in I$, set $\s(i) = \inf\ens{j > i}{(x_j,y_j) \in \Gamma}$.\footnote{Where we use the convention $(x_{n+j},y_{n+j}) = (x_j,y_j)$.} For every $i \in I$ and $j \in \ent{i+1}{\s(i)-1}$, $(x_j,y_j)$ belongs to $\D$. As $\Gamma$ is cyclically monotone, it follows that:
			\begin{equation*}
				\sum_{i = 1}^n |y_i-x_i| = \sum_{i \in I} |y_i-x_i| \\
				\leq \sum_{i \in I} |y_{\s(i)}-x_i| \\
				= \sum_{i\in I} \left|\sum_{j =i}^{\s(i)-1} y_{j+1}-x_j \right|\\
				\leq \sum_{i \in I} \sum_{j=i}^{\s(i)-1} |y_{j+1}-x_j| \\
				= \sum_{i=1}^n |y_{i+1}-x_i|.
			\end{equation*}
			Thus, $\Gamma \cup \D$ is cyclically monotone.\qedhere
		\end{enumerate}
	\end{proof}
	
	\begin{remq}\label{remq:cyclicall_monotonicity_triangle}
		Let $(C_i)_{i \in \II}$ be a family of ordered cyclically monotone sets and define $\Gamma  = \D \cup \left(\bigcup_{i \in \II} C_i\right)$. Then $\Gamma$ is a cyclically monotone set. Indeed, by induction on Point \ref{point:cup_cm_ordered} of Lemma \ref{lem:cycl_union}, for any finite subset $J$ of $I$, $\cup_{j \in J} C_j$ is cyclically monotone. As every finite sequence of element of $\cup_{i \in \II} C_i$ belongs to a such set, $\cup_{i \in \II} C_i$ is cyclically monotone. According to Point \ref{point:cycl_D_cup_g} of Lemma \ref{lem:cycl_union}, $\Gamma$ is cyclically monotone.
	\end{remq}

	To state the properties of our marginal decomposition, we introduce a reinforced version of the stochastic order, due to Kellerer \cite[Definition $1.17$]{kellerer_order_1986}. 
	\begin{defi}[Large\footnote{We added \emph{large} to make a distinction between this order and a similar order --- related to $\G$ instead of $\F$ --- that will be introduced later in the article (Definition \ref{def:Kellerer_order_strict}). For the sake of fluidity, we omit the word large in the following.} reinforced stochastic order of Kellerer]\label{def:Kellerer_order_large}
		Consider $(\g_1,\g_2) \in \MM_+(\R)^2$ and define 
		\begin{equation*}
				T_+(\g_1,\g_2) = \ens{t \in \R}{F_{\g_1}^+(t) >0 \text{ and } F_{\g_2}^+(t) < \g_2(\R)}
		\end{equation*}
		and
		\begin{equation*}
			T_-(\g_1,\g_2) = \ens{t \in \R}{F_{\g_1}^-(t) > 0 \text{ and } F_{\g_2}^-(t) < \g_2(\R)}.
		\end{equation*}
		We say that $\g_1$ is smaller than $\g_2$ in the (large) reinforced stochastic order if $\g_1 \leq_{\st} \g_2$, $T_+(\g_1,\g_2) \subset \{F_{\g_1}^+ > F_{\g_2}^+\}$, and $T_-(\g_1,\g_2) \subset \{F_{\g_1}^- > F_{\g_2}^-\}.$ In this case, we write $\g_1 \leq_{\F} \g_2$.\footnote{This notation, motivated by the associated Strassen-type theorem, is adopted from Kellerer \cite{kellerer_order_1986}.}
	\end{defi}

	\begin{remq}\label{remq:leq_F}
		\begin{enumerate}
			\item Intuitively, if ${\g_1} \leq_{\st} {\g_2}$, then ${\g_1} \leq_{\F} {\g_2}$ means that the relations $F_{\g_1}^+ \geq F_{\g_2}^+$ and $F_{\g_1}^- \geq F_{\g_2}^-$ given by the stochastic order are strict ``wherever possible''. Specifically, since $T_+({\g_1},{\g_2})^c \subset \{F_{\g_1}^+ = F_{\g_2}^+ = 0\} \cup \{F_{\g_1}^+ = F_{\g_2}^+ = \g_1(\R) \} $ and $T_-({\g_1},{\g_2})^c \subset \{F_{\g_1}^- = F_{\g_2}^- = 0\} \cup \{F_{\g_1}^- = F_{\g_2}^- = {\g_2}(\R)\}$, it follows that $\{F_{\g_1}^+ > F_{\g_2}^+\} \subset T_+({\g_1},{\g_2})$ and $ \{F_{\g_1}^- > F_{\g_2}^-\} \subset T_-({\g_1},{\g_2})$). Thus, the relation $\g_1 \leq_{\F} \g_2$ precisely means that the inclusions $\{F_{\g_1}^+ > F_{\g_2}^+\} \subset T_+(\g_1,\g_2)$ and $\{F_{\g_1}^- > F_{\g_2}^-\} \subset T_-(\g_1,\g_2)$ are actually equalities.
			\item\label{point:infsupsupp} Observe that, for all $\g \in \MM^+(\R)$, $s_{\g} = \inf(\{F_{\g}^- > 0\})$ and $S_\g = \sup(\{F_\g^+ < \g(\R)\}).$
			\item\label{point:leq_F} To state this point, recall notation $\At(\g)$ (see Sub-subsection \ref{subsection:not}, Point \ref{point:atoms_not}). By the previous point of the remark, if $\g_1 \leq_{\st} \g_2$,
			\begin{equation}
				T_+(\g_1,\g_2) = 
				\begin{cases}
					[s_{\g_1},S_{\g_2}[ \text{ if } s_{\g_1} \in \At({\g_1}) \\
					]s_{\g_1},S_{\g_2}[ \text{ otherwise } 
				\end{cases} \text{ and \hspace{0.2cm}}
				T_-(\g_1,\g_2) =
				\begin{cases}
					]s_{\g_1},S_{\g_2}] \text{ if } S_{\g_2} \in \At({\g_2}) \\
					]s_{\g_1},S_{\g_2}[ \text{ otherwise } 
				\end{cases}.
			\end{equation} 
			\item The reader may verify that, if $F_{\g_1}^+ \geq F_{\g_2}^+$, $]s_{\g_1},S_{\g_2}[ \subset E^+(\g_1,\g_2)$, $s_{\g_1} \notin \At({\g_2})$, and $S_{\g_2} \notin \At({\g_1})$, then $\g_1 \leq_\F \g_2.$
		\end{enumerate}
	\end{remq}

	We now examine the properties of the marginal decomposition introduced in Definition \ref{def:component_marginals}. Specifically, we establish that each pair is ordered in the  reinforced stochastic order of Kellerer and that the fixed parts coincide.

	\begin{pro}\label{pro:prop_deco}
		Consider $(\mu,\nu) \in \MM^2_+.$
		\begin{enumerate}
			\item The equalities $\mu = \sum_{k \in \KK^+ } {\mu}_k^+ + \sum_{k \in \KK^-} {\mu}_k^- + {\mu}^=$ and $\nu =\sum_{k \in \KK^+ } {\nu}_k^+ + \sum_{k \in \KK^-} {\nu}_k^- + {\nu}^=$ hold.
			\item\label{point:ordre_compo} For all $k \in \KK^+$ (resp. $\KK^-$), ${\mu}_k^+ \leq_{\F} {\nu}_k^+$ (resp. ${\nu}_k^- \leq_{\F} {\mu}_k^-$).
			\item We have ${\mu}_1^= = {\nu}_1^=$, ${\mu}_2^= = {\nu}_2^=$, and ${\mu}^= = {\nu}^=.$ 
		\end{enumerate}
	\end{pro}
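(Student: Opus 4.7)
The three assertions are of very different depth, so I would tackle them in increasing order of difficulty. The identity in Point 1 is essentially already in Point \ref{point:marg_sum_compo} of Remark \ref{remq:on_marginals}: reading Definition \ref{def:component_marginals} gives $\mu = \mu^+ + \mu^- + \mu_1^= + \mu_2^=$, and regrouping $\mu^\pm = \sum_{k \in \KK^\pm} \mu_k^\pm$ and $\mu^= = \mu_1^= + \mu_2^=$ yields the stated formula; the argument for $\nu$ is identical.

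For Point 3, I would leverage results already established in the section. The equality $\mu_2^= = \nu_2^=$ is precisely the last assertion of Lemma \ref{lem:eq_fixe_deux}, where both measures are given the common explicit form $\sum_{x \in B}[\min(F_\mu^+(x),F_\nu^+(x)) - \max(F_\mu^-(x),F_\nu^-(x))]\d_x$. For $\mu_1^= = \nu_1^=$, I would pick any $\pi \in \CCC(\mu,\nu)$ (non-empty by the remark following Definition \ref{defi:cycl_mon}) and invoke Point \ref{point:transport_fitting_marg_eq_first} of Proposition \ref{pro:transport_fitting_marg}, which yields $\pi_1^= = (\id,\id)_\# \mu_1^= = (\id,\id)_\# \nu_1^=$; projecting onto a marginal immediately gives $\mu_1^= = \nu_1^=$. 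The identity $\mu^= = \nu^=$ then follows by summation.

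The substance lies in Point 2. By the symmetry in Point \ref{point:sym_marg} of Remark \ref{remq:on_marginals}, the positive marginal component of $(\nu,\mu)$ attached to $]a_k^-,b_k^-[$ is $(\nu_k^-,\mu_k^-)$, so proving $\mu_k^+ \leq_\F \nu_k^+$ for every $k \in \KK^+$ (and for arbitrary marginals) and then applying it to the pair $(\nu,\mu)$ settles the $\KK^-$ case automatically. Fix $k \in \KK^+$. A direct computation from Definition \ref{def:component_marginals} gives the mass identity $\mu_k^+(\R) = \nu_k^+(\R) = F_\mu^-(b_k^+) - F_\nu^+(a_k^+)$. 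I would then compute $F_{\mu_k^+}^\pm$ and $F_{\nu_k^+}^\pm$ piecewise on the five regions $]-\infty,a_k^+[$, $\{a_k^+\}$, $]a_k^+,b_k^+[$, $\{b_k^+\}$, $]b_k^+,+\infty[$ and verify that $F_{\mu_k^+}^+ - F_{\nu_k^+}^+$ equals $(F_\mu^+ - F_\nu^+)(a_k^+)\geq 0$ at $a_k^+$, equals $(F_\mu^+ - F_\nu^+)(t) > 0$ on $]a_k^+,b_k^+[$ (using $]a_k^+,b_k^+[ \subset E^+$) and vanishes elsewhere, with a parallel description for $F^-$. This yields $\mu_k^+ \leq_\st \nu_k^+$.

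To upgrade to $\leq_\F$, I would use Point \ref{point:leq_F} of Remark \ref{remq:leq_F} to describe $T_\pm(\mu_k^+,\nu_k^+)$. From the piecewise analysis one has $s_{\mu_k^+} \geq a_k^+$ and $S_{\nu_k^+} \leq b_k^+$, so $T_+(\mu_k^+,\nu_k^+)$ equals either $[a_k^+,b_k^+[$ or $]a_k^+,b_k^+[$ according to whether $a_k^+ \in \At(\mu_k^+)$, which by construction is equivalent to $F_\mu^+(a_k^+) > F_\nu^+(a_k^+)$ --- precisely the condition ensuring the strict inequality $F_{\mu_k^+}^+ > F_{\nu_k^+}^+$ at the left endpoint. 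A symmetric description applies to $T_-$ and the right endpoint $b_k^+$. Thus $T_\pm$ is contained in the strict-inequality set, giving $\mu_k^+ \leq_\F \nu_k^+$. I do not anticipate a conceptual obstacle; the main care required is the bookkeeping of the atoms at $a_k^+$ and $b_k^+$, where the prefactors in Definition \ref{def:component_marginals} are tuned precisely to balance total masses while retaining the strict gaps demanded by $\leq_\F$.
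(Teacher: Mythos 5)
Your proof is correct and takes essentially the same route as the paper's. Points 1 and 3 are dispatched as the paper does: Point 1 is a regrouping of the identity from Remark \ref{remq:on_marginals}, and Point 3 follows from Proposition \ref{pro:transport_fitting_marg} (your use of Lemma \ref{lem:eq_fixe_deux} for $\mu_2^= = \nu_2^=$ is equivalent to the paper's appeal to Point \ref{point:transport_fitting_marg_eq_second} of that proposition, since the lemma gives the identity explicitly). For Point 2 your strategy --- compute $F_{\mu_k^+}^\pm$ and $F_{\nu_k^+}^\pm$ piecewise, obtain $\leq_\st$, then control $T_\pm$ --- is exactly the paper's, and your explicit invocation of the symmetry in Remark \ref{remq:on_marginals} to dispose of $\KK^-$ is a cleaner way to say ``the proof is similar.''

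One imprecision worth flagging: from $s_{\mu_k^+} \geq a_k^+$ and $S_{\nu_k^+} \leq b_k^+$ you assert that $T_+(\mu_k^+,\nu_k^+)$ \emph{equals} $[a_k^+,b_k^+[$ or $]a_k^+,b_k^+[$; the inequalities only give the containment $T_+ \subset [a_k^+,b_k^+[$. The paper in fact proves the equalities $s_{\mu_k^+} = a_k^+$ and $S_{\nu_k^+} = b_k^+$ (by showing $\{F_{\mu_k^+}^- > 0\} = ]a_k^+,+\infty[$ using $F_\mu^-(t) > F_\nu^-(t) \geq F_\nu^+(a_k^+)$ on $]a_k^+,b_k^+[$, and symmetrically for $S_{\nu_k^+}$), then applies Point 4 of Remark \ref{remq:leq_F}. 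Your argument still closes because the containment $T_+ \subset [a_k^+,b_k^+[$ is all that is used: on $]a_k^+,b_k^+[$ strictness comes from $E^+$, and if $a_k^+ \in T_+$ then $a_k^+ \in \At(\mu_k^+)$, forcing $F_\mu^+(a_k^+) > F_\nu^+(a_k^+)$ and hence strictness at the left endpoint. So the statement you wrote is slightly stronger than what you derived, but the conclusion $T_\pm \subset \{F_{\mu_k^+}^\pm > F_{\nu_k^+}^\pm\}$ is unaffected.
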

	
	\begin{proof}
		\begin{enumerate}
			\item\label{point:pro_marg_sum_compo} This was established in Point \ref{point:marg_sum_compo} of Remark \ref{remq:on_marginals}.
			\item For all $k \in \KK^+$ and $t \in \R$,  
			\begin{equation}\label{eq:fdr_rest_pos}
				\accol{F_{{\mu}_k^+}^+(t) = \1_{a_k^+ \leq t < b_k^+}(F_\mu^+(t)-F_\nu^+(a_k^+)) + \1_{t \geq b_k^+} (F_\mu^-(b_k^+)-F_\nu^+(a_k^+)) \\ F_{{\nu}_k^+}^+(t) = \1_{a_k^+< t < b_k^+} (F_\nu^+(t)- F_\nu^+(a_k^+)) + \1_{t \geq b_k^+}((F_\mu^-(b_k^+)-F_\nu^+(a_k^+))}
			\end{equation}
			and 
			\begin{equation}\label{eq:fdr_rest_neg}
				\accol{F_{{\mu}_k^+}^-(t) = \1_{a_k^+ < t \leq b_k^+}(F_\mu^-(t)-F_\nu^+(a_k^+)) + \1_{t>b_k^+}(F_\mu^-(b_k^+)-F_\nu^+(a_k^+)) \\ F_{{\nu}_k^+}^-(t) = \1_{a_k^+<t \leq b_k^+}(F_\nu^-(t)- F_\nu^+(a_k^+))+ \1_{t> b_k^+}(F_\mu^-(b_k^+)-F_\nu^+(a_k^+))},
			\end{equation} which implies
			\begin{equation}\label{eq:diff_rep_compo}
				\begin{cases}
					F_{{\mu}^+_k}^+(t) - F_{{\nu}^+_k}^+(t) = \1_{t=a_k^+}(F_\mu^+(a_k^+) - F_\nu^+(a_k^+)) + \1_{a_k^+< t < b_k^+} (F_\mu^+(t)-F_\nu^+(t))\\ F_{{\mu}_k^+}^-(t) - F_{{\nu}_k^+}^-(t) = \1_{a_k^+< t \leq b_k^+}(F_{\mu}^-(t)-F_{\nu}^-(t))
				\end{cases}.
			\end{equation}
			We have $]a_k^+,b_k^+[\subset E^+$ and Equation \eqref{eq:inclusion_border} implies $a_k^+ \in \{F_\mu^+ \geq F_\nu^+\}$. From Equation \eqref{eq:diff_rep_compo}, it follows that $F_{{\mu}_k^+}^+ \geq F_{{\nu}_k^+}^+.$ Morever, for all $t\in ]a_k^+,b_k^+[$, $F_\mu^-(t) > F_\nu^-(t) \geq F_\nu^+(a_k^+)$ and $F_\nu^+(t) < F_\mu^+(t) \leq F_\mu^-(b_k^+)$. By Equation \eqref{eq:fdr_rest_neg} and \eqref{eq:fdr_rest_pos}, we respectively obtain that $\{F_{\mu_k^+}^- > 0\} = ]a_k^+,+\infty[$ and $\{F_{\nu_k^+}^+ < \nu_k^+(\R)\}.$ By Point \ref{point:infsupsupp} of Remark \ref{remq:leq_F}, we get $s_{{\mu}_k^+} = a_k^+$ and $S_{{\nu}_k^+} = b_k^+$ and Equation \eqref{eq:diff_rep_compo} implies $]s_{{\mu}_k^+},S_{{\nu}_k^+}[ \subset E^+({\mu}_k^+,{\nu}_k^+)$. Since ${\mu}_k^+(S_{{\nu}_k^+}) = {\nu}_k^+(s_{{\mu}_k^+}) = 0,$ by Point \ref{point:leq_F} of Remark \ref{remq:leq_F}, we get ${\mu}_k^+ \leq_{\F} {\nu}_k^+$. The proof of the inequality ${\nu}_k^- \leq_{\F} {\mu}_k^-$ is similar.
			\item The Points \ref{point:transport_fitting_marg_eq_first} and \ref{point:transport_fitting_marg_eq_second} of Proposition \ref{pro:transport_fitting_marg} establish that ${\mu}_1^= = {\nu}_1^=$ and  ${\mu}_2^= = {\nu}_2^=$. This shows that ${\mu}^= = {\mu}_1^= + {\mu}_2^= ={\nu}_1^= + {\nu}_2^= = {\nu}^=$.
		\end{enumerate}
	\end{proof}

	We now state and prove the decomposition result for cyclically monotone transport plans associated with the marginal components introduced in Definition \ref{def:component_marginals}.

	\begin{them}[Decomposition of $\CCC(\mu,\nu)$]\label{pro:decomposition}
		Consider $(\mu,\nu) \in \MM^2_+.$ We use the notation for marginal components introduced in Definition \ref{def:component_marginals} and the notation $((A_k^+)_{k \in \KK^+},(A_k^-)_{k \in \KK^-},A^=)$ introduced in Definition \ref{defi:composantes}. Define also ${\mathfrak{P}} = \prod_{k \in \KK^+} \CCC({\mu}_k^+,{\nu}_k^+) \times \prod_{k \in \KK^-} \CCC({\mu}_k^-, {\nu}_k^-) \times \CCC({\mu}^=, {\nu}^=).$
		\begin{enumerate}
			\item\label{point:ordre_dec_part} For all $k \in \KK^+$ (resp. $\KK^-$), ${\mu}_k^+ \leq_{\F} {\nu}_k^+$ (resp. ${\nu}_k^- \leq_{\F} {\mu}_k^-$). Moreover, ${\mu}^= = {\nu}^=.$
			\item\label{point:somme_dec_part} The map $\varphi : {\mathfrak{P}} \to \CCC(\mu,\nu)$ defined by 
			$$\varphi(({\pi}_k^+)_{k \in \KK^+}, ({\pi}_k^-)_{k \in \KK^-},{\pi}^=) = \sum_{k \in \KK^+ } {\pi}_k^+ + \sum_{k \in \KK^- } {\pi}_k^- + {\pi}^=$$ is a bijection. Furthermore, its inverse $\varphi^{-1} : \CCC(\mu ,\nu) \to {\mathfrak{P}} $ is given by 
			\begin{equation}\label{eq:formul_inverse_int}
				\varphi^{-1}(\pi) = \left( \left(\pi_{\res A_k^+} \right)_{k \in \KK^+},  \left(\pi_{\res A_k^-} \right)_{k \in \KK^-}, \pi_{\res A^=} \right).
			\end{equation}
		\end{enumerate}
	\end{them}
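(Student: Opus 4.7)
The first point of the theorem is just the content of Proposition~\ref{pro:prop_deco}, so nothing new needs to be shown. The real work is in Point~\ref{point:somme_dec_part}.

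The plan is to first check that the restriction map $\pi \mapsto ((\pi_{\res A_k^+})_k,(\pi_{\res A_k^-})_k, \pi_{\res A^=})$ is a right inverse of $\varphi$. Given $\pi \in \CCC(\mu,\nu)$, Proposition~\ref{pro:transport_fitting_marg} immediately gives that each restriction has the correct marginals, and Remark~\ref{remq:cycl_res} ensures each is cyclically monotone for its own marginals, so the tuple belongs to $\mathfrak{P}$. Point~\ref{point:surj_int} of the same proposition rewrites $\pi$ as the sum of its pieces, which is precisely $\varphi$ applied to the restriction tuple. This simultaneously establishes surjectivity of $\varphi$ and the formula \eqref{eq:formul_inverse_int} for $\varphi^{-1}$; injectivity of $\varphi$ then follows from the pairwise disjointness of the sets $\ens{A_k^+}{k \in \KK^+}$, $\ens{A_k^-}{k \in \KK^-}$ and $A^=$ (Definition~\ref{defi:composantes}), since the value on each $A$-block of any sum recovers the summand supported there.

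The remaining task — and the main obstacle — is to show that $\varphi(\mathfrak{P}) \subset \CCC(\mu,\nu)$, i.e.\ that glueing arbitrary cyclically monotone pieces on each component yields a globally cyclically monotone transport plan. First, for the ``$=$'' piece: since $\mu^= = \nu^=$ (Proposition~\ref{pro:prop_deco}) and $W_1(\mu^=,\mu^=)=0 < +\infty$, Theorem~\ref{them:cyclicality_optimality} forces $\CCC(\mu^=,\nu^=) = \OO(\mu^=,\mu^=) = \{(\id,\id)_\# \mu^=\}$, which is concentrated on $\D$. Next, each $\pi_k^+ \in \CCC(\mu_k^+,\nu_k^+)$ is supported on $\spt(\mu_k^+) \times \spt(\nu_k^+) \subset [a_k^+,b_k^+]^2$, and is therefore concentrated on a cyclically monotone subset $\Gamma_k^+$ of $[a_k^+,b_k^+]^2$; similarly for $\pi_k^-$. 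The marginal identity for the sum $\varphi((\pi_k^+),(\pi_k^-),\pi^=)$ follows from Point~\ref{point:pro_marg_sum_compo} of Proposition~\ref{pro:prop_deco} by summing first marginals.

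The cyclical monotonicity of the glued plan is where Remark~\ref{remq:cyclicall_monotonicity_triangle} kicks in. The intervals $\{]a_k^+,b_k^+[\}_k \cup \{]a_k^-,b_k^-[\}_k$ are the pairwise disjoint open connected components of the open set $E^+ \cup E^-$ (Definition~\ref{defi:compo_real_line}), so any two of the closed intervals $[a_k^\pm,b_k^\pm]$ are either equal or share at most one common boundary point. In particular, for any two distinct components with intervals $[a,b]$ and $[a',b']$, we have $b \leq a'$ or $b' \leq a$, which yields $[a,b]^2 \leq_{\R^2} [a',b']^2$ or the reverse. Consequently the family $(\Gamma_k^\pm)$ belongs to $\OOO\RRR$, and Remark~\ref{remq:cyclicall_monotonicity_triangle} implies that $\D \cup \bigcup_k \Gamma_k^+ \cup \bigcup_k \Gamma_k^-$ is cyclically monotone. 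Since the glued plan is concentrated on this set (the identity piece lives on $\D$, each other piece on $\Gamma_k^\pm$), it is cyclically monotone, completing the proof.
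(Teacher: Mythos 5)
Your proposal is correct and follows essentially the same path as the paper: Proposition~\ref{pro:transport_fitting_marg} together with disjointness of the $A$-blocks gives bijectivity and the inverse formula, while Remark~\ref{remq:cyclicall_monotonicity_triangle} applied to an $\leq_{\R^2}$-ordered family of cyclically monotone carriers gives that $\varphi$ is valued in $\CCC(\mu,\nu)$. The only cosmetic deviation is that the paper invokes Proposition~\ref{pro:ordre_stochastique} to concentrate $\pi_k^+$ on the explicit set $\F \cap A_k^+$ (resp.\ $\ti{\F} \cap A_k^-$), whereas you use an abstract cyclically monotone carrier $\Gamma_k^\pm \subset [a_k^\pm,b_k^\pm]^2$; this bypasses Proposition~\ref{pro:ordre_stochastique} but the remaining argument is identical.
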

	\begin{proof}
		The first point follows from Proposition \ref{pro:prop_deco}. For the second point, we first establish that $\varphi$ is indeed valued in $\CCC(\mu,\nu)$. Consider a family ${\FF} := (({\pi}_k^+)_{k \in \KK^+},({\pi}_k^-)_{k \in \KK^-}, {\pi}^=) \in {\mathfrak{P}}$. By Point \ref{point:pro_marg_sum_compo} of Proposition \ref{pro:prop_deco}, $\pi^* := \sum_{k \in \KK^+} {\pi}_k^+ + \sum_{k \in \KK^-} {\pi}_k^- + {\pi}^=$ is a transport plan from $\mu$ to $\nu$. By Proposition \ref{pro:ordre_stochastique}, $\pi$ is concentrated  on $\Gamma^* = \bigcup_{k \in \KK^+}(\F \cap A_k^+) \cup \bigcup_{k \in \KK^-} (\ti{\F} \cap A_k^-) \cup \D$. As $\F$ and $\ti{\F}$ are cyclically monotone sets, by Remark \ref{remq:cyclicall_monotonicity_triangle}, $\Gamma^*$ is a cyclically monotone set. Thus, $\pi^* \in \CCC(\mu,\nu)$ and $\varphi$ is indeed valued in $\CCC(\mu,\nu).$ We now establish that $\varphi$ is injective. Consider $\pi \in \CCC(\mu,\nu)$ and  $ {\FF} := (({\pi}_k^+)_{k \in \KK^+},({\pi}_k^-)_{k \in \KK^-}, {\pi}^=) \in {\mathfrak{P}}$ such that $\varphi(\FF) = \pi$. Since ${\pi}^= \in \CCC({\mu}^=,{\nu}^=) = \{(\id,\id)_\# {\mu}^=\}$, ${\pi}^= = (\id,\id)_\# {\mu}^=$. In particular ${\pi}^= $ is concentrated on $A^=$. For all $k \in \KK^+$(resp. $k \in \KK^-$), ${\pi}_k^+$ (resp. ${\pi}_k^-$) is concentrated on $A_k^+$ (resp. $A_k^-$). As $\ens{A_k^+}{k \in \KK^+} \cup \ens{A_k^-}{k \in \KK^-}   \cup \{A^=\}$ is  a class of disjoints set, for all $k \in \KK^+$, $$\pi_{\res A_k^+} = \left(\sum_{k \in \KK^+ } {\pi}_j^+ + \sum_{j \in \KK^- } {\pi}_j^- + {\pi}^=\right)_{\res A_k^+} = {\pi}_k^+.$$ Similarly, for all $k \in \KK^-$, ${\pi}_k^- = \pi_{\res A_k^-}$, and $\pi^= = \pi_{\res A^=}$. Therefore, ${\FF}$ is equal to the second term of Equation \eqref{eq:formul_inverse_int} and $\varphi$ is injective. We establish now that $\varphi$ is surjective. Consider $\pi \in \CCC(\mu,\nu)$ and let $\FF$ denote the right term of Equation \eqref{eq:formul_inverse_int}. From Points (1--4) of Proposition \ref{pro:transport_fitting_marg} and Remark \ref{remq:cycl_res}, if follows that $\FF \in \mathfrak{P}.$ By Point $\ref{point:surj_int}$ of Proposition \ref{pro:transport_fitting_marg}, it follows that $\varphi(\FF) = \pi.$ Therefore, $\varphi$ is surjective and Formula \eqref{eq:formul_inverse_int} is satisfied.
	\end{proof}

	Using Notation \ref{nott:somme_directe}, Point \ref{point:somme_dec_part} gives the direct sum decomposition 
	\begin{equation}\label{eq:somme_directe_int}
		\begin{aligned}
			\CCC(\mu,\nu) &= \left(\bigoplus_{k \in \KK^+} \CCC({\mu}_k^+,{\nu}_k^+) \right) \oplus \left( \bigoplus_{k \in \KK^-} \CCC({\mu}_k^-,{\nu}_k^-) \right) \oplus \CCC({\mu^=},{\nu}^=) \\
			&= \left(\bigoplus_{k \in \KK^+} \Marg_{\F}({\mu}_k^+,{\nu}_k^+) \right) \oplus \left( \bigoplus_{k \in \KK^-} \Marg_{\ti{\F}}({\mu}_k^-,{\nu}_k^-) \right) \oplus \{(\id,\id)_\# \eta\}.
		\end{aligned}
	\end{equation}
	
	\begin{remq}\label{remq:finite_cost}
		As previously noted, if $W_1(\mu,\nu) <+ \infty$, then $\CCC(\mu,\nu) = \OO(\mu,\nu).$ However, in case $W_1(\mu,\nu) = + \infty$, one can not replace $\CCC(\mu,\nu)$ with $\OO(\mu,\nu)$ in this subsection. For instance, define 
		\begin{equation*}
			\begin{cases}
				\mu = \sum_{ k \geq 1} \frac{1}{2^k} \d_{-k} + \d_0 + \sum_{k \geq 1} \frac{1}{2^k} \d_k \\
				\nu = \sum_{ k \geq 1} \frac{1}{2^k} \d_{-k-2^k} + \d_0 + \sum_{k \geq 1} \frac{1}{2^k} \d_{k+2^k}
			\end{cases}.
		\end{equation*}
		As  $\pi^* := \sum_{k \geq 1} \frac{1}{2^k} \d_{(-k,-k-2^k)} + \d_{(0,0)} + \sum_{k \geq 1} \frac{1}{2^k} \d_{(k,k+2^k)} \in \Marg(\mu,\nu)$ is concentrated on $\left(\ti{\F} \cap ]-\infty,0]^2\right) \cup \left(\F \cap [0,+\infty[^2 \right)$, by Lemma \ref{lem:cycl_union}, $\pi^*$  belongs to $\CCC(\mu,\nu)$. By Theorem \ref{them:cyclicality_optimality}, $W_1(\mu,\nu) = J(\pi^*) = 2\sum_{k \geq 1} \frac{1}{2^k}2^k + 1 = + \infty.$ In this example, the decomposition yields $E^+(\mu,\nu) = ]0,+ \infty[$, $E^-(\mu,\nu) = ]-\infty, 0[$,
		and $E^=(\mu,\nu) = \{0\}$. Hence $\mu_1^+ = \sum_{k \geq 1} \frac{1}{2^k} \d_k$, $\nu_1^+ = \sum_{k \geq 1} \frac{1}{2^k} \d_{k+2^k}$, $\mu_1^- = \sum_{k \geq 1} \frac{1}{2^k} \d_{-k}$, $\nu_1^- = \sum_{k \geq 1} \frac{1}{2^k} \d_{-k - 2^k}$, and $\mu^= = \nu^= = \d_0$.
		Clearly $\mu \otimes \nu \notin \OO(\mu_1^+, \nu_1^+) \oplus \OO(\mu_1^-, \nu_1^-) \oplus \OO(\mu^=, \nu^=)$, and Theorem \ref{pro:decomposition} does not hold for $\OO(\mu,\nu)$ in place of $\CCC(\mu,\nu)$. 
	\end{remq}
	
	\begin{remq}[The decomposition of Kellerer]
		In \cite[Proposition $1.20$]{kellerer_order_1986}, Kellerer stated that if  $\mu \leq_{\st} \nu$ and $(\mu,\nu)$ satisfy $\mu(\{F_\mu^+=F_\nu^+\}) = \nu(\{F_\mu^-=F_\nu^-\}) = 0$, then $$\varphi : (\pi_k^+)_{k \in \KK^+} \in \prod_{k \in \KK^+} \Marg_{\F}\left(\mu_{\res [a_k^+,b_k^+[}, \nu_{\res ]a_k^+,b_k^+]}\right) \mapsto \sum_{k \in \KK^+} \pi_k^+ \in \Marg_{\F}(\mu,\nu).$$ is a bijection. Observe that, by Proposition \ref{pro:ordre_stochastique} and Remark \ref{remq:finite_cost}, we know that $\Marg_{\F}(\mu,\nu) = \mathfrak{C}(\mu,\nu).$ Hence, this result is a particular case of Theorem \ref{pro:decomposition}. Note that, in this case, $\mu^= = \mu^- = 0$, and the issue of mass allocation at boundary points vanishes.
	\end{remq}

	\begin{remq}
		One could ask about the optimality of our decomposition of $\CCC(\mu,\nu)$. Observe that, if we define  $\mu = \d_0 + 2\d_1$ and $\nu = 2 \d_1 + \d_2$, we have $\CCC(\mu,\nu) = \{\d_{(1,1)}\} \oplus \CCC(\d_0+\d_1, \d_1+\d_2)$ while Equation \eqref{eq:somme_directe_int} rewrites $\CCC(\mu,\nu) = \CCC(\mu,\nu)$. Thus, while the decomposition of $\CCC(\mu,\nu)$ given by Theorem \ref{pro:decomposition} is relevant with respect to the entropic problem with the distance cost, it can still be refined. In a companion paper, we will address with more details the question of the optimality of our decomposition. There we will introduce a refined version of our decomposition. This refinement will serve as the first step for the step for the study of entropic selection problem with the cost $ c^* : (x,y) \in \R^2  \mapsto |y-x| + (+\infty)\1_{x=y} \in [0,+\infty]$. In this paper we will also prove that this refined decomposition and the decomposition associated to Theorem \ref{pro:decomposition} are both optimal in a certain sense.
	\end{remq}

	\subsection{Applications of the decomposition result}\label{subsection:appli_deco}
	
	\subsubsection{Definition of $\KK(\mu,\nu)$ and characterization of $\BB(\mu,\nu)$}

	\begin{defi}[Large strong multiplicativity]\label{defi:strong_multiplicativity_large}
		A measure $\pi \in \MM_+(\R^2)$ is said to be (largely) strongly multiplicative if there exists $\eta_1, \eta_2 \in \MM_+^\s(\R)$ such that $\pi = \left(\eta_1 \otimes \eta_2\right)_{\res \F}.$
	\end{defi}

	For any pair $(\g_1,\g_2)$ defined on the same measurable space, we write $\g_1 \ll \g_2$ if $\g_1$ is absolutely continuous with respect to $\g_2$. In this case, we denote by $\frac{d \g_1}{d\g_2}$ the Radon-Nikodym derivative of $\g_1$ with respect to $\g_2$. If $\g_1 \ll \g_2$ and $\g_2 \ll \g_1$, we say that $\g_1$ and $\g_2$ are equivalent and we write $\g_1 \sim \g_2.$ We can now state the Strassen-type theorem of Kellerer associated to $\leq_\F$ \cite[Theorem $3.6$]{kellerer_order_1986}.
	
	\begin{them}[Strassen-type theorem for $\leq_{\F}$]\label{them:strassen_F}
		Consider $({\g_1},{\g_2}) \in \MM_+(\R)^2.$ The relation ${\g_1} \leq_{\F} {\g_2}$ holds if and only if $\Marg(\g_1,\g_2)$ contains a strongly multiplicative measure, \ie, there exists $\eta_1, \eta_2 \in \MM_+^\s(\R)$ such that 
		\begin{equation}\label{eq:kellerer_appartenance}
			\left(\eta_1 \otimes \eta_2\right)_{\res \F} \in \Marg(\g_1,\g_2).
		\end{equation} 
		In this case, $\Marg(\g_1,\g_2)$ contains exactly one strongly multiplicative measure. Moreover, one can assume $\eta_1 \sim \g_1$ and $\eta_2 \sim \g_2$ in Equation \eqref{eq:kellerer_appartenance}.
	\end{them}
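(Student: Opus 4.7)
Suppose $\pi = (\eta_1 \otimes \eta_2)_{\res \F} \in \Marg(\g_1,\g_2)$. Since $\pi$ is concentrated on $\F$, Proposition~\ref{pro:ordre_stochastique} yields $\g_1 \leq_\st \g_2$. For any $t \in \R$ the region $]t,+\infty[ \times ]-\infty,t]$ lies in $\ti{\G} \subset \F^c$, so $\pi$ assigns it zero mass; expanding $F_{\g_1}^+(t) = \pi(]-\infty,t] \times \R)$ and $F_{\g_2}^+(t) = \pi(\R \times ]-\infty,t])$ then gives
\[
F_{\g_1}^+(t) - F_{\g_2}^+(t) = \pi\bigl(]-\infty,t] \times ]t,+\infty[\bigr) = \eta_1(]-\infty,t]) \cdot \eta_2(]t,+\infty[),
\]
the last equality because $]-\infty,t] \times ]t,+\infty[ \subset \F$. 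For $t \in T_+(\g_1,\g_2)$, the identities $\g_1(]-\infty,t]) = \int_{]-\infty,t]} \eta_2([x,+\infty[)\,d\eta_1(x) > 0$ and $\g_2(]t,+\infty[) = \int_{]t,+\infty[} \eta_1(]-\infty,y])\,d\eta_2(y) > 0$ force $\eta_1(]-\infty,t]) > 0$ and $\eta_2(]t,+\infty[) > 0$ respectively, whence $F_{\g_1}^+(t) > F_{\g_2}^+(t)$. An analogous computation with left-continuous cumulative distribution functions handles $T_-$.

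\textbf{Hard direction: order implies existence.} I would argue constructively. Under the hypothesis $\g_1 \leq_\F \g_2$, one has $E^-(\g_1,\g_2) = \emptyset$, so Theorem~\ref{pro:decomposition} reduces the task to building a strongly multiplicative plan on each positive component and adding the diagonal part $(\id,\id)_\# \g^=$. On a single component $]a_k^+,b_k^+[$, where $h := F_{\g_1}^+ - F_{\g_2}^+$ is strictly positive in the interior, in the atomless case with Lebesgue densities $f_1, f_2$ one seeks densities $h_1, h_2$ of $\eta_1, \eta_2$ satisfying
\[
h_1(x) \int_x^{b_k^+} h_2(y)\,dy = f_1(x), \qquad h_2(y) \int_{a_k^+}^y h_1(x)\,dx = f_2(y).
\]
Setting $H_1(x) = \int_{a_k^+}^x h_1$ and $H_2(x) = \int_x^{b_k^+} h_2$, the coupled system entails $(H_1 H_2)' = f_1 - f_2 = h'$, hence $H_1 H_2 \equiv h$; one then integrates the logarithmic derivatives $H_1'/H_1 = f_1/h$ and $H_2'/H_2 = -f_2/h$ explicitly. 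The reinforced order condition is precisely what guarantees integrability of $f_i/h$ near the endpoints of $]a_k^+,b_k^+[$. To cover marginals with atoms, I would approximate $\g_1, \g_2$ by smoothed versions $\g_i \ast \phi_\varepsilon$ preserving $\leq_\F$, build $(\eta_1^\varepsilon, \eta_2^\varepsilon)$ at each scale, and extract a weak limit of $(\eta_1^\varepsilon \otimes \eta_2^\varepsilon)_{\res \F}$ using tightness and the compactness of $\F$-concentrated couplings with fixed marginals.

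\textbf{Uniqueness, the ``moreover'' claim, and main obstacle.} For uniqueness, if $\pi = (\eta_1 \otimes \eta_2)_{\res \F}$ and $\pi' = (\eta_1' \otimes \eta_2')_{\res \F}$ both belong to $\Marg(\g_1,\g_2)$, then for all $s \leq t$,
\[
\pi\bigl(]-\infty,s] \times ]t,+\infty[\bigr) = \eta_1(]-\infty,s])\,\eta_2(]t,+\infty[),
\]
and similarly for $\pi'$. For $s \leq t$, writing $\pi(]-\infty,s] \times ]-\infty,t]) = F_{\g_1}^+(s) - \pi(]-\infty,s] \times ]t,+\infty[)$ together with the shared marginals and the $\F$-concentration of both plans (which kills $\pi(]-\infty,s] \times ]-\infty,t])$ for $s>t$ up to the diagonal term) determines $\pi$ and $\pi'$ on a generating class of rectangles, forcing $\pi = \pi'$. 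The additional claim $\eta_1 \sim \g_1$ and $\eta_2 \sim \g_2$ is obtained a posteriori by restricting each $\eta_i$ to the support of the corresponding Radon--Nikodym derivative --- an operation that leaves $\pi$ invariant because on the discarded set the integrand defining $\pi$ already vanishes. The main obstacle is the construction step in the converse direction: the ODE argument is clean in the atomless density case on a bounded interval, but the boundary behaviour of $h$ at its zeros and the approximation argument needed to pass from atomless to general marginals with atoms form the technical heart of the proof.
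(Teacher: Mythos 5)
The paper does not prove this statement at all: it is quoted as Kellerer's Theorem~3.6 from \cite{kellerer_order_1986} and used as a black box, so there is no ``paper's own proof'' to compare against. On the merits of your attempt, the easy direction is correct and clean: the identity $F_{\g_1}^+(t) - F_{\g_2}^+(t) = \eta_1(]-\infty,t])\,\eta_2(]t,+\infty[)$ together with the positivity forced on each factor by $t\in T_+$ does give the reinforced strict inequality, and the symmetric argument with $F^-$ handles $T_-$.

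The hard direction has two issues beyond the gaps you already flagged. First, the framing ``reduce to each positive component via Theorem~\ref{pro:decomposition} and add $(\id,\id)_\#\g^=$'' is not quite right: if there were two or more positive components, a sum of strongly multiplicative plans on disjoint boxes is \emph{not} of the form $(\eta_1\otimes\eta_2)_{\res\F}$ --- any candidate $\eta_1,\eta_2$ supported on both components would necessarily create cross-mass on $\F$ between them. What actually happens is that the hypothesis $\g_1\leq_\F\g_2$ rules this out: a point $t$ strictly between two components or inside a nontrivial fixed part would satisfy $t\in T_+\cap T_-$ while $F_{\g_1}^+(t)=F_{\g_2}^+(t)$, contradicting $\leq_\F$. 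So the correct observation is that $\leq_\F$ forces essentially a single component; you do not reduce to the multi-component case, you show it cannot occur. Second, the uniqueness sketch does not close. You correctly note that $\pi(]-\infty,t]\times]t,+\infty[)$ is determined by $\g_1,\g_2$ for every $t$, but to show $\pi=\pi'$ you need $\pi(]-\infty,s]\times]t,+\infty[)$ for $s<t$, i.e.\ the products $\eta_1(]-\infty,s])\,\eta_2(]t,+\infty[)$ off the diagonal, and these are not immediately pinned down by the marginals plus the diagonal identity; some extra argument (e.g.\ a functional-equation or a monotone-class argument propagating the diagonal values to the off-diagonal rectangles) is needed. So both the construction and the uniqueness claims remain genuine gaps, which is consistent with the paper's decision to cite rather than reprove Kellerer.
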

	
	We denote by $\KK_\F(\g_1,\g_2)$ the unique strongly multiplicative measure in $\Marg(\g_1,\g_2)$ when $\g_1 \leq_\F \g_2$ and call it Kellerer transport plan. Note that $\KK_\F(\g_1, \g_2)$ belongs to $\Marg_\F(\g_1,\g_2) = \CCC(\g_1,\g_2).$ If $\g_2 \leq_\F \g_1$, we define $\KK_{\ti{\F}}(\g_1,\g_2) = i_\# \KK_\F(\g_2,\g_1)$, where $i : \R^2 \to \R^2$ is the map defined by $i(x,y) = (y,x)$. In this case, $\KK_{\ti{\F}}(\g_1,g_2)$ belongs to $\Marg_{\ti{\F}}(\g_1,\g_2)$. Our decomposition allows us to extend the definition of Kellerer transport plans to measures that are not in the reinforced large stochastic order.
	
	\begin{defi}[Generalized Kellerer transport plan]\label{defi:gen_kell_transport_plan}
		By Point \ref{point:ordre_dec_part} of Theorem \ref{pro:decomposition} and Theorem \ref{them:strassen_F}, the transport plan $\KK(\mu,\nu) := \sum_{k \in \KK^+} \KK_\F(\mu_k^+,\nu_k^+) + \sum_{k \in \KK^-} \KK_{\ti{\F}}(\mu_k^-,\nu_k^-) + (\id,\id)_\# \mu^=$ is well defined. By Equation \eqref{eq:somme_directe_int}, $\KK(\mu,\nu)$ belongs to $\CCC(\mu,\nu).$
	\end{defi}
	
	The following result is a characterization of barrier points in terms of cumulative distribution functions. Its proof relies on the Strassen-type result of Kellerer for $\leq_{\F}$ and our decomposition. We refer to Definition \ref{defi:composantes} for the definition of $E^=(\mu,\nu)$ and Definition \ref{defi:barrier_points} for the definition of $\BB(\mu,\nu)$. 
	
	\begin{pro}\label{pro:caracterization_barrier}
		Consider $(\mu,\nu) \in \MM^2_+.$ Then $\BB(\mu,\nu) = E^=(\mu,\nu)$.
	\end{pro}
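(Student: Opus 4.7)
The inclusion $E^=(\mu,\nu) \subset \BB(\mu,\nu)$ is already given by Lemma \ref{remq:preliminary_inclusion}, so the plan is to establish the reverse inclusion $\BB(\mu,\nu) \subset E^=(\mu,\nu)$, or equivalently, that every $x \in E^+ \cup E^-$ fails to be a barrier point. By the symmetry noted in Point \ref{point:sym_deco} of Remark \ref{remq:deco_real_line} (reversing the roles of $\mu$ and $\nu$ swaps $E^+$ and $E^-$), it suffices to treat the case $x \in E^+$.

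The idea is to exhibit an explicit $\pi \in \CCC(\mu,\nu)$ charging $\Cd(x)$, and the natural candidate is $\KK(\mu,\nu)$ from Definition \ref{defi:gen_kell_transport_plan}. Fix $x \in E^+$. Since $E^+$ is open, there is a unique $k \in \KK^+$ with $x \in \,]a_k^+,b_k^+[$. By Equation \eqref{eq:somme_directe_int}, the restriction of $\KK(\mu,\nu)$ to $A_k^+ = [a_k^+,b_k^+[\,\times\,]a_k^+,b_k^+]$ equals $\KK_\F(\mu_k^+,\nu_k^+)$, the strongly multiplicative transport plan provided by Theorem \ref{them:strassen_F}. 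Thus there exist $\eta_1 \sim \mu_k^+$ and $\eta_2 \sim \nu_k^+$ such that $\KK_\F(\mu_k^+,\nu_k^+) = (\eta_1 \otimes \eta_2)_{\res \F}$. Since $[a_k^+,x[\,\times\,]x,b_k^+] \subset \F$, it suffices to show that
\begin{equation*}
\eta_1([a_k^+,x[) \cdot \eta_2(]x,b_k^+]) > 0,
\end{equation*}
which by the equivalences $\eta_i \sim \mu_k^+,\nu_k^+$ reduces to verifying $\mu_k^+([a_k^+,x[) > 0$ and $\nu_k^+(]x,b_k^+]) > 0$.

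The first quantity can be computed from Definition \ref{def:component_marginals} as $\mu_k^+([a_k^+,x[) = F_\mu^-(x) - F_\nu^+(a_k^+)$. The inequalities $F_\mu^-(x) > F_\nu^-(x) \geq F_\nu^+(a_k^+)$ --- where the first uses $x \in E^+ \subset \{F_\mu^- > F_\nu^-\}$ and the second uses $a_k^+ < x$ so $]{-\infty},a_k^+] \subset \,]{-\infty},x[$ --- give $\mu_k^+([a_k^+,x[) > 0$. Symmetrically, $\nu_k^+(]x,b_k^+]) = F_\mu^-(b_k^+) - F_\nu^+(x)$, and the chain $F_\mu^-(b_k^+) \geq F_\mu^+(x) > F_\nu^+(x)$ yields $\nu_k^+(]x,b_k^+]) > 0$. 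We conclude that $\KK(\mu,\nu)([a_k^+,x[\,\times\,]x,b_k^+]) > 0$, hence $\KK(\mu,\nu)(\Cd(x)) > 0$, and therefore $x \notin \BB(\mu,\nu)$.

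The proof is thus mostly bookkeeping: the decomposition theorem and Kellerer's Strassen-type theorem do the heavy lifting. The only mildly delicate step is identifying the correct rectangle inside $A_k^+ \cap \Cd(x)$ and tracking the cumulative distribution values of $\mu_k^+$ and $\nu_k^+$ (whose definitions incorporate boundary atoms); once this is set up, positivity follows directly from $x \in E^+$, which simultaneously gives strict inequalities for both $F_\mu^+ - F_\nu^+$ and $F_\mu^- - F_\nu^-$.
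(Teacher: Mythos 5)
Your proof is correct and follows essentially the same route as the paper: both exhibit $\KK(\mu,\nu)$ as a cyclically monotone plan charging $\Cd(x)$ for $x \in E^+$, using the product form $\KK_\F(\mu_k^+,\nu_k^+) = (\eta_1 \otimes \eta_2)_{\res \F}$ from Theorem \ref{them:strassen_F} and reducing to positivity of $\mu_k^+$ strictly left of $x$ and $\nu_k^+$ strictly right of $x$. The only cosmetic difference is that you compute these masses directly from the CDF formulas in Definition \ref{def:component_marginals}, whereas the paper argues by contradiction and cites the identification $]a_k^+,b_k^+[ = ]s_{\mu_k^+},S_{\nu_k^+}[$ already established in Proposition \ref{pro:prop_deco}.
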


	\begin{proof}
		According to Lemma \ref{remq:preliminary_inclusion}, $E^= \subset \BB$. It remains to prove $\BB \subset E^=$. Suppose, to derive a contradiction, that $\BB \cap E^+$ is non-empty. Then, there exists $x \in \BB$ and $k \in \KK^+$ such that $x \in ]a_k^+,b_k^+[.$ Since $\KK(\mu,\nu) \in \CCC(\mu,\nu)$ and $x \in \BB$, we have $\KK(\mu,\nu)(]-\infty,x[ \times ]x,+\infty[) = 0$. This implies $\KK_\F(\mu_k^+,\nu_k^+)(]-\infty,x[ \times ]x,+\infty[) = 0.$ According to Theorem \ref{them:strassen_F}, there exists two $\sigma$-finite measures $\eta_1, \eta_2 \in \MM_+^\s(\R)$ such that  $\KK_\F(\mu_k^+,\nu_k^+) = (\eta_1 \otimes \eta_2)_{\res \F}$, $\mu_k^+ \sim \eta_1$ and $\nu_k^+ \sim \eta_2.$ Hence, $0 = \KK_\F(\mu_k^+,\nu_k^+)(]-\infty,x[ \times ]x,+\infty[) = (\eta_1 \otimes \eta_2)_{\res \F}(]-\infty,x[ \times ]x,+\infty[) = \eta_1(]-\infty,x[)\eta_2(]x,+\infty[)$, which forces either $\eta_1(]-\infty,x[)=0$ or $\eta_2(]x,+\infty[)=0$. Thus, $\mu_k^+(]-\infty,x[)=0$ or $\nu_k^+(]x,+\infty[)=0$. Moreover, in the proof of Point \ref{point:ordre_compo} of Proposition \ref{pro:prop_deco}, we established  that $]a_k^+,b_k^+[ = ]s_{\mu_k^+},S_{\nu_k^+}[$. By Point \ref{point:infsupsupp} of Remark \ref{remq:leq_F}, this is a contradiction. Hence, $ \BB \cap E^+ = \emptyset$. Similarly, $\BB \cap E^- = \emptyset$. Since $\R = E^+ \uplus E^- \uplus E^=$, we get $\BB \subset (E^+)^c \cap (E^-)^c = E^=.$
	\end{proof}
	
	\begin{remq}\label{remq:computation_E^=}
		As $E^= = (E^+)^c \cap (E^-)^c = \left(\{F_\mu^+ > F_\nu^+\}\cap \{F_\mu^- > F_\nu^-\}\right)^c \cap \left(\{F_\mu^+ < F_\nu^+\}\cap \{F_\mu^- < F_\nu^-\}\right)^c = \{F_\mu^+ = F_\nu^+\} \cup  \{F_\mu^- = F_\nu^-\} \cup \{F_\nu^- < F_\mu^- \leq F_\mu^+ < F_\nu^+\} \cup  \{F_\mu^- < F_\nu^- \leq F_\nu^+ < F_\mu^+\},$ Equation \eqref{eq:E_==_B_modifie} follows from Proposition \ref{pro:caracterization_barrier}.
	\end{remq}

	\subsubsection{Characterization of cyclical monotonicity in terms of crossings}

	\begin{defi}
		We define the set of crossings as $$\CC= \ens{((x_1,y_1),(x_2,y_2)) \in \R^2 \times \R^2}{x_1 \leq x_2 \text{ and } y_2 \leq y_1}.$$ A crossing $((x_1,y_1),(x_2,y_2)) \in \CC$ is said to be free if $y_1 \leq x_1$ or $x_2 \leq y_2.$ Let $\CC_0$ denote the set of free crossing and define $\CC_1 = \CC \setminus \CC_0$. We define the transport plans avoiding non-free crossing as $$\Marg^{\CC_1^c}(\mu,\nu) = \ens{\pi \in \Marg(\mu,\nu)}{\exists \Gamma \in \BB(\R^2), \pi(\Gamma^c)=0 \text{ and } \Gamma^2 \cap \CC_1 = \emptyset}.$$
	\end{defi}

	\begin{remq}\label{remq:crossing}
		\begin{enumerate}
			\item For notational simplicity, we adopt a non-symmetrical definition of crossing, \ie,  a definition that does not include pairs $((x_1,y_1),(x_2,y_2)) \in \R^2 \times \R^2$ such that $x_2 \leq x_1$ and $y_1 \leq y_2$. Had we chosen the symmetrical definition of $\CC$ (and $\CC_0$, $\CC_1$), we would have obtained the same set $\Marg^{\CC_1^c}(\mu,\nu)$.
			\item\label{point:deux_cycl_car} Consider $(x_1,y_1),(x_2,y_2) \in \R^2$ with $x_1 \leq x_2.$ Then, $((x_1,y_1),(x_2,y_2)) \in \CC_1^c$ if and only if $|y_1 - x_1| + |y_2 -x_2| \leq |y_2-x_1| + |y_1-x_2|$. Therefore, $\Marg^{\CC_1^c}(\mu,\nu)$ is the set of transport plan $\pi$ concentrated on a set $\Gamma$ satisfying the following condition:
			\begin{equation}\label{eq:deux_cycl_mon}
				\forall (x_1,y_1),(x_2,y_2) \in \Gamma, |y_1 - x_1| + |y_2 -x_2| \leq |y_2-x_1| + |y_1-x_2|.
			\end{equation}
			As this condition is implied by cyclical monotonicity, we have $\mathfrak{C}(\mu,\nu) \subset \Marg^{\CC_1^c}(\mu,\nu).$
		\end{enumerate}
	\end{remq}
	
	\begin{figure}[h]
		\begin{center}
			\def\svgwidth{11cm}
			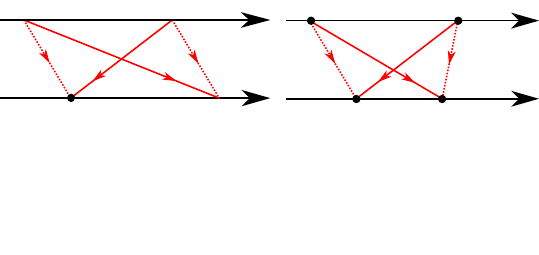
			\caption{Free crossings (in green) and non-free crossings (in red).}
			\label{preuve_dico}
		\end{center}
	\end{figure}
	
	\begin{pro}\label{pro:swapping_lemma}
		Consider $(\mu,\nu) \in \MM_+^2$ 
		and define $\Gamma = \left(\bigcup_{k \in \KK^+} \F \cap A_k^+\right) \cup \left(\bigcup_{k \in \KK^-}  \ti{\F} \cap A_k^- \right) \cup (A^= \cap \D).$ Then
		\begin{equation}
			\mathfrak{C}(\mu,\nu) = \Marg^{\CC_1^c}(\mu,\nu) =  \Marg_{\Gamma}(\mu,\nu).
		\end{equation}
	\end{pro}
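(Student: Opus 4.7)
The plan is to prove the chain of inclusions $\Marg_\Gamma(\mu,\nu) \subset \CCC(\mu,\nu) \subset \Marg^{\CC_1^c}(\mu,\nu) \subset \Marg_\Gamma(\mu,\nu)$. The middle inclusion is exactly Point \ref{point:deux_cycl_car} of Remark \ref{remq:crossing}, and needs no further work.

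For $\Marg_\Gamma(\mu,\nu) \subset \CCC(\mu,\nu)$, I would check that $\Gamma$ is itself cyclically monotone, so that every $\pi \in \Marg_\Gamma(\mu,\nu)$ lies in $\CCC(\mu,\nu)$ by definition. Each piece $\F \cap A_k^+$ (resp.\ $\ti{\F} \cap A_k^-$) is cyclically monotone as a subset of the cyclically monotone set $\F$ (resp.\ $\ti{\F}$). The family $\{\F \cap A_k^+\}_{k \in \KK^+} \cup \{\ti{\F} \cap A_k^-\}_{k \in \KK^-}$ is pairwise $\leq_{\R^2}$-ordered, since the underlying intervals $]a_k^+,b_k^+[$ and $]a_k^-,b_k^-[$ are pairwise disjoint (they are connected components of the disjoint open sets $E^+$ and $E^-$), and the precise open/closed convention defining $A_k^\pm$ preserves $\leq_{\R^2}$-comparability even when two such intervals share a boundary point. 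Invoking Remark \ref{remq:cyclicall_monotonicity_triangle} with these ordered cyclically monotone sets, and noting that $\Gamma$ is contained in their union together with $\D$, yields that $\Gamma$ is cyclically monotone.

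For $\Marg^{\CC_1^c}(\mu,\nu) \subset \Marg_\Gamma(\mu,\nu)$, the decisive observation is Remark \ref{remq:just_two}: the entire development of Subsection \ref{subsection:general_dec} relies on cyclical monotonicity only through Proposition \ref{pro:barrier_points}, which in turn only uses the two-pair condition \eqref{eq:deux_cycl_mon}. Hence for any $\pi \in \Marg^{\CC_1^c}(\mu,\nu)$, concentrated on some set $\Gamma^*$ with $(\Gamma^*)^2 \cap \CC_1 = \emptyset$, the conclusions of Proposition \ref{pro:barrier_points}, Lemma \ref{remq:preliminary_inclusion}, Lemma \ref{lem:mass_transfert}, and Proposition \ref{pro:transport_fitting_marg} all remain valid. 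Thus $\pi$ decomposes as $\pi = \sum_{k \in \KK^+} \pi_k^+ + \sum_{k \in \KK^-} \pi_k^- + \pi^=$, with $\pi_k^\pm$ supported in $A_k^\pm$ and of marginals $(\mu_k^\pm,\nu_k^\pm)$, and with $\pi^= = (\id,\id)_\# \mu^=$ supported in $A^= \cap \D$. The two-pair condition restricts to each $\pi_k^\pm$, and combined with $\mu_k^+ \leq_{\st} \nu_k^+$ (resp.\ $\nu_k^- \leq_{\st} \mu_k^-$), the proof of Point \ref{point:caracterisation_sto} of Proposition \ref{pro:ordre_stochastique}---which itself only invokes Proposition \ref{pro:barrier_points}---forces $\pi_k^+$ to be supported in $\F$ and $\pi_k^-$ in $\ti{\F}$. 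Reassembling these pieces shows that $\pi$ is concentrated on $\Gamma$.

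The main obstacle is bookkeeping: one must verify that every appeal to cyclical monotonicity in Subsection \ref{subsection:general_dec} can genuinely be downgraded to the two-pair condition of Remark \ref{remq:just_two}. No new idea is required beyond what has already been developed in Section \ref{sec:decomposition}.
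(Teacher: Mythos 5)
Your proof is correct and follows essentially the same route as the paper: you establish the same cyclic chain of inclusions, using Point \ref{point:deux_cycl_car} of Remark \ref{remq:crossing} for $\CCC(\mu,\nu) \subset \Marg^{\CC_1^c}(\mu,\nu)$, the observation of Remark \ref{remq:just_two} that the entire decomposition machinery (Proposition \ref{pro:barrier_points} through Theorem \ref{pro:decomposition}) downgrades to the two-pair condition for $\Marg^{\CC_1^c}(\mu,\nu) \subset \Marg_\Gamma(\mu,\nu)$, and Remark \ref{remq:cyclicall_monotonicity_triangle} for $\Marg_\Gamma(\mu,\nu) \subset \CCC(\mu,\nu)$. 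Your version is slightly more explicit in verifying the $\leq_{\R^2}$-ordering of the sets $\F \cap A_k^+$, $\ti{\F} \cap A_k^-$ and in naming the intermediate lemmas, but the ideas and their sequencing are the same as the paper's.
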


	\begin{proof}
		We prove the inclusions $\mathfrak{C}(\mu,\nu) \subset \Marg^{\CC_1^c}(\mu,\nu) \subset \Marg_{\Gamma}(\mu,\nu) \subset \CCC(\mu,\nu).$  The first inclusion corresponds to Point \ref{point:deux_cycl_car} of Remark \ref{remq:crossing}. For the second inclusion, by Remark \ref{remq:just_two}, the proof of Proposition \ref{pro:barrier_points} is only based on the fact that $\pi$ is concentrated on a set satisfying Equation \eqref{eq:deux_cycl_mon}. By Point \ref{point:deux_cycl_car} of Remark \ref{remq:crossing}, we may thus replace $\pi \in \CCC(\mu,\nu)$ with $\pi \in \Marg^{\CC_+^c}(\mu,\nu)$ in the statement of Proposition \ref{pro:barrier_points}. Since the proof of Theorem \ref{pro:decomposition} and its intermediate steps rely on Proposition \ref{pro:barrier_points}, we may also replace \( \CCC(\mu,\nu) \) with the larger set \( \Marg^{\CC_1^c}(\mu,\nu) \) in the statement of Theorem \ref{pro:decomposition}. Thus, every element of $\Marg^{\CC_1^c}(\mu,\nu)$ belongs to $\sum_{k \in \KK^+} \Marg_\F(\mu_k^+,\nu_k^+) + \sum_{k \in \KK^-} \Marg_{\ti{\F}}(\mu_k^-,\nu_k^-) + \{(\id,\id)_\# \mu^=\}$. Therefore, elements of $\Marg^{\CC_1^c}(\mu,\nu)$ are concentrated on $\Gamma$, which proves the second inclusion. The third inclusion is an immediate consequence of Remark \ref{remq:cyclicall_monotonicity_triangle}.
	\end{proof}

	\begin{remq}
		In the case of $L^p$ transport with $p>1$, it is well known that a transport plan is optimal if and only if there exists a set $\Gamma$ such that, for every $(x,y), (x',y') \in \Gamma$, $|y-x|^p+ |y'-x'|^p \leq |y-x'|^p+ |y'-x|^p.$ The first equality of Proposition \ref{pro:swapping_lemma} implies that this condition, sometimes referred as $2$-cyclical monotonicity \cite{pascale_60_2024}, is also equivalent to cyclical monotonicity when $p=1$. In case $p<1$, the equivalence seems to be less clear: we refer the reader to \cite[Section $2.1$]{juillet_solution_2020} for a discussion on the geometric consequences of the previous condition and to \cite[Part 2]{gangbo_geometry_1996} for general informations about optimal transportation for concave costs. 
	\end{remq}

	\section{Convergence of the solutions to the entropically regularized problem}\label{sec:convergence_entro}
	
	Throughout this section, we fix a pair $(\mu,\nu) \in \MM_+^2$ of measures. Given a measurable space $S$ and $(\g_1,\g_2) \in \MM_+(S)^2$, we denote by $\Ent(\g_1 | \g_2)$ the entropy of $\g_1$ relatively to $\g_2$ defined by: 
	\begin{equation*}
		\Ent(\g_1|\g_2) =
		\begin{cases}
			\int_{S} \log\left(\frac{d\g_1}{d\g_2}\right) \frac{\ddd\g_1}{\ddd\g_2} \dd \g_2 &\text{if } \g_1 \ll \g_2 \\+\infty &\text{otherwise }
		\end{cases},
	\end{equation*}	
	where we recall that $\g_1 \ll \g_2$ means that $\g_1$ is absolutely continuous with respect to $\g_2$ and $\frac{d\g_1}{d\g_2}$ stands for the Radon-Nykodim derivative of $\g_1$ with respect to $\g_2$. We recall the $\Ent(\cdot|\g_2)$ is valued in $]-\infty,+ \infty]$, strictly convex on $\MM_+(S)$ and lower semi-continuous. For all $\ee > 0$, let $$J_\ee : \pi \in \Marg(\mu,\nu) \mapsto \int_{\R^2} |y-x| \dd \pi(x,y) + \ee \Ent(\pi|\mu \otimes \nu)$$ denote the transport cost with  regularization parameter $\ee$, and define $$W^\ee_1(\mu,\nu) = \min_{\pi \in \Marg(\mu,\nu)} J_\ee(\pi).$$ For all $\ee >0$, there exists a notion analogue to cyclical monotonicity in the original optimal transport problem. This notion was introduced by Bernton, Ghosal and Nutz in \cite{bernton_entropic_2022} for more general costs and characterizes the solution of the minimization problem associated to $J_\ee$ when $W_1^\ee(\mu,\nu) < + \infty.$
	
	\begin{defi}
		Consider $\ee > 0$ and $\pi \in \Marg(\mu,\nu)$. The transport plan $\pi \in \Marg(\mu,\nu)$ is said to be $\ee$-cyclically invariant if there exists $f : \R^2 \to [0, + \infty[$ such that $\pi = f \cdot \mu \otimes \nu $, and, for all $((x_i,y_i))_{i \in \ent{1}{n}} \in  (\R^2)^n$, $$ \prod_{i=1}^n \exp\left( -\frac{1}{\ee} |y_i-x_i|\right)f(x_i,y_i) = \prod_{i=1}^n \exp\left( -\frac{1}{\ee} |y_{i+1}-x_i|\right)f(x_i,y_{i+1}),$$ where $y_{n+1}$ stands for $y_1.$ 
	\end{defi}
	As shown by the same authors in \cite[Theorem $1.3$]{ghosal_stability_2022}, there exists an unique $\ee$-cyclically invariant transport plan, even when $W_1^\ee(\mu,\nu) = +\infty$. Analogously to Theorem \ref{them:cyclicality_optimality} for the classical transport problem, they proved that it coincides with the unique minimizer of $J_\ee$ when $W_1^\ee(\mu,\nu)$ is finite.
	
	\begin{them}\label{them:well_posedness} 
		Consider $(\mu, \nu) \in \MM_+^2$ and $\ee > 0.$
		\begin{enumerate}
			\item There exists a unique $\ee$-cyclically invariant transport plan from $\mu$ to $\nu.$ In the following, we denote this measure by $\pi_\ee \in \Marg(\mu,\nu).$
			\item In case $W_1^\ee(\mu,\nu)$ is finite, then $J_\ee$ admits a unique minimizer, and this minimizer is equal to $\pi_\ee.$
		\end{enumerate}
	\end{them}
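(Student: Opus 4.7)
The plan is to reformulate the problem as a static Schrödinger bridge against the reference kernel
\begin{equation*}
K_\ee(\ddd x, \ddd y) := e^{-|y-x|/\ee}\, \mu(\ddd x) \otimes \nu(\ddd y),
\end{equation*}
which, since $|y-x| \geq 0$, is a finite positive measure with $K_\ee(\R^2) \leq \mu(\R)\nu(\R)$. For any $\pi \in \Marg(\mu,\nu)$ with $\pi \ll \mu \otimes \nu$, a direct computation of Radon--Nikodym derivatives gives the clean identity $\ee\,\Ent(\pi|K_\ee) = J_\ee(\pi)$, so minimizing $J_\ee$ is equivalent to minimizing $\Ent(\cdot|K_\ee)$ over $\Marg(\mu,\nu)$. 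This reduction links the problem to the standard Schrödinger bridge theory and exposes the Sinkhorn structure underlying $\ee$-cyclical invariance.

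For Point (2), assume $W_1^\ee(\mu,\nu) < +\infty$, so that the set of $\pi \in \Marg(\mu,\nu)$ with $J_\ee(\pi) < +\infty$ is non-empty. The marginal constraints make $\Marg(\mu,\nu)$ tight and weakly compact by Prokhorov's theorem. The cost term $\pi \mapsto \int |y-x|\,\ddd\pi$ is weakly lower semi-continuous (nonnegative continuous integrand), and $\Ent(\cdot|\mu \otimes \nu)$ is weakly lower semi-continuous and strictly convex on its effective domain. Hence $J_\ee$ is weakly lower semi-continuous and strictly convex, and the direct method yields a unique minimizer $\pi^*$. To recover the $\ee$-cyclical invariance of $\pi^*$, I would write $\pi^* = f \cdot \mu \otimes \nu$ and consider, for any $n$-tuple $((x_i,y_i))_{i=1}^n$, the signed perturbation $\eta = \sum_{i=1}^n \bigl(\d_{(x_i,y_{i+1})} - \d_{(x_i,y_i)}\bigr)$, which preserves both marginals. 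A careful first-variation argument, localized so that the entropy remains finite along the perturbation and applied modulo a $\pi^*$-null set as in standard cyclical-monotonicity proofs, yields after exponentiation the required multiplicative identity. Conversely, any $\ee$-cyclically invariant plan in $\Marg(\mu,\nu)$ satisfies the same first-order conditions and is therefore \emph{a} minimizer, which by strict convexity must coincide with $\pi^*$.

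For Point (1) in the general case where $W_1^\ee(\mu,\nu)$ may be infinite, the variational approach collapses because no $\pi$ has finite $J_\ee$; this is the main obstacle, and where I would invoke the Ghosal--Nutz construction. The idea is to build $\pi_\ee$ directly from the Schrödinger system, seeking measurable potentials $\phi,\psi : \R \to [0,+\infty]$ such that $\pi_\ee := \phi(x)\psi(y) \cdot K_\ee$ lies in $\Marg(\mu,\nu)$. Since $K_\ee$ is a finite positive measure regardless of $W_1^\ee$, a Sinkhorn-type iteration on $K_\ee$ --- equivalently, a fixed-point argument for the two marginal equations in $(\phi,\psi)$ --- produces such a pair of potentials, and the associated $\pi_\ee$ is then $\ee$-cyclically invariant by construction. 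Uniqueness of the cyclically invariant plan reduces to a rigidity lemma: any two multiplicative representations $\phi\psi \cdot K_\ee = \phi'\psi' \cdot K_\ee$ with the same marginals must agree $K_\ee$-almost everywhere, a classical statement in the Schrödinger-bridge literature that ultimately rests on the connectedness of the support through the kernel $K_\ee$.
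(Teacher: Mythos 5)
The paper does not prove this theorem at all: it is imported verbatim from Bernton--Ghosal--Nutz (cited as~\cite[Theorem~1.3]{ghosal_stability_2022} just before the statement), and the relevant structural characterization of $\varepsilon$-cyclically invariant plans is taken from~\cite[Lemma~2.7]{nutz_introduction_nodate}. So you are not reproducing an argument from this paper but sketching a proof of a nontrivial result from the Schr\"odinger-bridge literature. Your reformulation via the kernel $K_\varepsilon = e^{-|y-x|/\varepsilon}\,\mu\otimes\nu$ and the identity $\varepsilon\,\Ent(\pi|K_\varepsilon) = J_\varepsilon(\pi)$ are correct and are indeed the standard entry point, and you correctly identify the unbounded-cost case as the genuine difficulty.

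That said, both remaining steps have real gaps. For Point~(2), the perturbation $\eta = \sum_i(\delta_{(x_i,y_{i+1})} - \delta_{(x_i,y_i)})$ is a discrete signed measure, and $\pi^* + t\eta$ fails to be nonnegative for any $t\neq 0$ unless each $(x_i,y_i)$ is an atom of $\pi^*$ of sufficient mass; this is not a minor repair, because the entropy is not a linear integral cost and the ``standard cyclical-monotonicity'' machinery you invoke does not transfer. The actual route is the characterization of $I$-projections (Csisz\'ar, R\"uschendorf--Thomsen) or Schr\"odinger duality, which gives the multiplicative density form and hence $\varepsilon$-cyclical invariance. For Point~(1), asserting that ``a Sinkhorn-type iteration on $K_\varepsilon$ produces such a pair of potentials'' assumes exactly what must be proved: when $W_1^\varepsilon(\mu,\nu)=+\infty$ there is no coupling with finite entropy relative to $K_\varepsilon$, the Sinkhorn scheme has no finite objective to monitor, and the Schr\"odinger system need not admit integrable potentials. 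Establishing existence and uniqueness of the $\varepsilon$-cyclically invariant plan in this regime is precisely the substantive content of~\cite[Theorem~1.3]{ghosal_stability_2022}, obtained there by an approximation and stability argument rather than a direct fixed-point. In short, your sketch correctly diagnoses where the difficulty lies, but the two steps you defer are the theorem, not routine gaps.
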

	We now study the behaviour of $(\pi_\ee)_{\ee > 0}$ when $\ee \to 0^+.$ This problem is related to the minimization of $\Ent(\cdot|\mu\otimes \nu)$ among the elements $\CCC(\mu,\nu)$. Indeed, in the case of measures with finite support, every optimal transport plan has finite entropy, and by strict convexity of $\Ent(\cdot|\mu \otimes \nu)$ on $\CCC(\mu,\nu)$, there exists a unique minimizer of $\Ent(\cdot|\mu \otimes \nu)$ among the elements of $\CCC(\mu,\nu).$ In this case, it is well known that 
	$\lim_{\ee \to 0^+} \pi_\ee = \argmin_{\pi \in \CCC(\mu,\nu)} \Ent(\pi|\mu \otimes \nu).$ In the following subsection, we verify that $\KK(\mu,\nu)$ is a minimizer of $\Ent(\cdot|\mu \otimes \nu)$ among $\CCC(\mu,\nu)$ for every pair $(\mu,\nu) \in \MM_+^2.$ After, we establish that, if $W_1(\mu,\nu) < + \infty$ and there exists a optimal transport plan with finite entropy, then $\KK(\mu,\nu) = \lim_{\ee \to 0^+} \pi_\ee.$ Finally, we briefly discuss the convergence result of Di Marino and Louet.
	
	\subsection{$\KK(\mu,\nu)$ minimizes $\Ent(\cdot|\mu \otimes \nu)$ among $\CCC(\mu,\nu)$.}\label{subsection:ent_minimizes}
	
	We begin by we expressing the relative entropy of a cyclically monotone transport plan as the sum of the relative entropies of its components. 	
	\begin{pro}\label{pro:decomposition_entropie}
		Consider $\pi \in \CCC(\mu,\nu)$ and define the family $\ens{\pi_k^+}{k \in \KK^+} \cup \ens{\pi_k^-}{k \in \KK^-} \cup \{\pi^=\}$ as in Definition \ref{defi:composantes}.	Then the entropy decomposes as	\begin{equation}\label{eq:decomposition_entropie}
			\Ent(\pi | \mu \otimes \nu) = \sum_{k \in \KK^+} \Ent({\pi}_k^+ | \mu \otimes \nu) + \sum_{k \in \KK^-} \Ent({\pi}_k^- | \mu \otimes \nu) + \Ent(\pi^=|\mu \otimes \nu).
		\end{equation}
	\end{pro}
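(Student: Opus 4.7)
The plan is to combine the decomposition $\pi = \sum_{k \in \KK^+} \pi_k^+ + \sum_{k \in \KK^-} \pi_k^- + \pi^=$ provided by Point \ref{point:surj_int} of Proposition \ref{pro:transport_fitting_marg} with the observation that each term is the restriction of $\pi$ to one of the pairwise disjoint sets $A_k^+$, $A_k^-$, $A^=$ of Definition \ref{defi:composantes}. This mutual singularity is precisely what makes the relative entropy additive.

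First I would treat the absolutely continuous case $\pi \ll \mu \otimes \nu$. Let $f$ denote a density of $\pi$ with respect to $\mu \otimes \nu$; since $\pi$ is concentrated on the disjoint union $A := \bigcup_{k \in \KK^+} A_k^+ \cup \bigcup_{k \in \KK^-} A_k^- \cup A^=$, we may choose $f = 0$ on $A^c$. For any measurable $C \subset A$, the restriction $\pi_{\res C}$ has density $f \1_C$ with respect to $\mu \otimes \nu$, so with the convention $0 \log 0 = 0$,
$$\Ent(\pi_{\res C} | \mu \otimes \nu) = \int_C f \log f \dd (\mu \otimes \nu).$$
Writing $f \log f = (f \log f)^+ - (f \log f)^-$, the negative part is pointwise bounded by $1/e$ and, since $\mu \otimes \nu$ is finite, has finite integral. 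Applying Tonelli separately to the positive and negative parts to handle the countable sum, the sum over components of the above integrals equals $\int_A f \log f \dd (\mu \otimes \nu) = \Ent(\pi | \mu \otimes \nu)$, which is exactly \eqref{eq:decomposition_entropie}.

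If instead $\pi \not\ll \mu \otimes \nu$, both sides of \eqref{eq:decomposition_entropie} equal $+\infty$. Indeed, pick $B \in \BB(\R^2)$ with $\pi(B) > 0$ and $(\mu \otimes \nu)(B) = 0$. The identity $\pi(B) = \sum_{k \in \KK^+} \pi_k^+(B) + \sum_{k \in \KK^-} \pi_k^-(B) + \pi^=(B)$ forces at least one component to assign positive mass to the $\mu \otimes \nu$-null set $B$, so that component is not absolutely continuous with respect to $\mu \otimes \nu$ and its relative entropy is $+\infty$. Combined with the fact that relative entropy with respect to the finite measure $\mu \otimes \nu$ takes values in $]-\infty, +\infty]$, this forces the right-hand side of \eqref{eq:decomposition_entropie} to be $+\infty$ as well.

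No substantial obstacle arises: the cyclical monotonicity of $\pi$ enters only through the structural decomposition supplied by Proposition \ref{pro:transport_fitting_marg}, after which the additivity is a routine measure-theoretic consequence of the disjointness of $A_k^+$, $A_k^-$, $A^=$ together with the convention $0 \log 0 = 0$.
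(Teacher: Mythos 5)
Your proof is correct and follows essentially the same approach as the paper: both start from the decomposition $\pi = \sum_{k}\pi_k^+ + \sum_{k}\pi_k^- + \pi^=$ of Point~\ref{point:surj_int} of Proposition~\ref{pro:transport_fitting_marg}, exploit the pairwise disjointness of $A_k^+$, $A_k^-$, $A^=$ to split the entropy integral, and handle the non-absolutely-continuous case by observing that some component must also fail to be absolutely continuous. You are in fact a bit more explicit than the paper about the exchange of the countable sum with the integral (via Tonelli applied to the positive and negative parts of $f\log f$, using $(f\log f)^-\leq 1/e$ and the finiteness of $\mu\otimes\nu$); the same disjointness bound would also sharpen the non-absolutely-continuous case, where strictly speaking one should check that the negative contributions of the absolutely continuous components cannot sum to $-\infty$, though the paper glosses over this as well.
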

	
	\begin{proof}
		According to Theorem \ref{pro:decomposition}, we have $\pi = \sum_{k \in \KK^+} \pi_k^+ + \sum_{k \in \KK^-} \pi_k^- + \pi^=$. Thus, $\pi \ll \mu \otimes \nu$ if and only if 
		\begin{equation}\label{eq:ab_cont_compo}
			\begin{cases*}
				\forall k \in \KK^+, \pi_k^+ \ll \mu \otimes \nu \\
				\forall k \in \KK^+, \pi_k^+ \ll \mu \otimes \nu \\
				\pi^= \ll \mu \otimes \nu
			\end{cases*}.
		\end{equation}
		Thus, if $\pi \ll \mu \otimes \nu$ is not satisfied, then both sides of Equation \eqref{eq:decomposition_entropie} are equal to $+\infty$. Assume now $\pi \ll \mu \otimes \nu$ is satisfied, so that Equation \eqref{eq:ab_cont_compo} holds. Recall that for all $k \in \KK^+$ (resp. $k \in \KK^-$), $\pi_k^+$ (resp. $\pi_k^-$) is concentrated on $A_k^+$ (resp. $A_k^-$), that $\pi^=$ is concentrated on $A^=$, and that $\{A_k^+\}_{k \in \KK^+} \cup \{A_k^-\}_{k \in \KK^-} \cup \{A^=\}$ is a family of disjoint sets. In particular, for all $k \in \KK^+$, $\pi- \pi_k^+ =\sum_{j \in \KK^+ \setminus \{k\}} \pi_j^+ + \sum_{j \in \KK^-} \pi_j^- + \pi^=$ is concentrated on $\left[\biguplus_{j \in \KK^+ \setminus \{k\}} A_j^+\right] \uplus \left[\biguplus_{j \in \KK^-} A_j^-\right] \uplus A^= \subset \left(A_k^+\right)^c.$
		Therefore, one can assume that $\frac{d \pi- \pi_k^+}{\ddd\mu \otimes \nu}$ vanishes on $A_k^+$. Thus, we have
		\begin{equation}
			\int_{\R^2} \log\left( \frac{\ddd \pi }{\ddd \mu \otimes \nu}\right)\dd {\pi}_k^+ = \int_{A_k^+} \log\left( \frac{\ddd {\pi}_k^+ }{\ddd \mu \otimes \nu} + \frac{\ddd \pi - {\pi}_k^+ }{\ddd \mu \otimes \nu}\right)\dd {\pi}_k^+ = \int_{\R^2} \log\left( \frac{\ddd {\pi}_k^+ }{\ddd \mu \otimes \nu} \right)\ddd {\pi}_k^+.
		\end{equation}
		Similarly, for all $k \in \KK^-$, $\int_{\R^2} \log\left( \frac{\ddd \pi }{d \mu \otimes \nu}\right)\dd {\pi}_k^- =  \int_{\R^2} \log\left( \frac{\ddd {\pi}_k^- }{\ddd \mu \otimes \nu} \right)\dd {\pi}_k^-$, and $\int_{\R^2} \log\left( \frac{\ddd \pi }{\ddd \mu \otimes \nu}\right)\dd {\pi}^= =  \int_{\R^2} \log\left( \frac{\ddd {\pi}^= }{\ddd \mu \otimes \nu} \right)\dd {\pi}^=$. Finally, 
		\begin{align*}
			\Ent(\pi|\mu \otimes \nu) &= \sum_{k \in \KK^+} \int_{\R^2} \log\left( \frac{\ddd \pi }{\ddd \mu \otimes \nu}\right)\dd {\pi}_k^+ + \sum_{k \in \KK^-} \int_{\R^2} \log\left( \frac{\ddd \pi }{\ddd \mu \otimes \nu}\right)\dd {\pi}_k^- +  \int_{\R^2} \log\left( \frac{\ddd \pi }{\ddd \mu \otimes \nu}\right) \dd {\pi}^=\\
			&= \sum_{k \in \KK^+} \int_{\R^2} \log\left( \frac{\ddd \pi_k^+ }{\ddd \mu \otimes \nu}\right)\dd {\pi}_k^+ + \sum_{k \in \KK^-} \int_{\R^2} \log\left( \frac{\ddd \pi_k^- }{\ddd \mu \otimes \nu}\right)\dd {\pi}_k^- +  \int_{\R^2} \log\left( \frac{d \pi^= }{d \mu \otimes \nu}\right) \dd {\pi}^=\\
			&= \sum_{k \in \KK^+} \Ent(\pi_k^+|\mu \otimes \nu) + \sum_{k \in \KK^-} \Ent({\pi}_k^- | \mu \otimes \nu) + \Ent(\pi^=|\mu \otimes \nu). \qedhere
		\end{align*}
	\end{proof}

	Now, to prove that $\KK(\mu,\nu)$ minimizes $\Ent(\cdot|\mu \otimes \nu)$ over $\CCC(\mu_k^+,\nu_k^+)$, it suffices to show that, for every $k \in \KK^+$, $\KK(\mu,\nu)_{\res A_k^+} = \KK(\mu_k^+,\nu_k^+)$ is a minimizer of $\Ent(\cdot|\mu\otimes \nu)$ over $\CCC(\mu,\nu)$ (and similarly if $k \in \KK^-$). Before establishing this result in Proposition \ref{lem:minimum_kell_ord}, we state the following approximation result. For a proof, we refer to \cite[Lemma $5.1$]{di_marino_entropic_2018}
	
	\begin{lem}[\cite{di_marino_entropic_2018}, Lemma 5.1]\label{lem:approx_troncated}
		Consider a measure $\pi \in \MM_+(\R^2)$ and two Borel functions $a,b : \R^2 \to \R$. For all $n \geq 1$, define $\varphi_n : t \in \R \mapsto \max(-n,\min(n,t))$. If $(a+b)_- \in L^1(\pi)$, then $\int_{\R^2} \varphi_n(a) + \varphi_n(b) \dd\pi \dcv{n \to + \infty} \int_{\R^2} a + b \dd\pi.$
	\end{lem}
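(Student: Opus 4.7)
The plan is to reduce the claim to two applications of the monotone convergence theorem, one for the positive and one for the negative part of $\varphi_n(a)+\varphi_n(b)$. The pivot is the pointwise two-sided squeeze: for every $(a,b) \in \R^2$,
\begin{equation*}
  a+b \geq 0 \;\Longrightarrow\; 0 \leq \varphi_n(a)+\varphi_n(b) \leq a+b,
  \qquad
  a+b \leq 0 \;\Longrightarrow\; a+b \leq \varphi_n(a)+\varphi_n(b) \leq 0.
\end{equation*}
The second implication follows from the first applied to $(-a,-b)$ together with the oddness of $\varphi_n$. For the first, I would do a short case analysis depending on whether $|a|,|b|$ are larger or smaller than $n$: when both lie in $[-n,n]$ the truncation is trivial; when one of them, say $a$, exceeds $n$ in absolute value, the condition $a+b \geq 0$ forces $b$ into a range where $\varphi_n(b)$ compensates correctly (in particular $a>n$ and $b<-n$ give $\varphi_n(a)+\varphi_n(b)=n-n=0 \in [0,a+b]$).

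An immediate consequence of the squeeze is that $\sgn(\varphi_n(a)+\varphi_n(b))$ agrees with $\sgn(a+b)$, which gives both the domination $(\varphi_n(a)+\varphi_n(b))_{\pm} \leq (a+b)_{\pm}$ and the clean description
\begin{equation*}
(\varphi_n(a)+\varphi_n(b))_+ = (\varphi_n(a)+\varphi_n(b)) \mathbf{1}_{\{a+b \geq 0\}},
\qquad
(\varphi_n(a)+\varphi_n(b))_- = -(\varphi_n(a)+\varphi_n(b)) \mathbf{1}_{\{a+b \leq 0\}}.
\end{equation*}
Next I would establish monotonicity in $n$. On $\{a+b \geq 0\}$, one checks, again by cases on the relative sizes of $|a|,|b|,n$, that $\varphi_n(a)+\varphi_n(b)$ is non-decreasing in $n$ and converges to $a+b$; by the symmetric argument on $\{a+b\leq 0\}$, $-\varphi_n(a)-\varphi_n(b)$ is non-decreasing in $n$ and converges to $-(a+b)=(a+b)_-$. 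Consequently both sequences $(\varphi_n(a)+\varphi_n(b))_+$ and $(\varphi_n(a)+\varphi_n(b))_-$ are non-decreasing and converge pointwise to $(a+b)_+$ and $(a+b)_-$, respectively.

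Monotone convergence then yields
\begin{equation*}
\int_{\R^2} (\varphi_n(a)+\varphi_n(b))_+ \dd\pi \nearrow \int_{\R^2} (a+b)_+ \dd\pi \in [0,+\infty],
\end{equation*}
and similarly for the negative parts, with the limit $\int_{\R^2}(a+b)_- \dd\pi$ finite by hypothesis. Since $\pi$ is a \emph{finite} positive measure and $|\varphi_n(a)|,|\varphi_n(b)| \leq n$, each integral $\int_{\R^2} \varphi_n(a)+\varphi_n(b) \dd\pi$ is unambiguously defined as the difference of its positive and negative parts, and subtracting the two limits gives the claimed convergence in $(-\infty,+\infty]$.

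The only real work is the squeeze inequality; everything else is bookkeeping. I expect the trickiest configuration to be the mixed-sign regime with $|a|,|b|$ on opposite sides of $n$, which is precisely where $\varphi_n$ starts clipping one of the two variables while leaving the other intact, and where one must carefully verify that the truncation error in $a$ does not overpower the untruncated contribution of $b$ (or vice versa).
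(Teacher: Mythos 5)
Your proof is correct. The paper itself does not reproduce an argument for this lemma --- it is cited directly from Di Marino--Louet (\cite{di_marino_entropic_2018}, Lemma 5.1) --- so there is no in-paper proof to compare against; I can only check your argument on its own merits.

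The two-sided squeeze you identify as the crux does hold: if $a+b \geq 0$, the case analysis confirms $0 \leq \varphi_n(a)+\varphi_n(b) \leq a+b$ (in particular, $a<-n$ together with $a+b\geq 0$ forces $b>n$, so the truncations cancel to $0$ rather than going negative), and the oddness of $\varphi_n$ transfers this to the case $a+b\leq 0$. The monotonicity in $n$ on $\{a+b\geq 0\}$ is also correct: the only way the map $n\mapsto\varphi_n(a)+\varphi_n(b)$ could decrease is if some coordinate lies below $-n$ while the other is not above $n$, but then $a+b<0$, contradicting the constraint. With the sign identities $(\varphi_n(a)+\varphi_n(b))_\pm = \pm(\varphi_n(a)+\varphi_n(b))\,\mathbf{1}_{\{\pm(a+b)\geq 0\}}$, both positive and negative parts are monotone increasing to $(a+b)_\pm$, and the hypothesis $(a+b)_-\in L^1(\pi)$ together with $\pi$ finite and $|\varphi_n|\leq n$ makes the separation into positive and negative parts legitimate. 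Monotone convergence applied to each part then yields the claimed limit in $(-\infty,+\infty]$. This is a clean, self-contained argument.
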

	
	The following result shows that for measures $\theta_1,\theta_2 \in \MM_+(\R)$ such that $\theta_1 \leq_\F \theta_2$ (see Definition \ref{def:Kellerer_order_large}), the Kellerer transport plan $\KK_\F(\theta_1,\theta_2)$ (defined after Theorem \ref{them:strassen_F}) minimizes  $\Ent(\cdot|\theta_1 \otimes \theta_2)$ among the cyclically monotone transport plans. In fact the statement is more general: $\KK_\F(\theta_1,\theta_2)$  minimizes $\Ent(\cdot|\g_1 \otimes \g_2)$ for every pair $(\g_1,\g_2) \in \MM_2(\R)^2$ satisfying $\theta_1 \ll \g_1$ and $\theta_2 \ll \g_2$. This has already been proven by Di Marino and Louet in a more restricted context\footnote{Precisely, they ask for $\g_1$ and $\g_2$ to be compactly supported and atomless. Their optimal transport minimizing entropy is not defined using the Strassen Theorem from Kellerer: in fact, they explicitly construct a strongly multiplicative transport plan, which by the uniqueness part of Theorem \ref{them:strassen_F}, is the Kellerer transport plan. Their proof of the lemma does not use their specific construction, but only relies on the product form of the Kellerer transport plan.}, but their proof holds in our more general setting. For ease of reading and precaution, we reproduce their proof.
	
	\begin{pro}\label{lem:minimum_kell_ord}
		Consider $(\g_1, \g_2) \in \MM_+(\R)^2$ and $\theta_1, \theta_2 \in \MM_+(\R)$ such that $\theta_1 \ll \g_1, \theta_2 \ll \g_2$ and $\theta_1 \leq_{\F} \theta_2$. Then $$\KK_\F(\theta_1,\theta_2) \in \argmin_{\pi \in \CCC(\theta_1,\theta_2)} \Ent(\pi|\g_1 \otimes \g_2).$$
	\end{pro}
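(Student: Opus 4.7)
The plan is to follow the Di Marino--Louet strategy: exploit the product structure of $\KK_\F(\theta_1,\theta_2)$ to rewrite $\Ent(\pi\,|\,\g_1 \otimes \g_2)$ as a non-negative entropy gap $\Ent(\pi\,|\,\KK_\F(\theta_1,\theta_2))$ plus a term that depends only on the marginals $\theta_1,\theta_2$, and hence is constant over $\CCC(\theta_1,\theta_2)$.

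First, I would unpack the objects. Since $\theta_1 \leq_\F \theta_2$ implies $\theta_1 \leq_{\st} \theta_2$, Proposition~\ref{pro:ordre_stochastique} gives $\CCC(\theta_1,\theta_2) = \Marg_\F(\theta_1,\theta_2)$, so every competitor is concentrated on $\F$. Theorem~\ref{them:strassen_F} furnishes $\eta_1,\eta_2 \in \MM_+^\s(\R)$ with $\eta_i \sim \theta_i$ and $\KK_\F(\theta_1,\theta_2) = (\eta_1 \otimes \eta_2)_{\res \F}$. Since $\theta_i \ll \g_i$ and $\eta_i \sim \theta_i$, one has $\eta_i \ll \g_i$, so setting $f_i := \frac{d\eta_i}{d\g_i}$ yields $\frac{d\KK_\F(\theta_1,\theta_2)}{d\g_1 \otimes \g_2}(x,y) = f_1(x)f_2(y)\1_\F(x,y)$.

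The core observation is then that for any $\pi \in \CCC(\theta_1,\theta_2)$ with $\pi \ll \g_1 \otimes \g_2$ (the opposite case making the inequality trivial), one automatically has $\pi \ll \KK_\F(\theta_1,\theta_2)$. Indeed, $\pi$ is concentrated on $\F$, and since $\theta_i(\{f_i=0\}) = \eta_i(\{f_i=0\}) = 0$, the measure $\pi$ is concentrated on $\{f_1>0\}\times\{f_2>0\} \cap \F$, which is precisely the set where the density of $\KK_\F(\theta_1,\theta_2)$ with respect to $\g_1 \otimes \g_2$ is strictly positive. The chain rule then gives, $\pi$-a.e.,
$$\log \frac{d\pi}{d\g_1 \otimes \g_2}(x,y) = \log \frac{d\pi}{d\KK_\F(\theta_1,\theta_2)}(x,y) + \log f_1(x) + \log f_2(y).$$
Integrating against $\pi$ and using that $\pi$ has marginals $\theta_1,\theta_2$ on the last two terms gives
$$\Ent(\pi\,|\,\g_1 \otimes \g_2) \,=\, \Ent(\pi\,|\,\KK_\F(\theta_1,\theta_2)) + \int \log f_1 \, d\theta_1 + \int \log f_2 \, d\theta_2.$$
Specializing to $\pi = \KK_\F(\theta_1,\theta_2)$, the first term on the right vanishes, and subtracting the two identities yields $\Ent(\pi\,|\,\g_1 \otimes \g_2) - \Ent(\KK_\F(\theta_1,\theta_2)\,|\,\g_1 \otimes \g_2) = \Ent(\pi\,|\,\KK_\F(\theta_1,\theta_2)) \geq 0$, which is exactly the claim.

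The main technical obstacle is that the marginal integrals $\int \log f_i \, d\theta_i$ need not be finite, so the integration step above is formally of indeterminate form $(+\infty) - (+\infty)$. This is precisely the scenario addressed by Lemma~\ref{lem:approx_troncated}: I would apply it with $a(x,y) = \log \frac{d\pi}{d\KK_\F(\theta_1,\theta_2)}(x,y)$ and $b(x,y) = \log f_1(x) + \log f_2(y)$, observing that $(a+b)_- = \bigl(\log \frac{d\pi}{d\g_1 \otimes \g_2}\bigr)_-$ is $\pi$-integrable in the non-trivial case. The identity for the truncated integrands $\varphi_n(a) + \varphi_n(b)$ is unambiguously integrable, depends on the marginals only through the $b$-part, and passes to the limit via Lemma~\ref{lem:approx_troncated}, thereby legitimizing the subtraction above.
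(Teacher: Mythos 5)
Your setup is sound and overlaps heavily with the paper's own proof: you correctly reduce to $\CCC(\theta_1,\theta_2)=\Marg_\F(\theta_1,\theta_2)$, write $\frac{\ddd\KK_\F(\theta_1,\theta_2)}{\ddd\g_1\otimes\g_2}=\1_\F\, f_1\otimes f_2$ via Theorem~\ref{them:strassen_F}, and the observation that $\pi\ll\g_1\otimes\g_2$ together with $\theta_i(\{f_i=0\})=0$ forces $\pi\ll\KK_\F(\theta_1,\theta_2)$ is correct (and is a nice point the paper does not spell out). The intended ``Pythagorean'' identity $\Ent(\pi\,|\,\g_1\otimes\g_2)=\Ent(\pi\,|\,\KK_\F(\theta_1,\theta_2))+\Ent(\KK_\F(\theta_1,\theta_2)\,|\,\g_1\otimes\g_2)$ is also correct \emph{when all integrals converge absolutely}, because $\int(\log f_1+\log f_2)\,\ddd\pi$ is then a purely marginal quantity.

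The gap is in the integrability repair. You apply Lemma~\ref{lem:approx_troncated} with $a=\log\frac{\ddd\pi}{\ddd\KK_\F(\theta_1,\theta_2)}$ and $b(x,y)=\log f_1(x)+\log f_2(y)$, and then assert that the truncated integrand ``depends on the marginals only through the $b$-part.'' This is false: since $\varphi_n$ is not additive, $\varphi_n\bigl(\log f_1(x)+\log f_2(y)\bigr)$ is \emph{not} a sum of a function of $x$ and a function of $y$, so $\int\varphi_n(b)\,\ddd\pi$ genuinely depends on the joint law of $\pi$, not only on $\theta_1,\theta_2$. Consequently you cannot replace $\int\varphi_n(b)\,\ddd\pi$ by $\int\varphi_n(b)\,\ddd\KK_\F(\theta_1,\theta_2)$, and the comparison you want does not follow from the lemma. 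This is precisely why the paper's proof truncates the two log-densities \emph{separately}, setting $a_n(x)=\varphi_n(\log f_1(x))$, $b_n(y)=\varphi_n(\log f_2(y))$ (so that $\int(a_n+b_n)\,\ddd\pi$ is a bona fide marginal functional), and then uses the Fenchel--Young inequality $t\log t - t\,c\geq -e^{c-1}$ to obtain a lower bound $\Ent(\pi\,|\,\g_1\otimes\g_2)\geq \int(a_n+b_n+1)\,\ddd\pi - \int_\F e^{a_n+b_n}\,\ddd\g_1\otimes\g_2$ whose right-hand side is controlled as $n\to+\infty$ by dominated convergence and Lemma~\ref{lem:approx_troncated} applied to $\KK_\F(\theta_1,\theta_2)$. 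Without the Fenchel--Young step (or some substitute), your direct chain-rule subtraction remains of indeterminate $(+\infty)-(+\infty)$ form and the proposed truncation does not legitimize it.
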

	
	\begin{proof}
		Consider $\pi \in \CCC(\theta_1,\theta_2)$. If $\pi \ll \g_1 \otimes \g_2$ is not satisfied, then $\Ent(\pi|\g_1 \otimes \g_2) = + \infty \geq \Ent(\KK_\F(\theta_1,\theta_2)|\g_1 \otimes \g_2)$. We now assume $\pi \ll \g_1 \otimes \g_2$, and define $\pi_* = \frac{\ddd \pi}{d\g_1 \otimes \g_2}$. Let $\nu_1, \nu_2 \in \MM_+^\s(\R)$ be such that $\nu_1 \ll \theta_1$, $\nu_2 \ll \theta_2$ and $\KK(\g_1,\g_2) = (\nu_1 \otimes \nu_2)_{\res \F}$. We have
		$$\frac{d\KK_\F(\theta_1,\theta_2)}{d\g_1 \otimes \g_2} = \frac{d\KK_\F(\theta_1,\theta_2)}{d\theta_1 \otimes \theta_2}\frac{d\theta_1 \otimes \theta_2}{d\g_1 \otimes \g_2} = \left(\1_\F \frac{\ddd\nu_1}{d\theta_1}\otimes\frac{\ddd\nu_2}{d\theta_2} \right)\left( \frac{d\theta_1}{d\g_1}\otimes\frac{d\theta_2}{d\g_2}\right) = \1_\F f \otimes g,$$
		where $f$ and $g$ are defined by $f = \frac{\ddd\nu_1}{d{\g_1}}$ and $g = \frac{\ddd\nu_2}{d\g_2}$. Now, define $a = \log(f)$ and $b = \log(g).$ For all $n \geq 1$, we define $a_n := \varphi_n(a)$ and $b_n := \varphi_n(b)$. The reader may verify that, for all $(c,t) \in \R \times \R_+$, $t \log(t) - tc \geq -e^{c-1}  $. For all $(x,y,n) \in \R^2\times \N^*$, by applying this inequality with $c = a_n(x) + b_n(y) +1$ and $t = \pi_*(x,y)$, we get $\pi_*(x,y) \log(\pi_*(x,y)) - \pi_*(x,y)(a_n(x) + b_n(y) +1) \geq -e^{a_n(x) + b_n(y)}.$ Thus, as $\CCC(\theta_1,\theta_2) = \Marg_\F(\theta_1,\theta_2)$, for all $n \geq 1$:
		\begin{align*}
			\Ent(\pi|{\g_1} \otimes {\g_2}) &= \int_{\F} \pi_*(x,y) \log(\pi_*(x,y)) \dd {\g_1} \otimes {\g_2}(x,y) \\
			&=\int_{\F} \pi_*(x,y) \log(\pi_*(x,y)) - \pi_*(x,y)(a_n(x) + b_n(y) +1)\dd {\g_1} \otimes {\g_2}(x,y) \\  &\hspace{2cm}+\int_{\F}\pi_*(x,y)(a_n(x) + b_n(y) +1)\dd {\g_1} \otimes {\g_2}(x,y) \\
			&\geq - \int_{\F} e^{a_n(x)+b_n(y)} \dd{\g_1} \otimes {\g_2}(x,y) + \int_{\R^2}(a_n(x) + b_n(y) +1) \dd \pi(x,y) \\
			&= - \int_{\R^2} e^{a_n(x)+b_n(y)} \dd {\g_1} \otimes {\g_2}(x,y) + \int_{\R^2}(a_n(x) + b_n(y) +1)\dd \KK_\F({\theta_1},{\theta_2})(x,y),
		\end{align*}
		where we used that $\KK_\F({\theta_1},{\theta_2})$ and $\pi$ both belong to $\Marg({\theta_1},{\theta_2})$ to establish the last equality. Since for all $(z_1,z_2) \in \R^2$, $(\varphi_n(z_1) + \varphi_n(z_2))_+ \leq (z_1 + z_2)_+$, for all $(x,y) \in \R^2$, $0 \leq e^{a_n(x) + b_n(y)} \leq e^{(a(x) + b(y))_+} \leq \max(1,f(x)g(y)) \leq 1 + f(x)g(y).$ As $\int_{\F} 1 + f(x)g(y) \dd {\g_1} \otimes {\g_2}(x,y) \leq ({\g_1} \otimes {\g_2})(\R^2) + \KK_\F({\theta_1},{\theta_2})(\R^2) < + \infty$ and the pointwise limits $a = \lim_{n \to + \infty} a_n $ and $b = \lim_{n \to + \infty} b_n$ hold, by dominated convergence $$\int_{\F}  e^{a_n(x)+b_n(y)} \dd{\g_1} \otimes {\g_2}(x,y) \dcv{n \to + \infty} \int_{\F} e^{a(x)+b(y)} \dd{\g_1} \otimes {\g_2}(x,y) = \KK_\F({\theta_1},{\theta_2})(\R^2).$$ As $$\int_{\R^2} (a(x) + b(y))_- \dd \KK_\F({\theta_1},{\theta_2}) = \int_{\F} (f(x)g(y) \log(f(x)g(y)))_- \dd({\g_1} \otimes {\g_2})(x,y) \leq \int_{\R^2} 1 \dd{\g_1}\otimes {\g_2} = {\g_1} \otimes {\g_2} (\R^2),$$ Lemma \ref{lem:approx_troncated} applies. Thus, $$\int_{\R^2} a_n(x) + b_n(y) \dd\KK_\F({\theta_1},{\theta_2})(x,y) \dcv{n \to + \infty} \int_{\R^2} a(x) + b(y) \dd \KK_\F({\theta_1},{\theta_2})(x,y).$$ Since $\int_{\R^2} a(x) + b(y) \dd \KK_\F({\theta_1},{\theta_2})(x,y)= \int_{\F} \log(f(x)g(y)) \dd\KK_\F({\theta_1},{\theta_2})(x,y) = \Ent(\KK_\F({\theta_1},{\theta_2})| {\g_1} \otimes {\g_2})$, by gathering all our estimates together, we conclude $\Ent(\pi | {\g_1} \otimes {\g_2}) \geq -\KK_\F({\theta_1},{\theta_2})(\R^2) + \Ent(\KK_\F({\theta_1},{\theta_2}) | {\g_1} \otimes {\g_2}) + \KK_\F({\theta_1},{\theta_2})(\R^2) = \Ent(\KK_\F({\theta_1},{\theta_2}) | {\g_1} \otimes {\g_2})$, which proves the proposition.
	\end{proof}

	We now prove that $\KK(\mu,\nu)$ minimizes $\Ent(\cdot|\mu \otimes \nu)$ among $\CCC(\mu,\nu).$
	
	\begin{them}\label{them:kellerer_minimizes}
		Consider $(\mu,\nu) \in \MM_+^2$ and define $\KK(\mu,\nu)$ as in Definition \ref{defi:gen_kell_transport_plan}. Then 
		\begin{equation}\label{eq:Kell_min}
			\Ent(\KK(\mu,\nu)|\mu \otimes \nu) = \min_{\pi \in \CCC(\mu,\nu)}\Ent(\pi|\mu \otimes \nu).
		\end{equation}
	\end{them}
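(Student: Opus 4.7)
The plan is to combine the two preceding propositions: the entropy decomposition (Proposition \ref{pro:decomposition_entropie}) and the componentwise minimization property of the Kellerer plan (Proposition \ref{lem:minimum_kell_ord}). The result is essentially a componentwise reduction: since both the relative entropy and the set $\CCC(\mu,\nu)$ decompose as a direct sum along the components of the marginals, it suffices to minimize on each piece independently, and on each piece Kellerer's plan is the minimizer.

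First, I would fix $\pi \in \CCC(\mu,\nu)$ and decompose it as $\pi = \sum_{k \in \KK^+} \pi_k^+ + \sum_{k \in \KK^-} \pi_k^- + \pi^=$ according to Theorem \ref{pro:decomposition}. Applying Proposition \ref{pro:decomposition_entropie} yields
\begin{equation*}
\Ent(\pi|\mu \otimes \nu) = \sum_{k \in \KK^+} \Ent(\pi_k^+|\mu\otimes \nu) + \sum_{k \in \KK^-} \Ent(\pi_k^-|\mu\otimes\nu) + \Ent(\pi^=|\mu \otimes \nu).
\end{equation*}
The same identity holds for $\KK(\mu,\nu)$, whose components are exactly $\KK_\F(\mu_k^+,\nu_k^+)$, $\KK_{\ti\F}(\mu_k^-,\nu_k^-)$, and $(\id,\id)_\#\mu^=$. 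Thus it suffices to compare the three groups of summands termwise.

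For the positive components, I would check that $\mu_k^+ \leq \mu$ and $\nu_k^+ \leq \nu$ (this follows from the definition of the marginal components in Definition \ref{def:component_marginals}: the atomic boundary contributions $(F_\mu^+(a_k^+)-F_\nu^+(a_k^+))\d_{a_k^+}$ and $(F_\mu^-(b_k^+)-F_\nu^-(b_k^+))\d_{b_k^+}$ are bounded by $\mu(\{a_k^+\})$ and $\nu(\{b_k^+\})$ respectively). In particular $\mu_k^+ \ll \mu$ and $\nu_k^+ \ll \nu$, so Proposition \ref{lem:minimum_kell_ord} applied with $(\theta_1,\theta_2)=(\mu_k^+,\nu_k^+)$ and $(\g_1,\g_2)=(\mu,\nu)$ gives
\begin{equation*}
\Ent(\pi_k^+ | \mu\otimes\nu) \geq \Ent(\KK_\F(\mu_k^+,\nu_k^+) | \mu\otimes\nu).
\end{equation*}
For the negative components, the analogous inequality follows after applying the symmetry $i:(x,y)\mapsto(y,x)$, using $\KK_{\ti\F}(\mu_k^-,\nu_k^-)=i_\# \KK_\F(\nu_k^-,\mu_k^-)$ and the invariance of the relative entropy under $i_\#$ (since $i_\#(\mu\otimes\nu) = \nu\otimes\mu$). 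Finally, for the diagonal part, Point \ref{point:transport_fitting_marg_eq_first} and Point \ref{point:transport_fitting_marg_eq_second} of Proposition \ref{pro:transport_fitting_marg} force $\pi^= = (\id,\id)_\#\mu^=$ for every $\pi \in \CCC(\mu,\nu)$, so this term is the same on both sides and contributes no inequality.

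Summing the three groups of inequalities and invoking the entropy decomposition of $\KK(\mu,\nu)$ gives $\Ent(\pi|\mu\otimes\nu) \geq \Ent(\KK(\mu,\nu)|\mu\otimes\nu)$, which together with $\KK(\mu,\nu) \in \CCC(\mu,\nu)$ proves Equation \eqref{eq:Kell_min}. There is no real obstacle here beyond checking absolute continuity $\mu_k^\pm \ll \mu$, $\nu_k^\pm \ll \nu$ carefully at the boundary atoms; the main content is already packaged in Proposition \ref{lem:minimum_kell_ord} and Proposition \ref{pro:decomposition_entropie}.
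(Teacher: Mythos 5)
Your proof is correct and follows essentially the same route as the paper's: decompose $\pi$ via Theorem \ref{pro:decomposition}, split the entropy via Proposition \ref{pro:decomposition_entropie}, and minimize each summand via Proposition \ref{lem:minimum_kell_ord}. The paper's version is terser — it does not spell out the absolute-continuity check $\mu_k^\pm\ll\mu$, $\nu_k^\pm\ll\nu$ nor the $i_\#$-symmetry reduction for the negative components — but the substance is identical, and your extra verifications are accurate (for instance, $\mu_k^+\ll\mu$ follows immediately from $\mu_k^+\le\mu$, which is a consequence of $\mu=\sum_k\mu_k^+ + \sum_k\mu_k^- + \mu^=$).
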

	
	\begin{proof}
		Consider $\pi \in \CCC(\mu,\nu)$, define $\FF = ((\pi_k^+)_{k \in \KK^+},(\pi_k^-)_{k \in \KK^-}, \pi^=)$ using the notation in Definition \ref{defi:composantes}, and $\PPP$ as in Theorem \ref{pro:decomposition}. As $\FF \in \PPP$, by Proposition \ref{pro:decomposition_entropie} and Proposition \ref{lem:minimum_kell_ord}, we obtain:
		\begin{align*}
			\Ent(\pi|\mu \otimes \nu) &= \sum_{k \in \KK^+} \Ent(\pi_k^+|\mu \otimes \nu) + \sum_{k \in \KK^-} \Ent(\pi_k^-|\mu \otimes \nu) + \Ent(\pi^=|\mu \otimes \nu) \\
			&\geq  \sum_{k \in \KK^+} \Ent(\KK_\F(\mu_k^+,\nu_k^+)|\mu \otimes \nu) + \sum_{k \in \KK^-} \Ent(\KK_{\ti{\F}}(\mu_k^-,\nu_k^-)|\mu \otimes \nu) + \Ent(\pi^=|\mu \otimes \nu)\\
			&= \Ent(\KK(\mu,\nu)|\mu \otimes \nu).\qedhere
		\end{align*} 
	\end{proof}

	\subsection{Convergence of $(\pi_\ee)_{\ee > 0}$ in two different settings.}\label{subsection:conv_connue}
	
	In this part, we first prove that $\lim_{\ee \to 0^+} \pi_\ee = \KK(\mu,\nu)$ when $W_1(\mu,\nu) < +\infty$ and $\Ent(\pi|\mu \otimes \nu) < + \infty$. Then, we briefly recall the result of Di Marino and Louet in the case of atomless marginals, when there exists $\pi \in \CCC(\mu,\nu)$ such that $\Ent(\pi-(\id,\id)_\# \mu^=|\mu \otimes \nu)$ is finite. In this case, under some additional technical assumptions, Di Marino and Louet proved (using involved Gamma-convergence methods) that $(\pi_\ee)_{\ee >0}$ converges to $\KK(\mu,\nu).$ In another part (Theorem \eqref{them:convergence_semi_discret}), we will prove a new convergence result. This result is by no mean generalizing the two precedent result, but is a new case of convergence. Under the hypotheses that non-equal components form pairs of mutually singular measures, we prove that $\KK(\mu,\nu) = \lim_{\ee \to 0^+} \pi_\ee.$ Observe that we do not require the existence of an optimal transport plan with finite entropy, nor --- as in the case of Di Marino and Louet --- the existence of an optimal transport plan whose non-fixed part has finite entropy.
	
	\subsubsection{Convergence under the existence of an optimal transport plan with finite entropy}

	In case of measures with finite support, it is well known that $\Ent(\cdot|\mu\otimes \nu)$ admits a unique minimizer $\pi^*$ among $\OO(\mu,\nu)$, and that $\pi^* = \lim_{\ee \to 0^+} \pi_\ee.$ According to Theorem \ref{them:kellerer_minimizes}, we immediately get $\KK(\mu,\nu) = \lim_{\ee \to 0^+} \pi_\ee.$ We now extend this convergence result to a more general setting, namely when:
	\begin{equation}\label{eq:finiteness_entropy}
		W_1(\mu,\nu) < + \infty \text{\hspace{0.2cm} and \hspace{0.2cm}}\inf_{\pi \in \CCC(\mu,\nu)} \Ent(\pi|\mu \otimes \nu) < + \infty.
	\end{equation}
	
	Note that under Condition \eqref{eq:finiteness_entropy}, Theorem \ref{them:cyclicality_optimality} ensures that $\CCC(\mu,\nu) = \OO(\mu,\nu).$ The proof follows a similar path as the one in the finite support case (which can be found in the monograph \cite[Proposition $4.1$]{peyre_computational_2019}), but an additional result is required to guarantee that cluster points of $(\pi_\ee)_{\ee > 0}$ are elements of $\CCC(\mu,\nu).$ This result has been proven in \cite[Proposition $3.2$]{bernton_entropic_2022} for any continuous cost function.
	
	\begin{pro}\label{them:valeur_adherence_solutions_penalisees_generales}
		Cluster points of $(\pi_\ee)_{\ee>0}$ at point $0^+$ are  cyclically monotone transport plans.
	\end{pro}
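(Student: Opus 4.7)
The plan is to show that any weak cluster point $\pi^*$ of $(\pi_\ee)_{\ee>0}$ as $\ee \to 0^+$ lies in $\CCC(\mu,\nu)$. The marginal constraint $\pi^* \in \Marg(\mu,\nu)$ is immediate: the projections $p_1,p_2$ are continuous, so ${p_1}_\# \pi^* = \lim_{\ee \to 0^+} {p_1}_\# \pi_\ee = \mu$ and similarly ${p_2}_\# \pi^* = \nu$. The substantive content is the cyclical monotonicity of $\pi^*$. I would argue via the support: it suffices to show that $\spt(\pi^*)$ is a cyclically monotone set, since $\pi^*$ is concentrated on $\spt(\pi^*)$.

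Fix an $n$-tuple $((x_i,y_i))_{i \in \ent{1}{n}} \in \spt(\pi^*)^n$ and argue by contradiction assuming $\Delta := \sum_{i=1}^n|y_i-x_i| - \sum_{i=1}^n|y_{i+1}-x_i| > 0$. First I would fix a small radius $\rho > 0$ such that for the balls $U_i := B_\rho(x_i,y_i)$ in $\R^2$, continuity of the cost gives the uniform estimate $\sum_{i=1}^n(|\ti y_i - \ti x_i| - |\ti y_{i+1}- \ti x_i|) \geq \Delta/2$ for every choice $(\ti x_i, \ti y_i) \in U_i$. By definition of the support $\pi^*(U_i) > 0$, and then by the Portmanteau theorem applied to the open sets $U_i$, we have $\liminf_{\ee \to 0^+} \pi_\ee(U_i) > 0$; in particular, there is a fixed constant $\alpha > 0$ and a sequence $\ee_k \to 0^+$ along which $\pi_{\ee_k}(U_i) \geq \alpha$ for all $i$ and all $k$.

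The heart of the argument is to build, for each such $\ee_k$, a competitor $\ti{\pi}_{\ee_k} \in \Marg(\mu,\nu)$ obtained from $\pi_{\ee_k}$ by a cycle swap: on measurable sub-blocks sitting inside each $U_i$ of equal $\pi_{\ee_k}$-mass, exchange the second marginals cyclically so that mass originally carried from the $x$-region of $U_i$ to the $y$-region of $U_i$ is instead carried to the $y$-region of $U_{i+1}$. This is a standard swap (preserving marginals) and by the uniform bound above it reduces the transport cost by at least $\alpha \Delta/2$. Inserting $\ti\pi_{\ee_k}$ into the minimality inequality $J_{\ee_k}(\pi_{\ee_k}) \leq J_{\ee_k}(\ti\pi_{\ee_k})$ yields
\[ \int|y-x|\,d\pi_{\ee_k} - \int|y-x|\,d\ti\pi_{\ee_k} \;\leq\; \ee_k \bigl[\Ent(\ti\pi_{\ee_k}|\mu\otimes\nu) - \Ent(\pi_{\ee_k}|\mu\otimes\nu)\bigr]. \]
If the right-hand side is $O(\ee_k)$, letting $k \to \infty$ gives the desired contradiction, since the left-hand side is bounded below by the positive constant $\alpha \Delta/2$.

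The main technical obstacle is the entropy control on the right-hand side: since $\pi_{\ee_k}$ need not have finite entropy and its density can vanish or blow up, pointwise density comparison is hopeless. The clean route is to realise the swap at the level of the disintegration of $\pi_{\ee_k}$ against its first marginal $\mu$, exchanging only a piece of the kernel supported on the blocks, so that the modified kernel is bounded (in a Radon--Nikodym sense) by the original kernel plus a finite correction concentrated on $\cup_i U_i$, where the entropy contribution is controlled by the \emph{relative} entropy restricted to a region of bounded mass and hence is $O(1)$ uniformly in $k$ by a standard subadditivity and convexity argument for $\Ent$. This is essentially the continuous-cost argument of \cite{bernton_entropic_2022}; since $c(x,y)=|y-x|$ is continuous, one may invoke it directly.
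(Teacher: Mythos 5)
The paper does not actually supply a proof of this proposition: it is stated immediately after the sentence ``This result has been proven in \cite[Proposition 3.2]{bernton_entropic_2022} for any continuous cost function'', and that citation is the entire argument. Your proposal also defers to \cite{bernton_entropic_2022} in its final sentence, so you land in the same place; but the competitor sketch you offer on the way is not their argument, and it has two genuine gaps.

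First, the minimality inequality $J_{\ee_k}(\pi_{\ee_k}) \leq J_{\ee_k}(\ti{\pi}_{\ee_k})$ is only available when $W_1^{\ee_k}(\mu,\nu) < +\infty$, so that $\pi_{\ee_k}$ is actually a minimiser. In this paper $\pi_\ee$ is \emph{defined} (Theorem \ref{them:well_posedness}) as the unique $\ee$-cyclically invariant transport plan, and coincides with a minimiser of $J_\ee$ only in the finite-value regime; the present proposition is stated, and later used (e.g.\ in Theorem \ref{them:valeur_adherence_solutions_penalisees_generales_SWFM}), without any such hypothesis. The cited proof operates directly at the level of the $\ee$-cyclical invariance, i.e.\ the multiplicative Schr\"odinger structure of the density (Lemma \ref{them:structure_entropique} here), and never passes through a minimality comparison; that is precisely why it covers the general case, and why the competitor route as you wrote it does not.

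Second, the entropy control you declare ``$O(1)$ uniformly in $k$ by a standard subadditivity and convexity argument'' is the actual mathematical content, not an afterthought. There is no a priori bound on the density of $\pi_{\ee_k}$ on the blocks $U_i$: in general $\Ent(\pi_{\ee_k}|\mu\otimes\nu)$ diverges (typically like $1/\ee_k$), so the density may concentrate arbitrarily on the swap blocks as $k\to\infty$. Adding a slab of mass to a region where the background density is blowing up inflates the relative entropy without any bound uniform in $k$, and the swap blocks are dictated by the cycle $((x_i,y_i))_i$ you fixed; you are not free to place the perturbation where the density is tame. If one tries to push the disintegration route far enough to genuinely control the entropy change, one is in effect forced back to the explicit Schr\"odinger form of $\ddd\pi_\ee/\ddd(\mu\otimes\nu)$, which is the starting point of \cite{bernton_entropic_2022}. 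As written, your final paragraph hand-waves past the step where the theorem lives and then invokes the reference whose whole purpose was to get that step right.
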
 
	\begin{them}\label{them:min_kell}
		Consider $(\mu,\nu) \in \MM_+^2$ and assume Condition \eqref{eq:finiteness_entropy} holds. Then ${\KK}(\mu,\nu) = \lim_{\ee \to 0^+} \pi_\ee.$
	\end{them}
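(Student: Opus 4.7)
The plan is to follow the classical $\Gamma$-convergence-style argument for entropic regularisation, combined with Theorem \ref{them:kellerer_minimizes} (which already identifies $\KK(\mu,\nu)$ as an entropy minimiser over $\CCC(\mu,\nu)$) and Proposition \ref{them:valeur_adherence_solutions_penalisees_generales} (which says that cluster points of $(\pi_\ee)_{\ee>0}$ are cyclically monotone).

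First, I would establish a uniform entropy bound on $(\pi_\ee)_{\ee>0}$. Choose $\pi^\dagger \in \CCC(\mu,\nu)$ with $\Ent(\pi^\dagger|\mu\otimes\nu)<+\infty$, which exists by Condition \eqref{eq:finiteness_entropy}. Since $W_1(\mu,\nu)<+\infty$ we have $\CCC(\mu,\nu)=\OO(\mu,\nu)$ by Theorem \ref{them:cyclicality_optimality}, hence $J(\pi^\dagger)=W_1(\mu,\nu)$. By Theorem \ref{them:well_posedness}, $\pi_\ee$ minimises $J_\ee$, so
\begin{equation*}
J(\pi_\ee)+\ee\,\Ent(\pi_\ee|\mu\otimes\nu)\leq W_1(\mu,\nu)+\ee\,\Ent(\pi^\dagger|\mu\otimes\nu).
\end{equation*}
Since $\pi_\ee\in\Marg(\mu,\nu)$ gives $J(\pi_\ee)\geq W_1(\mu,\nu)$, dividing by $\ee$ yields $\Ent(\pi_\ee|\mu\otimes\nu)\leq\Ent(\pi^\dagger|\mu\otimes\nu)$ for every $\ee>0$.

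Second, I would extract a cluster point. The family $(\pi_\ee)_{\ee>0}$ has fixed marginals $\mu$ and $\nu$, so by Prokhorov's theorem it is tight, and every sequence $\ee_n\to 0^+$ admits a weakly convergent subsequence, whose limit $\pi^*$ lies in $\Marg(\mu,\nu)$. By Proposition \ref{them:valeur_adherence_solutions_penalisees_generales}, $\pi^* \in \CCC(\mu,\nu)$. Using lower semi-continuity of $\Ent(\cdot|\mu\otimes\nu)$ with respect to weak convergence, together with the uniform bound from the previous step applied with $\pi^\dagger=\KK(\mu,\nu)$ (whose entropy is finite by Theorem \ref{them:kellerer_minimizes} and Condition \eqref{eq:finiteness_entropy}), I obtain
\begin{equation*}
\Ent(\pi^*|\mu\otimes\nu)\leq\liminf_{n\to+\infty}\Ent(\pi_{\ee_n}|\mu\otimes\nu)\leq\Ent(\KK(\mu,\nu)|\mu\otimes\nu).
\end{equation*}
By Theorem \ref{them:kellerer_minimizes}, this inequality is actually an equality, so $\pi^*$ also minimises $\Ent(\cdot|\mu\otimes\nu)$ over $\CCC(\mu,\nu)$.

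Third, I would invoke uniqueness of the minimiser. The set $\CCC(\mu,\nu)=\OO(\mu,\nu)$ is convex (it is the set of minimisers of the linear functional $J$ on the convex set $\Marg(\mu,\nu)$), and $\Ent(\cdot|\mu\otimes\nu)$ is strictly convex on the sub-cone where it is finite. Hence any two minimisers with finite entropy coincide, which gives $\pi^*=\KK(\mu,\nu)$. Since every cluster point of $(\pi_\ee)_{\ee>0}$ equals $\KK(\mu,\nu)$ and the family is tight, the whole family converges weakly to $\KK(\mu,\nu)$ as $\ee\to 0^+$. The argument is essentially routine; the only non-trivial inputs are Proposition \ref{them:valeur_adherence_solutions_penalisees_generales} and the entropy-minimisation property of $\KK(\mu,\nu)$ established in Theorem \ref{them:kellerer_minimizes}, so I do not foresee any substantial obstacle beyond those already resolved earlier in the paper.
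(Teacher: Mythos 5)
Your proposal is correct and follows essentially the same route as the paper's own proof: the $J_\ee$-optimality inequality $\Ent(\pi_\ee|\mu\otimes\nu)\leq\Ent(\pi|\mu\otimes\nu)$, lower semi-continuity of $\Ent(\cdot|\mu\otimes\nu)$, Proposition \ref{them:valeur_adherence_solutions_penalisees_generales} to place cluster points in $\CCC(\mu,\nu)$, and strict convexity together with Theorem \ref{them:kellerer_minimizes} to pin down the limit. You spell out the tightness/Prokhorov step and the finiteness of $\Ent(\KK(\mu,\nu)|\mu\otimes\nu)$ slightly more explicitly, but the structure and the key lemmas are the same.
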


	\begin{proof}
		Let $\pi^*$ be a cluster point of $(\pi_\ee)_{\ee>0}$. By Proposition \ref{them:valeur_adherence_solutions_penalisees_generales}, $\pi^* \in \CCC(\mu,\nu).$  Fix $\ee > 0$ and $\pi \in \CCC(\mu,\nu).$ As $J(\pi_\ee) + \ee\Ent(\pi_\ee|\mu \otimes \nu) =J_\ee(\pi_\ee)  \leq J_\ee(\pi) = J(\pi) + \ee\Ent(\pi|\mu \otimes \nu)$ and $J(\pi) \leq J(\pi_\ee)$, we get
		\begin{equation*}\label{eq:convergence_in_support_fini}
			\ee\Ent(\pi_\ee | \mu \otimes \nu) \leq J(\pi_\ee) +   \ee \Ent(\pi_\ee | \mu \otimes \nu)  - J(\pi) \leq  J(\pi) + \ee \Ent(\pi | \mu \otimes \nu) - J(\pi) = \ee\Ent(\pi | \mu \otimes \nu).
		\end{equation*} 
		By lower semi-continuity of $\Ent(\cdot|\mu \otimes \nu)$, we get $\Ent(\pi^*|\mu \otimes \nu) \leq \Ent(\pi|\mu \otimes \nu).$ Hence, $\pi^*$ minimizes  $\Ent(\cdot|\mu \otimes \nu)$ over $\CCC(\mu,\nu)$. By the strict convexity of $\Ent(\cdot | \mu \otimes \nu)$ on $\CCC(\mu,\nu)$ and Condition \eqref{eq:finiteness_entropy}, we know that $\Ent(\cdot | \mu \otimes \nu)$ has a unique minimizer among elements of $\CCC(\mu,\nu)$. By Theorem \ref{them:kellerer_minimizes} this minimizer is $\KK(\mu,\nu)$. Therefore $\pi^* = \KK(\mu,\nu)$. This establishes the convergence result.
	\end{proof}
	
	Observe that, when $\mu$ and $\nu$ have finite support, Condition \eqref{eq:finiteness_entropy} is satisfied, so that $\KK(\mu,\nu) = \lim_{\ee \to 0^+}\pi_\ee.$ In case of atomic measures (not necessarily with finite support), Condition \eqref{eq:finiteness_entropy} may be satisfied or not. For instance, if  for some atomic measure $\theta$, we have $\mu =\nu = \theta$, then $\CCC(\mu,\nu) = \OO(\mu,\nu) = \{(\id,\id)_\#\theta \}$, and $\Ent((\id,\id)_\#\theta | \mu \otimes \nu) = -\sum_{x \in \spt(\theta)} \theta(x) \log(\theta(x)).$ Thus, for $\theta = \sum_{n \geq 2} n^{-2} \d_{n}$ Condition \eqref{eq:finiteness_entropy} is satisfied, whereas for $\theta = \sum_{n \geq 2} (n\log(n)^2)^{-1}\d_n$ Condition \eqref{eq:finiteness_entropy} is not satisfied.

	\subsubsection{The Di Marino--Louet case}\label{subsubsection:Dl_results}
	
	In \cite[Theorem $4.1$]{di_marino_entropic_2018}, Di Marino and Louet proved the following convergence result of $(\pi_\ee)_{\ee>0}$. 
	\begin{them}\label{them:conv_Di_Marino_Louet}
		Assume $\mu, \nu \in \PP(\R)$ satisfy the following conditions:
		\begin{enumerate}
			\item The measures $\mu, \nu$ have compact support and satisfy $\Ent(\mu|\LL^1)< + \infty$ and $\Ent(\nu|\LL^1) < +\infty$;
			\item The set  $\spt(\mu) \cap \{F_\mu^+ = F_\nu^+\}$ has a Lebesgue-negligible boundary and the family  $(]a_i,b_i[)_{i \in \II}$ of connected components of its interior satisfies: $$-\sum_{i \in \II} \int_{a_i}^{b_i} \log(\min(x-a_i,b_i-x)) \dd\mu(x) < + \infty;$$
			\item  $\min_{\pi \in \OO(\mu,\nu)} \Ent\left(\pi_{\res \{F_\mu^+ = F_\nu^+\}^c \times  \{F_\mu^+ = F_\nu^+\}^c}|\mu \otimes \nu\right)<+\infty$.
		\end{enumerate}
		Then $\lim_{\ee \to + \infty} \pi_\ee = \argmin_{\OO(\mu,\nu)} \Ent\left(\pi_{\res \{F_\mu^+ = F_\nu^+\}^c \times  \{F_\mu^+ = F_\nu^+\}^c}|\mu \otimes \nu\right).$
	\end{them}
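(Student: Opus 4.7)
The plan is to run the same $\Gamma$-convergence scheme as in the proof of Theorem~\ref{them:min_kell}, but adapted to the atomless setting. The difficulty is that, since $\mu$ is atomless, the fixed part $(\id,\id)_\# \mu^=$ of any $\pi \in \OO(\mu,\nu)$ is singular with respect to $\mu \otimes \nu$, so $\Ent(\pi|\mu\otimes\nu) = +\infty$ throughout $\OO(\mu,\nu)$ and Theorem~\ref{them:min_kell} does not apply. The comparison of entropies must therefore be carried out only on the non-fixed parts, living on $\{F_\mu^+=F_\nu^+\}^c \times \{F_\mu^+=F_\nu^+\}^c$, and one must feed this into the same convexity/lsc argument as before.

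First, since $\mu,\nu$ are compactly supported, $\Marg(\mu,\nu)$ is weakly compact, so $(\pi_\ee)_{\ee>0}$ admits cluster points as $\ee \to 0^+$; by Proposition~\ref{them:valeur_adherence_solutions_penalisees_generales}, every such $\pi^*$ lies in $\CCC(\mu,\nu)=\OO(\mu,\nu)$. By Theorem~\ref{them:deco_cas_sans_atomes} one has $\pi^* = (\pi^*)^{\neq} + (\id,\id)_\#\mu^=$ and $\KK(\mu,\nu) = \KK^{\neq}(\mu,\nu) + (\id,\id)_\#\mu^=$, where the superscript $\neq$ denotes the restriction to $\{F_\mu^+=F_\nu^+\}^c\times\{F_\mu^+=F_\nu^+\}^c$. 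It therefore suffices to identify $(\pi^*)^{\neq}$ with $\KK^{\neq}(\mu,\nu)$.

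The decisive step is the construction of a recovery sequence $\tilde\pi_\ee \to \KK(\mu,\nu)$ of the form $\tilde\pi_\ee = \KK^{\neq}(\mu,\nu) + \rho_\ee$. The measure $\KK^{\neq}(\mu,\nu)$ has finite relative entropy by Proposition~\ref{lem:minimum_kell_ord} and assumption~(3), so it contributes only $O(\ee)$ to $\ee\,\Ent(\tilde\pi_\ee|\mu\otimes\nu)$. The measure $\rho_\ee$ is a marginal-preserving, absolutely continuous approximation of $(\id,\id)_\#\mu^=$: on each connected component $]a_i,b_i[$ of the interior of $\spt(\mu)\cap\{F_\mu^+=F_\nu^+\}$ --- where $\mu=\nu$ --- one spreads the diagonal plan over a thin strip of width $\sim\ee$ inside $]a_i,b_i[^2$, with density constructed from $\mu_{\res ]a_i,b_i[}$ itself so that both marginals are preserved. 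A direct computation then bounds $\ee\,\Ent(\rho_\ee|\mu\otimes\nu)$ by $O(\ee) + O(\ee\,\Ent(\mu|\LL^1)) - \ee\sum_{i\in\II}\int_{a_i}^{b_i}\log(\min(x-a_i,b_i-x))\dd\mu(x)$, which vanishes as $\ee\to 0^+$ by assumptions~(1) and~(2); the Lebesgue-negligible boundary in~(2) guarantees that no mass of $\mu$ is lost when restricting to the union of the $]a_i,b_i[$. The transport cost $\int|y-x|\dd\rho_\ee$ is clearly $O(\ee)$.

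From $J_\ee(\pi_\ee)\leq J_\ee(\tilde\pi_\ee)$, the entropy decomposition of Proposition~\ref{pro:decomposition_entropie} applied to both sides, and the vanishing of the contribution of $\rho_\ee$, one extracts a uniform upper bound $\Ent(\pi_\ee^{\neq}|\mu\otimes\nu) \leq \Ent(\KK^{\neq}(\mu,\nu)|\mu\otimes\nu) + o(1)$. Lower semicontinuity of $\Ent(\cdot|\mu\otimes\nu)$ transfers this to any cluster point: $(\pi^*)^{\neq}$ minimizes $\Ent(\cdot|\mu\otimes\nu)$ over the non-fixed cyclically monotone transports. Applying Theorem~\ref{them:kellerer_minimizes} componentwise together with the strict convexity of $\Ent(\cdot|\mu\otimes\nu)$ on the finite-entropy class then forces $(\pi^*)^{\neq}=\KK^{\neq}(\mu,\nu)$, whence $\pi^*=\KK(\mu,\nu)$; since every subsequential limit is the same, the whole family converges. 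The hard part will be the construction of $\rho_\ee$: it must simultaneously restore absolute continuity, preserve both marginals on each component, have vanishing transport cost, and have its entropy blow-up absorbed by the logarithmic integrability in assumption~(2) --- which is precisely the quantity appearing in the theorem's hypothesis.
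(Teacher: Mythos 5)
The paper does not prove this theorem; it states it as \cite[Theorem~4.1]{di_marino_entropic_2018} and only sketches the underlying $\Gamma$-convergence strategy. Your proposal is thus an attempt at a self-contained proof, and it correctly recognises the main structural ideas from that sketch: the fixed part $(\id,\id)_\#\mu^=$ is singular w.r.t.\ $\mu\otimes\nu$, so $\Ent(\cdot\,|\mu\otimes\nu)\equiv+\infty$ on $\OO(\mu,\nu)$ and the comparison must be done on the non-fixed part only; and one must build a recovery sequence $\tilde\pi_\ee = \KK^{\neq}(\mu,\nu)+\rho_\ee$ that smears the diagonal over a strip of width $\sim\ee$, with hypotheses (1)--(2) being exactly what makes that block approximation work.

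However, there is a genuine gap in step 5: you produce only the $\Gamma$-$\limsup$ (recovery) side and never the matching $\Gamma$-$\liminf$ estimate. Writing $\pi_\ee^{\neq}=\pi_{\ee\res\{F_\mu^+=F_\nu^+\}^c\times\{F_\mu^+=F_\nu^+\}^c}$ and $\pi_\ee^{=}=\pi_\ee-\pi_\ee^{\neq}$, the inequalities $J_\ee(\pi_\ee)\le J_\ee(\tilde\pi_\ee)$ and $J(\pi_\ee)\ge W_1(\mu,\nu)$ give
\[
\Ent(\pi_\ee^{\neq}|\mu\otimes\nu)\;\le\;\Ent(\KK^{\neq}(\mu,\nu)|\mu\otimes\nu)+\Ent(\rho_\ee|\mu\otimes\nu)-\Ent(\pi_\ee^{=}|\mu\otimes\nu)+\tfrac{1}{\ee}\bigl(J(\tilde\pi_\ee)-W_1(\mu,\nu)\bigr).
\]
You argue that the contribution of $\rho_\ee$ vanishes because $\ee\,\Ent(\rho_\ee|\mu\otimes\nu)\to 0$, but that is not the quantity appearing above: here $\Ent(\rho_\ee|\mu\otimes\nu)$ enters \emph{without} the factor $\ee$, and it diverges like $-\log(2\ee)\,\mu(\{F_\mu^+=F_\nu^+\})\to+\infty$. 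The term $-\Ent(\pi_\ee^{=}|\mu\otimes\nu)$ is discarded as though it were nonpositive, whereas it is exactly the term that must cancel this divergence. One needs the lower bound $\Ent(\pi_\ee^{=}|\mu\otimes\nu)\ge-\log(2\ee)\,\mu(\{F_\mu^+=F_\nu^+\})+O(1)$, with a constant matching the one from $\rho_\ee$. Establishing that estimate on the entropic minimizer's own ``diagonal'' piece --- using the penalty $\tfrac{1}{\ee}(J(\pi_\ee)-W_1(\mu,\nu))$ to force concentration near $\D$, then the marginal constraint and $\Ent(\mu|\LL^1)<\infty$ to lower-bound the resulting entropy --- is precisely the $\Gamma$-$\liminf$ inequality for the renormalized functionals $F_\ee=H_\ee+\log(2\ee)\,\mu(\{F_\mu^+=F_\nu^+\})$ in the Di Marino--Louet argument, and it is where the real analytical work lies. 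Without it, your bound in step 5 is divergent rather than $o(1)$, and the lower-semicontinuity step that follows cannot close. A minor additional point: $\pi_\ee\notin\CCC(\mu,\nu)$, so Proposition~\ref{pro:decomposition_entropie} does not literally apply to $\pi_\ee$; you should instead invoke the elementary additivity of $\Ent(\cdot\,|\mu\otimes\nu)$ over restrictions to disjoint Borel sets, which holds for any measure.
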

	
	Observe that the first assumption implies that $\mu$ and $\nu$ are atomless. We briefly give the idea of the proof of this result to highlight the difference with our own convergence result (under different assumptions on $\mu$ and $\nu$, see Theorem \ref{them:convergence_semi_discret}). A classical method to establish the convergence of $(\pi_\ee)_{\ee>0}$ is to prove that $(J_\ee)_{\ee>0}$ converges to $J$ in the sense of Gamma-convergence, which requires a technique called block approximation. \footnote{For more information on Gamma-convergence, we e.g. refer to \cite{braides_gamma-convergence_2002}. For more details on block approximation, we refer the reader to \cite{carlier_convergence_2017}.} Using a classical result on Gamma-convergence, we obtain that cluster points of $(\pi_\ee)_{\ee >0}$ converge to minimizers of $J$, that is toward optimal transport plans. When $\OO(\mu,\nu)$ is not a singleton, this is not sufficient to prove that $\pi_\ee$ converges. In this case, one can prove that the functional $(H_\ee)_{\ee>0}$ defined by $H_\ee : \pi \in \Marg(\mu,\nu) \to \frac{1}{\ee}(J(\pi)-W_1(\mu,\nu)) + \Ent(\pi|\mu \otimes \nu)$ $\Gamma$-converges to $$H: \pi \in \Marg(\mu,\nu) \mapsto 
	\begin{cases}
		\Ent(\pi|\mu \otimes \nu) & \text{if } \pi \in \OO(\mu,\nu)\\
		+\infty & \text{otherwise}
	\end{cases}.$$
	If there exists $\pi \in \OO(\mu,\nu)$ such that $\Ent(\pi|\mu \otimes \nu)$ if finite, this $\Gamma$-convergence results proves that $(\pi_\ee)_{\ee > 0}$ converges toward the unique minimizer of $\Ent(\cdot|\mu \otimes \nu)$ among $\OO(\mu,\nu)$. In the $L^1$ case, as $H \equiv + \infty$ in general, this $\Gamma$-convergence result does not provide any information on the convergence of $\pi_\ee.$ The idea of Di Marino and Louet is to consider the refined family of functional $(F_\ee)_{\ee > 0}$ defined as $F_\ee = H_\ee + \log(2\ee) \mu(\{F_\mu^+ = F_\nu^+\})$, and prove that this family $\Gamma$-converges toward the functional $F$ defined as 
	\begin{equation*}
		F: \pi \in \Marg(\mu,\nu) \mapsto 
		\begin{cases}
			\Ent\left(\pi_{\res \{F_\mu^+ = F_\nu^+\}^c \times  \{F_\mu^+ = F_\nu^+\}^c}|\mu \otimes \nu\right) + \Ent(\mu_{\res \{F_\mu^+ = F_\nu^+\}}|\LL^1) & \text{if } \pi \in \OO(\mu,\nu) \\
			+\infty & \text{otherwise}
		\end{cases}.
	\end{equation*}
	The last hypothesis of Theorem \ref{them:conv_Di_Marino_Louet} is crucial, as it allows to go from the convergence of $(F_\ee)_{\ee >0}$ toward $F$ to the convergence of $(\pi_\ee)_{\ee>0}$. Next, they establish the existence of a strongly multiplicative transport plan on each component \cite[Proposition 5.1]{di_marino_entropic_2018}, which corresponds to Theorem \ref{them:strassen_F} for atomless measure with compact support. They finally prove that the minimizer of $F$ is a strongly multiplicative measure on each component. With our own notation, their result rewrites $\KK(\mu, \nu) = \argmin_{\OO(\mu,\nu)} \Ent\left(\pi_{\res \{F_\mu^+ = F_\nu^+\}^c \times  \{F_\mu^+ = F_\nu^+\}^c}|\mu \otimes \nu\right)$: in particular $\pi_\ee \dcv{\ee \to 0^+} \KK(\mu,\nu)$. In their article Di Marino and Louet provide a necessary and sufficient condition to the third assumption \cite[Theorem $1.1$]{di_marino_entropic_2018}, which is satisfied if and only if $-\int_{\{F_\mu^+ \neq F_\nu^+\}}\log(|F_\mu^+(x)-F_\nu^+(x)|)\dd\mu(x)$ is finite. This condition is not always satisfied, for instance when $\mu = \1_{[0,1]} \cdot \LL^1$ and $\nu$ is defined by the relation 
	\begin{equation*}
		F_\nu^+(x) = 
		\begin{cases}
			0 & \text{ if } x \leq 0 \\
			(1- 2e^{-2})x &\text{ if } x \in [0,1/2] \\
			x-e^{-\frac{1}{1-x}} & \text{ if } x \in [1/2,1]
		\end{cases},
	\end{equation*}
	we have $-\int_{\{F_\mu^+ \neq F_\nu^+\}}\log(|F_\mu^+(x)-F_\nu^+(x)|)\dd\mu(x) = - \int_0^1 \log(F_\mu^+(x)-F_\nu^+(x))\dd x \geq \int_{1/2}^1 \frac{1}{1-x}\dd x = +\infty.$
	
	\subsection{Properties of cluster points of $(\pi_\ee)_{\ee >0}$}\label{subsection:general_property}

	In this section, we show that restriction of any cluster point of $(\pi_\ee)_{\ee > 0}$ to a product set included in $\F$ or $\ti{\F}$ is a product measure. This property, referred to as (large) weak multiplicativity, is a symmetrized version of the notion of weak $\F$-multiplicativity introduced by Kellerer \cite[Definition $5.1$]{kellerer_order_1986}. First, let $\CC$ be the class $\ens{B_1 \times B_2}{(B_1,B_2) \in \BB(\R)^2,  B_1 \times B_2 \subset \F \text{ or }  B_1 \times B_2 \subset \ti{\F}}$ of product sets included in $\F$ or $\ti{\F}.$ 
	
	\begin{defi}[Large weak multiplicativity]\label{defi:weak_multiplicativity}
		A measure $\pi \in \MM_+(\R^2)$ is said to be (largely) weakly multiplicative if, for all $A \in \CC$, $\pi_{\res A}$ is a product measure.
	\end{defi}
	
	Clearly, strongly multiplicative measure (see Definition \ref{defi:strong_multiplicativity_large}) are weakly multiplicative. The converse is not true, as $\pi = \d_{(0,0)} + \d_{(1,1)}$ is weakly multiplicative without being strongly multiplicative. We begin by showing that weak multiplicativity is preserved under weak convergence. Then, we use a structure result on the measures $\pi_\ee$ to prove that cluster points of $\pi_{\ee}$ at $0^+$ are weakly multiplicative.
	
	\subsubsection{Stability of weak multiplicativity}
	
	The following characterization is straightforward and left to the reader.
	
	\begin{lem}\label{pro:caracterisation_produit}
		A measure $\pi \in \MM_+(\R^2)$ is a product measure if and only if $\pi(\R^2) \cdot \pi = ({p_1}_{\#}\pi) \otimes ({p_2}_{\#}\pi).$
	\end{lem}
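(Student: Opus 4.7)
The plan is to prove both implications by a direct computation, exploiting the fact that for a product measure the marginals are essentially rescalings of the factors.

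For the forward direction, I would start from the assumption that $\pi = \eta_1 \otimes \eta_2$ for some $\eta_1, \eta_2 \in \MM_+(\R)$. A direct computation of the marginals gives ${p_1}_\#\pi = \eta_2(\R)\,\eta_1$ and ${p_2}_\#\pi = \eta_1(\R)\,\eta_2$, so that
\begin{equation*}
({p_1}_\#\pi) \otimes ({p_2}_\#\pi) = \eta_1(\R)\eta_2(\R)\,(\eta_1 \otimes \eta_2) = \pi(\R^2)\cdot \pi,
\end{equation*}
which is the desired equality.

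For the converse direction, I would separate two cases. If $\pi(\R^2) = 0$, then $\pi$ is the zero measure, which is trivially a product measure (e.g.\ $\pi = 0 \otimes 0$). If $\pi(\R^2) > 0$, the assumed identity can be rewritten as
\begin{equation*}
\pi = \frac{1}{\pi(\R^2)}({p_1}_\#\pi) \otimes ({p_2}_\#\pi),
\end{equation*}
which directly exhibits $\pi$ as a product measure (the constant can be absorbed into either factor). There is no subtlety to manage, as both sides are finite positive measures on $\R^2$ and the identity is a genuine equality of measures rather than an equality up to normalization. Because the argument is purely algebraic once the marginals are computed, there is no real obstacle; the only thing to be careful about is the trivial case $\pi(\R^2) = 0$, which must be handled separately to avoid dividing by zero.
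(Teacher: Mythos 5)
Your proof is correct, and it is the natural argument; the paper itself gives no proof for this lemma, stating only that ``the following characterization is straightforward and left to the reader,'' so there is no paper proof to compare against. One small remark for completeness: since the paper's notion of ``product measure'' elsewhere (e.g.\ Definition \ref{defi:strong_multiplicativity_large}) allows $\sigma$-finite factors, in the forward direction you should note that if $\pi = \eta_1 \otimes \eta_2$ with $\pi \in \MM_+(\R^2)$ finite and both factors nonzero, then $\eta_1(\R)\eta_2(\R) = \pi(\R^2) < \infty$ already forces both $\eta_1$ and $\eta_2$ to be finite, so your marginal computation applies; and if one factor is the zero measure then $\pi = 0$ and the identity is trivial. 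With that observation the argument is complete in all cases.
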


	The following lemma provides a useful characterization of weak multiplicativity. For every $(a,b) \in \R^2$ such that $b<a$, we shall use the convention $[a,b] = \emptyset.$
	
	\begin{pro}\label{pro:caracterisation_SWFM}
		Consider $\pi \in \Marg(\mu,\nu)$ and let $\CC'$ be the class of subsets of $\R^2$ defined by $$\CC' = \ens{]-\infty,t] \times [t,+\infty[}{t \in \R} \cup \ens{ [t,+\infty[ \times ]-\infty,t]}{t \in \R}.$$ The following statements are equivalent:
		\begin{enumerate}
			\item $\pi$ is weakly multiplicative; 
			\item The restriction of $\pi$ to any element of $\CC'$ is a product measure;
			\item For all $(x,y,t) \in \R^3$: 
			\begin{equation}\label{eq:cara_weak_F}
				\pi(]-\infty,t] \times [t,+\infty[) \pi(]-\infty,t \wedge x] \times [t,y]) = \pi(]-\infty,t \wedge x] \times [t,+\infty[)\pi(]-\infty,t] \times [t,y])
			\end{equation}
			and
			\begin{equation}\label{eq:cara_weak_tiF}
				\pi([t,+\infty[\times ]-\infty,t])\pi([t,x] \times ]-\infty, t \wedge y]) = \pi([t,x] \times ]-\infty,t]) \pi([t,+\infty[ \times ]-\infty, t\wedge y]).
			\end{equation}
		\end{enumerate}
	\end{pro}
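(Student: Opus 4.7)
The plan is to establish the cycle of implications $(1) \Rightarrow (2) \Rightarrow (3) \Rightarrow (2) \Rightarrow (1)$, in which the two outer implications are set-theoretic reductions and the central equivalence $(2) \iff (3)$ is the measure-theoretic heart of the argument.

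The implication $(1) \Rightarrow (2)$ is immediate, since for every $t \in \R$ we have $]-\infty,t] \times [t,+\infty[ \subset \F$ and $[t,+\infty[ \times ]-\infty,t] \subset \ti{\F}$, so $\CC' \subset \CC$. For $(2) \Rightarrow (1)$, consider $B_1 \times B_2 \in \CC$, say included in $\F$ (the case $\ti{\F}$ is symmetric). Discarding the trivial case where one factor is empty, set $t = \sup B_1$; since $B_1 \times B_2 \subset \F$ we have $t \leq \inf B_2$, hence $B_1 \times B_2 \subset \,]-\infty,t] \times [t,+\infty[$. By assumption $\pi_{\res ]-\infty,t] \times [t,+\infty[}$ is of the form $\eta_1 \otimes \eta_2$, and restricting a product measure to a product of subsets again yields a product measure; thus $\pi_{\res B_1 \times B_2} = (\eta_1)_{\res B_1} \otimes (\eta_2)_{\res B_2}$.

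For the equivalence $(2) \iff (3)$, fix $t \in \R$ and set $\pi^t := \pi_{\res ]-\infty,t] \times [t,+\infty[}$. By Lemma \ref{pro:caracterisation_produit}, $\pi^t$ is a product measure if and only if
\begin{equation*}
\pi^t(\R^2)\,\pi^t(A \times B) = \pi^t(A \times \R)\,\pi^t(\R \times B)
\end{equation*}
for every pair of Borel sets $A,B \subset \R$. Using the $\pi$-$\lambda$ theorem applied to the family of pairs $(A,B)$ for which this identity holds, it suffices to verify it on the $\pi$-system of rectangles $A = \,]-\infty,x]$, $B = \,]-\infty,y]$. Using the definition of restriction and the convention $[t,y] = \emptyset$ when $y<t$, one computes $\pi^t(]-\infty,x] \times ]-\infty,y]) = \pi(]-\infty,t\wedge x] \times [t,y])$, together with $\pi^t(]-\infty,x] \times \R) = \pi(]-\infty,t\wedge x] \times [t,+\infty[)$ and $\pi^t(\R \times ]-\infty,y]) = \pi(]-\infty,t] \times [t,y])$. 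The resulting identity is exactly Equation \eqref{eq:cara_weak_F}. The symmetric analysis of $\pi_{\res [t,+\infty[ \times ]-\infty,t]}$ yields Equation \eqref{eq:cara_weak_tiF}, which establishes the equivalence.

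No step is genuinely difficult; the only care required is correct bookkeeping in the rectangle computations (handling the various $t \wedge x$, empty-interval, and zero-mass cases) and the standard application of the $\pi$-$\lambda$ theorem to pass from the identity on rectangles to the full product structure.
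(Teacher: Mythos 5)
Your proof is correct, and the heart of it --- the equivalence $(2) \iff (3)$ via Lemma \ref{pro:caracterisation_produit} and the reduction to rectangles $I_x \times I_y$ --- is the same argument as the paper's. Where you depart from the paper is in the treatment of $(1) \iff (2)$: the paper simply cites Kellerer \cite[Theorem 5.2]{kellerer_order_1986} for this equivalence, whereas you give a short self-contained argument. Your argument is sound: the implication $(2) \Rightarrow (1)$ follows because any non-empty product set $B_1 \times B_2 \subset \F$ sits inside the quadrant $]-\infty,t] \times [t,+\infty[$ with $t = \sup B_1 \in \R$ (non-emptiness of both factors and $B_1 \times B_2 \subset \F$ force $-\infty < \sup B_1 \leq \inf B_2 < +\infty$), and restricting a product measure to a measurable rectangle again yields a product measure. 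This makes the result entirely self-contained rather than outsourcing one direction to Kellerer, at the small cost of a few extra lines; the trade-off is reasonable, and for a reader without Kellerer's paper at hand your version is more transparent. One cosmetic remark: writing the cycle as $(1) \Rightarrow (2) \Rightarrow (3) \Rightarrow (2) \Rightarrow (1)$ is redundant since it just amounts to $(2) \iff (3)$ together with $(1) \iff (2)$; stating it that way would be cleaner.
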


	\begin{proof}
		The equivalence between the first two statements was established by Kellerer in \cite[Theorem 5.2]{kellerer_order_1986}. We now prove the equivalence between the second and the third statements. For every $x \in \R$, define $I_x = ]-\infty,x]$, and let us fix $t \in \R$. According to Proposition \ref{pro:caracterisation_produit}, $\pi_{]-\infty,t]\times [t,+ \infty[}$ is a product measure if and only if $$\pi_{]-\infty,t]\times [t,+ \infty[}(\R^2) \pi_{]-\infty,t]\times [t,+ \infty[} = ({p_1}_{\#}\pi_{]-\infty,t]\times [t,+ \infty[}) \otimes ({p_2}_{\#}\pi_{]-\infty,t]\times [t,+ \infty[}),$$ that is, for all $(x,y) \in \R^2$,
		$$\pi_{]-\infty,t]\times [t,+ \infty[}(\R^2) \pi_{]-\infty,t]\times [t,+ \infty[}(I_x \times I_y) = ({p_1}_{\#}\pi_{]-\infty,t]\times [t,+ \infty[}) \otimes ({p_2}_{\#}\pi_{]-\infty,t]\times [t,+ \infty[})(I_x \times I_y),$$
		which means Equation \eqref{eq:cara_weak_F} holds. Similarly, $\pi_{[t,+\infty[ \times ]-\infty,t]}$ is a product measure if and only if, for all $(x,y) \in \R^2,$ Equation \eqref{eq:cara_weak_tiF} is satisfied. The equivalence between the second and the third point is now straightforward.
	\end{proof}
	To prove that weak multiplicativity is stable with respect to weak convergence, we shall prove that the third formulation is stable with respect to weak convergence. This is achieved by applying a variant of the Portmanteau theorem adapted to transport plans. For a proof, we refer to \cite[Proposition/Notation $3.7$]{boubel_markov-quantile_2022}.
	\begin{lem}\label{lem:weak_con_marg}
		Consider $(\pi_n)_{n \geq 1} \in \Marg(\mu,\nu)^{\N^*}$ and $\pi \in \Marg(\mu,\nu).$ The following conditions are equivalent:
		\begin{enumerate}
			\item $(\pi_n)_{n \geq 1}$ weakly converges to $\pi$;
			\item For every pair $(I_1,I_2)$ of intervals, $\pi_n(I_1 \times I_2) \dcv{n \to + \infty} \pi(I_1 \times I_2).$
		\end{enumerate}
	\end{lem}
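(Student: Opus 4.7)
The plan is to prove the two implications separately: $(2) \Rightarrow (1)$ reduces quickly to the classical cumulative-distribution-function characterization of weak convergence on $\R^2$, while $(1)\Rightarrow(2)$ requires a refinement of the Portmanteau theorem that exploits the rigidity coming from fixed marginals.

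For $(1)\Rightarrow(2)$, call a product $I_1 \times I_2$ of intervals \emph{good} if the endpoints of $I_1$ lie in $\R \setminus \At(\mu)$ and those of $I_2$ in $\R \setminus \At(\nu)$. For such a rectangle, $\partial(I_1 \times I_2) \subset (\partial I_1 \times \R) \cup (\R \times \partial I_2)$ yields $\pi(\partial(I_1 \times I_2)) \leq \mu(\partial I_1) + \nu(\partial I_2) = 0$, and the classical Portmanteau theorem gives $\pi_n(I_1 \times I_2) \dcv{n \to +\infty} \pi(I_1 \times I_2)$. Since $\At(\mu)$ and $\At(\nu)$ are countable, good rectangles are plentiful. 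To extend to arbitrary rectangles, I would first deduce pointwise convergence of the bivariate cumulative distribution function $F_{\pi_n}(x,y) := \pi_n((-\infty,x] \times (-\infty,y])$ at \emph{every} $(x,y) \in \R^2$. Given $(x,y)$, pick $x_k \searrow x$ with $x_k \notin \At(\mu)$ and $y_k \searrow y$ with $y_k \notin \At(\nu)$; the fixed-marginal constraint gives the uniform (in $n$) bound
\begin{equation*}
    0 \leq \pi_n\bigl((-\infty, x_k] \times (-\infty, y_k]\bigr) - \pi_n\bigl((-\infty, x] \times (-\infty, y]\bigr) \leq \mu((x, x_k]) + \nu((y, y_k]),
\end{equation*}
whose right-hand side vanishes as $k \to \infty$ by right-continuity of $F_\mu$ and $F_\nu$. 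Combined with the good-rectangle convergence at $(x_k,y_k)$ and the same bound applied to $\pi$, an $\varepsilon/3$ argument yields $F_{\pi_n}(x,y) \dcv{n \to +\infty} F_\pi(x,y)$. An analogous scheme, using left-approximations and left-continuity of $F_\mu^-$, $F_\nu^-$, produces the convergence of $\pi_n((-\infty,x) \times (-\infty,y])$, $\pi_n((-\infty,x] \times (-\infty,y))$, and $\pi_n((-\infty,x) \times (-\infty,y))$ at every $(x,y)$. Finally, any mass $\pi_n(I_1 \times I_2)$ is an inclusion-exclusion combination of four such CDF-type quantities (depending on which endpoints are included), so convergence propagates.

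For $(2)\Rightarrow(1)$, specialize the hypothesis to the rectangles $(-\infty, x] \times (-\infty, y]$: this gives $F_{\pi_n}(x,y) \dcv{n \to +\infty} F_\pi(x,y)$ for every $(x,y) \in \R^2$, and in particular at every continuity point of $F_\pi$ (namely every $(x,y)$ with $x \notin \At(\mu)$ and $y \notin \At(\nu)$, since the marginals of $\pi$ are $\mu$ and $\nu$). The standard characterization of weak convergence on $\R^2$ via pointwise convergence of the bivariate CDF at continuity points of the limit then yields the weak convergence $\pi_n \dcv{n \to + \infty} \pi$.

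The main obstacle is the treatment in $(1)\Rightarrow(2)$ of rectangles whose boundary carries mass. The marginal constraints $\pi_n(E \times \R) = \mu(E)$ and $\pi_n(\R \times E) = \nu(E)$ are independent of $n$, which is precisely what makes the approximation argument uniform in $n$; without fixed marginals one would have no uniform control on $\pi_n(\partial(I_1 \times I_2))$, and the conclusion would genuinely fail at interval products whose endpoints carry atoms of the marginals.
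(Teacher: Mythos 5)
The paper does not give its own proof of Lemma \ref{lem:weak_con_marg}; it cites \cite[Proposition/Notation 3.7]{boubel_markov-quantile_2022}. Your proposal supplies a self-contained argument, and it is correct. The key mechanism --- that the fixed marginals $\mu, \nu$ give a bound on $\pi_n(\partial(I_1\times I_2))$ that is uniform in $n$, which is what lets you upgrade Portmanteau from atom-free rectangles to arbitrary ones --- is exactly what makes the equivalence work, and your closing remark that the statement would genuinely fail without the fixed-marginal constraint identifies the right obstruction.

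Two small clarifications worth spelling out if this were to be written up. First, in the extension step of $(1)\Rightarrow(2)$, the ``four CDF-type quantities'' you invoke should be read as four \emph{types} of quadrant mass
\(\pi_n\bigl((-\infty,x\rangle \times (-\infty,y\rangle\bigr)\),
with each bracket independently open or closed; an arbitrary interval product $I_1 \times I_2$ is then a signed (inclusion–exclusion) combination of finitely many quadrants of these types, and your $\varepsilon/3$ argument --- run once with right-approximations and right-continuity of $F_\mu^+,F_\nu^+$ for the closed brackets, once with left-approximations and left-continuity of $F_\mu^-,F_\nu^-$ for the open ones --- delivers convergence of each. Second, in $(2)\Rightarrow(1)$ it is good to note explicitly that because the $\pi_n$ and $\pi$ all have total mass $\mu(\R)=\nu(\R)$, no tightness issue arises, so pointwise convergence of the joint cumulative distribution function at continuity points of $F_\pi$ (which include every $(x,y)$ with $x\notin\At(\mu)$ and $y\notin\At(\nu)$, a co-countable-union-of-lines set) does yield weak convergence of the finite measures. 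Neither point is a gap, just a place where a reader might want the detail made explicit.
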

	
	We can now prove that the set of  weakly multiplicative transport plans from $\mu$ to $\nu$ is closed under weak convergence.
	
	\begin{pro}\label{cor:stabilite_SWFM}
		Consider $(\pi_n)_{n \geq 1} \in \Marg(\mu,\nu)^{\N^*}$ and $\pi \in \Marg(\mu,\nu).$ If the measures $(\pi_n)_{n \geq 1}$ are  weakly multiplicative and $(\pi_n)_{n \geq 1}$ converges weakly to $\pi$, then $\pi$ is  weakly multiplicative.
	\end{pro}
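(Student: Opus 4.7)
The plan is to reduce the proposition to the algebraic characterization of weak multiplicativity given by Proposition \ref{pro:caracterisation_SWFM}, and then to pass to the limit factor by factor using the interval-based criterion of weak convergence for transport plans provided by Lemma \ref{lem:weak_con_marg}.

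First I would observe that, since each $\pi_n$ is weakly multiplicative, Proposition \ref{pro:caracterisation_SWFM} applied to $\pi_n$ tells us that for every $(x,y,t) \in \R^3$, the equalities \eqref{eq:cara_weak_F} and \eqref{eq:cara_weak_tiF} hold with $\pi$ replaced by $\pi_n$. To prove that $\pi$ is weakly multiplicative, it then suffices, again by Proposition \ref{pro:caracterisation_SWFM}, to establish these same two identities for $\pi$ and every $(x,y,t) \in \R^3$.

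Next I fix $(x,y,t) \in \R^3$. Each of the four factors appearing in \eqref{eq:cara_weak_F} (and similarly in \eqref{eq:cara_weak_tiF}) is of the form $\pi_n(I_1 \times I_2)$, where $I_1$ and $I_2$ are intervals of $\R$ (possibly empty when e.g. $t > y$ or $t \wedge x < t$, which the convention $[a,b]=\emptyset$ for $b<a$ handles harmlessly). Since $(\pi_n)_{n \geq 1}$ converges weakly to $\pi$, Lemma \ref{lem:weak_con_marg} ensures that for each such product of intervals, $\pi_n(I_1 \times I_2) \to \pi(I_1 \times I_2)$ as $n \to +\infty$. Passing to the limit in the product equalities \eqref{eq:cara_weak_F} and \eqref{eq:cara_weak_tiF} satisfied by $\pi_n$, and using continuity of multiplication on $\R$, yields the same identities for $\pi$.

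Since $(x,y,t) \in \R^3$ was arbitrary, Proposition \ref{pro:caracterisation_SWFM} gives the conclusion. There is no genuine obstacle: the whole argument hinges on having the ``right'' characterization of weak multiplicativity, namely one that involves only $\pi$ evaluated on products of intervals (and not on arbitrary product Borel sets, for which weak convergence would fail in general). This is exactly what Proposition \ref{pro:caracterisation_SWFM} provides, and it is the reason why it is formulated through the class $\CC'$ of half-plane products rather than through the full class $\CC$.
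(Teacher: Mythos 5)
Your proof is correct and follows essentially the same route as the paper: apply the interval-based characterization of weak multiplicativity from Proposition~\ref{pro:caracterisation_SWFM} to each $\pi_n$, pass to the limit in each factor via Lemma~\ref{lem:weak_con_marg}, and invoke Proposition~\ref{pro:caracterisation_SWFM} once more to conclude. Your closing remark on why the class $\CC'$ (products of half-lines) rather than $\CC$ is the right one for this passage to the limit matches the implicit reason the paper proves that equivalence in the first place.
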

	
	\begin{proof}
		According to Proposition \ref{pro:caracterisation_SWFM}, for all $(x,y,t,n) \in \R^3 \times \N^*$,  $$\pi_n(]-\infty,t] \times [t,+\infty[)\pi_n(]-\infty,t \wedge x] \times [t,y]) = \pi_n(]-\infty,t \wedge x] \times [t,+\infty[)\pi_n(]-\infty,t] \times [t,y])$$ and $$ \pi_n([t,+\infty[\times ]-\infty,t])\pi_n([t,x] \times ]-\infty, t \wedge y]) = \pi_n([t,x] \times ]-\infty,t]) \pi_n([t,+\infty[ \times ]-\infty, t\wedge y]).$$ By Lemma \ref{lem:weak_con_marg}, we get  $$\pi(]-\infty,t \wedge x] \times [t,y])\pi(]-\infty,t] \times [t,+\infty[) = \pi(]-\infty,t \wedge x] \times [t,+\infty[)\pi(]-\infty,t] \times [t,y])$$ and $$\pi([t,+\infty[\times ]-\infty,t])\pi([t,x] \times ]-\infty, t \wedge y]) = \pi([t,x] \times ]-\infty,t]) \pi([t,+\infty[ \times ]-\infty, t\wedge y]).$$ From Proposition \ref{pro:caracterisation_SWFM}, it follows that $\pi$ is  weakly multiplicative.
	\end{proof}
	
	\subsubsection{Weak multiplicativity of cluster points of $(\pi_\ee)_{\ee>0}$}
	
	The following lemma provides us with information on the global form of $\ee$-cyclically invariant transport plans. Its proof is relatively straightforward and can be found in \cite[Lemma $2.7$]{nutz_introduction_nodate} by taking $R(dx,dy) = \exp(-|y-x|/\ee)\mu(dx)\nu(dy).$
	
	\begin{lem}\label{them:structure_entropique}
		Consider $\pi \in \Marg(\mu,\nu)$ and $\ee > 0.$ The following conditions are equivalent:
		\begin{enumerate}
			\item $\pi$ is $\ee$-cyclically monotone;
			\item There exists two measurable maps $\phi_\ee : \R \to \R$ and $\psi_\ee : \R \to \R$ such that $$ \frac{\ddd \pi}{\ddd\mu \otimes \nu}(x,y) = \exp \left( \frac{\phi_\ee(x) + \psi_\ee(y)-|y-x|}{\ee} \right) \hspace{1cm} (\mu \otimes \nu)(dx,dy)-a.s. \ .$$
		\end{enumerate}
	\end{lem}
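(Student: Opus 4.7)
The direction $(2) \Rightarrow (1)$ is a direct substitution. Assuming $\tfrac{\ddd\pi}{\ddd\mu\otimes\nu}(x,y) = \exp((\phi_\ee(x)+\psi_\ee(y)-|y-x|)/\ee)$, I would plug this density into both sides of the cyclical invariance relation. Each factor $\exp(-|y_i-x_i|/\ee)\cdot f(x_i,y_i)$ rewrites as $\exp((\phi_\ee(x_i)+\psi_\ee(y_i)-2|y_i-x_i|)/\ee)$ (and symmetrically for the right-hand side with $y_{i+1}$). After taking logs and cancelling the telescoping cyclic sums $\sum_i \psi_\ee(y_i) = \sum_i \psi_\ee(y_{i+1})$, one checks that both sides match. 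This is purely a computation with no subtlety.

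For the non-trivial direction $(1) \Rightarrow (2)$, I would work on the set $\{f > 0\}$, where $f := \ddd\pi/\ddd(\mu \otimes \nu)$, and set $g(x,y) := \ee \log f(x,y) + |y-x|$. The cyclical invariance hypothesis specialized to cycles of length two (i.e., taking $n=2$) yields, for $(\mu\otimes\nu)^{\otimes 2}$-almost every pair $((x_1,y_1),(x_2,y_2))$ in $\{f>0\}^2$, the decoupling identity
\begin{equation*}
g(x_1, y_1) + g(x_2, y_2) = g(x_1, y_2) + g(x_2, y_1).
\end{equation*}
By a Fubini argument, one can fix a reference point $(x_0, y_0)$ with $f(x_0, y_0) > 0$ such that the above identity holds for $(\mu \otimes \nu)$-almost every $(x,y)$ (with $(x_2,y_2) = (x_0,y_0)$ and $(x_1,y_1) = (x,y)$). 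Then define
\begin{equation*}
\phi_\ee(x) := g(x, y_0) - \tfrac{1}{2} g(x_0, y_0) \quad \text{and} \quad \psi_\ee(y) := g(x_0, y) - \tfrac{1}{2} g(x_0, y_0),
\end{equation*}
so that $g(x,y) = \phi_\ee(x) + \psi_\ee(y)$ almost surely, which gives the announced form of the density. On $\{f = 0\}$ one sets $\phi_\ee = \psi_\ee = -\infty$ (or truncates and extends arbitrarily, since the density vanishes there anyway).

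The main (minor) obstacle is the handling of measurability and the selection of a good reference point $(x_0, y_0)$: one needs the two-point decoupling identity to be valid for a full-measure choice of the reference, which is precisely the content of a Fubini-style argument applied to the cycle-length-two version of the invariance relation. Once this is done, the functions $\phi_\ee$ and $\psi_\ee$ inherit measurability from $g$, and the desired form of the Radon--Nikodym derivative follows. This is the classical argument from the entropic optimal transport literature; I would simply refer to \cite[Lemma 2.7]{nutz_introduction_nodate} for the complete proof, applied here with the reference measure $R(\ddd x, \ddd y) = \exp(-|y-x|/\ee)\mu(\ddd x)\nu(\ddd y)$.
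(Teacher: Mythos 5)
The paper gives no proof of this lemma beyond citing Nutz's lecture notes applied with the reference measure $R(\ddd x,\ddd y)=\exp(-|y-x|/\ee)\,\mu(\ddd x)\,\nu(\ddd y)$; you arrive at precisely the same citation, and the intermediate steps you sketch --- specializing the invariance relation to $n=2$ cycles, using Fubini to fix a reference point $(x_0,y_0)$ with $f(x_0,y_0)>0$, and slicing $g$ into $\phi_\ee+\psi_\ee$ --- are exactly the content of Nutz's argument. So the route is the same.

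There is, however, a sign inconsistency between your two directions. In $(2)\Rightarrow(1)$ you rewrite each factor as $\exp(-|y_i-x_i|/\ee)\,f(x_i,y_i)=\exp\bigl((\phi_\ee(x_i)+\psi_\ee(y_i)-2|y_i-x_i|)/\ee\bigr)$ and claim both sides of the cyclic identity match after cancelling the $\psi$-sums. They do not: after $\sum_i\phi_\ee(x_i)$ and $\sum_i\psi_\ee(y_i)=\sum_i\psi_\ee(y_{i+1})$ cancel, you are left with $-\tfrac{2}{\ee}\sum_i|y_i-x_i|$ on one side and $-\tfrac{2}{\ee}\sum_i|y_{i+1}-x_i|$ on the other, which are not equal in general. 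The computation only closes when the cost enters the invariance relation with the opposite sign, $\exp(+|y_i-x_i|/\ee)\,f(x_i,y_i)=\exp\bigl((\phi_\ee(x_i)+\psi_\ee(y_i))/\ee\bigr)$, a product that is manifestly invariant under cyclic permutation of the $y_i$; equivalently, the invariance relation should be read as $\prod_i \tfrac{\ddd\pi}{\ddd R}(x_i,y_i)=\prod_i\tfrac{\ddd\pi}{\ddd R}(x_i,y_{i+1})$. Your $(1)\Rightarrow(2)$ direction already tacitly assumes this $+$ convention: $g(x,y):=\ee\log f(x,y)+|y-x|$ only satisfies $g(x_1,y_1)+g(x_2,y_2)=g(x_1,y_2)+g(x_2,y_1)$ from the $n=2$ case under that convention. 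With the minus sign you wrote in $(2)\Rightarrow(1)$ you would instead need $g(x,y)=\ee\log f(x,y)-|y-x|$, which would produce a density of the form $\exp((\phi+\psi+|y-x|)/\ee)$ --- the wrong sign. You should carry the $\exp(+|y_i-x_i|/\ee)$ convention consistently through both directions; as written, the $(2)\Rightarrow(1)$ verification fails.
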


	\begin{them}\label{them:valeur_adherence_solutions_penalisees_generales_SWFM}
		Consider $(\mu,\nu) \in \MM_+^2$. If $\pi^*$ is a cluster point of $(\pi_\ee)_{\ee>0}$, then $\pi^*$ is a weakly multiplicative measure. From Proposition \ref{them:valeur_adherence_solutions_penalisees_generales}, it follows that cluster points of $(\pi_\ee)_{\ee> 0}$ are weakly multiplicative elements of $\CCC(\mu,\nu)$.
	\end{them}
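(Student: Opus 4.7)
The plan is to deduce the statement from Proposition \ref{cor:stabilite_SWFM} (stability of weak multiplicativity under weak limits) by showing that each $\pi_\ee$ itself is already weakly multiplicative. The key input will be the Schrödinger factorization of $\pi_\ee$ provided by Lemma \ref{them:structure_entropique}.

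First I would write, using Lemma \ref{them:structure_entropique}, that there exist measurable potentials $\phi_\ee,\psi_\ee:\R\to\R$ such that
\begin{equation*}
\frac{\ddd \pi_\ee}{\ddd\mu\otimes\nu}(x,y)=\exp\!\left(\frac{\phi_\ee(x)+\psi_\ee(y)-|y-x|}{\ee}\right)\quad (\mu\otimes\nu)\text{-a.e.}
\end{equation*}
On $\F$, the equality $|y-x|=y-x$ gives the factorized form
\begin{equation*}
\frac{\ddd \pi_\ee}{\ddd\mu\otimes\nu}(x,y)=\underbrace{\exp\!\left(\frac{\phi_\ee(x)+x}{\ee}\right)}_{=:f_\ee(x)}\cdot \underbrace{\exp\!\left(\frac{\psi_\ee(y)-y}{\ee}\right)}_{=:g_\ee(y)},
\end{equation*}
and on $\ti\F$, $|y-x|=x-y$ produces a similar factorization $\ti f_\ee(x)\ti g_\ee(y)$.

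Next, for any $A=B_1\times B_2\in\CC$ with $A\subset\F$, it follows from the above factorization that
\begin{equation*}
{\pi_\ee}_{\res A}= (f_\ee\cdot\mu_{\res B_1})\otimes(g_\ee\cdot\nu_{\res B_2}),
\end{equation*}
which is a product measure; the same conclusion holds for product sets contained in $\ti\F$ with $\ti f_\ee,\ti g_\ee$ in place of $f_\ee,g_\ee$. Therefore each $\pi_\ee$ is weakly multiplicative in the sense of Definition~\ref{defi:weak_multiplicativity}.

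Finally, given a cluster point $\pi^*$ of $(\pi_\ee)_{\ee>0}$, select a sequence $\ee_n\to 0^+$ with $\pi_{\ee_n}\rightharpoonup \pi^*$. Since every $\pi_{\ee_n}$ is weakly multiplicative and belongs to $\Marg(\mu,\nu)$, Proposition~\ref{cor:stabilite_SWFM} applies and yields that $\pi^*$ is weakly multiplicative. The last sentence of the theorem follows by combining this with Proposition~\ref{them:valeur_adherence_solutions_penalisees_generales}, which gives $\pi^*\in\CCC(\mu,\nu)$. There is no real obstacle here: the whole argument reduces to the elementary observation that $|y-x|$ splits additively in $x$ and $y$ on each of the two half-planes $\F$ and $\ti\F$, so that the Schrödinger exponential density already factorizes there; the heavy lifting is done by the already-established stability lemma.
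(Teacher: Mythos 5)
Your proposal is correct and follows essentially the same route as the paper: invoke the Schrödinger factorization from Lemma \ref{them:structure_entropique}, observe that $|y-x|$ splits additively on $\F$ and on $\ti\F$ so the density of $\pi_\ee$ factorizes on each half-plane (hence $\pi_\ee$ is weakly multiplicative), and conclude by the stability result Proposition \ref{cor:stabilite_SWFM}. The only cosmetic difference is that the paper verifies weak multiplicativity of $\pi_\ee$ via the reduced criterion of Proposition \ref{pro:caracterisation_SWFM} (sets $]-\infty,t]\times[t,+\infty[$ and $[t,+\infty[\times]-\infty,t]$), whereas you check Definition~\ref{defi:weak_multiplicativity} directly on all of $\CC$ — both are valid and the core argument is identical.
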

	
	\begin{proof}
		Consider $t \in \R$ and $\ee > 0$. By Lemma \ref{them:structure_entropique}, there exists $\phi_\ee : \R \to \R$ and $\psi_\ee : \R \to \R$ measurable such that $$ \pi_\ee(dx,dy) = \exp\left(\frac{\phi_\ee(x) + \psi_\ee(y) - |y - x|}{\ee} \right) \mu(dx) \nu(dy).$$ Thus, for every $t \in \R$, $$\1_{]-\infty,t]\times [t,+\infty[} \pi_\ee = (f_1^t \mu) \otimes (f_2^t \nu)$$ and $$\1_{[t,+\infty[ \times ]-\infty,t]} \pi_\ee = (f_3^t \mu) \otimes (f_4^t \nu),$$ where the maps  $f_1^t,f_2^t,f_3^t,f_4^t : \R \to \R$ are defined by
		\begin{align*}
			f_1^t(x) = \1_{]-\infty,t]}(x)\exp\left(\frac{\phi_\ee(x)+x}{\ee}\right)\\
			f_2^t(y) = \1_{[t,+\infty[}(y)\exp\left(\frac{\psi_\ee(y)-y}{\ee}\right)\\
			f_3^t(x) = \1_{[t,+\infty[}(x)\exp\left(\frac{\phi_\ee(x)-x}{\ee}\right)\\
			f_4^t(y) = \1_{]-\infty,t]}(y)\exp\left(\frac{\psi_\ee(y)+y}{\ee}\right).
		\end{align*}
		
		From Lemma \ref{pro:caracterisation_SWFM}, it follows that $\pi_\ee$ is  weakly multiplicative. By Proposition \ref{cor:stabilite_SWFM}, every cluster point of $(\pi_\ee)_{\ee>0}$ is weakly multiplicative.
	\end{proof}

	\subsection{Convergence of entropic minimizer when the marginal components are mutually singular}\label{subsection:new_convergence}
	
	To handle convergence in presence of mutually singular measures, we introduce the notions of strict reinforced stochastic order, strict strong multiplicativity, and strict weak multiplicativity. The definition of this notions and their relations are analogues, for the strict half-planes $\G = {(x,y) \in \R^2 : y > x}$ and $\ti{\G} = {(x,y) \in \R^2 : y < x}$, to (large) reinforced stochastic order (Definition \ref{def:Kellerer_order_large}), (large) strong multiplicativity (Definition \ref{defi:strong_multiplicativity_large}), and (large) weak multiplicativity (Definition \ref{defi:weak_multiplicativity}).
	
	\begin{defi}[Strict strong multiplicativity]\label{defi:strong_multiplicativity_strict}
		A measure $\pi \in \MM_+(\R^2)$ is called strictly strongly multiplicative if there exists $\eta_1, \eta_2 \in \MM_+^\s(\R)$ such that $\pi = \left(\eta_1 \otimes \eta_2\right)_{\res \G}.$
	\end{defi}
	
	We now introduce the order $\leq_\G$, that will be related to strict strong multiplicativity by its associated Strassen theorem in the following.
	
	\begin{defi}[Strict reinforced stochastic order]\label{def:Kellerer_order_strict}
		Consider $(\g_1,\g_2) \in \MM_+(\R)^2$ and define 
		\begin{equation*}
			T_*(\g_1,\g_2) = \ens{t \in \R}{F_{\g_1}^-(t) >0 \text{ and } F_{\g_2}^+(t) < \g_2(\R)}.
		\end{equation*}
		We say that $\g_1$ is smaller than $\g_2$ in the reinforced strict stochastic order if $F_{\g_1}^- \geq F_{\g_2}^+$ and $T_*(\g_1,\g_2) \subset \{F_{\g_1}^- > F_{\g_2}^+\}$. In this case, we write $\g_1 \leq_\G \g_2$.\footnote{This notation is motivated by the associated Strassen-type theorem.}
	\end{defi}
	
	Similarly to Theorem \ref{them:strassen_F} for the order $\leq_\F$, the following theorem from Kellerer \cite[Theorem $2.4$]{kellerer_order_1986} provides a Strassen-type result for  $\leq_{\G}$.
	
	\begin{them}[Strassen-type theorem for $\leq_{\G}$]\label{them:strassen_G}
		Consider $({\g_1},{\g_2}) \in \MM_+(\R)^2.$ The relation ${\g_1} \leq_{\G} {\g_2}$ is satisfied if and only if $\Marg(\g_1,\g_2)$ contains a stricly strongly multiplicative measure, \ie, there exists $\eta_1, \eta_2 \in \MM_+^\s(\R)$ such that 
		\begin{equation}\label{eq:kellerer_appartenance}
			\left(\eta_1 \otimes \eta_2\right)_{\res \G} \in \Marg(\g_1,\g_2).
		\end{equation} 
		In this case, $\Marg(\g_1,\g_2)$ contains exactly one strictly strongly multiplicative measure.
	\end{them}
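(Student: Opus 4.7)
My plan is to split the proof into the easy and hard implications, followed by uniqueness; the only nontrivial content is the construction of $(\eta_1,\eta_2)$ in the ``only if'' direction.

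For the implication ``$\Leftarrow$'', I would argue by direct computation. Assuming $\pi = (\eta_1 \otimes \eta_2)_{\res \G} \in \Marg(\g_1,\g_2)$, for every $t \in \R$ the concentration of $\pi$ on $\G$ forces $\pi([t,+\infty[\times ]-\infty,t]) = 0$, so
\[
F_{\g_1}^-(t) - F_{\g_2}^+(t) \;=\; \pi(]-\infty,t[\times \R) - \pi(\R \times ]-\infty,t]) \;=\; \pi(]-\infty,t[\times ]t,+\infty[) \;=\; \eta_1(]-\infty,t[)\,\eta_2(]t,+\infty[) \;\geq\; 0.
\]
This gives $F_{\g_1}^- \geq F_{\g_2}^+$. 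Moreover, if $t \in T_*(\g_1,\g_2)$, then $F_{\g_1}^-(t) > 0$ forces $\eta_1(]-\infty,t[) > 0$ (since otherwise $\pi(]-\infty,t[\times \R) = 0$) and similarly $F_{\g_2}^+(t) < \g_2(\R)$ forces $\eta_2(]t,+\infty[) > 0$, so the product is strictly positive and $F_{\g_1}^-(t) > F_{\g_2}^+(t)$.

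For the hard implication ``$\Rightarrow$'', I would set $D(t) = F_{\g_1}^-(t) - F_{\g_2}^+(t) \geq 0$ and seek a factorization $D = A \cdot B$ with $A$ left-continuous nondecreasing and $B$ left-continuous nonincreasing, so that $\eta_1$ and $\eta_2$ can then be defined by $\eta_1(]-\infty,t[) = A(t)$ and $\eta_2(]t,+\infty[) = B(t)$. The guiding heuristic comes from writing the desired marginal identities as $d\g_1 = B \cdot dA$ and $d\g_2 = -A \cdot dB$, which combined with $D = AB$ give the formal system $dA/A = d\g_1/D$ and $-dB/B = d\g_2/D$; the consistency condition $d(AB) = dD = d\g_1 - d\g_2$ is automatic. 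Concretely, I would work on each connected component $I = (a,b)$ of $\{D > 0\}$: fix a reference point $t_0 \in I$ and define
\[
A(t) \;=\; \exp\!\left(\int_{]t_0,t[} \frac{d\g_1}{D}\right), \qquad B(t) \;=\; \frac{D(t)}{A(t)}, \qquad t \in I,
\]
with the appropriate jump adjustment at atoms of $\g_1$ so that $A$ remains left-continuous and the product $AB$ still equals $D$, then extend $\eta_1,\eta_2$ by $0$ outside the union of these components. A direct verification using Fubini recovers $\g_1$ and $\g_2$ as the marginals of $(\eta_1 \otimes \eta_2)_{\res \G}$. The main obstacle is the boundary behavior at the endpoints of each component of $\{D > 0\}$: ensuring $A(a^+) = 0$ or $B(b^-) = 0$ in the correct way, accommodating possible atoms of $\g_1$ or $\g_2$ sitting on these boundaries, and using the reinforced hypothesis $T_*(\g_1,\g_2) \subset \{F_{\g_1}^- > F_{\g_2}^+\}$ to rule out the degenerate case where $D$ vanishes at a point of $T_*$.

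For uniqueness, any strictly strongly multiplicative $\pi \in \Marg(\g_1,\g_2)$ satisfies $\pi(]-\infty,t[\times ]t,+\infty[) = A(t) B(t) = D(t)$ by the computation of the first paragraph, so $A \cdot B$ is determined by $\g_1,\g_2$ alone. On each component of $\{D > 0\}$ the integrated marginal identity $\int_{]t_0,t[} B\, dA = F_{\g_1}^-(t) - F_{\g_1}^-(t_0)$ then fixes $A$ up to a single multiplicative constant $\la > 0$, and $B = D/A$ is correspondingly fixed up to $1/\la$. Since $(\eta_1,\eta_2) \mapsto (\la \eta_1, \eta_2/\la)$ leaves $\eta_1 \otimes \eta_2$ invariant, the transport plan $\pi$ itself is unique.
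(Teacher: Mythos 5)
The paper does not prove this theorem; it is quoted directly from Kellerer, \cite[Theorem~2.4]{kellerer_order_1986}, so there is no paper proof to compare against --- your attempt is in effect a reconstruction of Kellerer's argument. Your ``$\Leftarrow$'' direction is correct. For ``$\Rightarrow$'', factoring $D(t) = F_{\g_1}^-(t) - F_{\g_2}^+(t)$ as $A(t)B(t)$ with $\eta_1(]-\infty,t[) = A(t)$, $\eta_2(]t,+\infty[) = B(t)$ is indeed the core of the construction, and the formal calculus $dA/A = d\g_1/D$, $-dB/B = d\g_2/D$ is the right heuristic. Two corrections along the way: (i) $B$ must be \emph{right}-continuous rather than left-continuous, since $t \mapsto \eta_2(]t,+\infty[)$ is right-continuous; this is what makes $A\cdot B$ share the mixed left/right continuity of $D$. (ii) Under $\g_1 \leq_\G \g_2$ with $\g_1(\R)=\g_2(\R)$ (implicit in the statement), one has $\{D>0\}=T_*(\g_1,\g_2)$, which is the intersection of the half-lines $\{F_{\g_1}^->0\}$ and $\{F_{\g_2}^+<\g_2(\R)\}$ and hence a single open interval. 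So there is only one ``component'', and your worry about matching scalings between components in the uniqueness argument is moot.

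The substantive gap is the one you yourself flag but do not close. The exponential formula $A(t) = \exp\bigl(\int_{]t_0,t[} d\g_1/D\bigr)$ is wrong wherever $\g_1$ has an atom: the jump of $\log A$ at an atom $s$ equals $\log\bigl(1+\g_1(\{s\})/D(s)\bigr)$, not $\g_1(\{s\})/D(s)$, and one must commit to which value of the (neither left- nor right-continuous) function $D$ enters. More seriously, verifying that the $\eta_1,\eta_2$ so built are $\sigma$-finite, that $(\eta_1\otimes\eta_2)_{\res \G}$ has finite total mass, and that it actually recovers $\g_1,\g_2$ all the way to the endpoints of $T_*$, where $D\to 0$, is precisely where the reinforced hypothesis $T_*(\g_1,\g_2)\subset\{F_{\g_1}^->F_{\g_2}^+\}$ does its work --- and you acknowledge this without carrying it out. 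As written, the proof is a sound roadmap that stops short of the parts of the theorem that are actually difficult.
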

	
	We denote by $\KK_\G(\g_1,\g_2)$ the unique strongly multiplicative measure in $\Marg(\g_1,\g_2)$ when $\g_1 \leq_\G \g_2$. Similarly as for the definition of $\CC$ (Definition \ref{defi:weak_multiplicativity}), let $\DD$ denote the class  $$\ens{B_1 \times B_2}{(B_1,B_2) \in \BB(\R)^2,  B_1 \times B_2 \subset \G \text{ or }  B_1 \times B_2 \subset \ti{\G}}$$ of product sets included in $\G$ or $\ti{\G}.$
	
	\begin{defi}[Strict weak multiplicativity]
		A measure $\pi \in \MM_+(\R^2)$ is said to be strictly weakly multiplicative if, for all $A \in \DD$, $\pi_{\res A}$ is a product measure.
	\end{defi}
	
	The following result states that for measures $\g_1$, $\g_2$ in the strict reinforced stochastic order, $\KK_\G(\g_1,\g_2)$ is the only weakly multiplicative measure in $\Marg_\G(\g_1,\g_2).$ There is no similar result for the analogue large properties: when $\g_1 \leq_F \g_2$, in general, $\KK_\F(\g_1,\g_2)$ is not the only weakly multiplicative element of $\Marg_\F(\g_1,\g_2)$. For the proof, we refer to \cite[Proposition $4.3$]{kellerer_order_1986}
	
	\begin{lem}\label{lem:WGM_implique_SGM}
		Consider $(\g_1,\g_2) \in \MM_+(\R)^2$ such that $\g_1 \leq_\G \g_2.$ If $\pi \in \Marg_\G(\g_1,\g_2)$ and $\pi$ is weakly multiplicative, then $\pi = \KK_\G(\g_1,\g_2).$
	\end{lem}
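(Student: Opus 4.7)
The plan is to show that $\pi$ is itself strictly strongly multiplicative, i.e.\ of the form $(\eta_1 \otimes \eta_2)_{\res \G}$, after which the uniqueness clause of Theorem \ref{them:strassen_G} forces $\pi = \KK_\G(\g_1,\g_2)$. A preliminary observation is that, since $\pi$ is concentrated on $\G$, for every $t \in \R$ the set $[t,+\infty[\times ]-\infty,t]$ lies outside $\G$ and has $\pi$-measure zero, so its restriction is trivially a product measure. Using the characterization in Proposition \ref{pro:caracterisation_SWFM}, weak multiplicativity of $\pi$ therefore reduces to the single condition: for every $t \in \R$, $\pi_t := \pi_{\res ]-\infty,t]\times [t,+\infty[}$ is a product measure.

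Next I would disintegrate $\pi(\ddd x,\ddd y) = \g_1(\ddd x)\,k_x(\ddd y)$ with $k_x$ concentrated on $]x,+\infty[$, and extract the factors from the family $(\pi_t)_{t \in T_*(\g_1,\g_2)}$. The hypothesis $\g_1 \leq_\G \g_2$ gives $F_{\g_1}^-(t) > F_{\g_2}^+(t)$ on $T_*$, so $c_t := \pi_t(\R^2) \geq F_{\g_1}^-(t) - F_{\g_2}^-(t) > 0$. Writing $\pi_t = (\alpha_t \otimes \beta_t)/c_t$ with marginals $\alpha_t,\beta_t$, the disintegration identity forces, for $\g_1$-a.e.\ $x \leq t$ with $k_x([t,+\infty[)>0$, the conditional $k_x(\,\cdot\, \cap [t,+\infty[)/k_x([t,+\infty[) = \beta_t/c_t =: \rho_t$. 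Applying this identity at two thresholds $t < s$ in $T_*$ and computing the conditional of $k_x$ on $[s,+\infty[$ two ways yields $\rho_s = \rho_t(\,\cdot\, \cap [s,+\infty[)/\rho_t([s,+\infty[)$, so $(\rho_t)_{t \in T_*}$ is a consistent projective family of probability measures.

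To assemble $\eta_2$, fix a reference $t_0 \in T_*$, normalize by $\eta_2([t_0,+\infty[) := 1$ with $\eta_2(\,\cdot\, \cap [t_0,+\infty[) := \rho_{t_0}$, and extend leftward by declaring $\eta_2(\,\cdot\, \cap [t,+\infty[) := \rho_t \cdot \eta_2([t,+\infty[)$ for $t < t_0$ in $T_*$; consistency makes this well-defined and produces a $\sigma$-finite measure on $\bigcup_{t \in T_*}[t,+\infty[$. Define $f(x) := k_x([t,+\infty[)/\eta_2([t,+\infty[)$ for any $t \geq x$ in $T_*$ — independent of $t$ by the consistency relation — and set $\eta_1 := f \cdot \g_1$. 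For a product set $B_1 \times B_2 \subset \G$ separated by some $t \in T_*$, one then computes $\pi(B_1 \times B_2) = \int_{B_1} k_x(B_2)\,\g_1(\ddd x) = \int_{B_1} f(x)\,\eta_2(B_2)\,\g_1(\ddd x) = \eta_1(B_1)\,\eta_2(B_2)$, and a standard monotone class extension gives $\pi = (\eta_1 \otimes \eta_2)_{\res \G}$, completing the reduction.

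The main obstacle lies in this last assembly step: one must verify that $\eta_2$ extends to a genuine $\sigma$-finite measure with $\eta_2([t,+\infty[) < +\infty$ throughout, that $f$ is well-defined and $\g_1$-measurable, and that product sets $B_1 \times B_2 \subset \G$ touching the endpoints $s_{\g_1}$ or $S_{\g_2}$ (where $T_*$ terminates) can be recovered by monotone approximation through interior thresholds. The strict form of the order $\leq_\G$ is precisely what guarantees that $T_*$ is dense enough on the effective support that the denominators $k_x([t,+\infty[)$ and $\rho_t([s,+\infty[)$ never vanish on $\g_1$-positive sets, keeping the consistency chain from collapsing.
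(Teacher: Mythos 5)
The paper does not actually prove this lemma: it is cited verbatim from Kellerer \cite[Proposition 4.3]{kellerer_order_1986}, so your attempt is a reconstruction from scratch. Your high-level plan is the right one — show $\pi$ is strictly strongly multiplicative and then invoke the uniqueness part of Theorem \ref{them:strassen_G} — and the preliminary reduction is correct: since $\pi(\ti{\F}) = 0$, Proposition \ref{pro:caracterisation_SWFM} boils weak multiplicativity down to $\pi_{\res ]-\infty,t]\times[t,+\infty[}$ being a product for every $t$, and the bound $c_t > 0$ on $T_*$ (via $F_{\g_1}^+ \geq F_{\g_1}^- > F_{\g_2}^+ \geq F_{\g_2}^-$ there) is sound.

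However, what you have written is not a proof, and the missing step is precisely the mathematical core of the lemma. The consistency identity $\rho_s = \rho_t(\cdot\cap[s,+\infty[)/\rho_t([s,+\infty[)$ for $t<s$ in $T_*$ is derived by applying the disintegration formula at a common $x\leq t$ with $k_x([s,+\infty[)>0$; this is only available, and only makes sense, when $\rho_t([s,+\infty[) = \pi(]-\infty,t]\times[s,+\infty[)/c_t > 0$. If that positivity can fail at some pair $t<s$ in $T_*$, the projective family $(\rho_t)_{t\in T_*}$ decouples into separate pieces, the leftward extension from a reference $t_0$ has no way to cross the gap, and the candidate $\eta_2$ (hence $f$ and $\eta_1$, and the monotone-class identification $\pi=(\eta_1\otimes\eta_2)_{\res\G}$) is not defined. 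You assert in your last paragraph that the strict order $\leq_\G$ "guarantees" this positivity, but you neither prove it nor indicate why: this is exactly the point at which the strict order must be used (the analogous statement for $\leq_\F$ and $\Marg_\F$ is false, as the paper notes just before the lemma), so deferring it means the argument is genuinely incomplete. Concretely, you would need to establish, for $t_1<t_2$ in $T_*$, that $\pi(]-\infty,t_1]\times[t_2,+\infty[)>0$; assuming the contrary, the product structure at intermediate $t$ gives $\pi(]-\infty,t_1]\times[t,+\infty[)\cdot\pi(]-\infty,t]\times[t_2,+\infty[)=0$ for all $t\in(t_1,t_2)$, which forces a "barrier" of the form $\pi(]-\infty,t_1]\times]u,+\infty[) = \pi(]-\infty,u[\times[t_2,+\infty[) = 0$ at some $u\in(t_1,t_2)\subset T_*$; ruling this out against the marginal inequalities encoded in $\leq_\G$ is exactly the argument you owe, and it is not routine. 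Until that is supplied, this remains a sketch with a gap at its most delicate point.
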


	\begin{pro}\label{pro:pas_atomes_communs}
		Assume $(\g_1,\g_2) \in \MM_+(\R)^2$ have no atoms in common and $\g_1 \leq_{\F} \g_2$. Then, $\g_1 \leq_\G \g_2$ and $\KK_\F (\mu,\nu) = \KK_\G (\mu,\nu).$
	\end{pro}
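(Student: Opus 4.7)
The plan is to first upgrade $\g_1 \leq_\F \g_2$ to $\g_1 \leq_\G \g_2$, and then identify the two Kellerer plans by showing that $\KK_\F(\g_1,\g_2)$ assigns zero mass to the diagonal, so that the uniqueness clause of Theorem~\ref{them:strassen_G} applies. The engine behind both parts is the following dichotomy provided by the no-common-atoms hypothesis: for every $t \in \R$, either $F_{\g_1}^-(t) = F_{\g_1}^+(t)$ or $F_{\g_2}^-(t) = F_{\g_2}^+(t)$.

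For the first part, fix $t \in \R$. Since $\g_1 \leq_\F \g_2$ implies $\g_1 \leq_\st \g_2$, one has $F_{\g_1}^+ \geq F_{\g_2}^+$ everywhere, and passing to left-limits yields $F_{\g_1}^- \geq F_{\g_2}^-$. The dichotomy then gives $F_{\g_1}^-(t) \geq F_{\g_2}^+(t)$: if $\g_1(\{t\}) = 0$, use $F_{\g_1}^-(t) = F_{\g_1}^+(t) \geq F_{\g_2}^+(t)$; if $\g_2(\{t\}) = 0$, use $F_{\g_1}^-(t) \geq F_{\g_2}^-(t) = F_{\g_2}^+(t)$. To upgrade this to a strict inequality on $T_*(\g_1,\g_2)$, I would observe that any $t \in T_*(\g_1,\g_2)$ satisfies $F_{\g_1}^+(t) \geq F_{\g_1}^-(t) > 0$ and $F_{\g_2}^-(t) \leq F_{\g_2}^+(t) < \g_2(\R)$, so $t$ lies in both $T_+(\g_1,\g_2)$ and $T_-(\g_1,\g_2)$. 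The hypothesis $\g_1 \leq_\F \g_2$ therefore yields both $F_{\g_1}^+(t) > F_{\g_2}^+(t)$ and $F_{\g_1}^-(t) > F_{\g_2}^-(t)$, and applying the same atomic dichotomy promotes one of these strict inequalities to $F_{\g_1}^-(t) > F_{\g_2}^+(t)$. This establishes $\g_1 \leq_\G \g_2$.

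For the second part, Theorem~\ref{them:strassen_F} allows me to write $\KK_\F(\g_1,\g_2) = (\eta_1 \otimes \eta_2)_{\res \F}$ with $\eta_1 \sim \g_1$ and $\eta_2 \sim \g_2$. Since mutual absolute continuity preserves atoms, $\At(\eta_i) = \At(\g_i)$, so the no-common-atoms hypothesis gives $\At(\eta_1) \cap \At(\eta_2) = \emptyset$; as $\At(\eta_2)$ is countable, it follows that $\eta_1(\At(\eta_2)) = \sum_{x \in \At(\eta_2)} \eta_1(\{x\}) = 0$. By Tonelli, $(\eta_1 \otimes \eta_2)(\D) = \int \eta_2(\{x\}) \dd \eta_1(x)$, and the integrand vanishes $\eta_1$-almost everywhere since it is supported on $\At(\eta_2)$. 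Consequently $(\eta_1 \otimes \eta_2)_{\res \F} = (\eta_1 \otimes \eta_2)_{\res \G}$, which exhibits $\KK_\F(\g_1,\g_2)$ as a strictly strongly multiplicative element of $\Marg(\g_1,\g_2)$. Combined with $\g_1 \leq_\G \g_2$ just established, the uniqueness clause of Theorem~\ref{them:strassen_G} forces $\KK_\F(\g_1,\g_2) = \KK_\G(\g_1,\g_2)$.

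I do not expect any real obstacle: the argument is essentially a verification once the atomic dichotomy is identified. The one subtle point is to invoke the ``$\eta_1 \sim \g_1$, $\eta_2 \sim \g_2$'' refinement in Theorem~\ref{them:strassen_F} rather than bare absolute continuity, so that the atoms of $\g_i$ and $\eta_i$ coincide and the diagonal computation goes through.
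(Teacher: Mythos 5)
Your argument is correct and follows essentially the same route as the paper's: you use the no-common-atoms dichotomy both to show $F_{\g_1}^- \geq F_{\g_2}^+$ and, via $T_*(\g_1,\g_2) \subset T_+(\g_1,\g_2) \cap T_-(\g_1,\g_2)$, to obtain strictness on $T_*$, and you identify the two Kellerer plans by verifying that $\KK_\F(\g_1,\g_2)$ assigns zero mass to $\D$ so that the uniqueness in Theorem~\ref{them:strassen_G} applies. The one point you flag as subtle — invoking the equivalence $\eta_i \sim \g_i$ — is actually unnecessary: one-sided absolute continuity $\eta_i \ll \g_i$ already yields $\At(\eta_i) \subset \At(\g_i)$, which is all the diagonal computation requires, and that is what the paper uses.
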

	
	\begin{proof}
		For all $t \in \R$, as $\g_1$ and $\g_2$ have no atom in common, $\g_1(t) = 0$ or $\g_2(t) = 0.$ Therefore, $F_{\g_1}^-(t) - F_{\g_2}^+(t) \in \{F_{\g_1}^+(t) - F_{\g_2}^+(t),F_{\g_1}^-(t) - F_{\g_2}^-(t)\}$, and the inequality  $F_{\g_1}^-(t) - F_{\g_2}^+(t) \leq \min\big(F_{\g_1}^+(t) - F_{\g_2}^+(t), F_{\g_1}^-(t) - F_{\g_2}^-(t)\big)$ is an equality.  As $\g_1 \leq_{\st} \g_2$, we get $F_{\g_1}^-(t) - F_{\g_2}^+(t) = \min\left(F_{\g_1}^+(t) - F_{\g_2}^+(t), F_{\g_1}^-(t) - F_{\g_2}^-(t)\right) \geq 0.$  Furthermore, the sets $T^*(\g_1,\g_2)$ (Definition \ref{def:Kellerer_order_strict}), $T_+(\g_1,\g_2)$ and $T_-(\g_1,\g_2)$ (Definition \ref{def:Kellerer_order_large}) satisfy: $T_*(\g_1, \g_2) \subset T_+(\g_1, \g_2) \cap \ T_-(\g_1, \g_2) \subset \{F_{\g_1}^+ > F_{\g_2}^+\} \cap \{F_{\g_1}^- > F_{\g_2}^-\}.$ As $ \{F_{\g_1}^+ > F_{\g_2}^+\} \cap \{F_{\g_1}^- > F_{\g_2}^-\} = \{\min\left(F_{\g_1}^+ - F_{\g_2}^+, F_{\g_1}^- - F_{\g_2}^-\right) >0\}$, and, as we just saw, $F_{\g_1}^- - F_{\g_2}^+ = \min\left(F_{\g_1}^+ - F_{\g_2}^+, F_{\g_1}^- - F_{\g_2}^-\right)$, we get $ T_*(\g_1, \g_2) \subset \{ F_{\g_1}^- > F_{\g_2}^+(t)\}.$ Therefore, $\g_1 \leq_\G \g_2.$ Now, as $\g_1 \leq_\F \g_2$, by Theorem \ref{them:strassen_F}, there exists two  $\sigma$-finite measures $\nu_1 \ll \g_1$ and $\nu_2 \ll \g_2$ such that $\KK_\F(\mu,\nu) = (\nu_1 \otimes \nu_2)_{\res \F}$. As $\At(\nu_1)\cap \At(\nu_2) \subset \At(\g_1) \cap \At(\g_2) = \emptyset$ and $\At(\nu_1)$ is countable, we have $\nu_2(\At(\nu_1)) = 0$. Since $x \mapsto \nu_1(\{x\})$ is null outside $\At(\nu_1)$, we get $$(\nu_1 \otimes \nu_2)(\D) = \int_{\R} \left( \int_{\R} \1_{x=y}  \dd\nu_1(x) \right) \dd\nu_2(y) = \int_{\At(\nu_1)^c} \nu_1(y)  \dd\nu_2(y) = 0.$$ Thus, $\KK_\F(\mu,\nu) = (\nu_1 \otimes \nu_2)_{\res \G} + (\nu_1 \otimes \nu_2)_{\res \D} = (\nu_1 \otimes \nu_2)_{\res \G}$ belongs to $\Marg_\G(\g_1, \g_2)$ and is the restriction of a product measure to $\G$. This establishes $\KK_\F(\g_1,\g_2) = \KK_\G(\g_1,\g_2).$ 
	\end{proof}

	Before stating that $\KK(\mu,\nu) = \lim_{\ee \to 0^+} \pi_\ee$ when the components of non-equal pairs of $\DD_\KK$ are singular, we prove that two measures $(\g_1,\g_2) \in \MM_+^2$ forms a pair of singular measures if and only if there exists no transport plan from $\g_1$ to $\g_2$ that fixes mass. To prove this equivalence, we shall use that two measures are singular if and only if their common part is null. Let us recall the definition of common part of two measure. For every $(x,y) \in \R^2$, let $x \wedge y$ denote the minimum between $x$ and $y$. Now, if we define $h_1 = \frac{d\g_1}{d(\g_1+\g_2)}$ and $h_2 = \frac{d\g_2}{d (\g_1 + \g_2)}$, the common part $\g_1 \wedge \g_2$  between $\g_1$ and $\g_2$ is defined by $\g_1 \wedge \g_2 = (h_1 \wedge h_2) \cdot (\g_1 + \g_2)$.
	
	\begin{pro}\label{pro:equi_reduction}
		For all $\g_1, \g_2 \in \MM_+^2$, the following conditions are equivalent:
		\begin{enumerate}
			\item $\g_1 \wedge \g_2 = 0$;
			\item $(\g_1,\g_2)$ is a pair of singular measures;
			\item For all $\pi \in \CCC(\g_1,\g_2)$, $\pi(\D)=0.$
		\end{enumerate}
	\end{pro}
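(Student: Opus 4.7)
The plan is to establish the implications $(1)\Leftrightarrow(2)$, $(2)\Rightarrow(3)$, and $\neg(1)\Rightarrow\neg(3)$, closing the triangle.

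For $(1)\Leftrightarrow(2)$, I would argue directly from the densities $h_1,h_2$ of $\g_1,\g_2$ with respect to $\g_1+\g_2$. Since $h_1\wedge h_2 = 0$ $(\g_1+\g_2)$-a.e.\ is equivalent to $h_1 h_2 = 0$ $(\g_1+\g_2)$-a.e., the sets $A_1 = \{h_1 > 0 = h_2\}$ and $A_2 = \{h_2 > 0 = h_1\}$ form a measurable partition (up to a $(\g_1+\g_2)$-null set) of $\{h_1+h_2>0\}$ precisely when $\g_1\wedge\g_2 = 0$, which is in turn equivalent to $\g_1$ being concentrated on $A_1$ and $\g_2$ on $A_2$ with $A_1\cap A_2 = \emptyset$, i.e., to mutual singularity.

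For $(2)\Rightarrow(3)$, pick disjoint Borel sets $A_1,A_2$ with $\g_1(A_1^c) = \g_2(A_2^c) = 0$. Any $\pi\in\Marg(\g_1,\g_2)$ satisfies $\pi((A_1\times\R)^c) = \g_1(A_1^c) = 0$ and $\pi((\R\times A_2)^c) = \g_2(A_2^c) = 0$, so $\pi$ is concentrated on $A_1\times A_2$. Since $A_1\cap A_2 = \emptyset$, this set is disjoint from $\D$, so $\pi(\D) = 0$. This uses only the marginal constraint, no cyclical monotonicity.

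For $\neg(1)\Rightarrow\neg(3)$, assume $\eta := \g_1\wedge\g_2 \neq 0$. Set $\g_1' = \g_1-\eta$ and $\g_2' = \g_2-\eta$; these are positive measures with $\g_1'(\R) = \g_2'(\R)$, so $(\g_1',\g_2')\in\MM_+^2$ and by the remark following the definition of $\CCC$, there exists $\pi'\in\CCC(\g_1',\g_2')$ (the monotone transport plan, concentrated on a cyclically monotone set $\Gamma$). Define $\pi := \pi' + (\id,\id)_\# \eta$; clearly $\pi\in\Marg(\g_1,\g_2)$ and $\pi$ is concentrated on $\Gamma\cup\D$, which is cyclically monotone by Point~\ref{point:cycl_D_cup_g} of Lemma~\ref{lem:cycl_union}. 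Hence $\pi\in\CCC(\g_1,\g_2)$, and $\pi(\D) \geq \eta(\R) > 0$, contradicting (3).

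The implications are all essentially soft; the only non-routine step is the third, whose key point is exhibiting a cyclically monotone plan that moves a positive amount of mass along the diagonal. The main obstacle is making sure we have a cyclically monotone witness, and this is resolved cleanly by Lemma~\ref{lem:cycl_union}, which guarantees that attaching the diagonal to any cyclically monotone set preserves cyclical monotonicity.
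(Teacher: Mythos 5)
Your proposal is correct and follows essentially the same route as the paper: the paper likewise dismisses $(1)\Leftrightarrow(2)$ as classical, proves $(2)\Rightarrow(3)$ using only the marginal constraint (the diagonal restriction is dominated by both marginals, which are concentrated on disjoint sets), and proves $(3)\Rightarrow(1)$ by contradiction, gluing a cyclically monotone plan for the reduced pair $(\g_1-\g_1\wedge\g_2,\g_2-\g_1\wedge\g_2)$ to $(\id,\id)_\#(\g_1\wedge\g_2)$ and invoking the second point of Lemma~\ref{lem:cycl_union} to certify cyclical monotonicity of the union. The only cosmetic difference is that you spell out the density-based argument for $(1)\Leftrightarrow(2)$ which the paper omits.
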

	
	\begin{proof}
		The equivalence between the two first points is classical and omitted. We just prove the implications $(2)\implies (3)$ and $(3) \implies (1)$, beginning  with $(2) \implies (3)$. Consider $E$ such that $\g_1(E) = \g_2(E^c) = 0$  and $\pi \in \CCC(\g_1,\g_2).$  As ${p_1}_\# \pi_{\res \D} \leq \g_1$, ${p_1}_\# \pi_{\res \D} = {p_2}_\# \pi_{\res \D} \leq \g_2$ and $\D = [\D \cap (E \times \R)] \uplus [\D \cap (E^c \cap \R)]$, we have $0 \leq \pi(\D) = \left({p_1}_\# \pi_{\res \D}\right)(E) + \left({p_1}_\# \pi_{\res \D}\right)(E^c) \leq \g_1(E) + \g_2(E^c) = 0 + 0 = 0$. We now prove $(3) \implies (1).$ To derive a contradiction, assume $\g_1 \wedge \g_2 \neq 0$. Then, consider $\pi_0 \in \CCC(\g_1 - \g_1 \wedge \g_2, \g_2 - \g_1 \wedge \g_2)$, and define $\pi = \pi_0 + (\id,\id)_\# \g_1 \wedge \g_2.$ By construction $\pi \in \Marg(\g_1,\g_2)$ and, from Point \ref{point:cycl_D_cup_g} of Lemma \ref{lem:cycl_union}, $\pi$ is cyclically monotone. Hence, $\pi \in \CCC(\g_1,\g_2)$, and $\pi(\D) \geq [(\id,\id)_\# \g_1 \wedge \g_2] (\D) =  (\g_1 \wedge \g_2) (\R) > 0$. This is a contradiction, hence $3 \implies 1.$
	\end{proof}
	
	Recall the notation for $\{(\mu_k^+,\nu_k^+)\}_{k \in \KK^+} \cup \{(\mu_k^-,\nu_k^-)\}_{k \in \KK^-} \cup \{(\mu^=,\mu^=)\}$ introduced in Definition \ref{def:component_marginals}. We now show that under the hypothesis,
	\begin{equation}\label{hyp1}
		\begin{cases*}
			\forall k \in \KK^+, \mu_k^+ \wedge \nu_k^+ = 0\\
			\forall k \in \KK^-, \mu_k^- \wedge \nu_k^- = 0
		\end{cases*},
	\end{equation}
	we have $\KK(\mu,\nu) = \lim_{\ee \to 0^+} \pi_\ee$. Note that Condition \eqref{hyp1} implies the following condition,	
	\begin{equation}\label{hyp2}
		\begin{cases*}
			\forall k \in \KK^+, \At(\mu_k^+) \cap \At(\nu_k^+) = 0\\
			\forall k \in \KK^-, \At(\mu_k^-) \cap \At(\nu_k^-) = 0
		\end{cases*},
	\end{equation}
	which will allow us to apply Proposition \ref{pro:pas_atomes_communs}.
	The assumption  $\mu \wedge \nu = 0$ is sufficient condition for Assumption \eqref{hyp2}, but is not necessary. For instance, $\mu = \d_1 + \d_3 + \d_4$ and $\nu = \d_2 + \d_3 + \d_5$ satisfy Assumption \eqref{hyp2}, but we do not have $\mu \wedge \nu = 0.$

	\begin{them}\label{them:convergence_semi_discret}
		Consider $(\mu,\nu) \in \MM_+^2$. If Assumption \eqref{hyp1} is satisfied, then $$\pi_\ee \dcv{\ee \to 0^+} \KK(\mu,\nu).$$ 
	\end{them}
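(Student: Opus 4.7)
The plan is to characterize every cluster point of $(\pi_\ee)_{\ee>0}$ as $\KK(\mu,\nu)$ by exploiting the three properties available at the limit: cyclical monotonicity (Proposition~\ref{them:valeur_adherence_solutions_penalisees_generales}), weak multiplicativity (Theorem~\ref{them:valeur_adherence_solutions_penalisees_generales_SWFM}), and the rigidity provided by our decomposition (Theorem~\ref{pro:decomposition}). Since the marginals are fixed, $\Marg(\mu,\nu)$ is tight, so the family $(\pi_\ee)_{\ee>0}$ admits cluster points, and it suffices to show that every such cluster point $\pi^*$ coincides with $\KK(\mu,\nu)$.

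First I would fix a cluster point $\pi^*$ and apply Theorem~\ref{them:valeur_adherence_solutions_penalisees_generales_SWFM} together with Proposition~\ref{them:valeur_adherence_solutions_penalisees_generales} to obtain simultaneously $\pi^* \in \CCC(\mu,\nu)$ and $\pi^*$ weakly multiplicative. By Theorem~\ref{pro:decomposition}, $\pi^*$ decomposes as $\pi^* = \sum_{k \in \KK^+} \pi^*_{k,+} + \sum_{k \in \KK^-} \pi^*_{k,-} + \pi^{*,=}$, with $\pi^*_{k,+} = \pi^*_{\res A_k^+} \in \CCC(\mu_k^+,\nu_k^+)$, $\pi^*_{k,-} = \pi^*_{\res A_k^-} \in \CCC(\mu_k^-,\nu_k^-)$, and $\pi^{*,=} = (\id,\id)_\# \mu^=$. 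A short observation records that weak multiplicativity is stable under the decomposition: for any product set $B_1 \times B_2 \subset \F$, $(\pi^*_{k,+})_{\res B_1 \times B_2} = \pi^*_{\res (B_1 \cap [a_k^+,b_k^+[) \times (B_2 \cap ]a_k^+,b_k^+])}$ is a restriction of $\pi^*$ to a rectangle contained in $\F$, hence is a product measure; similarly on $\ti{\F}$. Thus each $\pi^*_{k,\pm}$ is weakly multiplicative.

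The heart of the argument is then to upgrade from large objects to strict ones using Assumption~\eqref{hyp1}. Fix $k \in \KK^+$. Since $\mu_k^+ \wedge \nu_k^+ = 0$, Proposition~\ref{pro:equi_reduction} applied to $\pi^*_{k,+} \in \CCC(\mu_k^+, \nu_k^+)$ yields $\pi^*_{k,+}(\D) = 0$. Combined with $\pi^*_{k,+} \in \Marg_{\F}(\mu_k^+,\nu_k^+)$ (by Proposition~\ref{pro:ordre_stochastique}, since $\mu_k^+ \leq_\F \nu_k^+$), we deduce $\pi^*_{k,+} \in \Marg_{\G}(\mu_k^+,\nu_k^+)$. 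The hypothesis $\mu_k^+ \wedge \nu_k^+ = 0$ further ensures $\At(\mu_k^+) \cap \At(\nu_k^+) = \emptyset$, so Proposition~\ref{pro:pas_atomes_communs} gives $\mu_k^+ \leq_\G \nu_k^+$ together with the identification $\KK_\G(\mu_k^+,\nu_k^+) = \KK_\F(\mu_k^+,\nu_k^+)$. Since $\pi^*_{k,+}$ is a weakly multiplicative element of $\Marg_\G(\mu_k^+,\nu_k^+)$, the uniqueness statement of Lemma~\ref{lem:WGM_implique_SGM} forces $\pi^*_{k,+} = \KK_\G(\mu_k^+,\nu_k^+) = \KK_\F(\mu_k^+,\nu_k^+)$. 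The same reasoning, performed with $i_\#$ to swap the roles, treats $\pi^*_{k,-}$ for $k \in \KK^-$. Assembling the components via Definition~\ref{defi:gen_kell_transport_plan} gives $\pi^* = \KK(\mu,\nu)$, and the theorem follows.

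The main obstacle will be checking carefully that the weak multiplicativity of $\pi^*$ transfers to each component $\pi^*_{k,\pm}$ in the precise sense required by Lemma~\ref{lem:WGM_implique_SGM}, and in particular that after discarding the diagonal one indeed lands in $\Marg_\G(\mu_k^+, \nu_k^+)$ rather than merely in $\Marg_\F(\mu_k^+,\nu_k^+)$; this is where Assumption~\eqref{hyp1}, via Proposition~\ref{pro:equi_reduction}, plays its pivotal role, allowing the strict Kellerer uniqueness (which has no large analogue) to conclude.
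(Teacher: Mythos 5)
Your proof is correct and follows essentially the same route as the paper's own argument: decompose the cluster point via Theorem~\ref{pro:decomposition}, restrict weak multiplicativity to each component, use Assumption~\eqref{hyp1} through Proposition~\ref{pro:equi_reduction} to discard the diagonal and Proposition~\ref{pro:pas_atomes_communs} to upgrade $\leq_\F$ to $\leq_\G$, then invoke the strict Kellerer uniqueness of Lemma~\ref{lem:WGM_implique_SGM}. The only cosmetic difference is the order in which you establish the weak multiplicativity of the components versus the passage to $\Marg_\G$; the paper does the latter first, but this changes nothing substantive.
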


	\begin{proof}
		Consider a cluster point $\pi$ of $(\pi_\ee)_{\ee>0}.$  According to Theorem \ref{them:valeur_adherence_solutions_penalisees_generales_SWFM}, $\pi$ is weakly multiplicative and belongs to $\CCC(\mu,\nu)$. We now define $\FF = ((\pi_k^+)_{k \in \KK^+}, (\pi_k^-)_{k \in \KK^-}, \pi^=)$ using the notation in Definition \ref{defi:composantes} and $\PPP$ as in Theorem \ref{pro:decomposition}. From this theorem, it follows that $\FF \in \PPP$ and
		$\pi = \sum_{k \in \KK^+} \pi_k^+ +  \sum_{k \in \KK^-} \pi_k^- + (\id,\id)_\# \mu^=.$
		Fix $k \in \KK^+$. As Equation \eqref{hyp2} is satisfied and $\mu_k^+ \leq_\F \nu_k^+$, from Proposition \ref{pro:pas_atomes_communs}, it follows that $\mu_k^+ \leq_\G \nu_k^+$ and $\KK_\F(\mu_k^+,\nu_k^+) = \KK_\G(\mu_k^+,\nu_k^+)$. Moreover, as Equation \eqref{hyp1} is satisfied and $\pi \in \CCC(\mu_k^+,\nu_k^+)$, by Proposition \ref{pro:equi_reduction}, $\pi_k^+(\D) = 0$. As $\CCC(\mu_k^+,\nu_k^+) = \Marg_{\F}(\mu_k^+,\nu_k^+)$, $\pi_k^+ \in \Marg_\G(\mu_k^+,\nu_k^+)$. Now, consider $B_1 \times B_2 \in \CC$. As $\pi$ is weakly multiplicative, there exists $\nu_1, \nu_2 \in \MM^+(\R)$ such that ${\pi}_{\res B_1 \times B_2} = (\nu_1 \otimes \nu_2)$. Thus, ${\pi_k^+}_{\res B_1 \times B_2} =   (\1_{[a_k^+,b_k^+[} \cdot \nu_1)\otimes (\1_{]a_k^+,b_k^+]}\cdot\nu_2).$ Therefore $\pi_k^+$ is weakly multiplicative. As $\mu_k^+ \leq_\G \nu_k^+$ and $\pi_k^+ \in \Marg_\G(\mu_k^+,\nu_k^+)$, $\pi_k^+ = \KK_\G(\mu_k^+,\nu_k^+) = \KK_\F(\mu_k^+,\nu_k^+).$  Similarly, for all $k \in \KK^-$, $\pi_k^- = i_\#\KK_\F(\nu_k^-,\mu_k^-).$ Hence $\pi = \sum_{k \in \KK^+} \KK_\F(\mu_k^+,\nu_k^+) + \sum_{k \in \KK^-} i_\#\KK_\F(\nu_k^-,\mu_k^-) + (\id,\id)_\# \mu^= = \KK(\mu,\nu).$ Therefore $(\pi_\ee)_{\ee>0}$ admits a unique cluster point $\pi$. As $\Marg(\mu,\nu)$ is compact, this establishes our result.
	\end{proof}

	\noindent\textbf{\large{Acknowledgment.}} I would  like to express my deep gratitude to Nicolas Juillet for introducing me to this research problem and for his constant support and valuable suggestions throughout the writing process.
	
	\noindent\textbf{\large{Copyright notice}:}
	\begin{minipage}{1.8cm}
		\includegraphics[width=1.8cm]{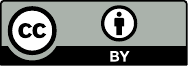}
	\end{minipage} 
	\vspace{0.1cm} This work was supported by the Austrian Science Fund (FWF), grant 10.55776/PIN1806324, and by the
	Agence nationale de la recherche (ANR), grant ANR-23-CE40-0017. A CC-BY public copyright license
	applies to this document and to all subsequent versions up to the Author Accepted Manuscript,
	in accordance with open-access requirements.

	\vspace{0.5cm}
	\begin{minipage}{18cm}
		\emph{Armand Ley} -- Universität Wien\\
		Oskar-Morgenstern-Platz 1, 1090 Vienna, Austria\\
		Email : \texttt{armand.ley@univie.ac.at}
	\end{minipage}

\end{document}